\DeclareMathAlphabet{\pazocal}{OMS}{zplm}{m}{n}
\newcommand{\M}[4]{\overline{\pazocal M}_{#1,#2}(#3,#4)}
\newcommand{\Mone}[3]{\overline{\pazocal M}^{(1)}_{#1}(#2,#3)}
\newcommand{\PP}{\mathbb P}
\renewcommand{\k}{\mathbf k}
\newcommand{\OO}{\mathcal O}
\renewcommand{\to}{\rightarrow}
\newcommand{\pr}{\rm{pr}}
\newcommand{\Aaff}{\mathbb A}
\newcommand{\N}{\mathbb N}
\newcommand{\A}{\pazocal A}
\newcommand{\B}{\pazocal B}
\newcommand{\X}{\pazocal X}
\newcommand{\XP}{\pazocal{X\!P}}
\newcommand{\tXP}{\widetilde{\pazocal{X\!P}}}
\renewcommand{\L}{\mathcal L}
\newcommand{\hL}{\widehat{\mathcal L}}
\newcommand{\cC}{\mathcal C}
\newcommand{\hC}{\widehat{\mathcal C}}
\newcommand{\Z}{\pazocal Z}
\newcommand{\Zp}{\pazocal Z^p}
\newcommand{\tZ}{\widetilde{\pazocal Z}}
\newcommand{\tZp}{\widetilde{\pazocal Z}^p}
\newcommand{\oZp}{\overline{\Z}^{p,\rm{gst}}}
\newcommand{\MM}{\mathfrak M}
\newcommand{\pP}{\mathfrak P}
\newcommand{\hP}{\widehat{\mathfrak P}}
\newcommand{\hM}{\widehat{\mathfrak M}}
\newcommand{\hrM}{\widehat{\rm{M}}}
\newcommand{\oM}{\overline{\pazocal M}}
\newcommand{\tM}{\widetilde{\pazocal M}}
\newcommand{\R}{\operatorname{R}}
\newcommand{\Gm}{\mathbb{G}_{\rm{m}}}
\newcommand{\w}{\mathbf{w}}
\newcommand{\PtoM}{\lambda}
\newcommand{\vir}[1]{[#1]^{\rm{vir}}}
\newcommand{\virloc}[1]{[#1]^{\rm{vir}}_{\rm{loc}}}
\newcommand{\dvr}{\Delta}
\newcommand{\bq}{\begin{equation}}
\newcommand{\eq}{\end{equation}}
\newcommand{\ba}{\begin{aligned}}
\newcommand{\ea}{\end{aligned}}
\newcommand{\be}{\begin{enumerate}}
\newcommand{\ee}{\end{enumerate}}
\newcommand{\bsm}{\left(\begin{smallmatrix}}
\newcommand{\esm}{\end{smallmatrix}\right)}                   
\newcommand{\bpm}{\begin{pmatrix}}
\newcommand{\epm}{\end{pmatrix}}
\newcommand{\barr}{\begin{displaymath}\begin{array}{cccc}}
\newcommand{\earr}{\end{array}\end{displaymath}}
\newcommand{\barrl}{\begin{displaymath}\begin{array}{lcl}}
\newcommand{\earrl}{\end{array}\end{displaymath}}
\newcommand{\barl}{\begin{displaymath}\begin{array}{l}}
\newcommand{\earl}{\end{array}\end{displaymath}}
\newcommand{\bxym}{ \begin{displaymath}\xymatrix }
\newcommand{\exym}{\end{displaymath}}
\newcommand{\bcd}{\begin{center}\begin{tikzcd}}
\newcommand{\ecd}{\end{tikzcd}\end{center}}
\newcommand{\Spec}{\underline{\operatorname{Spec}}}
\newcommand{\Pic}{\operatorname{Pic}}
\newcommand{\Hom}{\operatorname{Hom}}
\newcommand{\id}{{\rm id}}
\theoremstyle{plain}
\newtheorem{thm}{Theorem}[section]
\newtheorem{conj}{Conjecture}
\newtheorem{lem}[thm]{Lemma}
\newtheorem{prop}[thm]{Proposition}
\newtheorem{cor}[thm]{Corollary}
\newtheorem*{teo*}{Theorem}
\newtheorem*{matteo*}{Main Theorem}
\newtheorem{claim}{Claim}
\theoremstyle{definition}
\newtheorem{ex}[thm]{Example}
\newtheorem{dfn}[thm]{Definition}
\newtheorem{remark}[thm]{Remark}
\newcommand{\thismonth}{\ifcase\month 
  \or January\or February\or March\or April\or May\or June%
  \or July\or August\or September\or October\or November%
  \or December\fi}
\title{Reduced invariants from cuspidal maps}
\author{L.Battistella, F.Carocci, C.Manolache}
\date{\today}
\begin{document}

\begin{abstract}

We consider genus 1 enumerative invariants arising from the Smyth--Viscardi moduli space of stable maps from curves with nodes and cusps. We prove that these invariants are equal to the Vakil--Zinger reduced invariants for the quintic threefold, providing a modular interpretation of the latter.

\end{abstract}

\maketitle
\tableofcontents

\section{Introduction}

While genus $0$ Gromov--Witten theory is understood fairly well, the higher genus theory is still unknown in general and its enumerative meaning is affected by more degenerate contributions. The Vakil--Zinger \emph{reduced} Gromov--Witten invariants solve two problems at the same time: they give computations of genus $1$ Gromov--Witten invariants and they have a better enumerative meaning. The only unsatisfactory feature in Zinger's approach is that the modular interpretation is unclear. In this paper we propose a way to fix this issue by working with a different moduli space. 
\subsection{Reduced invariants} We briefly recall the definition and the main results which concern reduced Gromov-Witten invariants. The locus of maps from smooth elliptic curves to $\PP^r$ is irreducible; we call its closure  the \emph{main component} of $\M{1}{n}{\PP^r}{d}$. In \cite{VZ} R. Vakil and A. Zinger construct a desingularisation $\widetilde{\pazocal M}_{1,n}(\PP^r,d)^{\rm{main}}$ of the main component via an iterated blow-up construction. Let
\[\xymatrix{\widetilde{\mathcal C}\ar[r]^{\tilde{f}}\ar[d]_{\tilde{\pi}}&\PP^r\\
\widetilde{\pazocal M}_{1,n}(\PP^r,d)^{\rm{main}}}
\]
be the universal curve over $\widetilde{\pazocal M}_{1,n}(\PP^r,d)^{\rm{main}}$ and let $l\in\N$. Then, the sheaf 
$\tilde\pi_*\tilde f^*\mathcal {O}_{\PP^r}(l)$  is a vector bundle. Given $X_l$ a hypersurface of degree $l$ in $\PP^r$, Vakil--Zinger define genus $1$ \emph{reduced invariants} of $X_l$ as:
\[ c_{\rm{top}}(\tilde\pi_*\tilde f^*(\mathcal O_{\PP^r}(l)))\cap[\widetilde{\pazocal M}_{1,n}(\PP^r,d)^{\rm{main}}].\]

Standard and reduced invariants are related by the Li--Zinger formula \cite{LZ}:
\begin{equation}\label{lizinger}
N_1(X_5,d)=N_1^{\rm{red}}(X_5,d)+\frac{1}{12}N_0(X_5,d).
\end{equation}

\subsection{Cuspidal Gromov--Witten invariants} \label{alternatecompactifications}

Based on D.I. Smyth's work on the birational geometry of $\oM_{1,n}$ \cite{SMY1,SMY2}, M. Viscardi \cite{VISC} has introduced a series of \emph{alternate compactifications} of the moduli space of maps from smooth elliptic curves. The first instance of these alternate compactifications is $\oM_{1,n}^{(1)}(X,\beta)$, where elliptic tails are made unstable and cuspidal singularities are allowed in the source curve. These spaces carry perfect obstruction theories and lead to invariants; we call them cuspidal Gromov--Witten invariants. 
We conjecture the following analogue of the Li--Zinger formula for cuspidal invariants.
\begin{conj}\label{conj}
Let $X$ be a smooth projective threefold, $\gamma_i\in A^*(X)$. Reduced invariants with insertions $\gamma_i$ are equal to cuspidal invariants with insertions $\gamma_i$.
\end{conj}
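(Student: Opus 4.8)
\medskip
\noindent\emph{Strategy toward Conjecture \ref{conj}.}
Both invariants are intersection numbers against virtual classes on spaces birational to the main component of $\M{1}{n}{X}{\beta}$: writing $\gamma=(\gamma_1,\dots,\gamma_n)$,
\[
N_1^{\rm{red}}(X,\beta;\gamma)=\int_{[\tM_{1,n}(X,\beta)^{\rm{main}}]^{\rm{red}}}\prod_i\widetilde{\mathrm{ev}}_i^*\gamma_i,
\qquad
N_1^{\rm{cusp}}(X,\beta;\gamma)=\int_{\vir{\Mone{1,n}{X}{\beta}}}\prod_i\mathrm{ev}_i^*\gamma_i,
\]
where $[\tM_{1,n}(X,\beta)^{\rm{main}}]^{\rm{red}}$ is the reduced class of the Vakil--Zinger desingularisation \cite{VZ} (and of its extension to an arbitrary target) and $\vir{\Mone{1,n}{X}{\beta}}$ is the Smyth--Viscardi virtual class \cite{VISC}. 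The plan is to relate the two classes through a single morphism of the underlying spaces, and then to compare them by virtual push-forward.

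\noindent\textbf{Step 1 (a contraction morphism).} First I would construct a proper birational morphism
\[
c\colon\tM_{1,n}(X,\beta)^{\rm{main}}\longrightarrow\Mone{1,n}{X}{\beta}
\]
which is the identity on the dense open locus of maps from smooth elliptic curves and leaves the marked points unchanged, so that $\mathrm{ev}_i\circ c=\widetilde{\mathrm{ev}}_i$ for all $i$. The morphism $c$ should factor the rational map obtained by following the Vakil--Zinger blow-down with Smyth's modular contraction; the point is that the Vakil--Zinger blow-up centres --- the loci along which the contracted genus $1$ core of the source meets the remaining rational components in a prescribed number of points --- are exactly what is needed to resolve the indeterminacy of that rational map. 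Over the exceptional loci one contracts the genus $1$ core of the universal curve to a smooth point where it is not contracted by the map and to a \emph{cusp} where it is, obtaining a family of stable maps from curves with nodes and cusps. Surjectivity of $c$ uses that a cuspidal, or nodal rational, genus $1$ core is rigid, so it carries no excess moduli and $\Mone{1,n}{X}{\beta}$ acquires no component beyond the image of the main component; this rigidity is precisely the gain of the Smyth--Viscardi compactification over $\M{1}{n}{X}{\beta}$.

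\noindent\textbf{Step 2 (comparison of virtual classes).} By Step 1 and the projection formula, the conjecture follows once one proves $c_*[\tM_{1,n}(X,\beta)^{\rm{main}}]^{\rm{red}}=\vir{\Mone{1,n}{X}{\beta}}$. For a regular embedding $X\hookrightarrow\PP^N$ both modified theories are induced from the corresponding theory for $\PP^N$ by the Gysin pull-back of the embedding (the reduced and the Smyth--Viscardi obstruction theories differ from the absolute deformation theory of maps only by operations compatible with this Gysin map), so one may take $X=\PP^N$ and $\gamma_i\in A^*(\PP^N)$: this is the set-up of the body of the paper, with the twist $\OO(5)$ replaced by the insertions. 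To obtain the class identity I would exhibit a morphism of obstruction theories compatible with $c$ of relative virtual rank $0$ and invoke virtual push-forward. The local input is a comparison, over a family of cuspidal maps and the family obtained by replacing the cusp with a contracted genus $1$ core, of the two complexes $\R\pi_*f^*T_{\PP^N}$: these differ only by the contribution of that core, which is precisely the extra direction the Vakil--Zinger blow-up is designed to resolve and that separates the reduced from the standard obstruction theory. Together with cohomology and base change --- $\tilde\pi_*\tilde f^*T_{\PP^N}$ and its derived pieces being computed correctly over $\tM_{1,n}(\PP^N,d)^{\rm{main}}$ --- this should identify $c^*$ of the Smyth--Viscardi obstruction theory with the reduced one, yielding the equality of classes.

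\noindent\textbf{The main obstacle.} The hard part will be Step 2 over the full space $\Mone{1,n}{\PP^N}{d}$, which in general is neither smooth nor equidimensional: one must control $\R^1\pi_*f^*T_{\PP^N}$ over \emph{all} strata of the cuspidal family, including those where the cuspidal core carries several rational tails, and show that the Smyth--Viscardi obstruction theory is genuinely the pull-back of the reduced one rather than merely agreeing on the smooth locus. In the hypersurface case the twist $\OO(5)$ and the convexity of $T_{\PP^4}$ turn the relevant sheaves into honest bundles and reduce the comparison to an explicit computation; for a general threefold this positivity is lost, and the comparison must be carried out with the full cone-theoretic and cohomology-and-base-change machinery. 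A secondary but genuine difficulty already appears in Step 1: Smyth's contraction is only a rational map on the moduli of curves, so establishing that $c$ is a morphism, and proper, requires matching the Vakil--Zinger blow-up with Smyth's modular contraction stratum by stratum.
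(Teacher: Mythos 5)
The central gap is in your Step 1 claim that $\Mone{1,n}{X}{\beta}$ ``acquires no component beyond the image of the main component.'' This is false: the Smyth--Viscardi space only eliminates the boundary component $D^1$ (a single contracted elliptic tail), while the components $D^k$ for $k\geq 2$ survive, since a genus $1$ subcurve meeting $k\geq 2$ rational tails on which the map has positive degree is already $1$-stable and is not rigidified away by allowing cusps. Their generic points (a contracted \emph{smooth} elliptic curve with $k\geq 2$ tails whose tangent directions are in general position) are not smoothable by Vakil's criterion, so they do not lie in the closure of the main component. Consequently your contraction $c$ cannot be surjective, and $c_*[\tM_{1,n}(X,\beta)^{\rm{main}}]^{\rm{red}}=\vir{\Mone{1,n}{X}{\beta}}$ cannot hold as stated: the right-hand side has contributions supported on every $D^k\Mone{1,n}{X}{\beta}$, $k\geq 2$. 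The actual content of the proof (for the quintic, which is all the paper establishes --- the statement you are addressing remains a conjecture in general) is precisely to show that these boundary contributions vanish numerically, and this is where almost all the work lies: the paper replaces $\Mone{1}{X}{d}$ by the space of $1$-stable maps to $\PP^4$ with $p$-fields, localises the virtual class by a cosection built from the defining equation $\w$ of the quintic, passes to the fibre product $\Z$ over the $1$-stabilisation morphism $\MM_1^{\rm{wt,st}}\to\MM_1^{\rm{wt,st}}(1)$ (a morphism of Artin stacks of curves, not of the moduli of maps), desingularises via Hu--Li local equations, splits the intrinsic normal cone into a main piece and boundary pieces $\mathfrak C^k$, and kills the latter by a dimension count ($\dim\pazocal W_\mu=5d+4-2k<5d+1$). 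None of this machinery is available for a general threefold, which is why the conjecture is open.

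A secondary issue: your reduction ``one may take $X=\PP^N$'' by Gysin pull-back along $X\hookrightarrow\PP^N$ is not innocuous. The quintic case is tractable exactly because $X$ is a hypersurface, so that the hyperplane property (Euler class of $\pi_*f^*\OO(5)$ on the desingularised main component) and the $p$-field/cosection formalism apply; for a general threefold there is no such presentation of the virtual class, and the reduced invariants themselves are not defined in the paper by the formula you write. Your Step 1, restricted to the main components, is consistent with the paper's observation that maps in $D^1\cap\oM_1(\PP^4,d)^{\rm{main}}$ factor through a cusp by the smoothability criterion; but as a route to the conjecture the proposal is missing the entire boundary analysis, which is not a technical refinement but the heart of the matter.
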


The main result of this paper is that Conjecture \ref{conj} holds for the quintic threefold. More precisely, we have the following.

\begin{matteo*}\label{thm:main}
Let $X_5\subseteq\PP^4$ be a generic smooth quintic threefold. Then:
\[
N_1^{\rm{red}}(X_5,d)=N_1^{\rm{cusp}}(X_5,d),
\]
where:
\[N_1^{\rm{red}}(X_5,d):=\deg \left(c_{\rm{top}}(\tilde\pi_*\tilde f^*(\mathcal O_{\PP^4}(5)))\cap [\widetilde{\pazocal M}_{1}(\PP^4,d)^{\rm{main}}]\right)\]
and
\[N_1^{\rm{cusp}}(X_5,d):=\deg [\Mone{1}{X_5}{d}]^{\rm{vir}}.
\]
\end{matteo*}

\subsection{The proof of the main Theorem}
The moduli space $\oM_1(\PP^r,d)$ has many boundary components. From the proof of the Li--Zinger formula we see that the genus $0$ contribution to the right hand side of (\ref{lizinger}) comes from the boundary component with a single rational tail. This suggests that discarding this component would provide a more direct approach to reduced invariants for the quintic threefold. This is exactly what the Viscardi space $\Mone{1}{\PP^r}{d}$ does.

Here is a rough sketch of the proof of our result. We first show:
\begin{teo*}
There exists a well-defined $1$-stabilisation morphism at the level of weighted-stable curves:
\[
\MM_{1,n}^{\rm{wt,st}}\to \MM_{1,n}^{\rm{wt,st}}(1)
\]replacing elliptic tails of weight $0$ with cusps.
\end{teo*}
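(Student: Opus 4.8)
The plan is to realise the morphism through a universal contracted family. Writing $\pi\colon\mathcal C\to\MM_{1,n}^{\rm{wt,st}}$ for the universal curve, I would build a flat family $\bar\pi\colon\bar{\mathcal C}\to\MM_{1,n}^{\rm{wt,st}}$ of genus-one curves with at worst nodes and cusps, together with a contraction $\mathcal C\to\bar{\mathcal C}$ over the base whose effect on a fibre $(C,p_1,\dots,p_n)$ is: if the minimal elliptic subcurve $E\subseteq C$ — the unique smooth elliptic component, or the unique cycle of $\PP^1$'s — is a tail, meeting $\overline{C\setminus E}$ in a single node $q$ and carrying no marked point (a weight-zero elliptic tail), then delete $E$ and graft a cusp at $q$; otherwise do nothing. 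On one such curve the surgery is canonical and elementary: $C'=\overline{C\setminus E}$ is smooth at $q$, so $\mathfrak m_q/\mathfrak m_q^2$ has rank one and the subalgebra $\mathcal O_{\bar C}\subseteq\mathcal O_{C'}$ of functions with vanishing first-order variation at $q$ — locally $k+\mathfrak m_q^2$ — defines a cusp with no choice of tangent direction involved, $C'\to\bar C$ being the resulting finite birational map $\nu$, extended over $E$ by the constant map to the cusp. Arithmetic genus is unchanged, the delta invariant of the cusp accounting for the genus of $E$; and $(\bar C,p_1,\dots,p_n)$ is $(1)$-stable for the given weights, since $\nu^*\omega_{\bar C}\cong\omega_{C'}(2q)$, so $\nu^*\big(\omega_{\bar C}(\sum a_i p_i)\big)\cong\omega_{C'}(2q+\sum a_i p_i)$, whose degree on the component through $q$ only exceeds the already positive degree of $\omega_C(\sum a_i p_i)$ there and is unchanged elsewhere, no weighted collision of markings is created, and $\bar C$ carries no elliptic tail, its genus now sitting at the cusp. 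Conversely a weighted-stable genus-one nodal curve without a weight-zero elliptic tail is already $(1)$-stable, so the contraction is the identity on a dense open.

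To run the surgery in families, let $Z\subseteq\MM_{1,n}^{\rm{wt,st}}$ be the closed substack on which a weight-zero elliptic tail occurs — the union of the relevant elliptic-tail boundary strata. Over $Z$ the tail, its complement $\mathcal C'$ and the attaching node form honest families, the node being a section $\sigma\colon Z\to\mathcal C'$ through the relative smooth locus, and the analogous subalgebra $\mathcal O_{\bar{\mathcal C}_Z}\subseteq\mathcal O_{\mathcal C'}$ of functions constant to first order along $\sigma$ — locally $\mathcal O_Z[[u,v]]/(v^2-u^3)$ — gives a family of cuspidal genus-one curves, flat over $Z$ and stable under base change. The substantive task is to interpolate this with the identity over $\MM_{1,n}^{\rm{wt,st}}\setminus Z$, producing a single flat $\bar{\mathcal C}\to\MM_{1,n}^{\rm{wt,st}}$ in which, as the attaching node is smoothed, the tail disappears and the grafted cusp smooths simultaneously. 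This is exactly Smyth's contraction of a balanced subcurve to a genus-one Gorenstein singularity \cite{SMY1,SMY2}: the minimal elliptic subcurve is balanced — $\omega\cong\mathcal O$ on a smooth elliptic curve and on a cycle of $\PP^1$'s — and meets the rest in one node, so the resulting singularity is an elliptic $1$-fold point, i.e. a cusp, and the content of the construction is precisely that $\bar{\mathcal C}$ stays flat over the base across $Z$, the genus-one twist in the singularity absorbing the genus of the vanishing tail. Once this is in hand, the moduli property of $\MM_{1,n}^{\rm{wt,st}}(1)$ turns $\bar{\mathcal C}$ into the sought morphism.

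I expect the families step to be the main obstacle: proving that $\bar{\mathcal C}$ is flat over $\MM_{1,n}^{\rm{wt,st}}$ — equivalently that the surgery commutes with arbitrary base change — even though the contracted tail is present only over the proper closed substack $Z$, which is what upgrades the fibrewise recipe from a rational transformation to an honest $\MM_{1,n}^{\rm{wt,st}}(1)$-point. The other ingredients are routine: the $(1)$-stability of the fibres as above, the identification of the contracted locus with the weight-zero elliptic-tail divisors, and the transfer of Smyth's contraction — stated for the unweighted $\oM_{1,n}$ — to the Hassett-weighted setting. If one prefers to avoid the universal construction, one can instead define the morphism on the dense open where no weight-zero tail occurs, extend it over the generic points of the elliptic-tail divisors by the one-parameter version of the contraction, and conclude via the valuative criterion using separatedness and finite type of $\MM_{1,n}^{\rm{wt,st}}(1)$ together with smoothness (hence normality) of $\MM_{1,n}^{\rm{wt,st}}$ — but reaching the deepest boundary strata there again reduces to the same contraction in families.
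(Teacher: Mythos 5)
Your strategy matches the paper's second proof in spirit — build a global contraction of the universal curve and then use the moduli property of $\MM_{1,n}^{\rm{wt,st}}(1)$ — and you have correctly isolated the crux: flatness of $\bar{\cC}$ across the boundary where the tail vanishes. The pointwise surgery ($\mathcal O_{\bar C}=\k+\mathfrak m_q^2$, producing the cusp ring) and the $1$-stability check are correct. The gap is that the interpolation step is not actually carried out: you assert that ``this is exactly Smyth's contraction,'' but Smyth's Lemmas 2.12--2.13 operate over a DVR with a regular total space, and your subalgebra $\mathcal O_{\bar{\cC}_Z}\subseteq\mathcal O_{\cC'}$ only makes sense over $Z$, where the tail, its complement and the attaching section exist as families. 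There is no recipe offered for gluing it with $\mathcal O_{\cC}$ off $Z$ into a single flat sheaf of algebras on all of $\MM_{1,n}^{\rm{wt,st}}$. Without a canonical global construction, the one-parameter statement does not automatically globalise.

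The paper resolves this by exhibiting the contraction as a relative $\Proj$. The device you are missing is the passage to $\MM_1^{\rm div}$, the stack of nodal curves with a relative Cartier divisor, where the universal divisor $\pazocal D$ supplies a $\pi$-ample twist that does not exist intrinsically on $\MM_1^{\rm{wt,st}}$ (the weight is a combinatorial datum with no canonical line bundle attached). There one takes $\mathcal N=\omega_\pi(\pazocal E)\otimes\mathcal O_{\cC}(2\pazocal D)$ — trivial precisely on the weight-zero elliptic tails and $\pi$-ample elsewhere — and proves $\pi$-semi-ampleness of $\mathcal N$ and local freeness of $\pi_*\mathcal N$ (hence flatness of the $\Proj$) by a ``pullback with a boundary'' lemma that reduces everything to one-parameter families, where Smyth's results do apply; one then descends the construction from $\MM_1^{\rm div}$ to $\MM_1^{\rm{wt,st}}$ by a uniqueness argument for contractions. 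Your alternative via the valuative criterion points in the direction of the paper's first proof (constructing the graph of the morphism as a locally closed substack of $\operatorname{Mor}_{\MM\times\hM}(\cC,\hC)$ and applying Zariski's Main Theorem), but it is also not complete as stated: extending a rational map from a dense open of a normal stack is not automatic, and the explicit graph construction together with a properness check is precisely what replaces the hand-wave.
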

We then consider the following cartesian diagram:
\bcd
\Z_{X_5}\ar[r]\ar[d]\ar[rd,phantom,"\Box"] & \overline{\mathcal M}_1^{(1)}(X_5,d) \ar[d] \\
\MM_1^{\rm{wt}}\ar[r] & \MM_1^{\rm{wt}}(1).
\ecd

We prove that $\Z_{X_5}$ is a substack of $\oM_1(X_5,d)$ that has no component with contracted elliptic tails. The diagram above endows $\Z_{X_5}$ with a virtual class which, by Costello's virtual push-forward formula \cite{costello}, has the same enumerative content as $\vir{\Mone{1}{X_5}{d}}$. 

In order to compare the degree of $\vir{\Z_{X_5}}$ with Vakil-Zinger's reduced invariants we follow in Chang and Li's footsteps. We first introduce the moduli space of $1$-stable maps with $p$-fields; this has the advantage of having a simpler geometry and admits a cosection-localised virtual class with the same degree as $\vir{\Mone{1}{X_5}{d}}$, see \cite{CLpfields}. We then construct
\[\MM_{1}^{\rm{wt,st}}\times_{\MM_{1}^{\rm{wt,st}}(1)}\Mone{1}{\PP^4}{d}^p,\]
and perform a desingularisation of it via the study of local equations \cite{HL}. In the end we analyse a splitting of the intrinsic normal cone \cite{CL}. All these steps deliver the theorem.

\subsection{Smoothability} On our way there, we review Vakil's characterisation of maps lying in the boundary of $\oM_1(\PP^r,d)^{\rm{main}}$ and we give a new proof using Hu-Li's equations. We also review and extend Smyth's work on genus $1$ singularities, and rephrase smoothability as follows.

\begin{teo*}
A map $[f]\in\oM_1(\PP^r,d)$ contracting the minimal genus $1$ subcurve is smoothable if and only if it factors through a map from a genus $1$ singularity that does not contract any component.
\end{teo*}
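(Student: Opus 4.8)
The plan is to route both implications through Vakil's numerical characterisation of the boundary of $\oM_1(\PP^r,d)^{\rm{main}}$, which we reprove via Hu--Li's local equations, and then to recognise that characterisation as precisely the obstruction to the local ring of an elliptic $m$-fold point being respected. Let $C$ be the source of $f$, let $Z\subseteq C$ be the connected component of the $f$-contracted locus containing the minimal genus $1$ subcurve (so $p_a(Z)=1$, since $1=p_a(\text{core})\le p_a(Z)\le p_a(C)=1$), set $p:=f(Z)$, let $C_1,\dots,C_m$ be the connected components of $\overline{C\setminus Z}$, meeting $Z$ transversally at smooth points $q_1,\dots,q_m$, put $f_i:=f|_{C_i}$, and, after choosing a local coordinate $t_i$ on $C_i$ at $q_i$, put $v_i:=\mathrm df_i|_{q_i}(\partial_{t_i})\in T_p\PP^r$; by maximality of $Z$ each $f_i$ is non-constant on the component through $q_i$, which is exactly the clause ``$\overline f$ does not contract any component'' appearing below.

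The first ingredient is Vakil's criterion: $[f]\in\oM_1(\PP^r,d)^{\rm{main}}$ if and only if the $v_i$ satisfy the single linear relation $\sum_{i=1}^m v_i=0$ (equivalently, the tangent lines of the $C_i$ at the $q_i$ lie in a hyperplane of $T_p\PP^r$, in the balanced fashion prescribed by Hu--Li). I would reprove this from Hu--Li's explicit local equations: near $[f]$ the space $\oM_1(\PP^r,d)$ embeds in a smooth chart where it is cut out by those equations, and a first-order analysis isolates the branch that is the main component and identifies its tangent directions with the solutions of $\sum_i v_i=0$. Any further $f$-contracted rational subcurves — chains internal to some $C_i$, or trees disjoint from $Z$ — must also be contracted if $\overline f$ is to be non-constant on every component, but they contribute only seminormal points to $\overline C$ and do not enter the relation.

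The second ingredient is local, and here Smyth's classification of Gorenstein genus $1$ curve singularities enters: a contraction of $Z$ to such a singularity is necessarily a contraction to an elliptic $m$-fold point, and its target curve $\overline C$ has, at the singular point $x$, completed local ring sitting inside $\prod_{i=1}^m\widehat{\OO}_{C_i,q_i}\cong\prod_{i=1}^m\k[[t_i]]$ (for a suitable common normalisation of the $t_i$) as
\[
\widehat{\OO}_{\overline C,x}=\Big\{(g_1,\dots,g_m)\ :\ g_1(0)=\cdots=g_m(0),\ \ \textstyle\sum_{i=1}^m g_i'(0)=0\Big\}.
\]
A factorisation $f=\overline f\circ c$, with $c$ the contraction of $Z$ and $\overline f$ non-constant on every component, therefore exists precisely when $\overline f^{\sharp}\OO_{\PP^r}$ lands inside this subring; as $f(q_i)=p$ for every $i$, this reduces to $\sum_i\langle\mathrm dz|_p,v_i\rangle=0$ for every local coordinate $z$ on $\PP^r$ at $p$, i.e.\ to $\sum_i v_i=0$. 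Combining the two ingredients gives the theorem.

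The step I expect to be the real obstacle is the first: in Hu--Li's chart several components of $\oM_1(\PP^r,d)$ pass through $[f]$, and one must single out the main component and read off from the local equations that its tangent cone is cut out by the one relation $\sum_i v_i=0$, all while tracking how chains of contracted rational curves feed into it — the combinatorially delicate part. The second ingredient is then a finite computation with the local ring of the elliptic $m$-fold point, together with the bookkeeping of its automorphisms (the crimping data making the normalisation of the $t_i$ canonical), and it is exactly this computation that upgrades Vakil's numerical criterion to the modular statement that smoothability is equivalent to factoring through a genus $1$ singularity.
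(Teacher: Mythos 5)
There is a genuine gap, and it is precisely the reason the paper devotes a lemma to classifying \emph{all} Cohen--Macaulay genus~$1$ singularities rather than quoting Smyth's Gorenstein classification: your argument silently restricts the target singularity to a Gorenstein elliptic $m$-fold point, and that weakened statement is false.

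Concretely, your Ingredient~1 mis-states Vakil's criterion. The criterion proved via Hu--Li's equations in Proposition~\ref{prop:components}(2) is that the vectors $v_i=\mathrm{d}f(T_{q_i}R_i)$ are \emph{linearly dependent} in $T_{f(E)}\PP^r$, i.e.\ there exist scalars $\alpha_i$, not all zero, with $\sum\alpha_i v_i=0$. What you write --- $\sum v_i=0$ ``after a suitable common normalisation of the $t_i$'' --- is the existence of a relation with \emph{all} $\alpha_i\ne 0$, which is strictly stronger (take $m=2$, $v_1\ne 0$, $v_2=0$: the pair is dependent, but no relation with both coefficients nonzero exists). Your Ingredient~2 then computes the completed local ring of the elliptic $m$-fold point to be $\{(g_i):g_i(0)\text{ equal},\ \sum g_i'(0)=0\}$; this is correct, but it matches exactly your (incorrect) version of the tangent criterion, not the correct one. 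The case where some $\alpha_i$ vanish is carried by the non-Gorenstein $(k,m-k)$-singularities classified in the lemma that follows Proposition~\ref{prop:smoothability2}: there the local ring condition becomes $\sum_{i>k}g_i'(0)=0$ with $g_1'(0),\dots,g_k'(0)$ unconstrained, whence ``exists a genus~$1$ singularity the map factors through'' is equivalent to plain linear dependence, as it must be. Remark~\ref{remark:sprouting} gives an explicit stable map from an elliptic bridge $R_1\sqcup E\sqcup R_2$ with $v_2=0$ which is smoothable but provably does \emph{not} factor through a tacnode (the unique Gorenstein genus~$1$ singularity with two branches); it only factors through the non-Gorenstein singularity $\k[[x,y,z]]/(xz,yz,y^2-x^3)$. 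So your proposal, as written, proves a false statement, and the fix is precisely the non-Gorenstein classification you left out.

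Beyond this, your route is genuinely different from the paper's. You want to deduce the factorisation theorem from Vakil's tangent criterion (Proposition~\ref{prop:components}(2)) together with a purely local computation in the complete local ring of the singularity. The paper instead proves both implications by manipulating one-parameter families: for $\Rightarrow$ it takes a smoothing, invokes Smyth's semi-ampleness argument to contract the exceptional locus, and normalises; for $\Leftarrow$ it builds \emph{compatible} smoothings of $C$ and $\widehat C$ by a carefully staged two-step contraction (Lemma~\ref{lem:sing_smoothability} and the Claim inside the proof of Proposition~\ref{prop:smoothability2}), then extends $\hat f$ by showing $H^1(\widehat C,\widehat L)=0$. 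Your approach, once repaired, would trade that deformation-theoretic bookkeeping for a pointwise tangent-space/local-ring comparison, which is arguably cleaner and more in the spirit of Hu--Li; but as a caution, you should also justify the implicit step ``the first-order condition on $f^\sharp\OO_{\PP^r}$ suffices for the factorisation'': this uses that genus~$1$ singularities have $(\widetilde R/R)_i=0$ for $i\ge 2$, so membership in the local ring is detected entirely at first order, and it uses the existence of the abstract contraction $c\colon C\to\overline C$ for the chosen crimping, which is Smyth's Contraction Lemma and is not free.
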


\subsection{Relation to other works} We view our result as an analogue of the Li--Zinger formula for cuspidal invariants. The Li--Zinger formula was first proved by J. Li and A. Zinger \cite{LZ} in the symplectic category and with algebraic-geometric methods by H.-L. Chang and J. Li \cite{CL}. Our approach is an adaptation of \cite{CL}. 

Zinger has computed reduced invariants in \cite{zingred}. It would be interesting to see if there is a direct calculation for cuspidal invariants. Zinger's computation together with the Li--Zinger formula gives a computation of genus $1$ Gromov--Witten invariants \cite{zingerstvsred}. 

The reduced Gromov--Witten invariants are related to the Gopakumar--Vafa invariants, and they coincide for Fano targets. Indeed the Gopakumar--Vafa invariants are by definition related to Gromov--Witten invariants by a recursive formula which takes into account degenerate lower genus boundary contributions. These contributions were computed by R. Pandharipande in \cite {degcont}. We do not have reduced invariants for higher genus, but if we had a modular interpretation of the main component, we could view the Gopakumar--Vafa formula as a higher genus analogue of the Li--Zinger formula. 

Recently, D. Ranganathan, K. Santos-Parker, and J. Wise \cite{RSPW} have given a modular interpretation of the main component of the moduli space of stable maps via centrally aligned log structures and a factorisation property.

\subsection{Outline of the paper}
In \S\ref{sec:section1} we review some classical results about the irreducible components of $\oM_1(\PP^r,d)$ and local equations for this moduli stack in a smooth ambient space. We recall Smyth's $m$-elliptic singularities and Viscardi's alternate compactifications  of the space of maps. We discuss two different proofs of Vakil's characterisation of smoothable maps to projective space: one is based on Hu-Li's local equations; for the other one we give a classification of genus $1$ singularities.

In \S\ref{section:p-fields} we adapt Chang-Li's work on $p$-fields to our case: we introduce the moduli space $\Mone{1}{\PP^4}{d}^p$ of $1$-stable maps with $p$-fields, we endow it with a cosection-localised $0$-dimensional virtual class supported on a proper substack (depending on a homogeneous polynomial $\w$). We show that its degree coincides with the genus $1$ cuspidal invariants of the quintic threefold $X_5=V(\w)$ up to a sign.

In \S\ref{sec:comparison} we argue in two different ways that there is a morphism of Artin stacks
 \[\mathfrak M_{1,n}^{\operatorname{wt},\text{st}}\to\mathfrak M_{1,n}^{\operatorname{wt},\text{st}}(1)\]
extending the identity on smooth elliptic curves and replacing elliptic tails of weight $0$ by cusps. We then use it to define:
 \[\Z:= \mathfrak M_{1,n}^{\operatorname{wt},\text{st}}\times_{\mathfrak M_{1,n}^{\operatorname{wt},\text{st}}(1)} \oM^{(1)}_{1,n}(\PP^4,d),\quad \Zp:=\mathfrak M_{1,n}^{\operatorname{wt},\text{st}}\times_{\mathfrak M_{1,n}^{\operatorname{wt},\text{st}}(1)} \oM^{(1)}_{1,n}(\PP^4,d)^p.\]
 We show that $\Z$ is a closed substack of $\oM_1(\PP^4,d)$. Unfortunately we are not able to compare $\Zp$ directly with $\oM_1(\PP^4,d)^p$.
 
In \S\ref{sec:equations} and \S\ref{section:main_proof} we study local equations and a desingularisation of $\Zp$. With these in place, we adjust Chang-Li's splitting of the intrinsic normal cone to our case. This analysis allows us to prove the main theorem.

\subsection{Notations and conventions}
\begin{itemize}[leftmargin=0cm]
\item We work over an algebraically closed field $\k$ of characteristic $0$.

\item We fix a homogeneous polynomial $\mathbf w\in\k[x_0,\ldots,x_4]_5$ of degree $5$ such that $X_5=V(\mathbf w)\subseteq\PP^4$ is a generic smooth quintic threefold. We usually drop the subscript and write $X$ for $X_5$.

\item We fix a positive integer $d$ determining all the homology classes $\beta=d[\ell]\in A_1(\PP^4)$, line bundle weights, etc.

\item $\MM:=\MM_{1,n}^{\rm{wt,st}}$ denotes the moduli stack of prestable curves with a weight assignment subject to a stability condition, see \S\ref{sec:comparison}. The universal curve is $\pi\colon\cC\to \MM$.

\item $\pP:=\mathfrak{Pic}^{\rm{tot deg}=d,\rm{st}}_{1,n}$ denotes the Picard stack of $\pi\colon \cC\to \MM$ with universal line bundle $\mathcal{L}$ of total degree $d$, subject to the stability condition: \[\omega_\pi^{\rm{log}}\otimes\mathcal L^{\otimes 2}\ \text{is}\ \pi\text{-ample.}\]

\item $\MM_1^{\rm{div}}$ denotes the moduli space of nodal curves with a relative Cartier divisor satisfying a stability condition, see \S\ref{sec:section1}.

\item $\rm M:=\M{1}{n}{X}{\beta}$ denotes Kontsevich's moduli space of $n$-pointed genus $1$ stable maps to $X$ in the homology class $\beta$; we always denote by $(\pi,f)\colon\cC_{\rm M}\to \rm M\times X$ the universal curve and stable map.

\item Similarly we denote by $\widehat{\MM}:=\mathfrak M_{1,n}^{\rm{wt}=d,\rm{st}}(1)$ the stack of weighted-stable, at worst cuspidal curves (see \S\ref{sec:comparison}), with universal curve $\hat\pi\colon\hC\to \widehat{\MM}$.

\item Let $\hP$ denote the Picard stack of $\hat\pi\colon\hC\to\hM,$ with universal line bundle $\hL$ of total degree $d$ and satisfying the usual stability condition.

\item  Let $\MM^{\rm{div}}_1(1)$ be the moduli stack parametrising at worst cuspidal curves with a relative Cartier divisor subject to the usual stability condition.

\item $\widehat{\rm{M}}:=\Mone{1}{X}{\beta}$ is the Smyth-Viscardi's moduli space of $1$-stable maps;
we always denote by $(\hat{\pi},\hat f)\colon\hC_{\widehat{\rm M}}\to \widehat{\rm M}\times X$ the universal curve and stable map.

\item We usually work with unmarked curves, for we are interested in the Gromov-Witten theory of a Calabi-Yau threefold, so $n=0$.

\item We always denote the spectrum of a discrete valuation ring (DVR) by $\dvr$, with closed point $0$ and generic point $\eta$.

\item Subcurves are always connected.
 The \emph{minimal} arithmetic genus $1$ subcurve is called the \emph{core} or the \emph{circuit}.
\end{itemize}
\subsection*{Acknowledgements} This work was inspired by a discussion with Prof. R. Thomas. We thank Prof. T. Coates for useful discussions. L.B. and F.C. would like to thank F. Bernasconi, A. Dotto and Prof. D.I. Smyth for their clever suggestions.

L.B. is supported by a Royal Society 1st Year URF and DHF Research Grant Scheme.
C.M. is supported by an EPSRC funded Royal Society Dorothy Hodgkin Fellowship.
This work was supported by the Engineering and Physical Sciences Research Council grant EP/L015234/1: the EPSRC Centre for Doctoral Training in Geometry and Number Theory at the Interface.

\section{$\oM_1(\PP^r,d)$ - Components, equations  and alternate compactifications}\label{sec:section1}

\subsection{Local equations and components}
We start by recalling a description of the global geometry of $\oM_1(\PP^r,d)$. Besides the main component, which was defined in the Introduction, for every positive integer $k$ and partition $\lambda\vdash d$ into $k$ positive parts, there is an irreducible \emph{boundary component} $D_{\lambda}(\PP^r,d)$ defined to be the closure of the locus where:
\begin{enumerate}[label=(\roman*)]
\item the source curve is obtained by gluing a smooth elliptic curve $E$ with $k$  rational tails $R_i\cong\PP^1$,
\item the map contracts the elliptic curve $E$ to a point, 
\item the map has degree $\lambda_i$ on the rational tail $R_i$.
\end{enumerate}
Indeed $D_{\lambda}(\PP^r,d)$ is the image of the gluing morphism from the fiber product:
\[\left(\oM_{1,k}\times\prod_{i=1}^k\M{0}{1}{\PP^r}{\lambda_i}\right)\times_{(\PP^r)^k}\PP^r.\]
We will denote by $D^k$ the union of all the $D_{\lambda}(\PP^r,d)$ where $\lambda$ has $k$ parts.

\begin{prop}\label{prop:components}
\emph{(1)} These are all the irreducible components of $\oM_1(\PP^r,d)$: 
\[\oM_1(\PP^r,d)=\oM_1(\PP^r,d)^{\rm{main}}\cup\bigcup_{\lambda}D_{\lambda}(\PP^r,d).\]

\emph{(2)} A map $[f]$ lies in \emph{the boundary of the main component}  if and only if:
\begin{itemize}
\item either $f$ is non-constant on at least one irreducible component of the core,
\item or if $f$ contracts the core, write $C=E\ {}_{\mathbf p}\!\sqcup_{\mathbf q}\bigsqcup_{i=1}^k R_i$,  where $E$ is the \emph{maximal} contracted genus $1$ subcurve, then $\{\operatorname{d}\!f(T_{q_i}R_i)\}_{i=1}^k$ are \emph{linearly dependent} in $T_{f(E)}\PP^r$.
\end{itemize}
In this case we say that $[f]$ is smoothable.
\end{prop}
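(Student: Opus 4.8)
\emph{Proof proposal.} Parts (1) and (2) are intertwined, with (2) carrying the real content. For (1) I would stratify $\oM_1(\PP^r,d)$ by the combinatorial type $\Gamma$ of a stable map; each stratum closure $\overline{\mathcal M_\Gamma}$ is a finite image of a fibre product of genus-$0$ Kontsevich spaces $\M{0}{m}{\PP^r}{e}$, spaces of contracted maps $\oM_{g,m}\times\PP^r$, and, when the core is contracted, one factor $\oM_{1,k}\times\PP^r$; all of these being irreducible, so is $\overline{\mathcal M_\Gamma}$. The smooth-domain stratum has closure $\oM_1(\PP^r,d)^{\rm{main}}$, the stratum with smooth elliptic core contracted and $k$ rational tails of degrees $\lambda$ has closure $D_\lambda(\PP^r,d)$, and a Riemann--Roch computation gives $\dim D_\lambda=(r+1)d+r-k$ against $\dim\oM_1(\PP^r,d)^{\rm{main}}=(r+1)d$. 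To obtain the covering, I would take an arbitrary $[f]$ with contracted core and deform it — smoothing the core while keeping it contracted, moving the attaching nodes into general position, and un-bubbling any contracted rational trees in the tails, all of which are unobstructed operations — until it becomes a general point of a unique $D_\lambda$; a map with non-contracted core lies in $\oM_1(\PP^r,d)^{\rm{main}}$ by (2). Finally, (2) shows that $D_\lambda$ is already contained in $\oM_1(\PP^r,d)^{\rm{main}}$ once $\ell(\lambda)>r$ and is an extra irreducible component for $\ell(\lambda)\le r$, which pins down the list.

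For (2) the essential case is $[f]$ contracting the core, with $E$ the maximal contracted genus-$1$ subcurve, glued to non-contracted tails $R_1,\dots,R_k$ at $q_i\mapsto p_i$ and $f(E)=x$. I would use Hu--Li's local equations \cite{HL}: embed $\oM_1(\PP^r,d)$ near $[f]$ into a smooth ambient stack and write down its defining equations in a chart whose parameters include the node-smoothing parameters $t_1,\dots,t_k$. The crucial structural input is that, to leading order, these equations impose a relation $\sum_i t_i\,\operatorname{d}\!f(T_{q_i}R_i)=0$ in $T_x\PP^r$ among the $t_i$, with corrections governed by the internal geometry of the (possibly reducible or cyclic) core $E$. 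Decomposing the local scheme into its branches, the branch whose general point has all $t_i\neq 0$ — maps from a smooth domain — passes through $[f]$ precisely when such a relation exists, which, at a point where the tails are in general position, is equivalent to the vectors $\operatorname{d}\!f(T_{q_i}R_i)$ being linearly dependent; this branch is a chart of $\oM_1(\PP^r,d)^{\rm{main}}$, giving the criterion. When the core is not contracted, I would argue instead that a general one-parameter deformation of $[f]$ smooths all nodes of the circuit at once: the obstruction to doing so is the genus-$1$ obstruction class, and it vanishes because some component of the circuit carries positive degree, so $[f]\in\oM_1(\PP^r,d)^{\rm{main}}$.

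An alternative route to (2) — the one behind the factorisation statement in the introduction — is to show that an $[f]$ contracting the core $E$ is smoothable if and only if it factors as $C\xrightarrow{\tau}C'\xrightarrow{f'}\PP^r$ with $\tau$ contracting $E$ to a genus-$1$ Gorenstein (Smyth $m$-elliptic) singular point of $C'$ and $f'$ contracting no component. Given the factorisation, deformations of $(C',f')$ are unobstructed because $f'$ is everywhere non-constant, and a smoothing of the genus-$1$ singularity of $C'$, which exists since these singularities are smoothable \cite{SMY1,SMY2}, lifts to a smoothing of $(C,f)$; conversely a smoothing of $(C,f)$ yields such a factorisation after contracting the contracted locus. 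The analytic data of the relevant $m$-elliptic singularity, with one branch per tail, imposes exactly a linear-dependence condition on its branch tangent directions, which matches linear dependence of the $\operatorname{d}\!f(T_{q_i}R_i)$.

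The main obstacle is the contracted-core case of (2). On the local-equations side, controlling the branch decomposition and matching the main-component branch with the dependence condition requires carefully tracking the higher-order corrections contributed by a reducible or non-reduced core, and the linear algebra degenerates when some $\operatorname{d}\!f(T_{q_i}R_i)$ vanish or coincide. On the singularity side, one needs a sufficiently fine classification of genus-$1$ Gorenstein singularities and of their smoothing spaces, extending Smyth's, in order to extract the tangent-direction condition. The non-contracted-core case is comparatively easy but still needs the clean vanishing statement for the elliptic obstruction class, which is precisely what fails in the contracted case and so must be stated with care.
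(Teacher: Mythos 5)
Your proposal for part (2) in the contracted-core case follows the same route as the paper: Hu--Li local equations in a smooth chart, followed by reading off smoothability from the explicit defining polynomials and identifying the relevant data with the images of the tangent vectors $\operatorname{d}\!f(T_{q_h}R_h)$. Note however that the paper does not prove part (1) at all --- it cites Vakil and Zinger (\cite{Vre}, \cite{VZpreview}) --- and for part (2) it only treats the contracted-core case; the case where $f$ is non-constant on the core is likewise deferred to the references. So your outline is more ambitious than what the paper actually establishes in this proposition.

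Where your account diverges is in the structure of the Hu--Li equations, and this is the crux. You write that the equations impose $\sum_i t_i\,\operatorname{d}\!f(T_{q_i}R_i)=0$ only ``to leading order'', with corrections from the internal geometry of the core, and you flag controlling these corrections and the branch decomposition as the main obstacle. In fact, after the reductions in \S\ref{sec:section1} (choosing a simple divisor $D=f^{-1}\{x_0=0\}$, auxiliary sections $\pazocal A,\pazocal B$ through the core, and passing to the terminally weighted dual graph), the chart of $\oM_1(\PP^r,d)$ is carved out of the vector bundle $V$ by equations that are \emph{exactly} of the form $F^j=\sum_h\bigl(\prod_{q\in E(h)}\zeta_q\bigr)\bigl(\sum_{i\in D(h)}w_i^j\bigr)=0$: bilinear in node-smoothing parameters $\zeta_q$ and fibre coordinates $w_i^j$, with no higher-order terms whatsoever. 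This exactness is the content of Proposition~\ref{prop:smoothability} and \cite{HL}, and it is precisely what makes the argument clean: the point lies in the closure of the smooth-domain locus if and only if the linear system $\underline\zeta^T W=0$ admits a nonzero $\underline\zeta$, i.e.\ the rows of $W$ are linearly dependent, and a direct computation with the basis $e_i^h(z_h)$ identifies the $h$-th row of $W$ with $\operatorname{d}\!f(T_{q_h}R_h)$ in the chosen affine coordinates. Your worry that ``the linear algebra degenerates when some $\operatorname{d}\!f(T_{q_i}R_i)$ vanish or coincide'' likewise evaporates: linear dependence is a closed rank condition and needs no case distinction. If you take the exact form of the Hu--Li equations seriously, the obstacles you identify are not there.

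Your alternative route via factorisation through a genus-$1$ singularity is also developed in the paper, but as a separate result (Proposition~\ref{prop:smoothability2}), not as a proof of the present proposition. One correction there: the singular point of $C'$ need not be Gorenstein, as Remark~\ref{remark:sprouting} illustrates concretely; the relevant classification is of general reduced (Cohen--Macaulay) genus-$1$ singularities, each a union of a genus-$0$ seminormal singularity with a Smyth elliptic $m$-fold point, which is exactly the extension of Smyth's analysis that the paper carries out and that you correctly anticipate would be needed.
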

 
This is due to R. Vakil and A. Zinger, see \cite[Lemma~5.9]{Vre}\cite{VZpreview}. We shall later discuss a  proof of this fact based on local equations for the moduli space.

We now review Hu-Li's procedure for finding local equations of $\oM_1(\PP^r,d)$ in a smooth ambient space \cite{HL}. They will be useful when describing the structure of the intrinsic normal cone and its splitting.

Recall that a map $C\to\PP^r$ is given by a line bundle $L$ on $C$ together with $r+1$ sections in $H^0(C,L)$ that generate the line bundle. It is therefore natural to embed $\oM_1(\PP^r,d)$ as an open inside $\pi_*\mathcal L^{\oplus r+1}$ on the universal Picard stack $\mathfrak{P}$.

Hu and Li actually work over a local chart of $\mathfrak M_{1}^{\rm{div}}$, parametrising families of nodal curves with a relative Cartier divisor satisfying a stability condition, namely that $\omega_\pi\otimes\OO_{\cC}(2\pazocal D)$ is a $\pi$-ample line bundle, where $\pazocal D$ is the universal Cartier divisor. We can construct $\MM_{1,n}^{\rm{div}}$ as the open inside
\[C(\pi_*\mathcal L)=\Spec_\pP\operatorname{Sym}^{\bullet}(\R^1\pi_*\mathcal L^{\vee}\otimes\omega_{\pi})\]
(see \cite{CLpfields} and Section \ref{section:p-fields} below), where the section of $\mathcal L$ is not $0$ on any irreducible component of the curve. Alternatively this is the moduli functor of a prestable curve with a line bundle and a section up to scalar, which can be thought of as the hom-stack $\Hom_{\MM_1}(\cC,[\Aaff^1/\Gm])$; then we should pick the connected component where the line bundle has total degree $d$, and the open substack obtained by requiring weighted stability and the section not to vanish on any irreducible component of the curve. The morphism $\MM_{1,n}^{\rm{div}}\to\mathfrak P$ is obtained by declaring $\mathcal L_{\MM_{1,n}^{\rm{div}}}:=\OO_{\cC}(\pazocal D)$. 

Let $[f\colon C\to \PP^r]$ be a point of $\oM_1(\PP^r,d)$: we may fix homogeneous coordinates on $\PP^r$ in such a way that $D:=f^{-1}\{x_0=0\}$ is a simple divisor supported on the smooth locus of $C$, i.e. a $d$-uple of distinct smooth points; this property will then hold in a neighbourhood of $[f]$. This gives a morphism from an \'{e}tale chart $\pazocal U$ of $\oM_1(\PP^r,d)$ to an \'{e}tale chart $\pazocal V$ of $\mathfrak M_{1}^{\rm{div}}.$ We may assume that $\pazocal V$ is contained in the locus where the divisor consists of $d$ distinct smooth points; notice  that such locus is smooth over $\mathfrak M_{1}$. A map to $\PP^r$ shall now be thought of as a curve-divisor pair $(C,D)$ together with $r$ sections of $\mathcal O_C(D)$; the map can be written as $[1:u_1:\ldots:u_r]$, where $1$ denotes the given section of $\mathcal O_C\to\mathcal O_C(D)$.

Furthermore, \'{e}tale locally on $\oM_1(\PP^r,d)$, we may pick extra sections $\pazocal A$ and $\pazocal B$ of the universal curve $\cC\to\oM_1(\PP^r,d)$ such that:
\begin{enumerate}
\item they pass through the core;
\item they are distinct smooth points disjoint from the support of $\pazocal D$. 
\end{enumerate}
Now  $\pi_*\OO_{\cC}(\pazocal D+\pazocal A)$ is a vector bundle on $\pazocal V$ and $\pi_*\OO_{\cC}(\pazocal D)$ is carved inside it by the vanishing of the restriction map $\pi_*\OO_{\cC}(\pazocal D+\pazocal A)\to\pi_*\OO_{\cC}(\pazocal D+\pazocal A)\rvert_{\pazocal A}$.

\begin{prop}\label{prop:smoothability}
\emph{(1)} There is a locally closed embedding of an \'{e}tale chart of $\oM_1(\PP^r,d)$ inside the vector bundle $V:=\Spec_{\pazocal V}(\pi_*\OO_{\cC}(\pazocal D+\pazocal A)^{\oplus r})$:
\bcd
\oM_1(\PP^r,d) & \pazocal U\ar[l,"\acute{e}t"]\ar[r,hook]\ar[d] & V\ar[dl] \\
\MM_{1}^{\rm{div}} & \pazocal V\ar[l,"\acute{e}t"] &
\ecd
Notice that $V$ is smooth since $\pazocal V$ is.

\emph{(2)} Assume furthermore that $\pazocal V$ is affine; on $\pazocal V$ we then have: 
\[\pi_*\OO_{\cC}(\pazocal D+\A)\cong \pi_*\OO_{\cC}(\pazocal D+\A-\B)\oplus \pi_*\OO_{\cC}(\pazocal D+\A)_{|\B}\]
and restricting to $\A$ is zero on the second factor. 

\emph{(3)} Call $\varphi\colon \pi_*\OO_{\cC}(\pazocal D+\A-\B)\to\pi_*\OO_{\cC}(\pazocal D+\A)_{|\A}$ the map induced on the first factor by restricting to $\A$.  Locally on $\pazocal V$, $\pi_*\OO_{\cC}(\pazocal D+\A-\B)$ can be written as a sum of line bundles: let $D=\sum_{i=1}^d\delta_i$, then  
 \[\varphi=\oplus\varphi_i\colon\oplus_{i=1}^d \pi_*\mathcal O_{\cC}(\delta_i+\A-\B)\to \pi_*\OO_{\cC}(\pazocal D+\A)_{|\A}.\] 
  After choosing a trivialisation for each of the line bundles above,  $\varphi_i\colon\mathcal O_\pazocal V\to\mathcal O_\pazocal V$ is given by multiplication by \[\prod_{q\in[\A,\delta_i]}\zeta_q,\]
   where $\zeta_q$ is the smoothing coordinate on $\mathfrak M_1$ corresponding to the node $q$, and $[\A,\delta_i]$ denotes the set of nodes separating $\A$ (the core) from the point $\delta_i$.
\end{prop}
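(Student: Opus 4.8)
The plan is to prove the three parts in order, each reducing to a computation on the \'etale chart $\pazocal V$ of $\MM_1^{\rm{div}}$, where the universal curve and its line bundles are explicitly understood.

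For part (1): the data of a stable map $[1:u_1:\ldots:u_r]$ to $\PP^r$ with $f^{-1}\{x_0=0\}=D$ a fixed simple divisor is precisely the data of $r$ sections $u_1,\ldots,u_r$ of $\OO_{\cC}(\pazocal D)$, and these sections lie in $\pi_*\OO_{\cC}(\pazocal D)\subseteq\pi_*\OO_{\cC}(\pazocal D+\A)$ as a subsheaf. I would identify the (relative) moduli functor of such data with the subfunctor of $V$ cut out by the vanishing of the composite with restriction to $\A$, as indicated in the text preceding the proposition: since $\A$ is disjoint from $\pazocal D$ and passes through the core, $\pi_*\OO_{\cC}(\pazocal D)$ is exactly the kernel of $\pi_*\OO_{\cC}(\pazocal D+\A)\to\pi_*\OO_{\cC}(\pazocal D+\A)|_{\A}$; hence the map-data is carved inside $V$ by a section of a vector bundle, giving the locally closed embedding. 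Openness of the stability/generation condition gives that $\pazocal U$ is an \'etale chart. Smoothness of $V$ follows since $\pazocal V$ is smooth over $\MM_1$ (the locus of $d$ distinct smooth points is smooth over $\MM_1$, and $\MM_1$ is smooth) and $V$ is a vector bundle over $\pazocal V$.

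For part (2): here I would use the short exact sequence $0\to\OO_{\cC}(\pazocal D+\A-\B)\to\OO_{\cC}(\pazocal D+\A)\to\OO_{\cC}(\pazocal D+\A)|_{\B}\to 0$. Pushing forward, the obstruction to splitting lies in $\R^1\pi_*\OO_{\cC}(\pazocal D+\A-\B)$; after shrinking $\pazocal V$ (affine, and small enough that higher cohomology of the relevant line bundles is constant) this vanishes, because on each geometric fibre $\OO_{C}(\pazocal D+\A-\B)$ has enough positivity on every component meeting $\A$ and the only subtlety is components not in that range, which is handled by the genus $1$ bound — alternatively one invokes that after twisting down by $\B$ through the core the sheaf still has vanishing $h^1$ generically and one restricts to that open. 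The statement that restriction to $\A$ kills the second factor is immediate from $\A$ and $\B$ being disjoint.

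For part (3): writing $D=\sum_{i=1}^d\delta_i$ with the $\delta_i$ sections through distinct smooth points, one has $\OO_{\cC}(\pazocal D+\A-\B)=\bigotimes_i\OO_{\cC}(\delta_i)\otimes\OO_{\cC}(\A-\B)$, but more relevantly $\pi_*$ of each summand $\OO_{\cC}(\delta_i+\A-\B)$ is a line bundle on $\pazocal V$ (by a Riemann--Roch / base-change count on the fibres: degree $1$ on the component containing $\delta_i$ and $\A$, degree $0$ elsewhere away from $\B$, and the $\B$-twist-down kills the extra genus contribution), and $\varphi$ decomposes as the direct sum of the $\varphi_i$. To compute $\varphi_i$ in a local trivialisation I would work in the versal deformation of the curve near its nodes: a section of $\OO_{\cC}(\delta_i+\A-\B)$ that is nonzero at $\A$ restricts to $\A$ after passing across each node separating $\A$ from $\delta_i$, and in the standard local model $xy=\zeta_q$ the extension of a function across the node $q$ picks up exactly a factor of $\zeta_q$; chaining these over $q\in[\A,\delta_i]$ gives $\varphi_i=\prod_{q\in[\A,\delta_i]}\zeta_q$ up to a unit, which is absorbed into the choice of trivialisation. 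This is essentially Hu--Li's local computation \cite{HL} and I would cite it for the node-by-node bookkeeping.

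The main obstacle is part (3): keeping careful track of the trivialisations so that the claimed product formula holds \emph{on the nose} rather than up to an unspecified unit, and making sure the local node models patch correctly into the global line bundle $\pi_*\OO_{\cC}(\delta_i+\A-\B)$ on $\pazocal V$. The vanishing of $\R^1$ in part (2) is a secondary point that requires choosing $\pazocal V$ small enough and checking the genus $1$ positivity bound on fibres with contracted core.
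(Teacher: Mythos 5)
The paper gives no proof of this proposition — it defers entirely to Hu--Li (\cite[Lemma~4.10, Proposition~4.13]{HL}), and your sketch is a reasonable reconstruction of that argument. Two points of imprecision in part (2) are worth flagging. First, the phrase ``the obstruction to splitting lies in $\R^1\pi_*\OO_{\cC}(\pazocal D+\A-\B)$'' conflates two separate issues: the vanishing of $\R^1\pi_*\OO_{\cC}(\pazocal D+\A-\B)$ (equivalently, cohomology-and-base-change) is what makes the pushed-forward sequence
\[0\to \pi_*\OO_{\cC}(\pazocal D+\A-\B)\to\pi_*\OO_{\cC}(\pazocal D+\A)\to\pi_*\OO_{\cC}(\pazocal D+\A)_{|\B}\to 0\]
a short exact sequence of locally free sheaves on $\pazocal V$, whereas the splitting of such a sequence is then automatic because $\pazocal V$ is affine and the right-hand term is a line bundle. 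Second, the asserted fibrewise $h^1$-vanishing is subtler than ``positivity away from the core handled by the genus $1$ bound'': on a fiber where the core $E$ is contracted, $\OO_C(\pazocal D+\A-\B)$ restricts to the degree-$0$ line bundle $\OO_E(\A-\B)$, and one needs this to be nontrivial; for $\A\neq\B$ this is clear when $E$ is irreducible, but on cores that are cycles of $\PP^1$'s one must arrange $\A,\B$ (or shrink $\pazocal V$) so that $\OO_E(\A-\B)$ is nontrivial on every fiber. Hu--Li do set this up carefully; you should say explicitly how $\A$ and $\B$ are chosen rather than gesture at genericity.
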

We may now choose coordinates $(w^j_i)^{j=1,\ldots,r}_{i,\ldots, d}$ on the fiber of $V\to\pazocal V$ compatible with the basis given in Proposition~\ref{prop:smoothability}~.(3) such that $\pazocal U$ is an open inside $\left\{F^1=\ldots=F^r=0\right\}\subseteq V$, where:
\[F^j=\sum_{i=1}^d{\left(\prod_{q\in[\A,\delta_i]}\zeta_q\right)w_i^j}.\]
We refer the reader to \cite[Lemma~4.10, Proposition~4.13]{HL} for the details; we are going to review the key ideas in \S\ref{sec:equations}, where we find local equations for $\Mone{1}{\PP^r}{d}$.

 We include here a proof of Proposition~\ref{prop:components}~.(2) based on Hu-Li's equations.

\begin{proof}
 Let us start with the easiest degenerate situation: a contracted elliptic curve attached to a $\PP^1$ of degree $d$ at a single node $q$. Equations for the moduli space of maps around such a point look like: 
 \[\zeta_q\sum_{i=1}^d w_i^j=0,\;\;\;\text{for} \;\;\;j=1,\ldots,r.\]

  Our point corresponds to a smoothable map if and only if the equations admit a solution with $\zeta_q\neq 0$, that is $\sum_{i=1}^d w_i^j=0$ for every $j$. Taking a coordinate $z$ on  $\PP^1$ centred at the node $q$,   the $i$-th basis vector corresponds to a polynomial vanishing at $q$ and at $\delta_l,\;\forall l\neq i$. This can be written as:
  \[e_i(z)=z\prod_{l\neq i}\frac{(z-\delta_l)}{-\delta_l},\] 
  where we have chosen a convenient normalisation. So the restriction to the rational tail of the map corresponding to the point of coordinates $(w_i^j)_{i=1,\ldots,d}^{j=1,\ldots,r}$ can be written as:
  \[[1:\sum_{i=1}^d w_i^1e_i(z):\ldots:\sum_{i=1}^d w_i^re_i(z)].\] 
  Differentiating with respect to $z$ we see that the image of the tangent vector at $q$ is given in affine coordinates around $f(E)$ by:
  \[(\sum_{i=1}^d w_i^1,\ldots,\sum_{i=1}^d w_i^r).\] 
  Hence smoothability is equivalent to the image of the tangent vector being zero.
 
 More generally we may assume that the dual graph is terminally weighted \cite[\S 3.1]{HL}. Assume there are $k$  rational tails of positive weight $R_h,\ h=1,\ldots,k$. Denote by $D(h)$ the set of indices $i$ such that $\delta_i$ belongs to the $R_h$, and by $E(h)$ the set of nodes separating the core from $R_h$. The equations will then take the following form:
 \[\sum_{h=1}^k\left(\prod_{q\in E(h)}\zeta_q\right)\left(\sum_{i\in D(h)}w_i^j\right)=0,\ j=1,\ldots,r,\]
 which can be assembled in matrix form as follows:
 $$W\cdot\underline\zeta:=\left(\sum_{i\in D(h)}w_i^j\right)_{j,h}\cdot\left(\prod_{q\in E(h)}\zeta_q\right)_h=0.$$
 We see that smoothability is equivalent to the linear dependence of the rows of the above matrix $W$. On the other hand we can choose a suitable coordinate $z_h$ around the node $q_h$ on $R_h$ and write the map as: 
 \[[1:p_h^1(z_h):\ldots:p_h^r(z_h)],\]
  where: 
 \[p_h^j(z_h)=\sum_{i\in D(h)}w_i^je_i^h(z_h) \quad \text{and} \quad e_i^h(z_h)=z_h\prod_{l\in D(h)\setminus\{i\}}\frac{(z_h-\delta_l)}{-\delta_l}.\] The elliptic curve is contracted to the point $[1:0:\ldots:0]$ and the tangent vector to $R_h$ at $q_h$ is mapped to the $h$-th row of $W$ (in affine coordinates around $f(E)$). Again we see that the map is smoothable if and only if the image of the tangent vectors to the rational tails at the nodes are linearly dependent in $T_{f(E)}\PP^r$.
 \end{proof}

\subsection{Smyth-Viscardi's compactifications}
 The moduli spaces of $m$-stable maps give alternate compactifications of $\pazocal M_{1,n}(X,\beta)$, parametrising maps from smooth elliptic curves.
\begin{dfn} Let $C$ be a connected, reduced, projective curve of arithmetic genus $1$ over $\k$, and let $p_1,\dots,p_n\in C$ be smooth, distinct points. A map $f\colon C\to X$ is said to be \emph{$m$-stable} if the following conditions hold:
\begin{enumerate}
 \item $C$ has only nodes and elliptic $l$-fold points, $l\leq m$, as singularities.
 \item For any subcurve $E\subset C$ of arithmetic genus $1$ on which $f$ is constant, 
 \[\left|\left\{E\cap \overline{C\setminus E}\right\}\cup\left\{i : p_i \in E\right\}\right| > m.\]
 \item $f$ has no non-trivial infinitesimal automorphisms.
 \end{enumerate}
\end{dfn}
Recall that a $\k$-rational $p\in C$ is called an \emph{elliptic $m$-fold point} if:
\[
\hat{\mathcal{O}}_{C,p}\cong
\begin{cases}
\k[[x,y]]/(y^2-x^3) &  m=1\\
\k[[x,y]]/(x(x-y^2))&  m=2\\
\k[[x,y]]/ I_m &  m\geq 3
\end{cases}
\] 
where $I_m=\left(x_hx_i-x_hx_j : i,j,h\in\left\{1,\dots,m-1\right\}\right)$ and $i,j,h$ are distinct.

 Viscardi's main result \cite[Theorem~3.6]{VISC} is the following:
\begin{thm}
The moduli functor of $m$-stable maps  $\overline{\pazocal{M}}^{(m)}_{1,n}(X,\beta)$ is represented by a proper Deligne-Mumford stack of finite type over $\k$.
\end{thm}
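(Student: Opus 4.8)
The plan is to run the proof of Kontsevich's theorem on stable maps relative to Smyth's stack of genus $1$ Gorenstein curves, so that the only place one pays for the singular source curves is the properness step. Concretely I would (i) realise $\overline{\pazocal M}^{(m)}_{1,n}(X,\beta)$ as a locally closed substack of a relative Hom-stack to deduce algebraicity, (ii) use condition $(3)$ to get the Deligne--Mumford property and a boundedness estimate, and (iii) check the valuative criterion by combining Kontsevich stable reduction with Smyth's contraction of elliptic tails.

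\textbf{Algebraicity.} Let $\mathfrak M^{\leq m}_{1,n}$ be the stack of $n$-pointed prestable curves of arithmetic genus $1$ whose only singularities are nodes and elliptic $l$-fold points with $l\leq m$; its algebraicity and local finiteness of presentation are part of Smyth's work. Its universal curve $\pi\colon\cC\to\mathfrak M^{\leq m}_{1,n}$ is proper, flat and Gorenstein over the base, and since $X$ is projective the relative Hom-stack $\mathcal Hom_{\mathfrak M^{\leq m}_{1,n}}(\cC,X)$ is an algebraic stack, locally of finite type over $\k$ (by the general representability results for Hom-stacks, e.g. Olsson, Aoki, Hall--Rydh). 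Inside it I would carve out: the condition that $f^*\OO_X(1)$ have the degree prescribed by $\beta$ (open and closed, as this degree is locally constant); the $m$-stability of condition $(2)$, which is open because the locus where an $f$-contracted genus $1$ subcurve acquires $\leq m$ special points is closed — this is the map-theoretic avatar of the openness of Smyth's stability condition; and condition $(3)$. This exhibits $\overline{\pazocal M}^{(m)}_{1,n}(X,\beta)$ as a locally closed substack of $\mathcal Hom_{\mathfrak M^{\leq m}_{1,n}}(\cC,X)$, hence an algebraic stack locally of finite type. Condition $(3)$ says exactly that every infinitesimal automorphism vanishes, i.e. the diagonal is unramified, so the stack is Deligne--Mumford.

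\textbf{Finite type.} It remains to prove quasi-compactness. Condition $(3)$ forces every $f$-contracted rational component to carry at least three special points, while every non-contracted component has positive $\OO_X(1)$-degree; hence the number of irreducible components of the source, and the combinatorial type of its dual graph (with elliptic $l$-fold points counted with their $l$ branches), is bounded in terms of $\langle\beta,c_1(\OO_X(1))\rangle$. Over each such bounded piece one concludes by the usual boundedness of degree-bounded maps to the projective variety $X$ (pass to graphs inside $\cC\times X$ with fixed Hilbert polynomial). Thus $\overline{\pazocal M}^{(m)}_{1,n}(X,\beta)$ is of finite type.

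\textbf{Properness, and the main obstacle.} I would verify the valuative criterion over a DVR $\dvr$ with generic point $\eta$ and closed point $0$, allowing finite base change. For existence, start from an $m$-stable map over $\eta$, forget the non-nodal structure to get an ordinary stable map over $\eta$, and extend it to a nodal stable family over $\dvr$ using properness of Kontsevich's $\overline{\pazocal M}_{1,n}(X,\beta)$; then apply Smyth's contraction, replacing each maximal $f$-contracted genus $1$ subcurve of the central fibre that carries $\leq m$ special points by an elliptic $l$-fold point ($l\leq m$). One must check that this contraction exists as a \emph{flat} family over $\dvr$, lies in $\mathfrak M^{\leq m}_{1,n}$, has central fibre with exactly the prescribed Gorenstein singularities, and that the stable map factors through it; this is precisely Smyth's analysis of genus $1$ Gorenstein singularities adapted to maps. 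For uniqueness/separatedness, given two $m$-stable families over $\dvr$ agreeing over $\eta$, one compares their central fibres through a common dominating nodal model and uses that the contracted subcurve and the resulting singularity are canonically determined, so the $m$-stable reduction is independent of all choices. I expect this last step — existence and uniqueness of the Smyth-style contraction of the Kontsevich limit in families — to be the main technical obstacle; everything preceding it is essentially formal, and with it in place one concludes that $\overline{\pazocal M}^{(m)}_{1,n}(X,\beta)$ is a proper Deligne--Mumford stack of finite type over $\k$.
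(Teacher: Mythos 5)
The paper does not re-prove this theorem: it is quoted from Viscardi, so there is no in-paper proof to match against, and your outline faithfully reproduces Viscardi's strategy (a relative Hom-stack over Smyth's stack of at-worst $m$-elliptic prestable curves for algebraicity, the no-infinitesimal-automorphisms condition for the Deligne--Mumford property and boundedness, and a Kontsevich limit followed by a contraction for properness). What you should know, however, is that Remark~\ref{remark:sprouting} of this paper flags a genuine gap at precisely the step you single out as ``the main technical obstacle'': a stable map $F\colon\cC_\dvr\to\PP^r$ over a DVR scheme $\dvr$ need \emph{not} factor through the family obtained by contracting the maximal $f$-contracted genus~$1$ subcurve $E\subset\cC_0$ to the genus~$1$ Gorenstein singularity with the naive number of branches, so ``check the map factors'' is not just delicate but false as stated.

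The paper's counterexample lives in $\oM_{1}(\PP^3,4)$: take an elliptic bridge $C=R_1\,{}_{q_1}\!\sqcup E\sqcup_{q_2} R_2$ where $R_1$ maps isomorphically to a line, $E$ is contracted to a point, and $R_2$ normalises a cusp in a transverse plane. Then $\operatorname{d}\!f(T_{q_2}R_2)=0$, so by Proposition~\ref{prop:components}~(2) the map is smoothable; yet $f$ cannot factor through the tacnodal contraction of $E$, since the image is non-Gorenstein and a factorisation would produce a birational morphism between two curves with the same $\delta$-invariant and normalisation, hence an isomorphism --- a contradiction. The fix you would need to build into your properness argument is a \emph{sprouting} step before contracting: choose a maximal linear relation with all nonzero coefficients, say $\alpha_1\operatorname{d}\!f(T_{q_1}R_1)+\dots+\alpha_j\operatorname{d}\!f(T_{q_j}R_j)=0$, blow up $\cC_\dvr$ at the remaining attaching points $q_{j+1},\dots,q_m$, observe that the exceptional lines are contracted so their tangent images vanish and the relation completes to one with all nonzero coefficients; only then does the map descend to the elliptic $m$-fold singularity. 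Without this blow-up the existence half of your valuative criterion breaks; with it, the algebraicity, boundedness, and Deligne--Mumford parts of your argument are standard and sound.
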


\begin{remark}
 The Behrend-Fantechi obstruction theory $\R^\bullet\hat\pi_*\hat f^* T_X$ for spaces of morphisms endows $\overline{\pazocal{M}}^{(m)}_{1,n}(X,\beta)$ with a perfect obstruction theory relative to $\MM_{1,n}(m)$. The base is an irreducible Artin stack for every $m$ and even a smooth stack for $m\leq 4$. We thus have a virtual class on $\overline{\pazocal{M}}^{(m)}_{1,n}(X,\beta)$ and $m$-stable invariants can be defined in the usual way. 
\end{remark}

\begin{remark}\label{remark:sprouting}
We think that the algorithm proposed by Viscardi to prove the properness of his moduli spaces oversees a case. The issue is that, given a map $f\colon \cC_\dvr\to\PP^r_\dvr$ over a DVR scheme $\dvr$ such that $\cC_\eta$ is smooth and $f_0$ is constant on a genus $1$ connected subcurve $E\subseteq \cC_0$, it is not always true that $f$ descends to a map $\hat f\colon \hC_\dvr\to\PP^r_\dvr$, where $\hC_0$ has a genus $1$ \emph{Gorenstein} singularity.

Consider the stable map $[f]$ in $\oM_{1}(\PP^3,4)$ from an elliptic bridge \[C:=R_1{}_{q_1}\!\sqcup E\sqcup_{q_2} R_2\] to $\PP^3$ that maps $R_1$ to the $z$-axis, contracts $E$ to the origin, and makes $(R_2,q_2)$ into the normalisation of a cusp in the $(x,y)$-plane, i.e. its image is the non-Gorenstein singularity:
\[D:=\operatorname{Spec}\left(\k[x,y,z]/(x,y)\right)\cup \operatorname{Spec}\left(\k[x,y,z]/(z,y^2-x^3)\right).\] 
Notice that $\operatorname{d}\!f(T_{q_2}R_2)=0$, so there is a non-trivial linear relation:
\[0\cdot\operatorname{d}\!f(T_{q_1}R_1)+1\cdot\operatorname{d}\!f(T_{q_2}R_2)=0,\]
and hence the map is smoothable. Viscardi claims that the map factors through the family $\hC_\dvr$, obtained by contracting $E$ to a tacnode. Notice that the image of $\hat f$ would still be $D$, since there is at most one indeterminacy point of $\hC_\dvr\dashrightarrow\PP^r_\dvr$, located at the singularity. However in our example $f$ cannot factor through the tacnode. Indeed in that case we would have a birational morphism between two singular curves with the same $\delta$ invariant and the same normalisation, so $\hat f\colon \widehat C\to D$ would be an isomorphism, which is a contradiction.
 We suggest that in this case the correct procedure would be to sprout $(R_1,q_1)$.
 
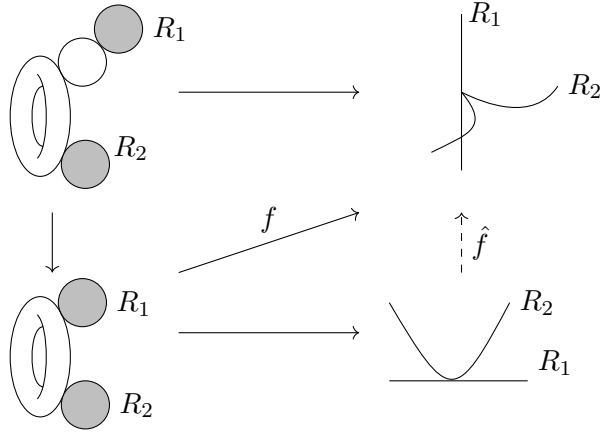
\begin{figure}[h]\label{fig:3-stab}
\centering
\begin{tikzpicture}[scale=.8]
\draw (-.1,0) circle (.4cm);
\draw[fill=gray!50] (.5,.55) circle (.4cm);
\node [right] at (.9,.55) {$R_1$};
\draw (-.8,-.9) ellipse (.5cm and 1cm);
\draw (-.85,-.2) .. controls (-.65,-.4) and (-.65,-1.4) .. (-.85,-1.6);
\draw (-.75,-.4) .. controls (-1,-.4) and (-1,-1.4) .. (-.75,-1.4);
\draw[fill=gray!50] (-.05,-1.7) circle (.4cm);
\node [above right] at (.25,-1.8) {$R_2$};

\draw[fill=gray!50] (-.1,-4) circle (.4cm);
\node [below right] at (.3,-3.65) {$R_1$};
\draw (-.8,-4.9) ellipse (.5cm and 1cm);
\draw (-.85,-4.2) .. controls (-.65,-4.4) and (-.65,-5.4) .. (-.85,-5.6);
\draw (-.75,-4.4) .. controls (-1,-4.4) and (-1,-5.4) .. (-.75,-5.4);
\draw[fill=gray!50] (-.05,-5.7) circle (.4cm);
\node [right] at (.35,-5.7) {$R_2$};

\draw (5,-5.3)-- (7.3,-5.3);
\node [ right] at (7.3,-5) {$R_1$};
\draw (5,-4) .. controls (6,-5.7) and (6.1,-5.7) .. (7,-4);
\node [ right] at (7,-4) {$R_2$};

\draw (6.2,-1.8)-- (6.2,.8);
\node at (6.6,.8) {$R_1$};
\draw (6.2,-.5) .. controls (6.6,-.7) and (7.4,-1) .. (7.8,-.4);
\node [right] at (7.8,-.4) {$R_2$};
\draw (6.2,-.5) .. controls (6.7,-1.1) and (6.3,-1.2) .. (5.7,.-1.5);

\draw[->] (1.5,-.5) -- (4.5,-.5);
\draw[->] (-.6,-2.5) -- (-.6,-3.5);
\draw[->] (1.5,-4.5) -- (4.5,-4.5);
\draw[->] (1.5,-3.5) -- node[above]{$f$} (4.5,-2.5);
\draw[->,dashed] (6.2,-3.5) -- node[right]{$\hat f$} (6.2,-2.5);
\end{tikzpicture}
\caption{An example of non-factorisation.}
\end{figure} 

However Viscardi's argument can be fixed. Let $(\mathcal{C}_\eta,F_{\eta})$ be a stable map to $\PP^r$, defined on the generic point of a DVR scheme $\dvr$; we may assume that $\mathcal C_{\eta}$ is smooth \cite[Section~3.2.1]{VISC}. 
As described in \cite[Theorem~3.6, Step~1]{VISC}, after applying nodal reduction we get a map $F\colon\mathcal C_{\dvr'}\to\PP^r_{\dvr'}$, for which we may suppose that $C:=\cC_0$ is nodal and reduced, and $f:=F_0$ is stable.

If $f$ is not constant on the minimal genus $1$ subcurve, then it is already $m$-stable and there is nothing to say. Otherwise let $E\subset C$ be the maximal genus $1$ subcurve where $f$ is constant and let $R_1\sqcup\ldots\sqcup R_m=\overline{C/E}$.

 By Proposition~\ref{prop:components}~(2) we know there is a non-trivial linear relation among the $\operatorname{d}\!f(T_{q_i}R_i)$'s.
  Consider a maximal one with all non-zero coefficients.  Possibly after relabelling, such a relation looks like:
 \begin{equation*}\label{eq:linrelation}
 \alpha_1\operatorname{d}\!f(T_{q_{1}}R_{1})+\ldots +\alpha_j \operatorname{d}\!f(T_{q_{j}}R_{j})=0.
 \end{equation*}
 In this case we blow-up $\mathcal{C}_\dvr$ in $q_{j+1},\ldots, q_m$. The induced map $\tilde{F_0}$ is constant on the exceptional divisors $G_{j+1},\ldots, G_m$ and we can complete the above linear relation to 
 \[\alpha_1\operatorname{d}\!\tilde f(T_{q_{1}}\tilde R_{1})+\ldots +\alpha_j \operatorname{d}\!\tilde f(T_{q_{j}}\tilde R_{j})+\beta_{j+1}\operatorname{d}\!\tilde f(T_{q_{j+1}}G_{j+1})+\ldots+\beta_m\operatorname{d}\!\tilde f(T_{q_{m}}G_m)=0\]
  with $\beta_i=1$ for all $i$. Now this \emph{sprouting} \cite[Section 2.3]{SMY2} ensures that the map descends to the corresponding elliptic $m$-fold singularity. Notice that, for every $m_1\leq m_2$, there is a morphism from an $m_2$-elliptic singularity to an $m_1$-elliptic point that is birational on the target and contracts $m_2-m_1$ branches of the source to the singular point. Proceed now with Step~2 of Viscardi's algorithm.
  
\end{remark}

The irreducible components of Viscardi's moduli space $\overline{\pazocal{M}}^{(m)}_{1,n}(\PP^r,d)$ are also well understood \cite[Thm.~5.9]{VISC}; indeed they have a similar description to the ones of Kontsevich's space. The main advantage of the $m$-stable compactification is that the number of components drops as $m$ increases.

In particular the space $\Mone{1}{\PP^r}{d}$ we will consider in the next sections has not got the boundary component $D^1$.
 
 \subsection{Genus $1$ singularities and smoothability}
 Inspired by Viscardi's alternate compactification, we give another description of smoothable maps in genus $1.$ We recall the following:
 \begin{dfn}
 Let $C$ be a reduced curve over $\k$ and $p\in C$ a singular point. We define the \emph{genus of the singularity} in $p$ to be the quantity: \[g(p)=\delta(p)-b(p)+1,\] where $\delta(p)=\dim_{\k}(\nu_*\OO_{\overline C}/\OO_C)\otimes k(p),$ $\nu\colon\overline{C}\to C$ is the normalisation of $C$ at $p$, and $b(p)$ is the number of branches of $C$ at $p.$
 \end{dfn}

 \begin{prop}\label{prop:smoothability2}
 Let $[f\colon C\to \PP^r]$ be a stable map from a genus $1$ curve $C=E {}_{\mathbf p}\!\sqcup_{\mathbf q}\bigsqcup_{i=1}^m R_i$, where $E$ is the maximal genus $1$ subcurve contracted by $f$, and the $R_i$ are rational tails on which $f$ has positive degree. 
  
  Then $f$ is smoothable if and only if it factors through a curve with a genus $1$ singularity $f\colon C\xrightarrow{\phi}\widehat C\xrightarrow{\hat f} \PP^r$, such that $\phi$ contracts $\operatorname{Exc}(f)$ and is an isomorphism outside it.
\end{prop}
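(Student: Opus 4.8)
The plan is to pass, in both directions, through the local algebra of genus $1$ curve singularities and then appeal to Proposition~\ref{prop:components}~(2). The first step is to pin down that local algebra (this is essentially Smyth's classification): if $p$ is a reduced curve singularity with $m$ branches and normalisation $\coprod_{i=1}^{m}\operatorname{Spec}\k[[z_i]]$, then $g(p)=1$ if and only if
\[
\hat\OO_{\widehat C,p}=\Big\{(g_i)_i\in\textstyle\prod_{i=1}^{m}\k[[z_i]]\ :\ g_1(0)=\dots=g_m(0),\ \ \sum_{i=1}^{m}a_i\,g_i'(0)=0\Big\}
\]
for some $(a_1,\dots,a_m)\in\k^{m}\setminus\{0\}$, the Gorenstein (elliptic $m$-fold) case being exactly the one with all $a_i\neq0$. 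Informally: the seminormalisation of a genus $1$ singularity is the ordinary $m$-fold point, which has $\delta=m-1$, and the one extra unit of $\delta$ that brings us to genus $1$ is cut out by a single linear condition on the $1$-jets of the branches.

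For the ``if'' direction, suppose $f=\hat f\circ\phi$ where $\phi\colon C\to\widehat C$ contracts $E:=\operatorname{Exc}(f)$, is an isomorphism elsewhere, and $\widehat C$ has a genus $1$ singularity at $p:=\phi(E)$. Since $C=E\,{}_{\mathbf p}\!\sqcup_{\mathbf q}\bigsqcup_{i}R_i$ and $\phi$ is an isomorphism away from $E$, the point $p$ has precisely the $m$ branches $\phi(R_i)$ and its normalisation is $\coprod_i(R_i,q_i)$; identify $\hat\OO_{R_i,q_i}\cong\k[[z_i]]$. Fix homogeneous coordinates on $\PP^r$ with $f(E)=[1:0:\dots:0]$ and affine coordinates $x_1,\dots,x_r$ centred there; then $f^{*}x_j$ restricts on $R_i$ to a function vanishing at $q_i$ whose derivative there is the $j$-th component of $\operatorname{d}\!f(T_{q_i}R_i)$, so the $1$-jets of $f^{*}x_1,\dots,f^{*}x_r$ encode precisely the tangent vectors $\operatorname{d}\!f(T_{q_i}R_i)$. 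As $\hat f$ exists, each $f^{*}x_j$ lies in $\hat\OO_{\widehat C,p}$, so the classification forces $\sum_i a_i\operatorname{d}\!f(T_{q_i}R_i)=0$ in $T_{f(E)}\PP^r$ for some $(a_i)\neq0$; thus $f$ is smoothable by Proposition~\ref{prop:components}~(2).

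For the ``only if'' direction, assume $f$ is smoothable; then $f$ contracts the core, so Proposition~\ref{prop:components}~(2) provides a non-trivial relation $\sum_i a_i\operatorname{d}\!f(T_{q_i}R_i)=0$ with $(a_i)\in\k^{m}\setminus\{0\}$ (padding with zeros if only some of the branches are involved). I will build $\widehat C$ as the pushout of $\bigsqcup_i R_i$ collapsing $\{q_i\}_i$ to a single point $p$ with completed local ring $\{(g_i):g_1(0)=\dots=g_m(0),\ \sum_i a_i g_i'(0)=0\}$ --- an Artin/Ferrand pushout, equivalently Smyth's explicit contraction. By construction $p$ is a genus $1$ singularity (so $\delta(p)=m$), and $\widehat C$ has arithmetic genus $1$: as the $R_i$ are rational, $\chi(\OO_{\widehat C})=\sum_i\chi(\OO_{R_i})-\delta(p)=m-m=0$. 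The collapsing map $\phi\colon C\to\widehat C$ is a morphism because $\hat\OO_{\widehat C,p}$ is contained in the local ring $\{(g_i):g_i(0)\text{ all equal}\}$ of the naive contraction $C/E$, to which $C$ maps; moreover $\phi$ contracts $\operatorname{Exc}(f)=E$ and is an isomorphism away from it. Finally $f$ factors through $\phi$: away from $p$ this is immediate, and at $p$ the induced $\hat f\colon\widehat C\to\PP^r$ is regular precisely because the $1$-jets of the $f^{*}x_j$ are the tangent vectors $\operatorname{d}\!f(T_{q_i}R_i)$, which satisfy $\sum_i a_i\operatorname{d}\!f(T_{q_i}R_i)=0$, i.e.\ the defining relation of $\hat\OO_{\widehat C,p}$. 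Since the components of $\widehat C$ are the $\phi(R_i)$, on which $\hat f=f|_{R_i}$ has positive degree, $\hat f$ contracts no component.

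The main obstacle is the classification underpinning the first paragraph --- specifically the assertion, used in both implications, that a genus $1$ singularity imposes on its branches exactly one \emph{linear} constraint and nothing subtler at higher order. Concretely, the inclusion $\hat\OO_{\widehat C,p}\subseteq\OO^{\mathrm{sn}}_p$ has $\k$-codimension $1$, and the point is to show, via a Nakayama/Krull-intersection argument, that $\hat\OO_{\widehat C,p}$ contains the square of the maximal ideal of the seminormalisation, so that this codimension-$1$ condition descends along the $1$-jet map $\OO^{\mathrm{sn}}_p\twoheadrightarrow\k^{m}$. A secondary subtlety, in the ``only if'' direction, is that one must contract using the relation that actually holds and not merely some relation: as the example in Remark~\ref{remark:sprouting} shows, different relations yield genuinely inequivalent genus $1$ singularities --- there a ``cusp plus a transverse branch'' and not a tacnode --- and only the correct one admits the factorisation $f=\hat f\circ\phi$. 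Everything else is bookkeeping with the tangent-space dictionary of Proposition~\ref{prop:components}~(2).
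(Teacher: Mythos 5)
Your proof is correct, and it takes a genuinely different route from the paper's. You reduce both implications to Proposition~\ref{prop:components}~(2) by passing through a local description of genus $1$ singularities: the completed local ring of such a singularity with $m$ branches sits inside $\prod_i\k[[z_i]]$ cut out by identifying all constant terms and imposing one further nontrivial linear relation $\sum_i a_ig_i'(0)=0$; the existence of $\hat f$ translates precisely into that relation holding for the vectors $\operatorname{d}\!f(T_{q_i}R_i)$, and conversely any such relation tells you how to pinch. The local description you invoke --- in particular the step $\mathfrak m_{\widetilde R}^2\subseteq R$, which you correctly single out as the main obstacle --- is exactly the paper's (unlabelled) classification lemma proved just before this proposition. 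The paper's own proof of Proposition~\ref{prop:smoothability2}, by contrast, is deformation-theoretic and deliberately independent of Proposition~\ref{prop:components}~(2): for ``smoothable~$\Rightarrow$~factors'' it takes a one-parameter smoothing, contracts $\operatorname{Exc}(F^*\OO_{\PP^r}(1))$ over the DVR via Smyth's semiampleness lemma, and normalises the total space; for the converse it constructs compatible smoothings of $C$ and $\widehat C$ by an explicit two-step contraction and then extends $\hat f$ over the family using the vanishing $H^1(\widehat C,\widehat L)=0$. Your route is shorter and makes the tangent-space meaning of the genus $1$ singularity transparent, but it forfeits the logical independence the paper wants: the outline advertises the classification lemma together with Proposition~\ref{prop:smoothability2} as a \emph{second} proof of Vakil's smoothability criterion, which your argument, by invoking Proposition~\ref{prop:components}~(2), would render circular. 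One clarification on your last worry: in the ``only if'' direction there is no need to find ``the'' correct relation --- any nonzero $(a_i)$ with $\sum a_i\operatorname{d}\!f(T_{q_i}R_i)=0$ produces a valid pinching and factorisation, since that relation is literally the defining constraint of the local ring you install at $p$; the choice merely determines which isomorphism type of genus $1$ singularity arises.
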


\begin{lem}(Classification of genus $1$ singularities)
The Cohen-Macaulay (i.e. reduced) genus $1$ singularities with $m$ branches are obtained by gluing a genus $0$ singularity with $k$ branches together with a Smyth's $(m-k)$-elliptic fold.
Notice that $k$ may be $0$ (i.e. a point) or $1$ (i.e. a smooth rational curve).
\end{lem}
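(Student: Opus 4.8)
The plan is to analyze the complete local ring $\hat{\OO}_{C,p}$ of a reduced genus $1$ singularity with $m$ branches by comparing it with the local ring of its \emph{seminormalization}, and to use the numerical constraint imposed by $g(p) = \delta(p) - b(p) + 1 = 1$. Write $\nu\colon (\overline C, \overline p_1,\dots,\overline p_m)\to (C,p)$ for the normalization, so that $\hat{\OO}_{C,p}\subseteq \nu_*\hat{\OO}_{\overline C,p} \cong \k[[t_1]]\times\cdots\times\k[[t_m]]$ is a subalgebra of colength $\delta(p) = m$. The seminormalization $\hat{\OO}^{\rm sn}$ sits in between; it is the largest subalgebra with the same underlying topological space, and for a reduced curve singularity it is precisely the one glueing all $m$ branches transversally at a single point, i.e. $\hat{\OO}^{\rm sn} = \k + (t_1\k[[t_1]]\times\cdots\times t_m\k[[t_m]])$, which is the $m$-fold elliptic singularity in the sense of Smyth (this is exactly the $l=m$ case of the list recalled before the lemma, since $\delta^{\rm sn} = m-1$). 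Thus $\hat{\OO}_{C,p}$ has colength $1$ inside $\hat{\OO}^{\rm sn}$ as a $\k$-vector space, equivalently the conductor-type quotient $\hat{\OO}^{\rm sn}/\hat{\OO}_{C,p}$ is $1$-dimensional.

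First I would show that the failure of $\hat{\OO}_{C,p}$ to be seminormal is concentrated on a single branch, or rather on a single ``clump''. The quotient $\hat{\OO}^{\rm sn}/\hat{\OO}_{C,p}$ being $1$-dimensional means there is exactly one seminormalization-type modification, so geometrically $\phi^{\rm sn}\colon \overline C^{\rm sn}\to C$ identifies in a more-than-transversal way along the tangent directions of some subset of the branches while keeping the others transversal. The key point is to partition the branches $\{1,\dots,m\}$ into the set $S$ of branches that pass through a common embedded genus $0$ singularity and the remaining $m - k$ branches (where $k = |S|$), and to check that $C$ is obtained by first forming the genus $0$ singularity $C_0$ with $k$ branches — so $\delta(C_0) = k - 1$ and $g(C_0) = 0$ — and then glueing its single point transversally to the other $m-k$ smooth branches via the $(m-k)$-fold elliptic glueing. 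A direct $\delta$-count confirms consistency: $\delta(p) = (k-1) + \big((m-k+1) - 1\big) = m - 1 + 1$? — more carefully, glueing a curve with one marked smooth-looking point to $m-k$ transversal branches at an $(m-k+1)$-fold elliptic point adds $\delta = m-k$, so $\delta(p) = (k-1) + (m-k) + 1 = m$, giving $g(p) = m - m + 1 = 1$ as required. The degenerate cases $k = 0$ (the ``genus $0$ part'' is just the singular point itself, so $C$ is the pure $m$-fold elliptic point) and $k = 1$ (the ``genus $0$ part'' is a smooth branch, and one recovers the elliptic $m$-fold point with one branch playing a distinguished role) are then read off.

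The heart of the argument, and the step I expect to be the main obstacle, is establishing that a reduced genus $1$ curve singularity \emph{must} be of this glued form — i.e. that its non-seminormal locus is forced to sit ``on top of'' a genus $0$ configuration glued to the rest transversally, rather than being some more exotic subalgebra. I would approach this via the classification philosophy underlying Smyth's and Viscardi's work: filter $\hat{\OO}_{C,p}$ by the intermediate rings between it and $\hat{\OO}^{\rm sn}$, use the fact that any reduced curve singularity dominated by a seminormal one of colength $1$ is Gorenstein precisely when it is a node or one of the elliptic $m$-fold points (so the Gorenstein genus $1$ singularities are exactly the $m$-fold elliptic points), and then for the non-Gorenstein case factor the modification $\overline C\to C$ through $\overline C\to C_0\to C$ where $C_0\to C$ is the minimal step separating out a transversal $(m-k)$-fold elliptic glueing. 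What remains is the purely local-algebra verification that $C_0$ — the partial normalization recording the ``genus $0$ part'' — is itself a genus $0$ (equivalently, a rational, unibranch-or-seminormal-rational after further normalization) singularity, which follows because $g$ is additive in the obvious sense under separating branches at a point and $g(C_0) = g(C) - 1 \cdot(\text{contribution of the elliptic glueing}) = 0$. I would cite Smyth's structure results on Gorenstein genus $1$ singularities for the Gorenstein case and otherwise carry out the colength-$1$ subalgebra analysis explicitly, since the list of branches is finite and the conductor computation is elementary once the seminormalization is identified.
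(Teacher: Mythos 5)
Your identification of the seminormalization $\hat{\OO}^{\rm sn} = \k + (t_1\k[[t_1]]\times\cdots\times t_m\k[[t_m]])$ with Smyth's elliptic $m$-fold point is incorrect. The ring you wrote down is the \emph{ordinary} $m$-fold point (all branches meeting transversally at a rational point), and --- as you yourself compute --- it has $\delta^{\rm sn} = m-1$, hence genus $0$. The elliptic $m$-fold point recalled just before the lemma has $\delta = m$ and genus $1$; for $m \geq 3$ it is cut out \emph{inside} your $\hat{\OO}^{\rm sn}$ by the extra codimension-one linear condition $\sum_i f_i'(0) = 0$, and for $m=1,2$ it is the cusp, resp.\ the tacnode. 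Concretely, the seminormalization of a tacnode is a node, and the seminormalization of a cusp is a smooth branch, so the two notions diverge already in low $m$. Indeed the elliptic $m$-fold point is itself one of the singularities the lemma classifies (the $k=0$ case), so it cannot serve as the common overring of all of them. The colength-$1$ observation $\hat{\OO}_{C,p}\subset\hat{\OO}^{\rm sn}$ is nonetheless correct --- it follows from $\delta(p)=m$ and $\delta^{\rm sn}=m-1$ --- but the ambient ring is the ordinary $m$-fold point.

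The deeper gap is that the colength-$1$ observation does not by itself yield the classification. You still have to show that the one-dimensional defect $\hat{\OO}^{\rm sn}/\hat{\OO}_{C,p}$ sits in $\mathfrak m_{\widetilde R}$-adic degree $1$, equivalently that $\mathfrak m^2_{\widetilde R}\subseteq\hat{\OO}_{C,p}$, so that $\hat{\OO}_{C,p}=\k\oplus V\oplus\mathfrak m^2_{\widetilde R}$ for a codimension-one subspace $V\subset\mathfrak m_{\widetilde R}/\mathfrak m^2_{\widetilde R}$. A priori the defect could sit in a higher graded piece; closure under multiplication rules that out, and the efficient way to see it is exactly Smyth's graded module $\widetilde R/R$ together with the multiplicativity $(\widetilde R/R)_i=(\widetilde R/R)_j=0\Rightarrow(\widetilde R/R)_{i+j}=0$: if the degree-$1$ piece vanished, so would all higher ones, forcing $g=0$. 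Your plan gestures at ``filter by intermediate rings'' and ``carry out the colength-$1$ analysis explicitly'', but this multiplicativity step is the heart of the lemma, not a routine verification. Once it is in place the two routes merge: $V$ is the kernel of a functional $(c_1,\ldots,c_m)$; declaring $k$ to be the number of vanishing $c_i$ and rescaling the others to $1$ produces the $(k,m-k)$ singularity, which is precisely what the paper does with the coefficients $a_i$ after Gaussian elimination --- so your approach is not genuinely different from the paper's once this gap is filled.
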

\begin{proof}
We extend the argument given by Smyth \cite[Appendix~A]{SMY1} to classify the Goreinstein genus $1$ singularities. We study the analytic germ of the singularity and we adopt Smyth's notation: $R$ denotes the completion of the local ring of $C$ at the singularity; 
$\widetilde{R}=\k[[t_1]]\oplus\ldots\oplus \k[[t_m]]$ its integral closure; $\mathfrak{m}_R$ the maximal ideal of $R$  and $\mathfrak{m}_{\widetilde{R}}$ that of $\widetilde{R}.$

Let us recall that to describe $R$ as a quotient polynomial ring, it is enough to find a $\k$-basis for $\mathfrak{m}_R/\mathfrak{m}^2_R=\langle e_1,\ldots, e_s\rangle_{\k}$ where the $e_i$ are some elements in $\widetilde{R}.$ Indeed, once given such a basis, it is a consequence of completeness that $R$ can be recognised as $\k[[x_1,\ldots, x_s]]/I$, where $I$ is the kernel of the ring homomorphism
\begin{align*}\label{eq:singularity}
\k[[x_1,\ldots, x_s]]&\rightarrow R\subset \k[[t_1]]\oplus\ldots\oplus \k[[t_m]]\\
\;\;x_i\;\; &\mapsto \;\; e_i\;\;
\end{align*}
Smyth observes that $\widetilde R/R$ is graded by:
\[ (\widetilde R/R)_i:=\mathfrak m_{\widetilde R}^i/(\mathfrak m_{\widetilde R}^i\cap R)+\mathfrak m_{\widetilde R}^{i+1}\]
Furthermore:
\begin{enumerate}
\item $m=\delta(p)=\sum_{i\geq 0}\dim_\k(\widetilde R/R)_i;$
\item $1=g(p)=\sum_{i\geq 1}\dim_\k(\widetilde R/R)_i;$
\item if $(\widetilde R/R)_i=(\widetilde R/R)_j=0$ then $(\widetilde R/R)_{i+j}=0$.
\end{enumerate}
From (2) and (3) it follows that $\dim_\k(\widetilde R/R)_1=1$ and $\dim_\k(\widetilde R/R)_i=0$ for $i\geq 2$. Then the exact sequence:
\[ 0\to \frac{\mathfrak m_{\widetilde R}^2}{\mathfrak m_{\widetilde R}^2\cap R}\to\frac{\mathfrak m_{\widetilde R}}{\mathfrak m_{\widetilde R}\cap R}\to (\widetilde R/R)_1\to 0\]
entails that:
\[\mathfrak{m}^2_{\widetilde{R}}\subseteq \mathfrak m_R,\qquad \mathfrak{m}_R/\mathfrak{m}^2_{\widetilde{R}}\subseteq \mathfrak{m}_{\widetilde{R}}/\mathfrak{m}^2_{\widetilde{R}}\;\;\text{is a codimension $1$ $\k$-subspace}.\]
Obviously $\mathfrak{m}^2_{R}\subseteq\mathfrak{m}^2_{\widetilde{R}}$. Hence $s$ is at least $m-1$. After relabelling we may assume $e_1,\ldots, e_{m-1}$ generate $\mathfrak{m}_R/\mathfrak{m}^2_{\widetilde{R}}$, and after Gaussian elimination they take the following form:
\begin{align*}
e_1&= (t_1, 0, \ldots, 0, a_1 t_m) \\
e_2&= (0, t_2, \ldots, 0, a_2 t_m) \\
 & \ldots \\
e_{m-1} &= (0, 0,\ldots, t_{m-1}, a_{m-1}t_m)
\end{align*}
with $a_{1}, \ldots, a_{m-1}\in \k.$ At this point Smyth restricts his attention to Gorenstein singularities and shows that under this assumption he can choose all the $a_i$ to be $1$. Furthermore $\mathfrak{m}^2_{\widetilde{R}}=\mathfrak{m}^2_{R}$ holds if $m\geq 3$, thus the above are generators for $\mathfrak{m}_R/\mathfrak{m}^2_R$ and the goal is reached. For $m=1$ (resp. $2$) he finds extra generators and shows they satisfy the equation of a cusp (resp. a tacnode).

Removing the Gorenstein restriction we have three possibilities for $m\geq 3$:
\begin{enumerate}
\item At least two of the $a_i$ are non-zero, say $i=1,2$: then $a_1a_2t_m^2=e_1\cdot e_2$ so $\mathfrak{m}^2_{\widetilde{R}}=\mathfrak{m}^2_{R}$ and we are done as above. It is easily seen that  $e_1,\ldots, e_{m-1}$ satisfy the equations of a $(k,m-k)$ singularity where $k$ is the number of $a_i$ that are zero.
\item Only one of the $a_i$ is non-zero, say $a_1=1$: then $\mathfrak{m}^2_{\widetilde{R}}=\mathfrak{m}^2_{R}+(t_m^2)$ and by adjoining $e_m=t_m^2$ to the $e_i$ we see that they generate $\mathfrak{m}_R/\mathfrak{m}^2_R$ and they satisfy the equation of a tacnode $(e_1^2-e_m)e_m=0$, and $e_ie_j=0$ for $i\neq j$ and $(i,j)\neq(1,m)$, so this is an $(m-2,2)$ singularity.
\item Finally all the $a_i$ are zero: then we have to add $t_m^2,\ t_m^3$ to generate $\mathfrak{m}_R/\mathfrak{m}^2_R$ and we find an $(m-1,1)$ singularity.
\end{enumerate}
\noindent Similarly for $m=2$ there are two possibilities: the tacnode (corresponding to $a_1\neq 0$) and the union of a cusp with a non-coplanar line (for $a_1=0$), with equations:
\[ \k[[x,y,z]]/(xz,yz,y^2-x^3).\]
\noindent For $m= 1$ the only possible singularity is the cusp: indeed it can be proven that $\mathfrak{m}^2_{\widetilde{R}}=\mathfrak{m}_{R}$. 
\end{proof}

\begin{remark}
 The genus and number of branches are no more sufficient to tell these singularities apart, neither is the embedding dimension. A numerical invariant that distinguishes them is:
\[ \dim_\k R/I_p \]
where 
\[I_p=\operatorname{Ann}_R(\widetilde R/R)=\left\{f\in\, R\;\rvert\; f\cdot\widetilde{R}\subseteq R\right\}\]
 is the conductor ideal at the singular point. 
 Using the explicit description of $R$ given in the lemma above, we can easily find generators of $I_p$ and check that for a $(k,m-k)$ singularity we have:
 \[ \dim_\k R/I_p=m-k.\]
\end{remark}

\begin{lem}\label{lem:sing_smoothability}
Every genus $1$ singularity is smoothable.
\end{lem}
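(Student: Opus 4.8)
The plan is to use the explicit classification of genus $1$ singularities from the previous lemma together with a one-parameter deformation argument. By the classification, a genus $1$ singularity with $m$ branches is obtained by gluing a genus $0$ (planar) singularity with $k$ branches to a Smyth $(m-k)$-elliptic fold point at a common separating point. Since smoothability is a local (analytic) question, and a disjoint union of smoothable germs is smoothable, it suffices to treat the two building blocks: (a) genus $0$ (Cohen--Macaulay, hence planar) curve singularities, which are classically smoothable — any plane curve singularity $\k[[x,y]]/(g(x,y))$ deforms to $\k[[x,y]][[\varepsilon]]/(g(x,y)-\varepsilon)$, whose general fibre is smooth; and (b) the elliptic $m$-fold point, for which I would exhibit an explicit smoothing of the standard local model. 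The gluing step then needs to be checked: one must produce a smoothing of the glued germ compatible with the smoothings of the pieces, which amounts to deforming the node-like gluing data along with the branch deformations.

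For step (b), the elliptic $m$-fold point, I would write down the smoothing directly using the Smyth normal forms. For $m=1$ (the cusp $y^2=x^3$) the versal deformation $y^2 = x^3 + ax + b$ has smooth general member, so it is smoothable. For $m=2$ (the tacnode $x(x-y^2)=0$) one deforms to $x(x-y^2) = \varepsilon$, again smooth for general $\varepsilon$; more conceptually, the tacnode is a limit of two smooth branches meeting, and smoothing the contact point gives a smooth curve. For $m\geq 3$, Smyth's local model is $\Spec \k[[x_1,\ldots,x_{m-1}]]/I_m$ with $I_m = (x_hx_i - x_hx_j)$; this is the cone over $m$ general points in $\PP^{m-2}$, equivalently the union of the $m$ coordinate-type lines through the origin in a way that is smoothable because it is a limit of $m$ lines in general position through a common point, which in turn deforms to a smooth (in fact irreducible) curve. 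Concretely I would realise the $(m-1)$-dimensional cone as a hyperplane section or a determinantal deformation and perturb the defining $2\times 2$ minors. The key point is that the elliptic $m$-fold point has a \emph{smoothing}, not merely a deformation preserving the singularity type; this follows since its versal deformation space is smooth (the singularity is, e.g., a Gorenstein or determinantal singularity in low $m$, and one can appeal to the smoothness of $\MM_{1,n}(m)$ for $m\leq 4$ quoted earlier, plus an explicit argument for larger $m$).

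For the gluing step (a)+(b): suppose $R$ is obtained by gluing $R' = \k[[t_1,\ldots,t_k]]\cap(\text{genus }0\text{ germ})$ and $R'' = $ elliptic $(m-k)$-fold germ at a point. Choose smoothings $\{R'_\varepsilon\}$, $\{R''_\varepsilon\}$ over $\k[[\varepsilon]]$ from the previous paragraph. The glued germ is a pushout $R = R' \times_{\k} R''$ identifying the residue fields, i.e. joining the two germs at a single point; deforming this to a genuine union and then smoothing the junction point (which is a separating node of the \emph{total} curve only after the genus has been absorbed) realises $R$ as a degeneration of a smooth germ. Precisely, I would build a family over $\Spec \k[[\varepsilon,s]]$ where $s$ smooths the separating point and $\varepsilon$ smooths each branch-cluster, and take a general fibre.

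I expect the main obstacle to be step (b) for $m \geq 3$: showing that Smyth's elliptic $m$-fold point genuinely admits a \emph{smoothing} rather than just a flat deformation, and writing this down cleanly. The cleanest route is probably to observe that the elliptic $m$-fold point is the central fibre of a family whose general fibre is a smooth rational curve embedded so that $m$ of its points are glued — i.e. it is a \emph{limit of} an $m$-nodal (or $m$-fold generic-secant) configuration that itself deforms to smooth — and to make this precise via the standard fact that the union of $m$ general lines through a point in $\Aaff^{m-1}$ smooths (it is the cone over $m$ general points, a smoothable determinantal scheme). Once the elliptic fold point and the planar pieces are smoothed, the compatibility of gluings is routine deformation theory of nodal/separating points, so the remaining work is bookkeeping.
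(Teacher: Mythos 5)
The paper takes a completely different and more economical route: it \emph{constructs} a smoothing of the $(k,m-k)$-singularity by starting from a genuinely smooth family and contracting down. Concretely, one glues an $m$-pointed elliptic curve $E$ to $m$ rational tails, chooses a one-parameter smoothing of this nodal curve over a DVR with an $A_1$-singularity of the total space at $k$ of the nodes and a regular total space elsewhere, passes to the semistable model, and then performs a two-step contraction: first Smyth's line bundle $\mathcal L_1=\omega_{\pi^{\rm ss}}(E+\Sigma)$ contracts the core to an elliptic $m$-fold point, and then a second line bundle $\mathcal L_2=\OO(\overline\Sigma)$ contracts the unmarked exceptional branches. The generic fibre is unchanged (hence smooth) throughout, and the central fibre of the resulting family is the given $(k,m-k)$-singularity. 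Smoothability is thus automatic; no separate smoothing of a building block, and no gluing of deformations, is ever needed.

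Your plan runs the other direction — smooth the two pieces and glue — and as written it has a gap that I do not think is just bookkeeping. The ``gluing'' in the classification lemma is \emph{not} a separating node between two germs with disjoint supports: the genus-$0$ piece and the elliptic $(m-k)$-fold share the single singular point, and the glued local ring is the fibre product $R'\times_\k R''$ over the residue field. So your reduction ``a disjoint union of smoothable germs is smoothable'' does not apply; what you actually need is that given flat smoothings $R'_\varepsilon$ and $R''_\varepsilon$ equipped with compatible sections, the fibre product $R'_\varepsilon\times_{\k[[\varepsilon]]}R''_\varepsilon$ is a flat family with singular central fibre $R'\times_\k R''$ and generic fibre a \emph{node}, which you then smooth. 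This is a plausible statement, but it requires verifying flatness (equivalently constancy of the $\delta$-invariant contribution from the gluing), and it is certainly not ``deformation theory of nodal/separating points,'' since the central fibre of that fibre product is a non-nodal, typically non-Gorenstein singularity. Describing the common singular point as a separating node is the place where your argument silently breaks.

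Two smaller points. First, the appeal to the smoothness of $\MM_{1,n}(m)$ for $m\leq4$ is not a proof of smoothability of the elliptic $m$-fold point; the relevant fact would be irreducibility of Smyth's stack (which does hold for all $m$), and even then one should be careful about circularity when the paper is in the middle of extending Smyth's classification. Second, your observation that the elliptic $m$-fold point for $m\geq3$ is the cone over $m$ general points in $\PP^{m-2}$ and that such cones are smoothable is correct and is a viable alternative ingredient, but it deserves an actual argument or citation (e.g.\ Pinkham) rather than a remark about $2\times 2$ minors. If you want to pursue your route, the honest structure would be: smooth the elliptic fold via the cone picture, smooth the seminormal genus-$0$ germ trivially, and then prove that the fibre-product family over $\k[[\varepsilon,s]]$ you sketch is flat with smooth generic fibre.
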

\begin{proof}
 We explicitly construct a (semi)stable model with a contraction to the given singularity. Assume we start with a $(k,m-k)$-singularity $\widehat C$. Pick any $m$-pointed elliptic curve $(E,q_1,\ldots,q_m)$ and glue $E$ along the markings with $m$ copies of $\PP^1$ at their respective points $0$; call the rational tails $R_1,\ldots,R_m$. It is useful to consider the resulting curve $C$ as a point of $\oM_{1,2m}$ with markings on the rational tails given by $1$ and $\infty$. We now choose a smoothing $\cC_{\dvr}$ of $C$ over a DVR scheme $\dvr$, such that the total space $\cC_{\dvr}$ has an $A_1$-singularity at the nodes $q_1,\ldots,q_k$ and is regular everywhere else; furthermore extend the markings to get a horizontal Cartier divisor $\Sigma$. Resolving the singularities we obtain a fibered surface $\pi^{\rm{ss}}\colon\cC_{\dvr}^{\rm{ss}}\to\dvr$ such that in the central fiber the strict transforms $\widetilde R_1,\ldots,\widetilde R_k$ are at distance $1$ from the core $E$, while the other rational tails are adjacent to it. Call $F_1,\ldots,F_k$ the exceptional divisors of the resolution. As in \cite[Lemma 2.12]{SMY1} we may contract $E$, which is obviously balanced, by means of the line bundle:
 \[\mathcal L_1:=\omega_{\pi^{\rm{ss}}}(E+\Sigma),\]
 which results in $\overline{\cC}_{\dvr}$ with an $m$-elliptic singularity in the central fiber, $k$ of whose branches are the images of $F_1,\ldots,F_k$ and are thus unmarked. We may finally perform a second contraction by using the line bundle $\mathcal L_2:=\OO_{\overline{\cC}_{\dvr}}(\overline{\Sigma})$; let $\hC_{\dvr}$ denote the resulting family of curves, with singularity $\hat q$. Notice that $\mathcal L_2$ satisfies cohomology and base-change, hence we may simply check on the central fiber both its relative semi-ampleness and the behaviour of its sections. The sections will be constant along $\overline F_1,\ldots,\overline F_k$, so the linear relation among their derivatives at the Smyth's singularity will only imply the linear dependence of the tangent vectors of $\widehat R_{k+1},\ldots, \widehat R_m$ at $\hat q$; on the other hand sections of $\mathcal L_2$ along $\overline R_1,\ldots,\overline R_k$ are completely independent, and thus they embed these $\PP^1$. We deduce that the central fiber of $\hC_{\dvr}$ has a $(k,m-k)$-singularity.
\end{proof}

\begin{proof}\ref{prop:smoothability2}
 The argument that if $f$ is smoothable then it has to factor through a genus $1$ singularity was essentially given by Vakil in \cite[Lemma 5.9]{Vre}. We review it here in some detail.
 
 Pick a smoothing $F\colon\cC_{\dvr}\to \PP^r$ of $f=F_0$. After base-change we may assume that $\dvr$ is the spectrum of a complete DVR. The line bundle $\mathcal L:=F^*\OO_{\PP^r}(1)$ is trivial on every irreducible component of the central fiber contracted by $f$.  Since every connected component of $\operatorname{Exc}(\mathcal L)=\{D\subset C : \mathcal{L}\rvert_D\simeq \OO_D\}$ has arithmetic genus $0$ or $1$, we may use the argument in \cite[Lemma 2.12]{SMY1} to show that $\mathcal L$ is $\pi_{\dvr}$-semi-ample. We thus get a contraction $\phi\colon \cC_{\dvr}\to \overline{\cC}_{\dvr}$ and notice that $F$ factors through $\phi$ by the construction of $\overline{\cC}_{\dvr}=\underline{\operatorname{Proj}}_{\dvr}\left(\bigoplus_{n\geq 0}\pi_{\dvr,*}\mathcal L^{\otimes n}\right)$.
 Being a smoothing of a reduced curve, $\cC_{\dvr}$ is normal, thus $\phi$ factors through the normalisation $\nu\colon \hC_{\dvr}\to \overline{\cC}_{\dvr}$, which is a finite map by \cite[\S 8.2]{Liusbook}. It is now clear that $\hC_{\dvr}\to\dvr$ is a flat family of genus $1$ curves with reduced fibers, and the map $\overline{F}\circ\nu$ has positive degree on all the components of $\hC_0$.

 Viceversa let us suppose that we have a factorisation:
 \bcd
 C\ar[rr,"f"]\ar[dr,"\phi"] & & \PP^r \\
 & \widehat{C}\ar[ur,"\hat f"] &
 \ecd
 such that $\phi$ contracts $\operatorname{Exc}(f)$ and is an isomorphism everywhere else. We shall below make the point that $f$ is smoothable as soon as it is not constant on the core (compare also with \cite[Theorem 4.5.1]{RSPW}), so in this case we have no use for $\phi$. Otherwise $\phi$ contracts at least the core, and $\widehat C$ has a genus $1$ singularity at a certain point $q$. We first show that:
 \begin{claim}
 Under these assumptions, we can produce smoothings $\cC_{\dvr}$ of $C$ and $\hC_{\dvr}$ of $\widehat{C}$ with a contraction map $\phi_{\dvr}$ extending the given $\phi$.
 \end{claim}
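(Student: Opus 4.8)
\emph{Strategy.} The plan is to reuse the explicit surface construction from the proof of Lemma~\ref{lem:sing_smoothability}, now feeding it the given curve $C$ and bookkeeping the contraction $\phi$. By the classification of genus~$1$ singularities, $\widehat C$ carries a $(k,m-k)$-singularity at $q$ for a well-defined $k$, and the splitting of the $m$ branches into the $k$ ``rational'' ones and the $m-k$ ``elliptic'' ones is intrinsic to $\widehat C$; via $\phi$ this singles out $k$ of the rational tails of $C$, say $R_1,\dots,R_k$. After adjoining auxiliary markings on each tail $R_i$ away from its attaching node $p_i$ (and pushing them through $\phi$ to $\widehat C$, which is harmless as we forget them at the very end), this is precisely the combinatorial input of the construction in Lemma~\ref{lem:sing_smoothability}.

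\emph{The two families and the contraction.} First I would choose a one-parameter smoothing $\cC_\dvr\to\dvr$ of $C$; these exist because the stack of prestable genus~$1$ curves is smooth, and one may prescribe the local type of the total space at each node through the smoothing parameter. I would take it to be regular except for an $A_1$-singularity at each $p_j$, $1\le j\le k$, using the remaining freedom to make the analytic germ produced below match that of $\widehat C$. Let $\rho\colon\cC^{\rm ss}_\dvr\to\cC_\dvr$ be the minimal resolution: its central fibre is $C$ with a $(-2)$-curve $F_j$ inserted between $R_j$ and $E$ for each $j\le k$, and $\rho$ contracts $F_j$ onto $p_j$. As in the proof of Lemma~\ref{lem:sing_smoothability}, the core $E$ is balanced, so by \cite[Lemma~2.12]{SMY1} the semiample line bundle $\omega_{\pi^{\rm ss}}(E+\Sigma)$ contracts $E$ to an $m$-elliptic point whose $k$ new branches are the images of the $F_j$; a second semiample bundle (the pullback of the markings) then collapses those $k$ branches, yielding $\psi\colon\cC^{\rm ss}_\dvr\to\hC_\dvr$ with $\hC_\dvr\to\dvr$ flat, generic fibre smooth of genus~$1$, and central fibre a genus~$1$ curve with a $(k,m-k)$-singularity.

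\emph{Descent and identification.} Since $A_1$-singularities are rational, $\rho_*\OO_{\cC^{\rm ss}_\dvr}=\OO_{\cC_\dvr}$; as $\psi$ contracts each $F_j$ to a point, rigidity makes it factor through $\rho$, giving $\phi_\dvr\colon\cC_\dvr\to\hC_\dvr$ with $\psi=\phi_\dvr\circ\rho$. By construction $\phi_\dvr$ is an isomorphism over the generic point of $\dvr$ and away from $E$ and the contracted locus, and $\phi_\dvr|_{\cC_0}\colon C\to\hC_0$ collapses $E$ while restricting to the identity on each $R_i\setminus\{p_i\}$. It remains to identify $\hC_0$ with the given $\widehat C$: both are obtained from $C$ by collapsing $E$ to a genus~$1$ singularity with the same branches and the same rational/elliptic partition, so their germs at the singular point agree once the smoothing was chosen to match the normal form, and the resulting isomorphism is the identity away from the singularity; hence it conjugates $\phi_\dvr|_{\cC_0}$ into $\phi$. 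Forgetting the auxiliary markings gives the asserted smoothings $\cC_\dvr$, $\hC_\dvr$ and the contraction $\phi_\dvr$ extending $\phi$.

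\emph{Main difficulty.} The step I expect to resist is the last identification: one must argue that the finitely many free parameters of the smoothing of $C$ can be tuned so that the contracted singularity on $\hC_0$ reproduces the analytic type of $\widehat C$ \emph{exactly}, not merely up to the coarse invariant $(k,m-k)$. In the Gorenstein range this is automatic since the normal forms are rigid once $k$ is fixed; in general it reduces to a local computation tracing how the smoothing parameters at $p_1,\dots,p_k$ feed into the coefficients $a_i$ of the normal form in the classification lemma, together with the routine check that $E$ remains balanced in the chosen model so that \cite[Lemma~2.12]{SMY1} applies.
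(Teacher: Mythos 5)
Your strategy --- reuse the two-step contraction from Lemma~\ref{lem:sing_smoothability} --- is the right one, but the choice of smoothing $\cC_\dvr$ has a real gap, and it is precisely the part you brush off as a ``routine check'' at the end. You take $\cC_\dvr$ regular at every node except for an $A_1$-singularity at $p_j$, $j\le k$, and then invoke Smyth's contraction lemma by asserting ``the core $E$ is balanced''. But here $E=\phi^{-1}(q)$ is the \emph{maximal} contracted genus~$1$ subcurve, which in general properly contains the core $Z$, with the tails $R_i$ attached at varying distances $l_i$ from $Z$; it is not the smooth elliptic curve of Lemma~\ref{lem:sing_smoothability}. After your resolution the strict transforms of $R_{k+1},\ldots,R_m$ sit at distances $l_{k+1},\ldots,l_m$ from $Z$ and those of $R_1,\ldots,R_k$ at $l_1+1,\ldots,l_k+1$; nothing forces the former to all be equal, nor the latter to all be strictly larger. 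When these fail the maximal unmarked balanced subcurve in $\cC_0^{\rm ss}$ is not bounded by all $m$ tails at once, \cite[Lemma~2.12]{SMY1} does not yield an $m$-elliptic fold point with the prescribed branch partition, and the subsequent collapse by $\OO(\overline\Sigma)$ cannot produce the given $(k,m-k)$-singularity. This is a discrete condition on the marked dual graph, not something repaired by the ``remaining freedom'' in the smoothing coefficients.

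The paper's proof closes this by prescribing the $A_n$-type of the total space at \emph{every} node $R_i\cap E$, not only at $p_1,\dots,p_k$: setting $l=\max\{l_{k+1},\dots,l_m\}$ and $L=\max\{l_1,\dots,l_m\}$, one takes $A_{l-l_i}$ at $R_i\cap E$ for $i>k$ and $A_{L-l_i+1}$ for $i\le k$, so that after resolving the total space the ``elliptic'' tails all sit at distance $l$ and the ``rational'' ones all at distance $L+1>l$. Only then is $E^{\rm bal}$, the union of components at distance $<l$ from $Z$, balanced with $m$ boundary branches, and Smyth's twisted line bundle $\omega_{\cC^{\rm ss}_\dvr/\dvr}\bigl(\sum_{F\subseteq E^{\rm bal}}(l+1-l(F,Z))F\bigr)\otimes\OO_{\cC^{\rm ss}_\dvr}(\Sigma)$ applies. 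Conversely, the ``main difficulty'' you flag --- tuning parameters so $\hC_0$ reproduces the analytic germ of $\widehat C$ exactly --- is not a difficulty at all: by the classification lemma a $(k,m-k)$-singularity is determined up to analytic isomorphism by $k$ and $m$ alone, so once the branch partition is correct the identification $\hC_0\cong\widehat C$ (up to an automorphism of $C$) is automatic.
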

  A statement of this kind is in general far from true, see Remark~\ref{rem:compatible_smoothing} below.
  The proof of this claim follows the lines of the two-step contraction described in the proof of Lemma~\ref{lem:sing_smoothability}. It will be useful to treat $C$ as a stable curve marked with $f^{-1}(Q)$ for a generic quadric $Q\subseteq \PP^r$.
  \begin{proof} Assume that $\widehat C$ has a $(k,m-k)$-singularity at $q$; denote by $E:=\phi^{-1}(q)\subset C$, which is the maximal genus $1$ unmarked (i.e. contracted by $f$) connected subcurve of $C$. By assumption the components of $C$ adjacent to $E$ are marked (i.e. not contracted by $f$), and they can be partitioned into $R_1,\ldots,R_k$ mapping to the genus $0$ part of the $(k,m-k)$-singularity, and $R_{k+1},\ldots,R_m$ parametrising the branches of the $(m-k)$-elliptic fold. Let $l_i$ denote the distance of $R_i$ from the core $Z$, which can be determined from the marked dual graph. Let $l=\max\{l_{k+1},\ldots,l_m\}$ and $L=\max\{l_1,\ldots,l_m\}$. Consider a smoothing $\cC_{\dvr}$ of $C$ with an $A_{l-l_i}$ singularity at $R_i\cap E$ for $i=k+1,\ldots,m$, and an $A_{L-l_i+1}$-singularity at $R_i\cap E$ for $i=1,\ldots,k$. Extend the markings $f^{-1}(Q)$ to a horizontal Cartier divisor $\Sigma$. Let $\cC_{\dvr}^{\rm{ss}}$ be the semistable model with regular total space; notice that in the central fiber the strict transforms of $R_{k+1},\ldots,R_m$ are at distance $l$ from the core, while all other marked components are further apart. Let $E^{\rm{bal}}$ be the maximal unmarked balanced subcurve inside $\cC_0^{\rm{ss}}$, including all the components at distance less than $l$ from $Z$. We may now use Smyth's line bundle:
 \[\mathcal L_1=\omega_{\cC^{\rm{ss}}_{\dvr}/\dvr}\left( \sum_{F\subseteq E^{\rm{bal}}}(l+1-l(F,Z))F\right)\otimes \OO_{\cC^{\rm{ss}}_{\dvr}}(\Sigma),\]
 to perform the contraction of $E^{\rm{bal}}$, obtaining a family $\overline{\cC}_{\dvr}\to\dvr$ whose central fiber has an $m$-elliptic fold point. Notice that the contraction is otherwise an isomorphism, due to the stability condition.
 As in Lemma~\ref{lem:sing_smoothability} we may then perform a second contraction by using the line bundle $\mathcal L_2:=\OO_{\overline{\cC}_{\dvr}}(\overline{\Sigma})$; we denote by $\hC_{\dvr}$ the resulting family of curves. It may be seen that the central fiber $\hC_0$ is isomorphic to $\widehat C$, and so is the resulting contraction up to an automorphism of $C$. \end{proof}

Let $\hL$ be an extension of $\widehat{L}:=\hat{f}^*\OO_{\PP^r}(1)$ on $\hC$, which exists because deforming line bundles on curves is unobstructed.
In order to extend $\hat{f}$ to $\widehat{F}\colon \hC_{\dvr}\to\PP^r$ all we have to show is that the $r+1$ sections $\hat{s}_0,\dots, \hat{s}_{r+1}$ representing the map $\hat{f}$ extend to sections of $\hL.$
Thus it is enough to verify that $H^1(\widehat{C},\widehat{L})=0$ \cite{Wang}. Once this is done we are going to obtain the smoothing of the original map by precomposing with the contraction $F\colon\cC_{\dvr}\xrightarrow{\phi}\hC_{\dvr}\xrightarrow{\widehat F}\PP^r$.

Since all the curves we are considering are Cohen-Maculay, they have a dualising sheaf $\omega_{\widehat{C}}$ such that for any line bundle $\widehat{L}$:
\[H^1(\widehat{C},\widehat{L})=(H^0(\widehat{C},\widehat{L}^{-1}\otimes\omega_{\widehat{C}}))^{\vee}.\]
It is known \cite[IV, \S~3]{serre} that  $\omega_{\widehat{C}}$ can be described as the subsheaf of $\nu_* \omega_{\overline{C}}\otimes K(\overline{C})$ (where $\nu\colon\bar C\to\widehat{C}$ is the normalisation) satisfying: for every open $U\subseteq\widehat{C}$ a rational $1$-form $\omega\in \nu_*\left( \omega_{\overline{C}}\otimes K(\overline{C})\right)(U)$ is a section of $\omega_{\widehat{C}}(U)$ if and only if:
\[\sum_{q\in \nu^{-1}(0)}\rm{Res}(\nu^*(f)\omega) =0 \quad \quad \forall f\in\OO_{\widehat C}(U)\]

From this description it is patent that the pullback of $\omega_{\widehat{C}}$ to the normalisation restricts to a line bundle of degree $-1$ or $0$ on every branch, according to it being one of the $k$ or $m-k$ components. Hence it can be seen from the normalisation exact sequence and Serre duality that $H^1(\widehat{C},\widehat{L})=0$ as soon as $\hat L$ has positive degree on one of the branches.
 \end{proof}

 \begin{remark}\label{rem:compatible_smoothing}
In general it is not possible to find compatible smoothings as above: pick any genus $2$ curve with a rational tail and map it to the ramphoid cusp, a planar singularity with local equation $y^2-x^5=0$. Then a compatible smoothing can be found only if the rational tail is attached to a Weierstrass point of the genus $2$ curve, as can be seen by computing the semi-stable model of the ramphoid cusp. This example was kindly suggested to us by Prof. D.I. Smyth.
\end{remark}

 \begin{remark}
A way around the construction of a compatible smoothing, would be to appeal to \cite[Lemma 2.9]{SMYtowards} and prove that the smoothability of $f$ only depends on its restriction to the $k$ rational tails, namely it is independent of the $k$-pointed elliptic curve that $f$ contracts.
\end{remark}

\section{The moduli space of $1$-stable maps with $p$-fields}\label{section:p-fields}
We adapt Chang-Li's theory of $p$-fields to cuspidal maps. This is a word-by-word repetition of the arguments in \cite{CLpfields} once noticed that they carry over to families of at worst cuspidal curves. We provide the non-expert reader with a r\'{e}sum\'{e} of some of the key ideas contained in \cite{CLpfields}; this section can otherwise be skipped.
\subsection{Moduli of sections}
\begin{dfn}
 Let $B$ be an algebraic stack and let $\pi\colon \cC\to B$ be a flat proper morphism of finite presentation, which is representable by algebraic spaces, and whose geometric fibers are reduced l.c.i. curves. Let $\mathcal Z$ be an algebraic stack,  representable, quasi-projective, and smooth over $\cC$. The \emph{cone of sections} of $\mathcal Z$ over $\cC$ is a $B$-stack $\mathfrak S$ defined by:
 \[\mathfrak{S}(S\to B)=\left\{\text{sections of}\ \mathcal Z_S\to\cC_S\right\};\]

 The groupoid $\mathfrak{S}$ is an algebraic stack representable and quasi-projective over $B$ \cite[Proposition~2.3]{CL}.
 
\end{dfn}
 To fix the notation let us draw the following diagram:
 \bcd
& \mathcal Z\ar[d] \\
\cC_{\mathfrak S}\ar[r]\ar[ur,bend left,"\mathfrak e"]\ar[d,"\pi_{\mathfrak S}"] & \cC\ar[d,"\pi"] \\
\mathfrak S \ar[r] & B
\ecd
where $\mathfrak e$ denotes the universal section. 
Let us see some examples of the above construction we will be interested in. 

\subsubsection*{Direct image cones}
When $\mathcal Z=\rm{Vb}(\mathcal L)$ for a line bundle $\mathcal L$ on $\cC$, the algebraic stack  $\mathfrak{S}\to B$ representing sections of $\mathcal L$ can be constructed as:   
\[C(\pi_*\mathcal L):=\Spec_B\operatorname{Sym}^{\bullet}(\R^1\pi_*\mathcal L^{\vee}\otimes\omega_{\pi}).\]
 This is essentially because $ \R^1\pi_*\mathcal L^{\vee}\otimes\omega_{\pi}$ commutes with pullbacks, and it has the desired modular interpretation by Serre duality. 

\subsubsection*{Moduli of stable maps and $1$-stable maps} 
Recall that $\mathfrak{P}$ denotes the universal Picard stack.
Let $X\subseteq \PP^r$ be a projective variety and let $\mathcal Z_X$ be defined by the following cartesian diagram over $\cC_{\mathfrak{P}}$:
\bcd
\mathcal Z_X\ar[r,"/\Gm"]\ar[d,hook]\ar[dr,phantom,"\Box"] & X\times \cC_{\mathfrak{P}}\ar[d,hook] \\
\mathcal L^{\oplus r+1}\setminus 0_{\cC}\ar[r,"/\Gm"] & \PP^r\times\cC_{\mathfrak{P}}
\ecd
Then $\oM_1(X,\beta)$ is the open substack of the moduli space $\mathfrak{S}_X$ of sections of $\mathcal Z_X\to\cC_{\pP}$ defined by the stability condition.  This is the point of view we have already taken for $X=\PP^n$ in Proposition \ref{prop:smoothability} to describe the local equations of the moduli space of maps.

Analogously the moduli space of $1$-stable maps $\oM_1^{(1)}(X,\beta)$ can be thought of as an open inside the moduli space of sections of 
\[\widehat{\mathcal Z}_X=(\operatorname{Vb}(\widehat{\mathcal{L}}^{\oplus r+1}_{\hP})\setminus 0_{\widehat{\cC}})\times_{\PP^r} X\]
over $\widehat{\cC}_{\hP}.$

\subsubsection*{Moduli of $1$-stable maps with $p$-fields}
 In this section we denote by $\hrM$ the moduli space of $1$-stable maps $\oM_1^{(1)}(\PP^4,d)$ and by $\widehat{\mathcal P}_{\hrM}=\hL_{\hrM}^{\otimes -5}\otimes\omega_{\hat\pi_{\hrM}}.$ 
 
The moduli space of $p$-fields is defined as the cone of sections of the line bundle $\widehat{\mathcal P}_{\hrM}$ over $\hC_{\hrM}$:

\begin{dfn}
 The moduli space of $1$-stable maps with $p$-fields: \[\Mone{1}{\PP^4}{d}^p:=C(\hat\pi_{\hrM,*}(\widehat{\mathcal P}_{\hrM}))\] parametrises $1$-stable maps:
 \bcd
 \hC_S\ar[r,"\hat f_S"]\ar[d,"\hat\pi_S"] & \PP^4 \\
 S &
 \ecd
 with a $p$-field $\hat\psi\in H^0(\hC_S,\hat f_{S}^*\OO_{\PP^4}(-5)\otimes\omega_{\hat\pi_{S}})$.
\end{dfn}

We use the abbreviation $\widehat{\pazocal P}:=\Mone{1}{\PP^4}{d}^p$. Employing the above description of the moduli space of $1$-stable maps,  $\widehat{\pazocal P}$ can also be thought of as an open in the moduli space of sections of the vector bundle $\rm{Vb}(\hL_{\hP}^{\oplus 5}\oplus \widehat{\mathcal P}_{\hP}).$

\subsection{Obstruction theories}
With the above, the morphism $\mathfrak S\to B$ admits a relative dual perfect obstruction theory:
\[ \phi_{\mathfrak S/B}\colon \mathbb T_{\mathfrak S/B}\to \mathbb E_{\mathfrak S/B}:=\R^\bullet\pi_{\mathfrak S,*}\mathfrak e^* T_{\mathcal Z/\cC}\]
For the proof see \cite[Proposition 2.5]{CLpfields} and notice that it relies on general properties of obstruction theories and the cotangent complex, and the fact that $\mathcal Z\to\cC$ is smooth,
but never on the specification that $\cC\to B$ is a family of \emph{nodal} curves.

Let us review the examples above:
\begin{itemize}
 \item for the direct image cone of a line bundle $\mathcal L$ the dual obstruction theory is $\mathbb E_{C(\pi_*\mathcal L)/B}=\R^\bullet\pi_*\mathcal L$;
 \item the moduli space $\hrM=\oM_1^{(1)}(\PP^4,d)$ has a dual obstruction theory relative to $\hP$ given by $\mathbb{E}_{\hrM/\hP}= \R^\bullet\hat\pi_{\hrM,*}(\bigoplus_0^4 \hL_{\hrM})$;
 \item in the case of $1$-stable maps with $p$-fields $\widehat{\pazocal P}=\oM_1^{(1)}(\PP^4,d)^p$, we get $\mathbb E_{\widehat{\pazocal P}/\hP}=\R^\bullet\hat\pi_{\widehat{\pazocal P},*}(\hL_{\widehat{\pazocal P}}^{\oplus 5}\oplus\widehat{\mathcal P}_{\widehat{\pazocal P}}).$
\end{itemize}
We review the compatibility of various obstruction theories for the moduli spaces mentioned above. 
\begin{lem}
 $\hP$ is a smooth Artin stack of dimension $0$. Furthermore there is a compatible triple of dual perfect obstruction theories:
 \bcd
 \hat\rho ^*\mathbb T_{\hP/\hM}[-1]\ar[r]\ar[d,"\wr"] &  \mathbb T_{\hrM/\hP} \ar[r]\ar[d] &  \mathbb T_{\hrM/\hM} \ar[r,"{[}1{]}"]\ar[d] & {} \\
 \R^\bullet\hat\pi_{\hrM,*}(\OO_{\hC})\ar[r] & \R^\bullet\hat\pi_{\hrM,*}(\bigoplus_0^4 \hL_{\hrM})\ar[r] & \R^\bullet\hat\pi_{\hrM,*}(f_{\hrM}^* T_{\PP^4})\ar[r,"{[}1{]}"] & {}
 \ecd
 implying that $ \mathbb E_{\hrM/\hP}=\R^\bullet\hat\pi_{\hrM,*}(\bigoplus_0^4 \mathcal L_{\hrM})$ gives the standard Behrend-Fantechi-Viscardi virtual class on $\hrM$.
\end{lem}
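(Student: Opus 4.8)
The plan is to transcribe, step by step, the argument of Chang--Li \cite{CLpfields}, checking at each point that the hypothesis of \emph{nodality} is never used: all that intervenes is that the fibres of $\hat\pi\colon\hC\to\hM$ are connected, reduced, l.c.i.\ curves of arithmetic genus $1$ — which holds for at worst cuspidal curves, since a cusp is a plane (hence l.c.i.), reduced singularity. So the first step is to record this observation and then to import the relevant statements of \cite{CL,CLpfields} wholesale.

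For the smoothness and dimension of $\hP$ I would argue as follows. By \cite{VISC} (cf.\ the remark following Viscardi's theorem above) the stack $\hM$ is smooth, and since the weight decoration is locally constant while $3g-3+n=0$ for $g=1$, $n=0$, it has dimension $0$. Now $\hP$ is an open substack — carved out by an open stability condition — of the relative Picard stack of $\hat\pi\colon\hC\to\hM$, which is smooth over $\hM$ because the obstruction to deforming a line bundle on a fibre $C$ lies in $H^2(C,\OO_C)=0$, and has relative dimension $h^1(C,\OO_C)-h^0(C,\OO_C)=0$. Hence $\hP$ is a smooth Artin stack of dimension $0$. The same computation identifies $\mathbb T_{\hP/\hM}$ with $\R^\bullet\hat\pi_*\OO_{\hC}[1]$; pulling back along $\hat\rho\colon\hrM\to\hP$ and invoking cohomology and base change then realises the left-hand column of the diagram as the asserted isomorphism onto $\R^\bullet\hat\pi_{\hrM,*}\OO_{\hC}$.

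Next I would produce the two outer obstruction theories via the moduli-of-sections formalism: both $\hrM\to\hM$ and $\hrM=\oM_1^{(1)}(\PP^4,d)\to\hP$ are open substacks of moduli of sections of smooth $\cC$-stacks, so \cite[Prop.~2.3]{CL} and \cite[Prop.~2.5]{CLpfields} apply unchanged. Viewing $\hrM\to\hM$ as the moduli of sections of $\PP^4\times\hC\to\hC$ gives $\mathbb E_{\hrM/\hM}=\R^\bullet\hat\pi_{\hrM,*}(f_{\hrM}^*T_{\PP^4})$, which is by construction the Behrend--Fantechi--Viscardi obstruction theory. Viewing $\hrM\to\hP$ as the moduli of sections of $\operatorname{Vb}(\hL_{\hP}^{\oplus5})\setminus 0_{\hC}\to\hC_{\hP}$, and using that the vertical tangent bundle of a vector bundle restricted along a section is that bundle again, gives $\mathbb E_{\hrM/\hP}=\R^\bullet\hat\pi_{\hrM,*}(\bigoplus_0^4\hL_{\hrM})$. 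The bottom row of the diagram I would obtain by applying $\R^\bullet\hat\pi_{\hrM,*}$ to the pullback along $f_{\hrM}$ of the Euler sequence of $\PP^4$, i.e.\ to the short exact sequence of vector bundles $0\to\OO_{\hC}\to\hL_{\hrM}^{\oplus5}\to f_{\hrM}^*T_{\PP^4}\to0$ on $\hC_{\hrM}$; the top row is the dualised, rotated triangle of cotangent complexes of $\hrM\xrightarrow{\hat\rho}\hP\to\hM$.

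The hard part — indeed essentially the only place where any care is needed — is checking that these three obstruction-theory morphisms intertwine the two distinguished triangles, i.e.\ that the squares commute in the derived category. This is the functoriality of the moduli-of-sections obstruction theory under the $\cC_{\hP}$-morphism $\operatorname{Vb}(\hL_{\hP}^{\oplus5})\setminus0_{\hC}\to\PP^4\times\hC_{\hP}$ (whose fibrewise effect is the Euler sequence above), established in \cite[\S4]{CL}; its proof is formal in the smoothness of the relevant $\mathcal Z$ over $\cC$ and therefore transfers verbatim. Granting this, the compatible triple follows, $\phi_{\hrM/\hP}$ is a perfect obstruction theory, and — since $\hP\to\hM$ is smooth — functoriality of virtual classes under a smooth base \cite{costello,CL} yields that the class defined by $\mathbb E_{\hrM/\hP}$ coincides with the one defined by $\mathbb E_{\hrM/\hM}$, namely the standard Behrend--Fantechi--Viscardi class. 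The genuine ``obstacle'' is thus entirely the bookkeeping of importing \cite{CL,CLpfields} into the cuspidal setting: no new geometric input is needed, only the observation that reducedness and the l.c.i.\ property — not nodality — are what those arguments rely on.
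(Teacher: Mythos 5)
Your proposal is correct and follows essentially the same route as the paper: smoothness of $\hP$ from the l.c.i.\ deformation theory of nodes and cusps, the moduli-of-sections description of the two outer obstruction theories, the Euler sequence for the bottom row, compatibility of the squares via \cite[Lemma~2.8]{CLpfields}, and a functoriality argument (the paper cites \cite{Manolache-pullback} where you cite \cite{costello,CL}, but these deliver the same conclusion). The only cosmetic difference is that the paper invokes \cite[Prop.~6.3]{BF} directly for the Gorenstein case to identify $\mathbb E_{\hrM/\hM}$ with the Behrend--Fantechi--Viscardi obstruction theory, whereas you derive it from the cone-of-sections construction.
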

\begin{proof}
 The first statement follows from deformation theory: the projection $\hat\rho\colon\hP\to\hM$ is unobstructed of relative dimension $0$ and $\hM$ is a smooth Artin stack of dimension $0$, since both nodal and cuspidal singularities are l.c.i., so obstructions to their deformations are contained in $\operatorname{Ext}^2_{\OO_{\widehat{C}}}(\Omega_{\widehat{C}},\OO_{\widehat{C}})=0$.
 
 The fact that $\mathbb T_{\hrM/\hM}\to \mathbb E_{\hrM/\hM}:=\R^\bullet\hat\pi_{\hrM,*}(f_{\hrM}^* T_{\PP^4})$ is a perfect obstruction theory when $\hC\to\hrM$ is a family of Gorenstein curves is proved in \cite[Proposition 6.3]{BF}.
 
 The lower row in the above diagram is induced by the Euler sequence of $\PP^4$. The middle column comes from identifying the space of stable maps as an open substack of the cone of sections (see above) of $\rm{Vb}(\bigoplus_0^4\hL)$ over $\hP$. The existence of such a commutative diagram is \cite[Lemma 2.8]{CLpfields}.
 The final claim follows from functoriality of virtual pullbacks \cite{Manolache-pullback}.
\end{proof}

\begin{lem}
 There is a compatible triple of dual perfect obstruction theories \[(\R^\bullet\hat\pi_{\widehat{\pazocal P},*}(\widehat{\mathcal P}_{\widehat{\pazocal P}}),\R^\bullet\hat\pi_{\widehat{\pazocal P},*}(\hL_{\widehat{\pazocal P}}^{\oplus 5}\oplus\widehat{\mathcal P}_{\widehat{\pazocal P}}),\R^\bullet\hat\pi_{\widehat{\pazocal P},*}(\hL_{\widehat{\pazocal P}}^{\oplus 5}))\] for the triangle:
 \bcd
 \widehat{\pazocal P}\ar[rr,"\hat\rho"]\ar[dr] & & \hrM\ar[dl] \\
 & \hP &
 \ecd
\end{lem}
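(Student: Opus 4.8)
The plan is to follow the blueprint already set up for stable maps in Chang--Li \cite{CLpfields} and simply observe that the construction of the compatible triple there never used the nodality of the curves, only that the fibres of $\hat\pi$ are Gorenstein (indeed l.c.i.) curves, which holds here because cuspidal singularities are l.c.i. First I would recall the general principle: given the cone of sections $\mathfrak S\to B$ of a smooth $\cC$-stack $\mathcal Z$, the relative dual obstruction theory is $\mathbb E_{\mathfrak S/B}=\R^\bullet\pi_{\mathfrak S,*}\mathfrak e^*T_{\mathcal Z/\cC}$, and if $\mathcal Z$ sits over $\cC_B$ with $B\to B'$ a further morphism, the compatibility of obstruction theories in the triangle $\mathfrak S\to B\to B'$ is governed by the exact triangle of cotangent complexes together with the projection formula. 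In our situation $B'=\hP$ and $B$ is the total space $C(\hat\pi_{\hrM,*}\widehat{\mathcal P}_{\hrM})=\widehat{\pazocal P}$, realised as the open substack of the cone of sections of $\operatorname{Vb}(\hL_{\hP}^{\oplus 5}\oplus\widehat{\mathcal P}_{\hP})$ over $\hP$.

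Concretely I would argue as follows. The morphism $\widehat{\pazocal P}\to\hP$ is the cone of sections of the smooth $\hC_{\hP}$-stack $\operatorname{Vb}(\hL_{\hP}^{\oplus 5}\oplus\widehat{\mathcal P}_{\hP})$, so the relative dual obstruction theory is $\R^\bullet\hat\pi_{\widehat{\pazocal P},*}(\hL_{\widehat{\pazocal P}}^{\oplus 5}\oplus\widehat{\mathcal P}_{\widehat{\pazocal P}})$, as recorded in the bullet list above. On the other hand $\hrM\to\hP$ is the cone of sections of $\operatorname{Vb}(\hL_{\hP}^{\oplus 5})$ (an open therein), with relative dual obstruction theory $\R^\bullet\hat\pi_{\hrM,*}(\bigoplus_0^4\hL_{\hrM})$; pulling back along $\hat\rho\colon\widehat{\pazocal P}\to\hrM$ we get $\hat\rho^*$ of this theory. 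Finally the map $\hat\rho$ itself is again a cone of sections, namely of $\operatorname{Vb}(\widehat{\mathcal P}_{\hrM})$ over $\hC_{\hrM}$, so by the direct-image-cone example its relative dual obstruction theory is $\R^\bullet\hat\pi_{\widehat{\pazocal P},*}(\widehat{\mathcal P}_{\widehat{\pazocal P}})$. The three theories fit into a morphism of distinguished triangles because the splitting $\hL_{\hP}^{\oplus 5}\oplus\widehat{\mathcal P}_{\hP}$ of the vector bundle over $\hC_{\hP}$ induces, after applying $\R^\bullet\hat\pi_*$, the exact triangle
\[
\R^\bullet\hat\pi_{\widehat{\pazocal P},*}(\widehat{\mathcal P}_{\widehat{\pazocal P}})\to\R^\bullet\hat\pi_{\widehat{\pazocal P},*}(\hL_{\widehat{\pazocal P}}^{\oplus 5}\oplus\widehat{\mathcal P}_{\widehat{\pazocal P}})\to\R^\bullet\hat\pi_{\widehat{\pazocal P},*}(\hL_{\widehat{\pazocal P}}^{\oplus 5})\xrightarrow{[1]}{},
\]
which is compatible with the triangle of cotangent complexes $\hat\rho^*\mathbb L_{\hrM/\hP}\to\mathbb L_{\widehat{\pazocal P}/\hP}\to\mathbb L_{\widehat{\pazocal P}/\hrM}\xrightarrow{[1]}{}$ via the obstruction theory maps. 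This last compatibility is exactly the content of \cite[Lemma~2.8]{CLpfields}, whose proof is formal and does not see the type of singularities of the curve; I would just cite it.

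The only genuine point to check — and the one I expect to be the mildest of obstacles — is that each of the three relative obstruction theories is in fact \emph{perfect} (i.e. the relevant $\R^\bullet\hat\pi_*$ is a two-term complex of locally free sheaves in $[-1,0]$). For this one needs that for a Gorenstein curve $\widehat C$ and a line bundle $N$ on it, $\R^\bullet\hat\pi_*N$ is perfect of amplitude $[0,1]$: this holds because $\hat\pi$ is flat proper of relative dimension $1$ with Gorenstein fibres, so cohomology and base change plus the vanishing of $\R^{\geq 2}$ give perfectness, and Serre duality $\R^1\hat\pi_*N\cong(\hat\pi_*(N^{\vee}\otimes\omega_{\hat\pi}))^{\vee}$ makes the dual statement transparent. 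Since cuspidal curves are l.c.i. hence Gorenstein, this applies verbatim to $N=\hL_{\widehat{\pazocal P}}$ and $N=\widehat{\mathcal P}_{\widehat{\pazocal P}}=\hL_{\widehat{\pazocal P}}^{\otimes-5}\otimes\omega_{\hat\pi}$, and to their direct sums. Everything else is the formal yoga of cones of sections from \S\ref{section:p-fields}, so the proof is a short citation-and-assembly argument rather than a computation.
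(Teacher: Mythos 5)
Your proposal is correct and follows essentially the same route the paper takes: realise all three morphisms as cones of sections over families of l.c.i.\ curves, invoke the general compatibility of the resulting dual obstruction theories via \cite[Lemma~2.8]{CLpfields}, and observe that nothing in that argument uses nodality beyond the Gorenstein/l.c.i.\ property. Your explicit remark on perfectness via Serre duality is a mild (and welcome) elaboration of what the paper leaves implicit.
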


Notice that the virtual rank of $\mathbb E_{\widehat{\pazocal P}/\hP}:=\R^\bullet\hat\pi_{\widehat{\pazocal P},*}(\hL_{\widehat{\pazocal P}}^{\oplus 5}\oplus\widehat{\mathcal P}_{\widehat{\pazocal P}})$ is $0$, hence it endows the moduli space of $1$-stable maps with $p$-fields with a cycle class of dimension $0$. However $\widehat{\pazocal P}$ is not proper. In the next section we describe Chang-Li's cosection (depending on the choice of $\w\in\k[x_0,\ldots,x_4]_5$) of the obstruction bundle, and show that the corresponding localised cycle is supported on the proper subtack $\Mone{1}{X}{d},$ where $X=V(\w)$ is the quintic threefold, that is the degeneracy locus of the cosection.

\subsection{Cosection localisation and virtual pullback}\label{sec:cosection}
Recall Kiem-Li's machinery of cosection localised virtual classes \cite[Theorem 1.1]{KLcosection}:

\begin{teo*}[Localisation by cosection]\label{thm:KLlocalisation}
Let $Y\to S$ be a morphism of DM type between algebraic stacks, with $Y$ Deligne-Mumford and $S$ smooth, endowed with a perfect
obstruction theory. Suppose the obstruction sheaf $\rm{Ob}_Y$ admits a
surjective homomorphism $\sigma:\rm{Ob}_{Y|_U } \to \OO_U$
over an open $U\subseteq Y$.

Let $\iota\colon Y(\sigma):=Y\setminus U\hookrightarrow Y$,
then $(Y,\sigma)$ has a localised virtual cycle:
$$[Y]^{\rm{vir}}_{\rm{loc}}\in A_* Y(\sigma).$$
This cycle enjoys the usual properties of the virtual cycles; it relates to
the usual virtual cycle $[Y]^{\rm{vir}}$ via 
$[Y]^{\rm{vir}} = \iota_*[Y]^{\rm{vir}}_{\rm{loc}} \in A_*Y  .$
\end{teo*}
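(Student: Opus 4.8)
This is \cite[Theorem~1.1]{KLcosection}; we indicate the construction, as it underlies \S\ref{sec:cosection} and \S\ref{section:main_proof}. Recall that, $S$ being smooth, $Y$ carries a virtual class $\vir Y=0^!_{\mathfrak E}[\C_Y]\in A_*Y$ following Behrend--Fantechi: $\mathfrak E=h^1/h^0(\mathbb E^\vee)$ is the vector bundle stack of the (dual) perfect obstruction theory, $\C_Y\hookrightarrow\mathfrak E$ the closed embedding of the intrinsic normal cone, and $0^!_{\mathfrak E}$ the refined intersection with the zero section. The plan has three steps: \textbf{(I)} refine $0^!_{\mathfrak E}$ to a \emph{cosection-localised Gysin homomorphism} valued in $A_*Y(\sigma)$; \textbf{(II)} show that $\C_Y$ lies in the subcone-stack on which this refinement is defined; \textbf{(III)} set $\virloc Y:=0^!_{\mathfrak E,\sigma}[\C_Y]$ and read off the properties.

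\emph{Step \textbf{(I)}.} Pick (\'etale-locally on $Y$, then glue) a presentation $\mathfrak E=[E_1/E_0]$ by vector bundles, so that $\rm{Ob}_Y=\operatorname{coker}(E_0\to E_1)$, and lift the cosection to a surjection $\bar\sigma\colon E_1\rvert_U\to\OO_U$; the \emph{kernel cone} is the closed subcone $E_1(\sigma):=\ker(\bar\sigma\rvert_U)\cup E_1\rvert_{Y(\sigma)}\subseteq E_1$. One constructs a localised Gysin homomorphism $0^!_{E_1,\sigma}\colon A_*E_1(\sigma)\to A_*Y(\sigma)$ by an intersection-theoretic procedure (deformation to the normal cone of the zero section of $E_1$, combined with a blow-up that separates the part of a cycle lying over $U$ --- where $\ker\bar\sigma$ is an honest subbundle carrying an ordinary Gysin map --- from the part over $Y(\sigma)$, handled by $0^!_{E_1\rvert_{Y(\sigma)}}$), and checks that it does not depend on the choices. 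The crucial input is that $\ker\bar\sigma\hookrightarrow E_1\twoheadrightarrow\OO_U$ exhibits $E_1\rvert_U/\ker\bar\sigma$ as the \emph{trivial} line bundle, so the excess Euler classes that would otherwise obstruct well-definedness (and independence of the presentation) vanish. One then verifies compatibility with proper pushforward and smooth/l.c.i.\ pullback, and that $\iota_*\circ 0^!_{E_1,\sigma}=0^!_{E_1}$ on cycles supported on $E_1(\sigma)$. Quotienting by $E_0$ yields $0^!_{\mathfrak E,\sigma}\colon A_*\mathfrak E(\sigma)\to A_*Y(\sigma)$ with $\iota_*\circ 0^!_{\mathfrak E,\sigma}=0^!_{\mathfrak E}$, where $\mathfrak E(\sigma):=[E_1(\sigma)/E_0]$.

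\emph{Step \textbf{(II)}.} Work in a local model: $Y=Z(s)\hookrightarrow M$ for a section $s\in\Gamma(M,V)$ of a vector bundle on a smooth $M$, with obstruction theory $\mathbb E=[V^\vee\rvert_Y\to\Omega_M\rvert_Y]$, so that $E_1=V\rvert_Y$, $\rm{Ob}_Y=\operatorname{coker}(\operatorname{d}\!s\colon T_M\rvert_Y\to V\rvert_Y)$, and the embedding $C_{Y/M}\hookrightarrow V\rvert_Y$ cut out by $s$ represents $\C_Y=[C_{Y/M}/T_M\rvert_Y]$. After shrinking, extend $\bar\sigma$ to $\tilde\sigma\colon V\to\OO$ on a neighbourhood of $U$ in $M$ and put $h:=\tilde\sigma(s)$; since $s\rvert_Y=0$ one has $h\in I_{Y/M}$. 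A short Leibniz computation gives $\operatorname{d}\!h\rvert_Y=\tilde\sigma(\operatorname{d}\!s\rvert_Y)=\bar\sigma\circ\operatorname{d}\!s$, which vanishes on $U$ \emph{precisely because} $\sigma$ is a well-defined cosection of $\rm{Ob}_Y$, i.e.\ $\bar\sigma$ annihilates $\operatorname{im}(\operatorname{d}\!s)$. Thus $h$ and its differential both vanish along $Y\cap U$, so $\operatorname{ord}(h)>\operatorname{ord}(s)$ along every arc in $M$ through a point of $Y\cap U$; hence the linear function $\bar\sigma$ vanishes on the support of $C_{Y/M}\rvert_U$, whence $C_{Y/M}\subseteq E_1(\sigma)$, i.e.\ $\C_Y\subseteq\mathfrak E(\sigma)$.

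\emph{Step \textbf{(III)}.} By Step \textbf{(II)}, $\virloc Y:=0^!_{\mathfrak E,\sigma}[\C_Y]\in A_*Y(\sigma)$ is defined, and $\iota_*\virloc Y=\iota_*\bigl(0^!_{\mathfrak E,\sigma}[\C_Y]\bigr)=0^!_{\mathfrak E}[\C_Y]=\vir Y$ by Step \textbf{(I)}. The remaining properties (behaviour under proper pushforward and smooth/l.c.i.\ pullback, deformation invariance, torus-equivariant refinements) are inherited from the corresponding properties of $0^!_{\mathfrak E,\sigma}$ together with the usual Behrend--Fantechi formalism. I expect the main obstacle to be Step \textbf{(I)}: fabricating the localised Gysin homomorphism, proving it independent of the auxiliary presentation, and establishing $\iota_*\circ 0^!_{\mathfrak E,\sigma}=0^!_{\mathfrak E}$. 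Step \textbf{(II)}, by contrast, is the one-line Leibniz computation above once the local model is in place.
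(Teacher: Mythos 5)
The paper does not actually prove this statement: it is quoted verbatim as \cite[Theorem~1.1]{KLcosection}, and what follows in \S\ref{sec:cosection} is only a review of the construction of the localised Gysin map (your Step~(I)) so that it can be combined with virtual pullback. So there is no in-paper proof to compare against; I will instead assess your sketch on its own terms.

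Your three-step architecture is the right one and matches the logic of the Kiem--Li paper. However, both of the technical steps are described less accurately than you may realise, and in ways that matter.

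In Step~(I), the phrase ``deformation to the normal cone \ldots\ combined with a blow-up \ldots\ the excess Euler classes \ldots\ vanish'' does not describe what Kiem--Li actually do, and the appeal to an excess-intersection mechanism is a red herring. The paper's own recap in \S\ref{sec:cosection} has the genuine mechanism: for a cycle $[B]\subset E(\sigma)$ not contained in $E\rvert_{Y(\sigma)}$ one chooses a \emph{regularising morphism} $\nu\colon\widetilde Y\to Y$, proper and dominating a point of $U$, along which $\nu^*\sigma$ extends to a surjection $\nu^*E\twoheadrightarrow\OO_{\widetilde Y}(D)$ for a Cartier divisor $D$; one then Gysin-intersects inside the honest kernel bundle $\widetilde G=\ker\widetilde\sigma$ and \emph{caps with $[D]$}, $s^!_{\sigma,\rm{loc}}[B]=\frac{1}{k}\nu(\sigma)_*\bigl([D]\cdot s^!_{\widetilde G}[\widetilde B]\bigr)$. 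The divisor $D$ is what localises the answer to $Y(\sigma)$; your sketch never produces such a class on $Y(\sigma)$ from the part of the cycle lying over $U$, it only asserts that something glues. Well-definedness and $\iota_*\circ s^!_{\sigma,\rm{loc}}=s^!$ are then real theorems about this specific formula, not consequences of a vanishing Euler class.

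In Step~(II), the Leibniz computation $\operatorname{d}\!h\rvert_Y=\bar\sigma\circ\operatorname{d}\!s=0$ on $U$ is correct and is the right starting point, but the jump ``$h$ and $\operatorname{d}\!h$ vanish along $Y\cap U$, so $\operatorname{ord}(h)>\operatorname{ord}(s)$ along every arc through $Y\cap U$'' is a gap. For an arc $\gamma$ with $s\circ\gamma=t^m v(t)$, $v(0)\neq 0$, one has $h\circ\gamma=t^m\,\tilde\sigma(\gamma(t))(v(t))$, and the desired inequality $\operatorname{ord}(h\circ\gamma)>m$ is \emph{equivalent} to $\bar\sigma(v(0))=0$, which is exactly the conclusion you are trying to reach; the vanishing of $\operatorname{d}\!h$ only gives $\operatorname{ord}(h\circ\gamma)\geq 2$, which suffices when $m=1$ (transverse arcs) but says nothing for arcs through the singular locus, over which $C_{Y/M}$ may well have components. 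Scheme-theoretically the point is whether $[h]=0$ in $I/I^2$ over $U$; from $\operatorname{d}\!h\rvert_Y=0$ one only gets that $[h]$ lies in $\ker(I/I^2\to\Omega_M\rvert_Y)$, which is zero when $Y\hookrightarrow M$ is l.c.i.\ but not in general. This is precisely where the nontrivial content of \cite[\S4]{KLcosection} lies (their Lemma~4.4 and Corollary~4.5, the latter being what the paper cites, and which moreover imposes the hypothesis that the relative cosection lifts to an absolute one). Calling Step~(II) a ``one-line Leibniz computation'' underestimates it and inverts where the real difficulty is.

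So: right outline, citation in place of proof in the paper, but your Step~(I) mischaracterises the construction and your Step~(II) has a genuine logical gap in passing from ``cosection kills $\operatorname{im}(\operatorname{d}\!s)$'' to ``cosection kills the normal cone''.
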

We now review a slight generalisation of this, that combines it with Manolache's virtual pullback construction \cite{Manolache-pullback}; this can also be found in \cite[\S~5]{CLpfields}.

Let $Y\to S$ be as in the hypotheses of the theorem, or more generally such that there is a triple of compatible obstruction theories for a triangle $Y\to S\to T$ with $T$ smooth. Assume for simplicity that the obstruction theory $\mathbb E_{Y/S}$ is a vector bundle $E$  admitting a cosection:
\[E\xrightarrow{\sigma}\OO_Y\]
surjective on an open $U\subseteq Y$; denote by $Y(\sigma)$ the complement $Y\setminus U.$ Recall the following notation from \cite{KLcosection}:
\[G=\operatorname{Ker}\left(E\rvert_U\to\OO_Y\rvert_U\right),\qquad E(\sigma)=E\rvert_{Y(\sigma)}\cup G.\]
 Kiem and Li define a localised Gysin map:
 \[s^!_{\sigma,\rm{loc}}\colon A_*(E(\sigma))\to A_*(Y(\sigma)),\]
 which we explain in what follows.
 Let $[B]\in Z_*(E(\sigma))$ be a cycle represented by a closed integral substack. If $B\subset  E\rvert_{Y(\sigma)}$ then
  they use the standard Gysin morphism: \[s^!_{\sigma,\rm{loc}}[B]:=s^!_{E\rvert_{Y(\sigma)}}[B]\in A_*(Y(\sigma)).\]
 Suppose instead that $B$ is not contained in $E\rvert_{Y(\sigma)}.$ Then we may choose a variety with a \emph{regularising morphism} $\widetilde{Y}\xrightarrow{\nu} Y$ such that:
 \begin{itemize}\label{cond:deflocal}
 \item $\nu$ is proper and $\nu(\widetilde{Y})\cap U\neq\emptyset$;
 \item the pullback along $\nu$ of the cosection $\nu^*\sigma=:\tilde{\sigma}$ extends to a surjective morphism
 \[\nu^*E\xrightarrow{\tilde{\sigma}}\OO_{\widetilde{Y}}(D)\to 0\]
 for a Cartier divisor $D\subseteq \widetilde{Y}.$
 \item there is a closed integral $\widetilde{B}\subset\widetilde{G}=\operatorname{Ker}(\tilde{\sigma})$ such that $\tilde{\nu}_*[\widetilde{B}]=k[B]$ for some $k\in\mathbb N.$
 \end{itemize}
 We denote by  $\nu(\sigma)\colon D\to Y(\sigma)$ the restriction of $\nu$ to the divisor, and by $\tilde{\nu}\colon\widetilde{G}\to E(\sigma)$ the restriction of the natural map $\nu^*E\to E.$
 The localised Gysin pullback is then defined as:
 \[s^!_{\sigma,\rm{loc}}[B]:=\frac{1}{k} \nu(\sigma)_*\left([D]\cdot s^!_{\widetilde{G}}[\widetilde{B}]\right)\,\in\,A_*(Y(\sigma)).\]
 In \cite[\S~2]{KLcosection} they prove that: such $(\widetilde{Y},\nu,\widetilde B)$ always exist; the cycle $s^!_{\sigma,\rm{loc}}[B]$ is independent of the above choices; the construction preserves rational equivalences and thus defines the desired morphism on Chow groups.
 \begin{remark}
 If we consider $[Y]$ as a cycle in $E(\sigma)$, thought of as the zero section of $E$, notice that a natural choice for $\widetilde{Y}$ is $\operatorname{Bl}_{Y(\sigma)}Y.$
 \end{remark}
  Kiem and Li then proceed to extend the cosection-localised Gysin pullback to vector bundle stacks $h^1/h^0(\mathbb E)$ (this is relatively easy in the presence of a global resolution, which is the case in the situation at hand); they show that the intrinsic normal cone $\mathfrak C_{Y/S}$ is contained in the closed substack $h^1/h^0(\mathbb{E})(\sigma),$ at least when the cosection of the relative obstruction bundle lifts to a cosection of the absolute obstruction bundle \cite[Corollary~4.5]{KLcosection}. 
\begin{dfn}
Consider a cartesian diagram of stacks:
\bcd
Y'\ar[r,"\varphi'"]\ar[d,"\rho'"]\ar[dr,phantom,"\Box"] & Y\ar[d,"\rho"]\\
S'\ar[r,"\varphi"] & S
\ecd
with $\rho$ as above. Assume furthermore that we have a cosection 
\[\rm{Ob}_{Y/S}\to \OO_Y\]
that lifts to a cosection of the absolute obstruction sheaf $\rm{Ob}_Y$.
Then we can define a localised virtual pullback operation: \[(\rho')^!_{\mathbb E_{Y'/S'},\sigma'}\colon A_*(S')\to A_{*-\operatorname{rk}(\mathbb E)}(Y'(\sigma'))\] 
where $\mathbb E_{Y'/S'}=(\varphi')^*\mathbb E_{Y/S},\;\sigma'=(\varphi')^*(\sigma)$ and $Y'(\sigma')=Y(\sigma)\times_{Y} Y'$.
\end{dfn}
Replace $S'$ by a primitive cycle in it, i.e. assume that $S'$ is irreducible and reduced. Recall that pulling back $\mathbb E_{Y/S}$ along $\varphi'$ endows $\rho'$ with a relative perfect obstruction theory.  Furthermore we can pullback the cosection:
\[\rm{Ob}_{Y'/S'}\cong \varphi^* \rm{Ob}_{Y/S}\xrightarrow{\sigma'}\OO_{Y'}.\]
 The degeneracy locus of $\sigma'$ is $Y'(\sigma'),$ which in turn implies that:
\[h^1/h^0(\mathbb E_{Y'/S'})(\sigma')=(\varphi')^*h^1/h^0(\mathbb E_{Y/S})(\sigma).\]

By \cite[Corollary 4.5]{KLcosection} and from the cartesian diagram above,
\[\mathfrak{C}_{Y'/S'}\subseteq (\varphi')^* \mathfrak{C}_{Y/S}\subseteq \mathbb E_{Y'/S'}(\sigma')\]
thus we can define the localised virtual pullback of $[S']$ to be:
\[(\rho')^!_{\mathbb E_{Y'/S'},\sigma'}[S']=s^!_{\sigma',\rm{loc}}[\mathfrak{C}_{Y'/S'}].\]

\subsection{A cosection for $p$-fields}

We are going to construct the cosection paralleling \cite[\S\S3.2-3.4]{CLpfields}. There is a morphism of vector bundles on $\hP$ induced by iterated tensoring of line bundles:
\[ h_1\colon\rm{Vb}(\hL_{\hP}^{\oplus 5}\oplus\widehat{\mathcal P}_{\hP})\to \rm{Vb}(\omega_{\hat\pi_{\hP}}),\quad h_1(x,p)=p\w(x_0,\ldots,x_4)\]
By differentiating it and pulling it back along the universal evaluation
\bcd
& \rm{Vb}(\hL_{\hP}^{\oplus 5})\setminus\{0\}\oplus\rm{Vb}(\widehat{\mathcal P}_{\hP})\ar[d] \\
\hC_{\widehat{\pazocal P}}\ar[ru,bend left,start anchor=north, end anchor=west,"\mathfrak e"]\ar[d,"\hat\pi_{\widehat{\pazocal P}}"]\ar[r] & \hC_{\hP}\ar[d,"\hat\pi_{\hP}"] \\
\widehat{\pazocal P} \ar[r] & \rm \hP
\ecd
 we obtain a cosection of the relative obstruction sheaf
\begin{equation}\label{eqn:cosection}
 \begin{split}
 \sigma_1\colon\rm{Ob}_{\widehat{\pazocal P}/\hP}=\R^1\hat\pi_{\widehat{\pazocal P},*}(\hL_{\widehat{\pazocal P}}^{\oplus 5}\oplus\widehat{\mathcal P}_{\widehat{\pazocal P}})\to \R^1\hat\pi_{\widehat{\pazocal P},*}(\omega_{\hat\pi_{\widehat{\pazocal P}}})\simeq \OO_{\widehat{\pazocal P}} \\
 \sigma_{1|(u,\psi)}(\mathring{x},\mathring{p})=\mathring{p}\w(u)+\psi\sum_{i=0}^4\partial_i\w(u)\mathring{x}_i
\end{split}
\end{equation}

The degeneracy locus of this cosection is $\Mone{1}{X}{d}$: by Serre duality if $\w(u)\neq 0$ then we can find a $p$ such that the cosection does not vanish; similarly we can do if $\psi\sum_{i=0}^4\partial_i\w(u)\neq 0$. But $\w(u)=0$ and $\partial_i\w(u)=0$ never happen simultaneously by smoothness of $X$, so it has to be $\w(u)=\psi=0$. 

Moreover Chang and Li prove that $\sigma_1$ lifts to a cosection of the absolute obstruction bundle $\rm{Ob}_{\widehat{\pazocal P}}\to\OO_{\widehat{\pazocal P}};$ it has the same degeneracy locus because $\rm{Ob}_{\widehat{\pazocal P}/\hP}\to \rm{Ob}_{\widehat{\pazocal P}}$ is surjective. This is a sufficient condition for the relative intrinsic normal cone to be contained in the closed substack of the obstruction bundle determined by the cosection, see \S~\ref{sec:cosecvirpull}.

We may thus endow $\Mone{1}{\PP^4}{d}^p$ with a localised virtual cycle:
\[\virloc{\widehat{\pazocal P}}=0^{!}_{\sigma_1,\rm{loc}}[\mathfrak{C}_{\widehat{\pazocal P}/\hP}]\;\in\; A_0\left(\Mone{1}{X}{d}\right).\]

We want to show that it gives the same numerical invariants as the cuspidal Gromov-Witten theory of $X$, up to a sign:
\begin{thm}\label{thm:p-fields-quintic}
 \[\deg[\Mone{1}{\PP^4}{d}^p]^{\rm{vir}}_{\rm{loc}}= (-1)^{5d}\deg[\Mone{1}{X}{d}]^{\rm{vir}}\]
\end{thm}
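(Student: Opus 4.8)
The plan is to transcribe Chang--Li's comparison theorem \cite[\S\S3--6]{CLpfields} to the cuspidal setting, checking at each step that nodality of the source curves is never used beyond the fact that at worst cuspidal curves are reduced Gorenstein (equivalently l.c.i.) curves, and isolating the one place where the genus intervenes. First I would record the output of the cosection localisation of \S\ref{sec:cosection}: the class $\virloc{\widehat{\pazocal P}}=0^{!}_{\sigma_1,\rm{loc}}[\mathfrak C_{\widehat{\pazocal P}/\hP}]$ is supported on the degeneracy locus of $\sigma_1$, which we computed to be $\Mone{1}{X}{d}$, embedded in $\widehat{\pazocal P}$ as the zero section over $\Mone{1}{X}{d}\subseteq\Mone{1}{\PP^4}{d}$ --- on this locus the $p$-field vanishes identically because $X=V(\w)$ is smooth. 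Since $\Mone{1}{X}{d}$ is proper by \cite{VISC}, the left-hand degree is well defined.

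Next I would set up the comparison over $Z:=\Mone{1}{X}{d}$. Pulling back along $\hat f$ the Euler sequence of $\PP^4$ and the normal bundle sequence $0\to T_X\to T_{\PP^4}|_X\to\OO_X(5)\to 0$ and applying $\R^\bullet\hat\pi_*$ gives, in $K$-theory over $Z$, a decomposition
\[\mathbb E_{\widehat{\pazocal P}/\hP}|_Z=\R^\bullet\hat\pi_*\OO_{\hC}+\R^\bullet\hat\pi_*\hat f^*T_X+\R^\bullet\hat\pi_*\hat f^*\OO_X(5)+\R^\bullet\hat\pi_*\bigl(\hat f^*\OO_{\PP^4}(-5)\otimes\omega_{\hat\pi}\bigr).\]
Here the first summand is absorbed in passing from the obstruction theory relative to $\hP$ to that relative to $\hM$; the second is the standard Behrend--Fantechi--Viscardi complex computing $\vir{\Mone{1}{X}{d}}$; and the last two are mutually dual by relative Serre duality, which is available precisely because $\omega_{\hat\pi}$ is a line bundle on families of at worst cuspidal curves and cohomology and base change applies. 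These last two summands are the directions along which $\sigma_1$ pairs non-trivially off $Z$.

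Then I would run the cone analysis of \cite[\S\S4--6]{CLpfields} unchanged: via local equations for $\widehat{\pazocal P}$ together with a regularising blow-up, one shows that $\mathfrak C_{\widehat{\pazocal P}/\hP}$ restricted over $Z$ is the cone $\mathfrak C_{\Mone{1}{X}{d}/\hM}$ of the quintic obstruction theory, embedded ``diagonally'' in the dual pair $\R^\bullet\hat\pi_*\hat f^*\OO_X(5)\oplus\R^\bullet\hat\pi_*(\hat f^*\OO_{\PP^4}(-5)\otimes\omega_{\hat\pi})$; applying $0^{!}_{\sigma_1,\rm{loc}}$ then strips off the localised Euler class of that diagonal, yielding $(-1)^{r}\vir{\Mone{1}{X}{d}}$ with $r=\operatorname{rk}\R^\bullet\hat\pi_*\hat f^*\OO_X(5)$, the relative Euler characteristic of $\hat f^*\OO_{\PP^4}(5)$ on a fibre. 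This is the only genus-dependent point: for an arithmetic genus $1$ curve $C$ one has $\chi(C,\hat f^*\OO_{\PP^4}(5)|_C)=5d+1-1=5d$, so the sign is $(-1)^{5d}$ (rather than $(-1)^{5d+1}$ as in genus $0$), and the theorem follows upon taking degrees.

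I expect the main obstacle to be bookkeeping rather than a new idea: since the stack $\widehat{\pazocal P}$, its perfect obstruction theories relative to $\hP$ and $\hM$, the smoothness of $\hM$ and $\hP$, and the cosection $\sigma_1$ have all been established above, what remains is to verify that every lemma of \cite{CLpfields} feeding into the cone splitting --- together with the auxiliary statements it borrows from \cite{CL,KLcosection,Manolache-pullback} --- uses only the Gorenstein/l.c.i. property of the fibres, so that it transfers verbatim. I would be especially careful with the base-change and duality statements for families of curves acquiring cusps, since these are exactly the properties that one must not take for granted once one leaves the nodal world.
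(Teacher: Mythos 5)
Your top-level plan --- transcribe \cite[\S\S3--5]{CLpfields} and observe that nothing uses nodality beyond the l.c.i./Gorenstein property of the fibres, with the sign coming from Riemann--Roch in genus $1$ --- is exactly the strategy the paper takes (the section introducing \S\ref{section:p-fields} even says so explicitly), and your sign bookkeeping $\chi(\hat f^*\OO_{\PP^4}(5)|_C)=5d$ is correct. However, the middle of your sketch misdescribes what Chang--Li actually do, and the misdescription hides the two genuinely nontrivial steps.

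Chang--Li's comparison, and the paper's, does \emph{not} proceed by identifying $\mathfrak C_{\widehat{\pazocal P}/\hP}$ restricted over $Z=\Mone{1}{X}{d}$ with $\mathfrak C_{Z/\hM}$ sitting ``diagonally'' in the Serre-dual pair and then stripping off a localised Euler class. No such direct identification of normal cones is available --- this is precisely the difficulty --- and making it work is the content of the two steps you elide: (a) a deformation-to-the-normal-cone argument, introducing the family $\widehat{\pazocal V}=\Mone{1}{V}{(d,0)}^p$ over $\Aaff^1_t$ with $V_0=N_{X/\PP^4}$, for which one must \emph{lift} the naive relative obstruction theory $\mathbb E'_{\widehat{\pazocal V}/\hP}$ to a specifically tailored $\mathbb E_{\widehat{\pazocal V}/\hP}$ (otherwise one factor $\R^1\hat\pi_*\OO$ too many appears and the fibre restrictions are wrong); and (b) functoriality of cosection-localised virtual pullback along the comparison morphism $\hat v\colon\Mone{1}{N_{X/\PP^4}}{d}^p\to\Mone{1}{X}{d}$, following \cite{KKP} and \cite[Theorem 5.2]{KLcosection}. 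The intermediate moduli space $\Mone{1}{N_{X/\PP^4}}{d}^p$, which is the pivot of the whole argument, is absent from your proposal. You also invoke ``local equations and a regularising blow-up'' --- those are tools used for the Main Theorem in \S\ref{sec:equations}--\S\ref{section:main_proof}, not for this theorem, which is instead pure deformation/functoriality of localised Gysin maps. So while you correctly identify \emph{which} prior result to adapt and correctly isolate the l.c.i.-vs-nodal checkpoints, the mechanism you describe for how the adaptation runs is not the one in Chang--Li or in the paper, and as stated it would not go through.
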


\subsection{From $p$-fields to the quintic threefold}
This is achieved in two steps: we first compare the invariants of $\Mone{1}{\PP^4}{d}^p$ with the ones of $\Mone{1}{N_{X/\PP^4}}{d}^p$, where $N_{X/\PP^4}$ is the normal bundle of the quintic in $\PP^4.$

Second we compare the latter invariants with $\deg[\Mone{1}{X}{d}]^{\rm{vir}}.$

\subsubsection*{Deformation to the normal cone}
This is attained in \cite[\S\S4-5]{CLpfields} by a family version of the $p$-fields construction applied to the deformation to the normal cone of $X\subseteq \PP^4$; let us denote the latter by $V\to\Aaff^1_t$, so that $V_{t\neq 0}=\PP^4$ and $V_0=N_{X/\PP^4}.$

\begin{lem}
 The deformation to the normal cone $V$ is cut inside $\rm{Vb}(\OO_{\PP^4}(5))\times\Aaff^1_t$ with basis coordinates $[x_0:\ldots:x_4]$ and fiber coordinate $y$ by the equation $\w(x)-ty=0$. If $C(V)$ denotes the affine cone over $V$, then its tangent bundle is determined by the following exact sequences:
 \begin{align}
  0\to T_{C(V)/\Aaff^1_t}\to\OO^{\oplus 5}_{C(V)}\oplus\OO_{C(V)}\xrightarrow{\sum_i\partial_i\w(x)\mathring{x_i}-t\mathring{y}}\OO_{C(V)}\to 0 \label{eq:reltangentcone}\\
    0\to T_{C(V)}\to\OO^{\oplus 5}_{C(V)}\oplus\OO_{C(V)}\oplus\OO_{C(V)}\xrightarrow{\sum_i\partial_i\w(x)\mathring{x_i}-t\mathring{y}-y\mathring{t}}\OO_{C(V)}\to 0\label{eq:tangentcone}
  \end{align}
\end{lem}
This allows a description of the moduli space of maps to $V$ as the cone of sections of a certain smooth object $\mathcal Z'$ over $\hC_{\hP\times\Aaff^1}$:

\bcd
& \mathcal Z'\ar[d]\ar[r]\ar[dr,phantom,"\Box"] & V\ar[d] \\
& \rm{Vb}(\mathcal L_{\hP}^{\oplus 5})\setminus\{0\}\oplus\rm{Vb}(\mathcal L^{\otimes 5}_{\hP})\ar[d] \ar[r] & \rm{Vb}(\OO_{\PP^4}(5))\times\Aaff^1_t\\
\hC_{\oM_1^{(1)}(V)}\ar[uur,bend left,"\mathfrak e"]\ar[d,"\hat{\pi}_{\oM_1^{(1)}(V)}"]\ar[r] & \hC_{\hP\times\Aaff^1_t}\ar[d,"\hat\pi_{\hP\times\Aaff^1_t}"] & \\
\Mone{1}{V}{(d,0)} \ar[r] & \rm \hP\times\Aaff^1_t &
\ecd

Similarly $\widehat{\pazocal V}:=\Mone{1}{V}{(d,0)}^p$ can be defined as the cone of sections of $\mathcal Z:=\mathcal Z'\oplus\rm{Vb}(\widehat{\mathcal P}_{\hP})$. The general theory explained above provides an obstruction theory for $\widehat{\pazocal V}\to\hP\times\Aaff^1_t$ \cite[Proposition 4.2]{CLpfields}:

\begin{lem}
 A dual perfect obstruction theory is given by:
 \[\phi_{\widehat{\pazocal V}/\hP\times\Aaff^1_t}\colon\mathbb T_{\widehat{\pazocal V}/\hP\times\Aaff^1_t}\to\mathbb E_{\widehat{\pazocal V}/\hP\times\Aaff^1_t}:=\R^\bullet\hat\pi_{\widehat{\pazocal V}}(f_{\widehat{\pazocal V}}^*\mathcal H\oplus\widehat{\mathcal P}_{\widehat{\pazocal V}})\]
 where $f_{\widehat{\pazocal V}}\colon \hC_{\widehat{\pazocal V}}\to V$ is the universal map and $\mathcal H$ is the vector bundle on $V$ defined by the exact sequence, see \eqref{eq:reltangentcone}:
 \[0\to\mathcal H\to\pr_{\PP^4}^*\left(\OO_{\PP^4}(1)^{\oplus 5}\oplus \OO_{\PP^4}(5)\right)\xrightarrow{\sum_i\partial_i\w(x)\mathring{x}_i-t\mathring{y}} \pr_{\PP^4}^*\OO_{\PP^4}(5)\to 0.\]
The restriction of $\phi_{\widehat{\pazocal V}/\hP\times\Aaff^1_t}$ to a fiber
 \[
 \widehat{\pazocal V}_t=\left\{\begin{array}{lr} \widehat{\pazocal P} & t\neq 0 \\ \Mone{1}{N_{X/\PP^4}}{d}^p & t=0 \end{array}\right.
\]
gives the standard obstruction theory of $\widehat{\pazocal V}_t\to\hP$.
\end{lem}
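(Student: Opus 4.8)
The plan is to deduce this statement from the general cone-of-sections formalism recalled above, so that essentially all of the work is a matter of identification. First I would note that $\widehat{\pazocal V}=\Mone{1}{V}{(d,0)}^p$ is, by its very definition, an open substack --- cut out by the $1$-stability of the underlying curve together with the non-vanishing of the section on each irreducible component --- of the moduli space $\mathfrak S$ of sections of $\mathcal Z=\mathcal Z'\oplus\rm{Vb}(\widehat{\mathcal P}_{\hP})$ over the universal curve $\hat\pi\colon\hC_{\hP\times\Aaff^1_t}\to\hP\times\Aaff^1_t$. Since $\mathcal Z'$ is obtained from the smooth $\Aaff^1_t$-scheme $V$ by the base change displayed above, $\mathcal Z\to\hC_{\hP\times\Aaff^1_t}$ is smooth; hence the relative dual perfect obstruction theory $\phi_{\mathfrak S/B}\colon\mathbb T_{\mathfrak S/B}\to\R^\bullet\pi_{\mathfrak S,*}\mathfrak e^*T_{\mathcal Z/\cC}$ of \cite[Proposition~2.5]{CLpfields} applies verbatim: as stressed above, that proof only uses smoothness of $\mathcal Z\to\cC$ and never that the fibres of $\cC\to B$ are nodal, so it is insensitive to the presence of cusps. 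Restricting it to the open substack $\widehat{\pazocal V}\subseteq\mathfrak S$ equips $\widehat{\pazocal V}\to\hP\times\Aaff^1_t$ with a relative dual perfect obstruction theory whose complex is $\R^\bullet\hat\pi_{\widehat{\pazocal V},*}\,\mathfrak e^*T_{\mathcal Z/\hC}$.

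The second step is to compute $\mathfrak e^*T_{\mathcal Z/\hC}$. The direct-sum splitting $\mathcal Z=\mathcal Z'\oplus\rm{Vb}(\widehat{\mathcal P}_{\hP})$ splits the universal section, so $\mathfrak e^*T_{\mathcal Z/\hC}\cong\mathfrak e^*T_{\mathcal Z'/\hC}\oplus\widehat{\mathcal P}_{\widehat{\pazocal V}}$, the second summand being the $p$-field contribution. For the first summand I would use that $\mathcal Z'$ is, \'etale-locally over $\hC_{\hP\times\Aaff^1_t}$, the affine cone $C(V)$ made $\Gm$-equivariant against the universal line bundle $\hL$; pulling back the $\Gm$-equivariant sequence \eqref{eq:reltangentcone} for $T_{C(V)/\Aaff^1_t}$ along $\mathfrak e$ and descending the $\Gm$-action then identifies $\mathfrak e^*T_{\mathcal Z'/\hC}$ with $f_{\widehat{\pazocal V}}^*\mathcal H$, where $\mathcal H$ on $V$ is precisely the descent of that sequence, i.e. the bundle appearing in the statement. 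This gives $\mathbb E_{\widehat{\pazocal V}/\hP\times\Aaff^1_t}=\R^\bullet\hat\pi_{\widehat{\pazocal V},*}\big(f_{\widehat{\pazocal V}}^*\mathcal H\oplus\widehat{\mathcal P}_{\widehat{\pazocal V}}\big)$; it is perfect of amplitude $[-1,0]$ because $\mathcal H$ and $\widehat{\mathcal P}$ are vector bundles and $\hat\pi_{\widehat{\pazocal V}}$ has one-dimensional fibres, so $\R^\bullet\hat\pi_{\widehat{\pazocal V},*}$ of a vector bundle is perfect in $[0,1]$, and we dualise.

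For the restriction to a fibre $t=t_0$ I would invoke that $\R^\bullet\hat\pi_{\widehat{\pazocal V},*}$ commutes with the (derived) pullback along $\{t_0\}\hookrightarrow\Aaff^1_t$, and that forming the cone of sections and its obstruction theory is compatible with base change of the base $B=\hP\times\Aaff^1_t$; so it is enough to identify $\mathcal H|_{V_{t_0}}$. When $t_0\neq0$ the equation $\w(x)-t_0y=0$ solves for $y$, so $V_{t_0}\cong\PP^4$, and in the defining sequence of $\mathcal H$ the fibre coordinate $\mathring y$ becomes a function of the $\mathring x_i$; hence $\mathcal H|_{V_{t_0}}\cong\OO_{\PP^4}(1)^{\oplus 5}$, $f_{\widehat{\pazocal V}}^*\mathcal H$ restricts to $\hL^{\oplus 5}$, and one recovers $\mathbb E_{\widehat{\pazocal P}/\hP}=\R^\bullet\hat\pi_*(\hL^{\oplus 5}\oplus\widehat{\mathcal P})$. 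When $t_0=0$ the equation becomes $\w(x)=0$ with $y$ free, so $V_0=N_{X/\PP^4}=\rm{Vb}(\OO_X(5))$, and restricting the defining sequence of $\mathcal H$ along $V_0\hookrightarrow V$ produces exactly the bundle underlying the standard $p$-field obstruction theory of $\Mone{1}{N_{X/\PP^4}}{d}^p\to\hP$, the computation being word-for-word the one in \cite[\S4]{CLpfields}.

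I do not anticipate a genuine obstacle: the whole argument is a transcription of \cite[\S\S4--5]{CLpfields} to at-worst-cuspidal source curves, and nothing changes because the cone-of-sections obstruction theory only sees the smoothness of $\mathcal Z\to\cC$. The one place that calls for care is the $\Gm$-bookkeeping in the second step --- matching the affine-cone picture $C(V)\subseteq\rm{Vb}(\OO_{\PP^4}(5))\times\Aaff^1_t$ and its sequence \eqref{eq:reltangentcone} with the bundle $\mathcal H$ on $V$ as written, keeping track of the $\hL$-twists --- together with the attendant check in the third step that the fibrewise identification is an isomorphism of obstruction theories and not merely of the underlying perfect complexes.
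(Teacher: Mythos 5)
Your argument is correct and is exactly the one the paper intends: the lemma is essentially a citation of \cite[Proposition 4.2]{CLpfields}, and the whole point of \S\ref{section:p-fields} is that the cone-of-sections formalism for obstruction theories is agnostic to whether the fibres of $\hC\to\hP\times\Aaff^1_t$ are nodal or at-worst-cuspidal, since \cite[Proposition~2.5]{CLpfields} uses only the smoothness of $\mathcal Z\to\hC$. Your identification of $\mathfrak e^*T_{\mathcal Z/\hC}$ with $f_{\widehat{\pazocal V}}^*\mathcal H\oplus\widehat{\mathcal P}_{\widehat{\pazocal V}}$ and the fibrewise computation of $\mathcal H|_{V_{t_0}}$ (solving for $\mathring y$ when $t_0\neq 0$ to get $\OO_{\PP^4}(1)^{\oplus 5}$, and obtaining the normal-bundle case at $t_0=0$) fill in exactly the details that the paper defers to \cite[\S 4]{CLpfields}.
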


We would like to conclude that the restriction of the virtual cycle to the fibers is the standard virtual cycle on the fiber. The techniques of functoriality in intersection theory teach us that we should look for a triple of compatible obstruction theories for the triangle:
\bcd
\widehat{\pazocal V}_t\ar[rr,hook,"\iota_t"]\ar[dr] & & \widehat{\pazocal V} \ar[dl] \\
& \hP &
\ecd
The cone of sections interpretation provides us with an obstruction theory relative to $\widehat{\pazocal V}\to \hP$ given by:
\[
 \mathbb E'_{\widehat{\pazocal V}/\hP}:=\R^\bullet\hat\pi_{\widehat{\pazocal V}}(f_{\widehat{\pazocal V}}^*\mathcal K\oplus\widehat{\mathcal P}_{\widehat{\pazocal V}})
\]
where $\mathcal K$ is determined by the following exact sequence on $V$, see \eqref{eq:tangentcone}:
\[
 0\to\mathcal K\to\pr_{\PP^4}^*\left(\OO_{\PP^4}(1)^{\oplus 5}\oplus \OO_{\PP^4}(5)\oplus\OO_{\PP^4}\right)\xrightarrow{\sum_i\partial_i\w(x)\mathring{x}_i-t\mathring{y}-y\mathring{t}} \pr_{\PP^4}^*\OO_{\PP^4}(5)\to 0
\]
Of course $h^0(\mathbb E'_{\widehat{\pazocal V}/\hP})\simeq\mathbb T_{\widehat{\pazocal V}/\hP}$, but the previous lemma and the difference between $\mathcal H$ and $\mathcal K$ hint at the fact that the obstruction sheaf $h^1 (\mathbb E'_{\widehat{\pazocal V}/\hP})$ contains one factor $\R^1\hat\pi_{\widehat{\pazocal V},*}\OO_{\hC_{\widehat{\pazocal V}}}\simeq \OO_{\widehat{\pazocal V}}$ too many, so that restricting $\mathbb E'_{\widehat{\pazocal V}/\hP}$ to the fibers we would not find their standard obstruction theory. A confirmation of this fact is given by observing that $\mathbb E'_{\widehat{\pazocal V}/\hP}$ equips $\widehat{\pazocal V}$ with a $0$-dimensional cycle, while we are looking for a $1$-dimensional cycle such that restricting to any fiber, i.e. applying $\iota_t^!,$ we get $[\widehat{\pazocal V}_t]^{\rm{vir}}\in A_0(\widehat{\pazocal V}_t)$.

This issue is solved in \cite[\S\S4.5-6]{CLpfields} by lifting the standard obstruction theory $\phi'_{\widehat{\pazocal V}/\hP}$ to a different, specifically tailored one: 
\[\phi_{\widehat{\pazocal V}/\hP}\colon\mathbb T_{\widehat{\pazocal V}/\hP}\to\mathbb E_{\widehat{\pazocal V}/\hP},\]
 where the latter two-term complex fits into an exact triangle:
\[ \R^1\hat\pi_{\widehat{\pazocal V},*}\OO_{\hC_{\widehat{\pazocal V}}}[-2]\to\mathbb E_{\widehat{\pazocal V}/\hP}\xrightarrow{\nu}\mathbb E'_{\widehat{\pazocal V}/\hP}\xrightarrow{[1]}\]
Furthermore $\phi_{\widehat{\pazocal V}/\hP}$ is compatible with the standard obstruction theory for the fibers:
\begin{lem}
For every $t\in\mathbb A^1_\k$ we have a commutative diagram:
\bcd
\hat{\pi}_{\widehat{\pazocal V}_c *}\OO_{\hC_{\widehat{\pazocal V}_c}}[-1]\ar[r] & \mathbb E_{\widehat{\pazocal V}_c/\hP} \ar[r] &\mathbb E_{\widehat{\pazocal V}/\hP}\rvert_{\widehat{\pazocal V}_c}\\
\mathbb{T}^{\leq 1}_{\widehat{\pazocal V}_c/\widehat{\pazocal V}}\ar[u]\ar[r] & \mathbb{T}^{\leq 1}_{\widehat{\pazocal V}_c/\hP}\ar[u]\ar[r] & \mathbb{T}^{\leq 1}_{\widehat{\pazocal V}/\hP}\rvert_{\widehat{\pazocal V}_c} \ar[u]
\ecd
\end{lem}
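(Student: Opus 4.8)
The plan is to transcribe the compatibility worked out in \cite[\S\S4.5--4.6]{CLpfields} to the cuspidal setting, the only point to verify being that their argument uses nothing about the fibers of $\hat\pi$ beyond what at-worst-cuspidal curves provide: $\hP$ is smooth (the Lemma above), the fibers are Gorenstein l.c.i.\ curves so that Serre duality and cohomology-and-base-change apply to all the direct images occurring, and $h^0(\OO_C)=h^1(\OO_C)=1$ for a connected arithmetic genus $1$ Gorenstein curve, so that for any such family $\R^\bullet\hat\pi_*\OO$ splits canonically as $\OO\oplus\R^1\hat\pi_*\OO[-1]$ with both summands line bundles. Throughout I would fix $c\in\Aaff^1_\k$ and write $\iota_c\colon\widehat{\pazocal V}_c\hookrightarrow\widehat{\pazocal V}$ for the inclusion of the fiber; since it is pulled back from the Cartier divisor $\{c\}\subset\Aaff^1_t$ it is a regular embedding with trivial conormal bundle, so $\mathbb L_{\widehat{\pazocal V}_c/\widehat{\pazocal V}}\cong\OO_{\widehat{\pazocal V}_c}[1]$ and $\mathbb T^{\leq1}_{\widehat{\pazocal V}_c/\widehat{\pazocal V}}\cong\OO_{\widehat{\pazocal V}_c}[-1]$.

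The bottom row I would produce as the truncation $(\cdot)^{\leq1}$ of the dual of the cotangent triangle of $\widehat{\pazocal V}_c\hookrightarrow\widehat{\pazocal V}\to\hP$. For the top row I would restrict along $\iota_c$ the defining triangle $\R^1\hat\pi_{\widehat{\pazocal V},*}\OO_{\hC}[-2]\to\mathbb E_{\widehat{\pazocal V}/\hP}\xrightarrow{\nu}\mathbb E'_{\widehat{\pazocal V}/\hP}$ of the tailored obstruction theory: using cohomology-and-base-change together with the splitting $\mathcal K|_{V_c}\cong\mathcal H_c\oplus\OO_{V_c}$ read off from \eqref{eq:reltangentcone}--\eqref{eq:tangentcone} (the extra summand being the $\mathring t$-direction), one identifies $\mathbb E'_{\widehat{\pazocal V}/\hP}|_{\widehat{\pazocal V}_c}$ with $\mathbb E_{\widehat{\pazocal V}_c/\hP}\oplus\R^\bullet\hat\pi_{\widehat{\pazocal V}_c,*}\OO_{\hC_{\widehat{\pazocal V}_c}}$, where $\mathbb E_{\widehat{\pazocal V}_c/\hP}$ is the standard Behrend-Fantechi-Viscardi obstruction theory of the fiber; feeding this and the genus-$1$ splitting into the restricted triangle and cancelling the superfluous $\R^1\hat\pi_*\OO$ summand then yields the top row, with first term $\hat\pi_{\widehat{\pazocal V}_c,*}\OO_{\hC_{\widehat{\pazocal V}_c}}[-1]$. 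The vertical arrows will be: in the middle the standard obstruction theory $\phi_{\widehat{\pazocal V}_c/\hP}$; on the right $\iota_c^*\phi_{\widehat{\pazocal V}/\hP}$; on the left the tautological obstruction theory of the Cartier divisor $\widehat{\pazocal V}_c\hookrightarrow\widehat{\pazocal V}$, which under the identifications above becomes an isomorphism $\OO_{\widehat{\pazocal V}_c}[-1]\xrightarrow{\sim}\hat\pi_{\widehat{\pazocal V}_c,*}\OO_{\hC_{\widehat{\pazocal V}_c}}[-1]$. Commutativity of the right-hand square I would deduce from the compatibility of $\phi_{\widehat{\pazocal V}/\hP}$ with $\phi'_{\widehat{\pazocal V}/\hP}$ (by which the former was built) and the naturality of the cone-of-sections obstruction theory under base change; the left-hand square is a diagram chase with the cotangent triangle.

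The hard part, just as in \cite{CLpfields}, is that the naive relative obstruction theory $\mathbb E'_{\widehat{\pazocal V}/\hP}$ cannot be used directly --- its restriction to a fiber carries one copy of $\R^1\hat\pi_*\OO\cong\OO$ too many --- so one must work with the tailored $\mathbb E_{\widehat{\pazocal V}/\hP}$ and then check that cancelling this extra line bundle is compatible \emph{on the nose}, not merely up to an abstract quasi-isomorphism, both with the obstruction-theory morphisms and with the cotangent triangle; this bookkeeping, carried out in \S4.6 of loc.\ cit., is where the care lies. The cuspidal case contributes nothing beyond checking at each step that only Gorenstein l.c.i.\ fibers of arithmetic genus $1$ are required, never nodality. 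With the diagram in hand, the resulting compatible triple of obstruction theories feeds into the functoriality of virtual pullbacks \cite{Manolache-pullback} to give $\iota_c^!\vir{\widehat{\pazocal V}}=\vir{\widehat{\pazocal V}_c}$, which is the purpose of the lemma.
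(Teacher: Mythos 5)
Your proposal takes essentially the same route as the paper, which itself contains no proof beyond the citation ``For the proof see [CL, \S 4.6]'' --- you correctly identify that the content of the lemma is the Chang--Li bookkeeping of \cite[\S\S4.5--4.6]{CLpfields}, \cite[\S4.6]{CL}, and that the only thing to check on the cuspidal side is that every step uses only Gorenstein l.c.i.\ genus-$1$ fibers (Serre duality, cohomology and base change, the splitting $\R^\bullet\hat\pi_*\OO\cong\OO\oplus\R^1\hat\pi_*\OO[-1]$), never nodality. One small caveat: the identification $\mathbb E'_{\widehat{\pazocal V}/\hP}\rvert_{\widehat{\pazocal V}_c}\cong\mathbb E_{\widehat{\pazocal V}_c/\hP}\oplus\R^\bullet\hat\pi_*\OO$ that you extract from a putative splitting $\mathcal K\rvert_{V_c}\cong\mathcal H_c\oplus\OO_{V_c}$ is not automatic (the extension $0\to\mathcal H\to\mathcal K\to\OO_V\to 0$ need not split on the nose on the fiber, particularly at $c=0$ over $N_{X/\PP^4}$); the reason Chang--Li introduce the tailored $\mathbb E$ via a cone rather than a direct sum is precisely to sidestep demanding such a global splitting, and your closing paragraph already acknowledges that this is where the care lies, so this is a gap in exposition rather than in strategy.
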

For the proof see \cite[\S~4.6]{CL}.
Notice that the lower row is not a distinguished triangle, yet Chang and Li prove in \cite[A.4]{CLpfields} that this weaker (truncated) notion of compatibility is enough to prove functoriality.

Since we are working with cosection localised cycles, we need a family version of the cosection. This is induced by differentiating the following vector bundle morphism on $\hP\times\Aaff^1_t$:
\[
 \rm{Vb}(\hL_{\hP}^{\oplus 5}\oplus\hL_{\hP}^{\otimes 5}\oplus\widehat{\mathcal P}_{\hP})\xrightarrow{(\pr_2,\pr_3)}\rm{Vb}(\hL_{\hP}^{\otimes 5}\oplus\widehat{\mathcal P}_{\hP})\xrightarrow{\cdot}\rm{Vb}(\omega_{\hat\pi_{\hP}})
\]
The cosection takes then the following form:
\begin{align*}
\rm{Ob}_{\widehat{\pazocal V}/\hP\times\mathbb A^1}\subseteq \R^1\hat{\pi}_*(\hL_{\widehat{\pazocal V}}^{\oplus 5}\oplus\hL_{\widehat{\pazocal V}}^{\otimes}\oplus\widehat{\mathcal{P}}_{\widehat{\pazocal V}})\to \R^1\hat{\pi}_*\omega_{\hat{\pi}}\\
\bar\sigma_{1|(u,v,\psi)}(\mathring{x},\mathring{y},\mathring{p})=\psi\mathring{y}+v\mathring{p}
\end{align*}
It is showed in \cite[\S4.7]{CLpfields} that $\bar\sigma_1$ lifts to a cosection $\bar\sigma\colon \rm{Ob}_{\widehat{\pazocal V}}\to \OO_{\widehat{\pazocal V}}$ and that the degeneracy locus of $\bar\sigma$ is 
\[\Mone{1}{X}{d}\times\Aaff^1_t.\]

Recall that the sections $(u,v)$ are required to satisfy $\w(u)-tv=0$. So $\bar\sigma_1$ coincides up to a non-zero scalar with the above defined cosection $\sigma_1$ for $\widehat{\pazocal P}$ when $t\neq 0$. It is proved in \cite[Theorem 5.2]{KLcosection} that, given compatible perfect obstruction theories, the construction of a cosection localised virtual cycle is compatible with Gysin pullbacks, so that by \cite[Proposition 4.9]{CLpfields}:
\[
 \iota_{t\neq0}^![\widehat{\pazocal V}]^{\rm{vir}}_{\rm{\bar\sigma}}=[\widehat{\pazocal P}]^{\rm{vir}}_{\rm{\sigma}}\in A_0(\widehat{\pazocal Q}),\quad \iota_{0}^![\widehat{\pazocal V}]^{\rm{vir}}_{\rm{\bar\sigma}}=[\Mone{1}{N_{X/\PP^4}}{d}^p]^{\rm{vir}}_{\rm{\bar\sigma_0}}\in A_0(\widehat{\pazocal Q})
\]
where we have denoted by $\widehat{\pazocal Q}:=\Mone{1}{X}{d}$.
\subsubsection*{Functoriality of cosection-localised pullback}
We prove that $\deg[\Mone{1}{N_{X/\PP^4}}{d}^p]^{\rm{vir}}_{\rm{\bar\sigma_0}}$ coincides up to a sign with $\deg[\Mone{1}{X}{d}]^{\rm{vir}}$.
In the rest of the section we use the following notation: $\widehat{\pazocal N}:=\Mone{1}{N_{X/\PP^4}}{d}^p$, and $\hat v\colon\widehat{\pazocal N}\to\widehat{\pazocal Q}$. 

First, Chang and Li prove that there is a perfect obstruction theory 
\[\mathbb E_{\widehat{\pazocal N}/\widehat{\pazocal Q}}:=\R^{\bullet}\hat\pi_{\widehat{\pazocal N},*}(\hL^{\otimes 5}_{\widehat{\pazocal N}}\oplus \widehat{\mathcal P}_{\widehat{\pazocal N}})\]
 compatible with $\mathbb E_{\widehat{\pazocal N}/\hP}$ and $\hat v^*\mathbb E_{\widehat{\pazocal Q}/\hP}$, so that $\mathbb E_{\widehat{\pazocal N}/\widehat{\pazocal Q}}$ inherits a cosection $\sigma_0'$ with degeneracy locus $D(\sigma_0')=\widehat{\pazocal Q}$. 
So they have a localised virtual pullback:
\[
 \hat v^!_{\mathbb E_{\widehat{\pazocal N}/\widehat{\pazocal Q}},\rm{loc}}\colon A_*(\widehat{\pazocal Q})\to A_*(D(\sigma_0')=\widehat{\pazocal Q})
\]
which they then prove to satisfy functoriality using the techniques of \cite{KKP}, so:
\[\hat v^!_{\mathbb E_{\widehat{\pazocal N}/\widehat{\pazocal Q}},\rm{loc}}[\widehat{\pazocal Q}]^{\rm{vir}}=[\Mone{1}{N_{X/\PP^4}}{d}^p]^{\rm{vir}}_{\rm{\sigma'_0}}.\]
To conclude it is  enough to compute the degree of $\hat v^!_{\mathbb E_{\widehat{\pazocal N}/\widehat{\pazocal Q}},\rm{loc}}$
on $A_0(\widehat{\pazocal Q})$; we just need to compute $\deg(\hat v^!_{\mathbb E_{\widehat{\pazocal N}/\widehat{\pazocal Q}},\rm{loc}}[\zeta])$ for a closed point $\zeta$. This is done in \cite[Theorem 5.7]{CLpfields} and the same considerations work in our case.

Hopefully we have managed to convince the reader that the subtle intersection theory perpetuated in \cite{CLpfields} does not rely at all on the hypothesis that the families of curves we are working with are \emph{nodal}, but \emph{l.c.i.} curves are well-behaved enough so that all the proofs carry over to the situation of our interest.

\section{The weighted $1$-stabilisation morphism}\label{sec:comparison}

Before stating the main result of this section we recall the following:
\begin{dfn}
$\mathfrak M_{1,n}^{\rm{wt}=d,\rm{st}}$ denotes the stack of \emph{weighted-stable} curves of genus $1$ with $n$ markings and total weight $d$: geometric points of $\mathfrak M_{1,n}^{\rm{wt}=d,\rm{st}}$ represent connected, reduced, nodal, projective curves of arithmetic genus $1$ with $n$ distinct smooth markings and an integer-valued function $\rm{wt}$ on the dual graph. 

Such a function assumes only nonnegative values, the sum of which on all the vertices of the dual graph is $d$; $\rm{wt}$ is compatible with the specialisation maps, and we further impose the following stability condition: 
\begin{itemize}
\item every $p_a=0$ component of weight $0$ has at least three special points;
\item every $p_a=1$ component of weight $0$ has at least one special point.
\end{itemize}
\end{dfn}
There is an \'{e}tale, non-separated morphism $\mathfrak M_{1,n}^{\rm{wt}=d,\rm{st}}\to\mathfrak M_{1,n}$.
The stability condition is such that the forgetful map $\M{1}{n}{\PP^r}{d}\to\mathfrak M_{1,n}$ factors through $\mathfrak M_{1,n}^{\rm{wt}=d,\rm{st}}$, the weight coming from the degree of the map to $\PP^r$.

\begin{dfn}
Let $\mathfrak M_{1,n}^{\operatorname{wt}=d,\text{st}}(1)$ be the stack of \emph{at worst cuspidal} connected, reduced, $n$-marked, projective curves of arithmetic genus $1$ that are weighted-stable with total weight $d$, i.e. the weight is nonnegative and such that:
\begin{itemize}
\item every $p_a=0$ component of weight $0$ has at least three special points; 
\item every $p_a=1$ component of weight $0$ has at least \emph{two} special points.
\end{itemize}
In this definition, by \emph{special point} we mean the preimage of a node or a marking in the normalisation. 

\end{dfn}
\begin{remark}
 It follows from the miniversal deformation of the cusp that being at worst cuspidal (i.e. having only nodes and cusps as singolarity) is an open condition on the base of any family of curves.
\end{remark}
\begin{thm}
There exists a morphism $\mathfrak M_{1,n}^{\operatorname{wt}=d,\text{st}}\to\mathfrak M_{1,n}^{\operatorname{wt}=d,\text{st}}(1)$ which extends the identity on the smooth locus.
\end{thm}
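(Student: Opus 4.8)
The plan is to produce, functorially in the base, a contraction of the weight-$0$ elliptic tails and then to appeal to the universal property of $\MM_{1,n}^{\rm{wt}=d,\rm{st}}(1)$. Concretely, given a weighted-stable family $\pi\colon\cC\to S$ I want a morphism $c\colon\cC\to\widehat{\cC}$ over $S$ with $\widehat{\cC}\to S$ flat and proper, whose geometric fibres are connected reduced genus-$1$ curves with at worst nodes and cusps, carrying the weight function inherited from $\cC$, and with $c$ an isomorphism away from the locus collapsed in each fibre. In a single geometric fibre the prescription is unambiguous: since the arithmetic genus is $1$ there is at most one connected genus-$1$ subcurve of weight $0$ meeting the rest of the curve in a single node and carrying no marking (it contains the core, and is a proper tail because the total weight $d$ is positive), and one contracts it to a point. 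The subtlety is that this ``weight-$0$ elliptic tail locus'' $\mathcal E\subseteq\cC$ need not be flat over $S$ — it jumps up along boundary strata where a curve degenerates and a weight-$0$ genus-$1$ tail appears — so one cannot simply form a relative blow-down of $\mathcal E$; the content of the theorem is precisely that the operation nonetheless makes sense in families.

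Following Smyth's contraction technique \cite[Lemma~2.12]{SMY1} (and the two-step procedure in the proof of Lemma~\ref{lem:sing_smoothability}), I would carry out $c$ by means of a $\pi$-semiample line bundle rather than a blow-down of $\mathcal E$. \'Etale-locally on $S$ — say after passing to a chart like $\MM_{1,n}^{\rm{div}}$ on which the weights are realised by a relative Cartier divisor $\mathcal D$, and replacing $\cC$ if necessary by a semistable model so that $\mathcal E$ becomes balanced — take a weighted logarithmic dualizing sheaf twisted by the tail locus, of the form $\mathcal L=\omega_{\pi}(\mathcal E+\Sigma)\otimes\OO_{\cC}(k\mathcal D)$ with $\Sigma$ the marking divisor and $k\gg 0$. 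By adjunction $\mathcal L$ restricts to $\omega$ of each fibral tail, which has degree $0$, while the twists make it of positive degree on every component not contained in a tail; hence $\mathcal L$ is $\pi$-semiample, and I would set $\widehat{\cC}:=\underline{\Proj}_S\bigl(\bigoplus_{m\ge 0}\pi_*\mathcal L^{\otimes m}\bigr)$, followed by the second contraction that undoes the auxiliary semistable bubbles. The point to which a tail collapses has $\delta$-invariant equal to $p_a$ of the tail, namely $1$, and a single branch; by the classification of genus-$1$ singularities recalled above it is therefore an ordinary cusp. Away from the tails $\mathcal L$ is relatively very ample, so $c$ is an isomorphism there, the weight function descends, and one checks the target stability condition directly: the component of $\cC$ carrying the new cusp was weighted-stable and loses only the node at which the tail was attached, hence — now of arithmetic genus $1$ — still has at least two special points. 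Since on the smooth locus of $\MM_{1,n}^{\rm{wt}=d,\rm{st}}$ there are no tails, $c=\id$ there; and because $\mathcal L$, hence $\widehat{\cC}$, is built canonically from $(\cC,\mathcal D,\Sigma)$, the construction descends and the universal property of $\MM_{1,n}^{\rm{wt}=d,\rm{st}}(1)$ yields the desired morphism of Artin stacks.

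The main obstacle is everything I have just asserted about $\widehat{\cC}$: that $\pi_*\mathcal L^{\otimes m}$ is locally free and commutes with base change for $m\gg 0$, and that the fibres of the Proj are the expected at-worst-cuspidal curves rather than worse degenerations. This is precisely the relative version of Smyth's contraction lemma, proved via the ``balanced'' condition on the tail together with cohomology and base change, and using the miniversal deformation of the cusp (an l.c.i.\ singularity) to see that being at worst cuspidal is an open condition, so that flatness propagates. There is also a more abstract second argument, avoiding explicit line bundles: verify by Artin's criterion that the functor sending a weighted-stable family to the contraction of its weight-$0$ tail locus is represented by an algebraic stack — deformations of the target curves being unobstructed because nodes and cusps are l.c.i.\ — and that this stack is canonically $\MM_{1,n}^{\rm{wt}=d,\rm{st}}(1)$, after which the morphism, and the fact that it extends the identity, come for free. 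Either way the substantive difficulty is the same: to see that ``contract the weight-$0$ elliptic tail to a cusp'' is a flat family operation and not merely a fibrewise one.
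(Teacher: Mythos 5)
Your proposal is essentially the paper's second proof of this theorem: construct the contraction on the chart $\MM_1^{\rm div}$ by taking $\underline{\Proj}$ of a $\pi$-semiample line bundle of the form $\omega_\pi(\pazocal E)\otimes\OO_{\cC}(2\pazocal D)$ (the paper uses $k=2$; your $k\gg0$ is equivalent), reduce the flatness and ``fibres are at-worst-cuspidal'' checks to a DVR base via the ``pullback with a boundary'' lemma, invoke Smyth's contraction lemma there, and then descend to $\MM_1^{\rm wt,st}$. You have identified the correct line bundle, the correct reduction, and the correct reference. Two remarks. First, the parenthetical ``replacing $\cC$ by a semistable model so that $\mathcal E$ becomes balanced'' is unnecessary for the $1$-stabilisation and the paper does not do it: on a weighted-stable genus-$1$ curve, an unmarked weight-$0$ elliptic tail meeting the rest in one node can have no weight-$0$ rational bubbles (they would be semistable), so it is automatically the core and automatically balanced. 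Introducing a semistable model here actually creates new difficulties, since $\pazocal E$ ceases to be a Cartier divisor in the non-regular relative minimal model. Second, you pass a bit quickly over the descent from $\MM_1^{\rm div}$ to $\MM_1^{\rm wt,st}$: since $\pazocal D$ is an auxiliary local choice (not canonical data), one must show that two choices $\pazocal D_1,\pazocal D_2$ produce canonically isomorphic $\underline{\Proj}$'s; the paper does this by showing $\phi_{i,*}\OO_{\cC}\cong\OO_{\hC_i}$ and appealing to uniqueness of contractions, which is a genuine step, not automatic from ``built canonically from $(\cC,\pazocal D,\Sigma)$.''

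It is also worth noting that the paper's \emph{first} proof is entirely different from anything you sketch: it constructs a locally closed ``graph'' substack $\X\subseteq\operatorname{Mor}_{\MM\times\hM}(\cC,\hC)$ of line-bundle-preserving, marking-preserving, surjective maps contracting only weight-$0$ tails, and then shows $\pr_1|_{\X}\colon\X\to\MM$ is representable, proper, bijective and hence an isomorphism by Zariski's Main Theorem. Your ``abstract second argument'' via Artin's criterion is closer in spirit but is not what the paper does; the paper's ZMT argument has the advantage of also producing the valuative criterion verification needed in Lemma~\ref{lem:inclusion}. Your $\delta$-invariant observation (that contracting a genus-$1$ tail with one branch must produce a cusp, by the classification of genus-$1$ singularities) is a nice shortcut for identifying the singularity type, but it does not by itself give flatness or reducedness of the fibres, so you still need Smyth's Contraction Lemma exactly as you say.
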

As anticipated we explain two different approaches to the proof:
\begin{enumerate}
 \item we first construct a substack of the product $\mathfrak M_{1,n}^{\operatorname{wt}=d,\text{st}}\times\mathfrak M_{1,n}^{\operatorname{wt}=d,\text{st}}(1)$ which plays the role of the graph of such a morphism, and then check that the projection onto the first factor is an isomorphism;
\item we prove that the $1$-stabilisation exists at the level of curves with a divisor constructing the contraction directly, then argue that it descends to a morphism between moduli spaces of weighted curves.
\end{enumerate}

\subsection{First approach: the graph}

Recall the shorthand notation $\MM=\mathfrak M_{1,n}^{\operatorname{wt}=d,\text{st}}$ and $\hM=\mathfrak M_{1,n}^{\operatorname{wt}=d,\text{st}}(1)$, with universal curves $\pi\colon \cC\to \MM$ and $\hat\pi\colon\hC\to\hM$ respectively. Abusing notation we will still write $\cC$ and $\hC$ for their pullbacks to the product $\MM\times\hM$ along the two projections.

\begin{lem}\label{lemma:def_X}
 There is a locally closed substack $\X\subseteq\operatorname{Mor}_{\MM\times\hM}(\cC,\hC)$ representing morphisms that contract genus $1$ tails of weight $0$ to cusps, and are weight-preserving isomorphisms everywhere else. 
\end{lem}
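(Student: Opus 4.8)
The strategy is to exhibit $\X$ as a locally closed substack of the Hom-stack $\operatorname{Mor}_{\MM\times\hM}(\cC,\hC)$ by imposing that a morphism of curves over the product (i) is an isomorphism over a dense open, and (ii) contracts each weight-$0$ genus $1$ tail to a cusp, while (iii) it is compatible with the markings and the weights in the obvious way. The Hom-stack $\operatorname{Mor}_{\MM\times\hM}(\cC,\hC)$ is algebraic by standard representability results for Hom-stacks between flat proper families (here both $\cC$ and $\hC$ are proper over $\MM\times\hM$, and $\hC$ has l.c.i.\ — indeed at worst cuspidal — fibers, so the relevant cotangent complex has the right amplitude); alternatively one bypasses this and directly constructs $\X$ as a functor, checking it is algebraic by working \'{e}tale-locally on $\MM\times\hM$.

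\textbf{Key steps, in order.} First I would define the functor $\X$ precisely: an $S$-point is a morphism $g\colon\cC_S\to\hC_S$ over $S$, commuting with the structure maps to $S$, carrying the $i$-th section of $\cC_S$ to the $i$-th section of $\hC_S$, respecting weights (the pushforward of the weight function on the dual graph of $\cC_s$ is the weight function of $\hC_s$, where vertices contracted together are summed), such that on each geometric fiber $g_s$ is an isomorphism away from the union of the weight-$0$ genus $1$ tails and contracts each such tail $E\subseteq\cC_s$ to a cusp of $\hC_s$. Second, I would show this is representable by a locally closed substack: being an isomorphism on a dense open and contracting a specified subcurve are locally closed conditions; the weight and marking compatibilities are closed conditions; and the "contracts to a cusp'' condition is locally closed because being at worst cuspidal is open (the remark preceding the theorem) and having a cusp at the image point of a contracted $E$ is then a closed condition within that open. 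Third — and this is the crux — I would analyze the local structure of such a contraction: near a node $q=E\cap R$ where $E$ is a contracted weight-$0$ elliptic tail and $R$ is the rest of the curve, one must check that a family version of the contraction $\cC_S\to\hC_S$ exists and produces precisely a cusp in the total space, i.e.\ that the formal neighborhood of the cusp in $\hC_S$ is cut out by the expected equation $y^2-x^3$ over $S$. This I would do by choosing a coordinate on $R$ at $q$ and computing $\hat\pi_{S,*}$ of the relevant line bundle (as in Smyth \cite[Lemma 2.12]{SMY1}, or as in the two-step-contraction arguments reviewed in \S\ref{sec:section1}), verifying cohomology and base-change so that the fiberwise picture globalizes.

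\textbf{Main obstacle.} The hard part will be Step 3: showing that the contraction behaves uniformly in families, in particular that as a weight-$0$ elliptic tail degenerates (e.g.\ from a smooth elliptic curve to a nodal or cuspidal genus $1$ curve within the family, or as several tails collide), the image singularity stays a cusp and the morphism $g$ varies algebraically. The subtlety is exactly the kind of non-uniformity flagged in Remark~\ref{remark:sprouting} and Remark~\ref{rem:compatible_smoothing}: a priori the limit of cusps need not be a cusp, and one must use the stability condition (every weight-$0$ genus $1$ component of $\cC$ has at least one special point, and its image has at least two) to rule this out. I expect this is handled by a valuative-criterion-style argument combined with the explicit local equations: work over a DVR, run the contraction on the family, and check the central fiber is forced to acquire a cusp — never a tacnode or worse — precisely because only one branch of the rest of the curve meets the contracted $E$. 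Once the local model is pinned down, gluing it to the (trivial) isomorphism part over the complement gives the global morphism $g$, and hence the point of $\X$; the algebraicity and local-closedness then follow formally.
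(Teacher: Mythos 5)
Your proposal correctly identifies the framework — cut $\X$ out of the Hom-stack $\operatorname{Mor}_{\MM\times\hM}(\cC,\hC)$ by imposing conditions — but it misidentifies where the difficulty lies and, relatedly, omits the key device the paper uses. This lemma only asks you to exhibit a locally closed substack representing morphisms with the stated properties; it does not ask you to show that such morphisms exist over every point of $\MM$, nor to construct the contraction. What you call ``the crux'' (Step 3 and the ``Main obstacle'': building the contraction in families, checking the image singularity is a cusp via explicit local equations à la Smyth) is exactly the content of the \emph{next} lemma (that $\pr_1|_\X\colon\X\to\MM$ is an isomorphism) and of the paper's alternative ``second approach''. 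Here you are handed the universal morphism $\phi\colon\cC\to\hC$ over the Hom-stack and must carve out a locus; existence is deferred.

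With that reframing, the genuine subtlety for \emph{this} lemma — which your sketch does not address — is showing that the \emph{weight-preserving} condition cuts out a locally closed locus. The weight function is a combinatorial datum on dual graphs, not a priori an algebraic one, so ``pushforward of the weight function agrees with the target weight function'' is not obviously locally closed on a chart of the Hom-stack. The paper solves this by lifting to the Picard stacks: a morphism of weighted curves is weight-preserving if and only if it can be covered by a morphism over $\mathfrak P\times\hP$ that pulls back $\hL$ to $\L$, and ``pulls back the universal line bundle isomorphically'' is a locally closed condition precisely because the diagonal $\mathfrak P\to\mathfrak P\times_{\MM}\mathfrak P$ is a quasi-compact locally closed immersion. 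One then descends to $\operatorname{Mor}_{\MM\times\hM}(\cC,\hC)$ via Chevalley's theorem (the image is constructible) and promotes it to locally closed by separate DVR arguments: preservation of markings and surjectivity are closed; contracting only weight-$0$ components, degree~$1$ on uncontracted components, and weight-preservation of the dual-graph map are each checked against specializations and generizations. Your ``Step 2'' collapses all of this into one sentence (``weight and marking compatibilities are closed conditions''), which hides the part that actually requires an idea.

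Finally, your proposal imposes ``contracts $E$ to a cusp'' as a primitive condition and asserts it is locally closed, which would need its own argument. The paper avoids this entirely: once you know the contracted subcurve has weight $0$, hence (by stability) arithmetic genus $1$, and the fibers of the contraction over the non-isomorphism locus are geometrically connected, then the preservation of arithmetic genus forces the image of an elliptic \emph{tail} to be a cusp — the only alternative (contracting an elliptic bridge to a node) is excluded by the connectedness of the non-isomorphism fibers, which is an open condition. That observation replaces the delicate local analysis your plan relies on and is, for this lemma, the cleaner route.
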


\begin{proof}
Recall that $\operatorname{Mor}_{\MM\times\hM}(\cC,\hC)$ is an algebraic stack; in fact the map to $\MM\times\hM$ is representable by algebraic spaces \cite{OlssonHOM}. Let $\phi\colon\cC\to\hC$ denote the universal morphism. We now proceed to construct $\X$ as a locally closed substack in the space of morphisms.

\begin{description}[labelindent=0cm,leftmargin=\parindent]

\item[Step 1] We first lift the problem to the level of Picard stacks in order to transmute the property of being weight-preserving into the more manageable one of preserving the line bundles. Recall the notation:
 \[\PtoM\colon\mathfrak P=\mathfrak{Pic}^{tot deg=d,st}_{1,n}\to\MM,\quad \hat\PtoM\colon\hP=\mathfrak{Pic}^{tot deg=d,st}_{1,n}(1)\to\hM\]
  for the Picard stacks of $\pi$ and $\hat \pi$ respectively, with universal line bundles $\L$ and $\hL$. We can now look at the algebraic stack $\operatorname{Mor}_{\mathfrak{P}\times\hP}(\cC,\hC)$ with universal morphism
 $\Phi$ and natural projection $\Pi$ to $\operatorname{Mor}_{\MM\times\hM}(\cC,\hC)$. There exists a locally closed substack $\pazocal{Y}'\subseteq \operatorname{Mor}_{\mathfrak{P}\times\hP}(\cC,\hC)$ representing those morphisms that \emph{preserve the line bundles}. Indeed, given a smooth chart $S\to\operatorname{Mor}_{\mathfrak{P}\times\hP}(\cC,\hC)$, the locus of $s\in S$ where $\Phi_s^*\hL_s\cong\mathcal{L}_s$ is nothing but the locus $T$ where the two sections $\mathcal{L}_S$ and $\Phi_S^*\hL_S$ of $\mathfrak{P}(S)\to\MM(S)$ are isomorphic. In other words we are looking at the fiber product 
  \[\xymatrix{T\ar[r]\ar[d]&\mathfrak{P}\ar[d]^{\Delta}\\
  S\ar[r]&\mathfrak{P}\times_{\MM}\mathfrak{P}}\]
Being $\mathfrak{P}\to\MM$ representable by locally separated algebraic spaces \cite[Theorem~8.3.1]{neron}, $\Delta$ is a quasi-compact locally closed immersion \cite[\href{http://stacks.math.columbia.edu/tag/04YU}{Tag 04YU}]{stacks-project}, so in particular $T\subseteq S$ is locally closed.

\item[Step 2] 
Furthermore there is a closed substack $\pazocal{Y}\subseteq \pazocal{Y}'$ representing \emph{surjective morphisms that preserve the markings}.

Given a chart $S\to\operatorname{Mor}_{\mathfrak{P}\times\hP}(\cC,\hC)$, the locus of $s\in S$ where $\Phi_s$ is marking-preserving is the equaliser of the two sections:
\bcd
S \ar[r,shift left, "\times\hat\sigma_i" above]\ar[r,shift right,"\times\Phi\circ\sigma_i" below] & \hC_S\times_S\ldots\times_S \hC_S
\ecd
This defines a closed subscheme of $S$, since $\hC_S\to S$ is separated.

As regards surjectivity, since $\Phi$ is proper and the dimension of the fiber is upper semicontinuous \cite[\href{http://stacks.math.columbia.edu/tag/0D4I}{Tag 0D4I}]{stacks-project}, the locus in $\hC_S$ where the fiber of $\Phi$ is empty is open. Its image in $S$ is open by flatness of $\hC_S\to S$ \cite[\href{http://stacks.math.columbia.edu/tag/01UA}{Tag 01UA}]{stacks-project}, and the complement of the latter is the locus we need.
 
\item[Step 3] We may now get back to $\operatorname{Mor}_{\MM\times\hM}(\cC,\hC)$. Let $\X'$ be the image of $\pazocal{Y}$ under $\Pi$. This is a constructible substack of $\operatorname{Mor}_{\MM\times\hM}(\cC,\hC)$ by Chevalley's theorem \cite[Theorem 5.9.4]{LMB}. Recall that to show that a constructible set is open (respectively closed) it is enough to check that it contains all the generisations of its points (respectively all the specialisations) \cite[\href{http://stacks.math.columbia.edu/tag/0DQN}{Tag 0DQN}\href{http://stacks.math.columbia.edu/tag/0903}{Tag 0903}]{stacks-project}. Finally, under Noetherian assumptions, two points related by specialisation/ generisation are contained in the image of a DVR scheme \cite[\href{http://stacks.math.columbia.edu/tag/054F}{Tag 054F}]{stacks-project}.

It can be shown as above that being surjective and marking-preserving are closed conditions. The requirement that $\phi$ can be covered by a line bundle-preserving morphism can be translated into the following combinatorial conditions:

\begin{enumerate}
\item\emph{$\phi$ contracts only components of weight $0$.} We show that this is open. Assume that $\dvr$ is a DVR scheme with closed point $0$ and generic point $\eta$, and we are given $\dvr\to\X'$ such that $\phi_0\colon\cC_0\to \hC_0$ does not contract any component of positive weight. Suppose there exists an irreducible component $D_\eta\subseteq \cC_{\eta}$ of positive weight $d_D$ which is contracted by $\phi_{\eta}.$ The contracted locus, i.e. $\left\{c\in \cC_{\dvr} | \dim_c\phi^{-1}(\phi(c))\geq 1\right\}$, is closed by semicontinuity of fiber dimension, hence it contains all the components $D_i\subseteq \cC_0$ to which $D_\eta$ specialises. At least one of them has positive weight, since the sum of their weights is $d_D$, which is a contradiction.

\item\emph{$\phi$ has degree 1 on every non contracted component} or, equivalently, for every $S\to\X'$ there is an $S$-dense open in $\hC_S$ such that the restriction of $\phi_S$ to its preimage is an isomorphism.
This is an open and closed condition; we show it is open. Let $\dvr$ be a DVR scheme as above and assume that $\phi_0$ satisfies the property. Since $\phi_\dvr$ is proper, we may consider 
\[\phi_{\dvr,*}[\cC_{\dvr}]=\sum n_i[\hC_{\dvr,i}]\in A_2(\hC_\dvr).\] Applying Gysin pull-back to $0$ (which is a regular closed point of the base) \cite[Prop. 10.1(a)]{FUL}, we see that all the $n_i$'s are 1 for those $\hC_i$'s such that $0^![\hC_i]\neq 0$. On the other hand there is no irreducible component of $\hC$ supported on $\hC_\eta$.

\item\emph{$\phi$ is weight-preserving.} This is again an open condition, as we can see from the weighted dual graphs. Let $\Gamma(\phi)$ be the map induced at the level of weighted dual graphs $\Gamma(\cC_\dvr)\to\Gamma(\hC_\dvr)$. It is compatible with the specialisation maps:
\bcd
\Gamma(\cC_0)\ar[r,"\Gamma(\phi_0)"]\ar[d,"\rm{sp}"] & \Gamma(\hC_0)\ar[d,"\rm{sp}"] \\
\Gamma(\cC_{\eta})\ar[r,"\Gamma(\phi_\eta)"]& \Gamma(\hC_\eta)
\ecd
Since the weight of a component of the generic fiber is determined by those of the components to which it specialises
\[ \deg(v)=\sum_{w\in\rm{sp}^{-1}(v)}\deg(w)
\]
$\Gamma(\phi_\eta)$ has to be weight-preserving as well.
\end{enumerate}

\item[Step 4] We have imposed that, if $\phi$ contracts a subcurve $E$ of the fiber, it must have weight $0$. Since the fibers of $\hC$ only have nodes and cusps as singularities, and the markings are required to be smooth points, we observe that $E$ must be unmarked and have arithmetic genus $1$ by weighted stability; furthermore $\left|E\cap\overline{C\setminus E}\right|\leq 2$, i.e. $E$ is either an elliptic tail or an elliptic bridge. There are two possibilities left:
\begin{enumerate}
\item $\phi$ contracts an elliptic tail to a cusp and is an isomorphism everywhere else, or there is no elliptic tail to start with and $\phi$ is an isomorphism;
\item the elliptic tail/bridge is contracted to a smooth point/node, then a non-separating node or a cusp must appear somewhere else in the fiber of $\hC$ in order to preserve the arithmetic genus.
\end{enumerate} 
We want to avoid the second scenario, so we define the open substack $\X\subseteq \X'$ as follows.
Given $\mathcal{C}_S\to\hC_S\in\X'(S)$, let $U\subseteq \hC$ be the \emph{maximal} $S$-dense open subset such that $\phi_S|_{\phi_S^{-1}(U)}\colon\phi_S^{-1}(U)\to U$ is an isomorphism, and $V$ its closed complement in $\hC_S$. 
Then $\X$ is the open locus \cite[\href{http://stacks.math.columbia.edu/tag/055G}{Tag 055G}]{stacks-project} where the fibers of $\pi|_{\phi^{-1}(V)}\colon \phi^{-1}(V)\to S$ are geometrically connected.
\end{description} 
This concludes the construction of the locally closed substack $\X\subseteq \operatorname{Mor}_{\MM\times\hM}(\cC,\hC)$.
 \end{proof}

\begin{lem}\label{lamma:projection_iso}
 The first projection $\pr_1\colon \operatorname{Mor}_{\MM\times\hM}(\cC,\hC)\to\MM$ restricted to $\X$ is an isomorphism with $\MM$.
\end{lem}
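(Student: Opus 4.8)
The plan is to show that the morphism $\pr_1|_{\X}\colon\X\to\MM$ is an isomorphism by checking it is representable, étale (or at least formally étale), and a bijection on geometric points, together with some separatedness bookkeeping. Concretely, I would exhibit a two-sided inverse: a morphism $\MM\to\X$ built from a canonical $1$-stabilisation construction on the universal curve, and then verify that the two composites are the identity.

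First I would construct the candidate inverse. Given a family $\cC_S\to S$ in $\MM(S)$, one must produce, functorially in $S$, a contraction $\phi_S\colon\cC_S\to\hC_S$ to an at-worst-cuspidal weighted-stable curve that contracts precisely the weight-$0$ elliptic tails to cusps and is an isomorphism elsewhere, together with the induced $S$-point of $\hM$. The natural way is via a relative contraction: let $Z\subseteq\cC_S$ be the (closed) union of the weight-$0$ elliptic tails — this is well-defined fiberwise and closed by upper-semicontinuity of the relevant numerical data since the weight function is locally constant and the elliptic-tail locus is cut out combinatorially from the dual graph. On the complement, which meets each fiber in the core together with the positive-weight components, one takes the line bundle $\omega_{\pi}^{\rm{log}}(\text{markings})$ suitably twisted — as in Smyth's Lemma~2.12 used already in \S\ref{sec:section1} — so that it is relatively semi-ample, trivial exactly on $Z$, and defines the contraction $\phi_S=\underline{\Proj}_S\left(\bigoplus_{n\geq0}\pi_{S,*}\mathcal M^{\otimes n}\right)$ with $\mathcal M$ the chosen line bundle. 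One checks fiberwise (using the classification of genus~$1$ singularities and Smyth's balanced-contraction machinery) that each contracted elliptic tail becomes a cusp and that the resulting curve is weighted-stable in $\hM$ with the pushed-forward weight function; this gives a morphism $c\colon\cC_S\to\hC_S$ which together with the two classifying maps defines $S\to\X'$, and the connectedness-of-fibers condition on the preimage of the non-iso locus is satisfied by construction, so it lands in $\X$. Functoriality in $S$ follows because $\mathcal M$ commutes with base change (cohomology and base change for a relatively semi-ample line bundle on a family of genus-$1$ curves) and $\underline{\Proj}$ of a base-change-compatible sheaf of algebras commutes with base change. This produces $\MM\to\X$.

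Next I would check the two composites. The composite $\MM\to\X\xrightarrow{\pr_1}\MM$ is the identity essentially by construction, since the target curve of the classifying data is the same $\cC_S$ we started with. For the other composite $\X\xrightarrow{\pr_1}\MM\to\X$ one must argue that the canonical contraction recovered from $\pr_1(\X)$ agrees with the tautological one parametrised by $\X$: this is where uniqueness of the $1$-stabilisation enters. Here I would use that a point of $\X$ already records a contraction $\phi$ with exactly the prescribed contracted locus and exactly the prescribed target singularity type (this was pinned down in Step~4 of Lemma~\ref{lemma:def_X}, ruling out the "node plus extra node/cusp" scenario), and that such a contraction of a fixed prestable curve with prescribed contracted subcurve is unique up to unique isomorphism — it must be $\underline{\Proj}$ of the pushforward algebra of $\OO_{\cC}(\text{contracted locus})$-twisted dualising sheaf, because any contraction with a given fiberwise-connected contracted locus and normal target is the Stein factorisation, hence canonical. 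Thus the data on $\X$ is isomorphic, uniquely, to the data pulled back via $\pr_1$ from $\MM$, giving a canonical $2$-isomorphism of the composite with $\id_\X$. (One should also note $\X$ and $\MM$ are stacks over the same base $\mathfrak M_{1,n}$, and the maps commute with the projections to it, which helps control $2$-automorphisms.)

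The main obstacle I expect is the uniqueness/rigidity input in the second composite — precisely, showing that the contraction recorded by a point of $\X$ has no nontrivial automorphisms over $\cC_S$ and coincides canonically with the Smyth-type contraction, so that $\pr_1|_\X$ is not merely bijective on points but genuinely an isomorphism of stacks. Bijectivity on geometric points is comparatively easy: surjectivity is existence of the $1$-stabilisation of a single curve (contract the weight-$0$ elliptic tail to a cusp, which is always possible and is handled by the two-step contraction as in Lemma~\ref{lem:sing_smoothability}), and injectivity on points plus triviality of relative automorphisms is exactly the rigidity statement. A clean way to package all of this is: $\pr_1|_\X$ is representable (both sides are stacks over $\mathfrak M_{1,n}$ and the morphism is compatible with those structure maps, while $\X$ lives inside a $\Hom$-stack that is representable by algebraic spaces over $\MM\times\hM$), it is a monomorphism (by rigidity of the contraction data), it is proper or at least separated and of finite type, and it is surjective and formally étale (deforming the pair "curve $+$ its canonical contraction" is equivalent to deforming the curve, since the contraction is rigid), hence an isomorphism. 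I would present the argument in that order: representability, then surjectivity on points via existence of $1$-stabilisation, then the mono property via uniqueness of the contraction, then étaleness via the rigidity of the contraction under infinitesimal deformations, concluding with the standard fact that a representable, étale, universally injective, surjective morphism is an isomorphism.
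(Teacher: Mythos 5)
Your proposal is correct in outline but takes a genuinely different route from the paper's proof of this lemma, and in fact it essentially reproduces the paper's \emph{second} approach (construct the contraction directly, then descend) inside a proof of a lemma belonging to the \emph{first} approach. The paper's actual proof of Lemma~\ref{lamma:projection_iso} is a Zariski's Main Theorem argument: one shows $\pr_1|_\X$ is representable by algebraic spaces (faithfulness of the forgetful functor, i.e.\ $\operatorname{Aut}(\phi)\hookrightarrow\operatorname{Aut}(C)$), then proper via the valuative criterion (three cases: no elliptic tail, non-smoothed elliptic tail handled by an explicit pushout, and smoothed elliptic tail handled by Smyth's contraction), and then invokes ZMT using bijectivity on points together with normality of $\MM$. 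This avoids the need to verify that two composites are $2$-isomorphic to identities; bijectivity plus normality delivers the isomorphism for free. Your route — build a candidate inverse via the relative $\Proj$ of a semi-ample twist and then use rigidity of the Stein factorisation to pin down uniqueness — is valid and matches what the paper does in \S4.2, but it front-loads the harder work. In particular, the two approaches are not independent: the paper's case (c) in the properness argument explicitly defers to the second approach, so both ultimately hinge on the same contraction machinery.

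One genuine gap in your sketch as written: the weight function on $\MM^{\rm{wt,st}}_1$ does \emph{not} define a global line bundle. When you write ``$\omega_\pi^{\rm{log}}(\text{markings})$ suitably twisted,'' the twist you need (so that $\mathcal N$ is trivial exactly on the weight-$0$ elliptic tails and ample elsewhere) depends on choosing a relative Cartier divisor realising the weight. Such a divisor exists only étale-locally on $\MM^{\rm{wt,st}}_1$; the paper handles this by first building the contraction over $\MM_1^{\rm{div}}$ (where the universal divisor lives) and then showing, via the rigidity of the Stein factorisation (cf.\ the argument using \cite[Lemma~1.15]{debarre}), that the resulting $\hC$ is independent of the choice of local lift and therefore descends to $\MM^{\rm{wt,st}}_1$. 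You implicitly assume this descent when you say the construction is ``functorial in $S$,'' but making that precise is exactly the content of Proposition~\ref{1-stabilization-div} and what follows it, and without that your candidate inverse is only defined on an atlas. Once you include the descent, your proof goes through. The étaleness alternative you offer is also fine in principle but is not actually needed once the explicit inverse is in hand, and requires an infinitesimal-rigidity computation that is more delicate than the ZMT route.
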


 \begin{proof}
 This result will follow from an application of Zariski's Main Theorem for algebraic spaces.
First we claim that the projection $\pr_{1|\X}\colon\X\to \MM$ is \emph{representable by algebraic spaces}: by \cite[\href{http://stacks.math.columbia.edu/tag/04Y5}{Tag 04Y5}]{stacks-project} we only need to check that it is faithful, and by \cite[Theorem 2.2.5]{CONR} it is enough to look at geometric points. Hence we need to say that, given $\phi\colon C\to \widehat{C}$ a $K$-point of $\X$, we have
$\operatorname{Aut}(\phi)\subseteq\operatorname{Aut}(C)$. Recall that automorphisms of $\phi$ are commutative diagrams:
\bcd C\ar[r,"\phi"]\ar[d,"\psi"] & \widehat{C}\ar[d,"\hat\psi"] \\ C\ar[r,"\phi"] & \widehat{C} \ecd
 Now $\hat\psi$ is determined by $\psi:$ for any $p\in\widehat{C}$, either $\phi$ is an isomorphism in a neighbourhood of its preimage and thus around that point $\hat{\psi}=\phi\circ\psi\circ\phi^{-1},$ or $\phi^{-1}(p)$ is a genus 1 tail, so $\psi$ must preserve it.
 
Secondly $\pr_{1|\X}$ is \emph{proper}: this can be seen using the valuative criterion:
\bcd
\eta'=\operatorname{Spec}(K')\ar[r]\ar[d] & \eta=\operatorname{Spec}(K)\ar[r]\ar[d] & \X \ar[d] \\
\dvr'=\operatorname{Spec}(R')\ar[r]\ar[urr,dashed,"\exists?", left] & \dvr=\operatorname{Spec}(R)\ar[r] & \MM
\ecd
Let $\pi \colon \cC_\dvr\to \dvr$ be the family of nodal curves on $\dvr$; there are three cases to consider:
\begin{enumerate}[label=(\alph*)]
\item the central fiber contains no elliptic tail, then the same is true for $\cC_\eta$, hence $\phi_\eta$ is an isomorphism. We can extend $\phi_\eta$ as follows:
\bcd
\cC_\eta\arrow{r}{\sim}[swap]{\phi_\eta}\ar[d,hook,"\iota"] & \hC_\eta\ar[d,hook,"\iota\circ\phi_\eta^{-1}"] \\
\cC_\dvr\ar[r,"\id_\cC"] & \cC_\dvr=:\hC_\dvr
\ecd
Another extension $\phi'\colon \cC_\dvr\cong \hC_\dvr$ would be isomorphic to the previous one via:
\bcd
\cC_\dvr\ar[r,"\phi'"]\ar[d,"\id"] &\hC_\dvr\ar[d,"(\phi')^{-1}"] \\
\cC_\dvr\ar[r,"\id"] & \cC_\dvr
\ecd

\end{enumerate}
If instead $\cC_0$ has got an elliptic tail, then we have two possibilities:
\begin{enumerate}[label=(\alph*)]
\setcounter{enumi}{1}
\item $\cC_{\eta}$ has got an elliptic tail as well; that is the image of $\dvr\to\MM$ is contained in the boundary, so we can find a lift
\bcd
 & \MM_{1,1}\times\MM_{0,1+n}^{\rm{wt}=d,\rm{st}}\ar[d] & \\
 \dvr\ar[r]\ar[ur,dashed] & \mathfrak D_{\{1,\emptyset\},\{0,n\}}\ar[r,hook] &\MM.
\ecd
 Then $\cC_\dvr$ is the pushout of a family of rational curves $\mathcal R_\dvr$ and a family of genus 1 curves $\mathcal E_\dvr$:
\begin{equation}\label{eq:pushout}
\begin{tikzcd}
\dvr\ar[d]\ar[r] & \mathcal R_\dvr \ar[d] \\
\mathcal E_\dvr\ar[r] & \cC_\dvr
\end{tikzcd}
\end{equation}
Recall that the cuspidal curve $\hC_K$ can be described as the pushout of the following diagram:
\bcd
2K\ar[r]\ar[d] & \mathcal R_K\ar[d] \\
K\ar[r] & \hC_K.
\ecd
Since the smooth section $\dvr\to\mathcal R_\dvr$ defines a Cartier divisor, it makes sense to take its double and we can thus define $\hC_\dvr$ by means of the similar diagram:
\bcd
2\dvr\ar[r]\ar[d] & \mathcal R_\dvr\ar[d] \\
\dvr\ar[r] & \hC_\dvr
\ecd
The morphism $\phi_\dvr\colon\cC_\dvr\to\hC_\dvr$ extending $\phi_\eta$ is then defined by exploiting the description of $\cC_\dvr$ as a pushout \eqref{eq:pushout}, and the morphisms $\id\colon\mathcal R_\dvr\to \mathcal R_\dvr$ and $\pr_{\mathcal E_\dvr}\colon\mathcal E_\dvr\to \dvr$ to the upper right and bottom left corners of the last diagram respectively.
\item If $\cC_\dvr$ does smooth the elliptic tail, then $\phi_\eta$ is an isomorphism. We may assume that $\dvr$ is the spectrum of a complete DVR with algebraically closed residue field \cite[Theorem 7.10]{LMB}. Then we may pick a smooth section for each rational component of $\cC_0$ and extend them to sections of $\cC_\dvr\to \dvr$ by Grothendieck's existence theorem; let us denote by $\Sigma$ the Cartier divisor that is the sum of all such sections. Let $Z$ be the elliptic tail in the central fiber; then we claim that $\omega_{\cC_\dvr/\dvr}(Z)\otimes\mathcal O_{\cC_\dvr}(2\Sigma)$ is $\pi_\dvr$ semi-ample, ample on the generic fiber, and gives the contraction of the elliptic tail to the cusp in the central fiber. We shall not prove the claim here, since this is the core of the second approach.
\end{enumerate}
Finally observe that the map is bijective by construction and $\MM$ is normal, hence $\pi_{|\X}\colon\X\to \MM$ is an isomorphism by Zariski's main theorem (as in \cite[\href{http://stacks.math.columbia.edu/tag/082I}{Tag 082I}]{stacks-project}).
 \end{proof}
 
\subsection{Second approach: constructing the contraction}

The idea behind this construction is essentially due to Hassett \cite[\S2]{HassettHyeon} and it has recently been reviewed and simplified in \cite[\S3.7]{RSPW}. For simplicity we will work with unmarked curves, which is indeed the case of interest when looking at the Gromov-Witten theory of a Calabi-Yau threefold.

We shall construct the contraction over $\MM_1^{\rm{div}}$ first, and then show that it descends to $\MM_1^{\rm{wt,st}}$. The weighted stability condition is implicit in the notation.

Let $\pazocal E$ be the locus inside the universal curve spanned by elliptic tails of weight $0$; this is a Cartier divisor in the universal curve over $\MM^{\rm{wt,st}}_1$; we will abuse notation and denote by $\pazocal E$ all its pullbacks. Moreover we denote by $\mathfrak D^1$ its image in $\MM^{\rm{wt,st}}_1$, which is a Cartier divisor as well.

Consider the following line bundle on the universal curve over $\MM_1^{\rm{div}}$: 
\begin{equation}\label{eq:linebundlecontraction}
\mathcal N:=\omega_{\pi}(\pazocal E)\otimes\mathcal O_{\cC}(2\pazocal D),
\end{equation} where $\pazocal D$ is the universal Cartier divisor over $\MM_1^{\rm{div}}$.
Notice that $\mathcal N$ is trivial on the locus of elliptic tails, so the Proj construction applied to $\mathcal N$ will contract this locus.

\begin{prop}\label{1-stabilization-div}
Let $\hC=\underline{\operatorname{Proj}}_{\MM_1^{\rm{div}}}(\bigoplus_{n\geq 0}\pi_*\mathcal N^{\otimes n})$. Then $\hC$ is a family of weighted $1$-stable curves and $\phi$ is a regular morphism:
 
 \bcd
 (\cC,\pazocal D)\ar[rr,"\phi"]\ar[dr,"\pi"] & & (\hC,\phi(\pazocal D))\ar[ld,"\hat\pi" above left] \\
 & \MM_1^{\rm{div}} &
 \ecd
This defines the $1$-stabilisation morphism $\MM_1^{\rm{div}}\to\MM_1^{\rm{div}}(1)$.
\end{prop}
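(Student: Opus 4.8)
The statement asserts three things about $\hC=\underline{\operatorname{Proj}}_{\MM_1^{\rm{div}}}(\bigoplus_{n\geq 0}\pi_*\mathcal N^{\otimes n})$: that $\hat\pi\colon\hC\to\MM_1^{\rm{div}}$ is a flat family of weighted $1$-stable curves, that $\phi\colon\cC\to\hC$ is a morphism (regular, i.e. everywhere defined), and that this construction is functorial in the base and so defines a morphism of stacks $\MM_1^{\rm{div}}\to\MM_1^{\rm{div}}(1)$. My plan is to reduce the first two points to a fibrewise analysis over an algebraically closed field, using cohomology and base change; the third is then essentially formal once one sees that the whole construction commutes with base change.

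First I would verify that $\mathcal N=\omega_\pi(\pazocal E)\otimes\OO_\cC(2\pazocal D)$ is relatively semi-ample and that its higher direct images vanish, so that cohomology and base change applies to $\pi_*\mathcal N^{\otimes n}$ and the formation of $\hC$ commutes with arbitrary base change $S\to\MM_1^{\rm{div}}$. Thus it suffices to check on geometric fibres. Over a geometric point, $\mathcal N$ restricted to a fibre $C$ equals $\omega_C(E)\otimes\OO_C(2D)$, where $E$ is the union of the weight-$0$ elliptic tails. The key computation is: this line bundle has degree $0$ on each such elliptic tail $E_i$ (since $\deg_{\,E_i}\omega_C=0$ because $E_i$ meets the rest of the curve in one point and is an elliptic tail contributing a $+1$ from $\omega_C$ balanced by... actually $\deg_{E_i}\omega_C(E) = 2g(E_i)-2+1 = 1-1+0 = -1+1$; I must be careful here: $\deg_{E_i}\omega_C = 2p_a(E_i)-2 + \#(E_i\cap\overline{C\setminus E_i}) = 0+1=1$ is wrong — rather $\deg_{E_i}\omega_C = 2\cdot 1 - 2 = 0$ if $E_i$ is a genus-$1$ component with one node, but $\omega_C|_{E_i}=\omega_{E_i}(q_i)$ has degree $1$; hence $\omega_C(E)|_{E_i}$ has degree $1 + E\cdot E_i$ and one checks $E\cdot E_i = -1$ using $E^2$-computations, so the total is $0$; since $D$ is disjoint from $E$, $\OO_C(2D)$ contributes $0$ as well), while on every other component it has positive degree by the weighted-stability inequality ($\omega_C$ contributes nonnegatively and $2D$ makes up the slack on weight-$0$ rational components). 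Therefore $\mathcal N$ is semi-ample with exactly the elliptic tails in its base locus. The Proj contracts precisely these tails, and I would identify the resulting singularity: contracting a genus-$1$ tail meeting the rest at one point produces a cusp (this is the local computation — the section ring of $\omega_E(q)$ on a smooth elliptic curve $E$ with a point $q$, after gluing, gives $\k[[x,y]]/(y^2-x^3)$), while no other component is contracted. Hence $\hC$ has at worst nodes and cusps, arithmetic genus $1$, and inherits a weight function; the weighted-stability inequality for $\MM_1^{\rm{div}}(1)$ (weight-$0$ genus-$1$ components need two special points) is exactly what survives after removing elliptic tails, and the divisor $\phi(\pazocal D)$ is still a relative Cartier divisor supported in the smooth locus since $D$ avoids the contracted locus. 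The morphism $\phi$ is the canonical map $\cC\to\underline{\operatorname{Proj}}(\bigoplus\pi_*\mathcal N^{\otimes n})$, which is a morphism (not just a rational map) precisely because $\mathcal N$ is \emph{globally} semi-ample relative to $\MM_1^{\rm{div}}$ — this is where relative semi-ampleness, rather than mere semi-ampleness fibrewise, is needed, and it follows from the base-change statement together with fibrewise semi-ampleness by the standard argument (e.g. \cite[Lemma~2.12]{SMY1}, invoked already in Lemma~\ref{lem:sing_smoothability}).

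Once $\hC/\MM_1^{\rm{div}}$ is a family of at-worst-cuspidal curves with the divisor $\phi(\pazocal D)$ satisfying the stability condition for $\MM_1^{\rm{div}}(1)$, I get a tautological classifying morphism $\MM_1^{\rm{div}}\to\MM_1^{\rm{div}}(1)$; compatibility with base change of the Proj construction shows this is a morphism of stacks, not just on points. Finally, restricting over the smooth locus: there $\pazocal E=\emptyset$, so $\mathcal N=\omega_\pi\otimes\OO_\cC(2\pazocal D)$ is already $\pi$-ample by the very stability condition defining $\MM_1^{\rm{div}}$, whence $\phi$ is an isomorphism and the morphism extends the identity as claimed.

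\textbf{The main obstacle.} The delicate point is the local identification of the contraction of a genus-$1$ elliptic tail (and, more generally, of an elliptic bridge or a chain of tails that may appear after base change to a DVR) as producing exactly a \emph{cusp}, together with the verification that $\phi$ is genuinely a morphism rather than a rational map — i.e. the passage from fibrewise semi-ampleness to relative semi-ampleness. This requires the cohomology-and-base-change input to be set up carefully (checking $\R^1\pi_*\mathcal N^{\otimes n}=0$ for $n\geq 1$, which again reduces to the fibrewise degree computation above via Serre duality on the fibres). The rest — weighted stability of $\hC$, behaviour of the divisor, descent to $\MM_1^{\rm{wt,st}}$, and the statement over the smooth locus — is routine once this core local picture is in hand, and indeed the argument is the exact analogue of Hassett's contraction \cite[\S2]{HassettHyeon} and the treatment in \cite[\S3.7]{RSPW}.
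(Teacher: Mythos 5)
Your overall plan — reduce everything to a fibrewise check, identify the contraction of a weight-$0$ elliptic tail as a cusp, deduce properties of $\hC$ over an atlas, and then descend — matches the paper's strategy. But there is a genuine error at the foundation of your reduction step: the claim that $\R^1\pi_*\mathcal N^{\otimes n}=0$ for $n\geq 1$ is false, and this breaks the cohomology-and-base-change argument you are resting on. On a geometric fibre $C=E\cup_q R$ with $E$ a weight-$0$ elliptic tail, $\mathcal N^{\otimes n}|_E\cong\OO_E$ for all $n$ (this is the whole point: the degree is exactly $0$ there), and by Serre duality
\[
H^1\!\left(C,\mathcal N^{\otimes n}\right)^\vee \cong H^0\!\left(C,\mathcal N^{\otimes -n}\otimes\omega_C\right).
\]
The right-hand side is one-dimensional: $\mathcal N^{\otimes -n}\otimes\omega_C$ restricts to $\OO_E(q)$ on $E$, whose unique section vanishes at $q$ and therefore glues with the zero section on $R$ (where the bundle has negative degree). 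Hence $\R^1\pi_*\mathcal N^{\otimes n}$ is a line bundle supported on the elliptic-tail divisor $\mathfrak D^1$, nonzero for every $n\geq 1$, and naive cohomology-and-base-change simply does not apply at the very locus where you need it most.

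The paper's proof gets around this with a more delicate device, Lemma~\ref{DVR} (the ``pullback with a boundary'' lemma, taken from \cite{RSPW}): since $\R^1\pi_*\mathcal N$ is a line bundle supported on the Cartier divisor $\mathfrak D^1$, the formation of $\pi_*\mathcal N$ \emph{does} commute with pullback along any DVR scheme $\dvr\to\MM_1^{\rm{div}}$ whose closed point hits $\mathfrak D^1$ and whose generic point does not. This is much weaker than base change along arbitrary maps, but it is enough: it lets the paper check local freeness of $\pi_*\mathcal N$, relative semi-ampleness of $\mathcal N$, and the at-worst-cuspidal geometry of $\hC$ on one-parameter smoothings, where Smyth's Contraction Lemma~\cite[Lemma~2.13]{SMY1} applies after choosing $\dvr$ so that $\cC_\dvr$ has regular total space. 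Your proposal is missing this lemma and the flavour of argument it enables. Until you replace the vanishing claim with this boundary-supported $\R^1$ picture, the reduction to fibres (and hence the regularity of $\phi$, the flatness of $\hat\pi$, and the identification of the singularity as a cusp) is not justified.
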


We need to prove that $\mathcal N$ is $\pi$-semi-ample (regularity of $\phi$) and that $\pi_*\mathcal N$ is locally free (flatness of $\hat\pi$). Both these facts are clearly true generically, but less so on points of $\pazocal E$ and $\mathfrak D^1$. We shall check this by exploiting the next lemma, which is a nice technical gadget taken from \cite{RSPW}.
\begin{lem}[pullback with a boundary]\label{DVR}
Let $\pi\colon\cC\to S$ be a proper family of curves over a smooth basis, and let $\mathcal N$ be a line bundle on $\cC$ such that $\R^1\pi_*\mathcal N$ is a line bundle supported on a Cartier divisor $\mathfrak D\subseteq S$. Then for every DVR scheme $\dvr$ with closed point $0$ and generic point $\eta$, and for every morphism $f\colon \dvr\to S$ such that $f(0)\in\mathfrak D$ and $f(\eta)\in S\setminus\mathfrak D$ we have
\[f^*\pi_*\mathcal N\cong \pi_{\dvr,*}f_\cC^*\mathcal N.\]
\end{lem}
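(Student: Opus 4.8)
The strategy is to reduce the whole statement to a single $\operatorname{Tor}$-vanishing on the base, for which the two positional hypotheses on $f$ turn out to be exactly what is needed. Since the assertion and all the sheaves involved are compatible with Zariski localisation on $S$, I would first assume $S=\operatorname{Spec} A$ is affine (with $A$ Noetherian) and, after shrinking, that $\mathfrak D=V(t)$ for a single nonzerodivisor $t\in A$; this — rather than the smoothness of $S$ — is all we use about $\mathfrak D$. As $\pi\colon\cC\to S$ is proper and flat with fibres of dimension $\le 1$, the classical theory of cohomology and base change for proper morphisms (Grothendieck; see Mumford's \emph{Abelian Varieties}, \S~II.5, or \cite{stacks-project}) furnishes a complex of finite free $A$-modules $K^\bullet=[K^0\xrightarrow{\,d\,}K^1]$ concentrated in degrees $0$ and $1$ that \emph{universally} computes $\R\pi_*\mathcal N$, i.e.\ $\R^i\pi_{B,*}(\mathcal N_B)\cong H^i(K^\bullet\otimes_A B)$ functorially in every $A$-algebra $B$. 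Consequently $\pi_*\mathcal N=\ker d=:M$ and $\R^1\pi_*\mathcal N=\operatorname{coker}d$, and by hypothesis $\operatorname{coker}d$ is a line bundle on $\mathfrak D$, so after a further localisation I may take $\operatorname{coker}d\cong A/(t)$.

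Writing $f$ as a ring homomorphism $A\to R$, the target of the map in the statement is $\pi_{\dvr,*}f_\cC^*\mathcal N=H^0(K^\bullet\otimes_A R)=\ker(d\otimes_A R)$, and the map itself is the evident base-change homomorphism $M\otimes_A R\to\ker(d\otimes_A R)$ induced by the inclusion $M\subseteq K^0$ (well defined because $M$ dies in $K^1$, hence in $K^1\otimes_A R$). So everything comes down to showing that $(-)\otimes_A R$ commutes with the formation of $\ker d$, and this is the one step that uses the hypotheses on $f$: the image of $t$ in $R$ lies in the maximal ideal of $R$ because $f(0)\in\mathfrak D$, and is nonzero because $f(\eta)\notin\mathfrak D$, so — $R$ being a domain — multiplication by $f^*t$ on $R$ is injective. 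Resolving $A/(t)$ by $0\to A\xrightarrow{\,t\,}A\to A/(t)\to 0$, this is precisely the assertion $\operatorname{Tor}^A_i(\operatorname{coker}d,R)=\operatorname{Tor}^A_i(A/(t),R)=0$ for all $i>0$; equivalently, $f^*\mathfrak D$ is an effective Cartier divisor on $\dvr$ supported at the closed point, so $\mathcal O_{\dvr}$ has no higher $\operatorname{Tor}$ against sheaves supported on $\mathfrak D$.

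The conclusion is then a short two-step chase. Tensoring $0\to\operatorname{im}d\to K^1\to\operatorname{coker}d\to 0$ with $R$ and using $\operatorname{Tor}^A_1(\operatorname{coker}d,R)=0$ shows $\operatorname{im}(d)\otimes_A R\hookrightarrow K^1\otimes_A R$; hence $d\otimes_A R$ factors as $K^0\otimes_A R\twoheadrightarrow\operatorname{im}(d)\otimes_A R\hookrightarrow K^1\otimes_A R$ and $\ker(d\otimes_A R)=\ker(K^0\otimes_A R\to\operatorname{im}(d)\otimes_A R)$. On the other hand the same exact sequence presents $\operatorname{im}d$ as the kernel of a surjection from the free module $K^1$ onto $\operatorname{coker}d\cong A/(t)$, a module of projective dimension one, so $\operatorname{im}d$ is a finitely generated projective $A$-module; tensoring $0\to M\to K^0\to\operatorname{im}d\to 0$ with $R$ therefore stays exact and identifies $M\otimes_A R$ with $\ker(K^0\otimes_A R\to\operatorname{im}(d)\otimes_A R)=\ker(d\otimes_A R)$. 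Thus the base-change map $f^*\pi_*\mathcal N\to\pi_{\dvr,*}f_\cC^*\mathcal N$ is an isomorphism, and — being the canonical one — it is independent of the affine chart and glues over $S$.

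As for the main obstacle: there is no deep one. The only ingredient here that is not purely formal is the existence of the universal two-term complex $K^\bullet$ of finite free modules, which is classical; granting it, the entire proof is the observation that the hypotheses $f(0)\in\mathfrak D$ and $f(\eta)\notin\mathfrak D$ are exactly what makes $\operatorname{Tor}^A_{>0}(\mathcal O_{\mathfrak D},\mathcal O_{\dvr})$ vanish. The only genuine care needed is to keep the degree bookkeeping straight and to check that the isomorphism constructed locally is the canonical base-change morphism, so that it globalises.
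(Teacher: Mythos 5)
Your proof is correct. The paper itself gives no argument here: its proof is the single line ``The argument can be found in \cite[Lemma~3.7.2.2]{RSPW}'', so there is no authorial route to compare against, but what you wrote is the self-contained argument one would expect that reference to contain. You pass to a local chart, invoke the classical finite free two-term complex $K^\bullet=[K^0\xrightarrow{d} K^1]$ universally computing $\R\pi_*\mathcal N$ (fibre dimension $\leq 1$ bounds the complex), reduce the assertion to $\ker(d)\otimes_A R\xrightarrow{\sim}\ker(d\otimes_A R)$, and observe that the two positional hypotheses on $f$ force $f^*t$ to be a nonzero element of the DVR $R$, hence a nonzerodivisor, hence $\operatorname{Tor}^A_{>0}(A/(t),R)=0$; the hypothesis that $\R^1\pi_*\mathcal N$ is a line bundle on $\mathfrak D$ enters exactly when you write $\operatorname{coker}d\cong A/(t)$ locally. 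Two minor remarks. First, the step establishing that $\operatorname{im}d$ is projective via $\operatorname{pd}_A(A/(t))=1$ slightly over-proves: tensoring $0\to\operatorname{im}d\to K^1\to A/(t)\to 0$ with $R$ already exhibits $\operatorname{Tor}^A_1(\operatorname{im}d,R)$ as a quotient of $\operatorname{Tor}^A_2(A/(t),R)=0$, which is all the final chase requires. Second, you correctly note that the smoothness of $S$ is never used beyond ensuring $\mathfrak D$ is an effective Cartier divisor cut by a nonzerodivisor; in the paper $S$ is a smooth Artin stack such as $\MM_1^{\rm{div}}$, so the localisation is smooth rather than Zariski, but the argument is unchanged.
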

\begin{proof}
The argument can be found in \cite[Lemmma~3.7.2.2]{RSPW}.
\end{proof}
Now recall that in our case $\mathcal N$ is trivial on elliptic tails and of positive degree everywhere else. The rank of $\R^1\pi_*\mathcal N$ can be checked on the fibers \cite[Theorem III.12.11]{HAR}, so we see that it is $0$ outside $\mathfrak D^1$ and $1$ on it.

\begin{lem}\label{lemma:semiample}
The line bundle $\mathcal N$ is $\pi$-semi-ample, i.e. the natural map
\[\pi^*\pi_*\mathcal N^{\otimes n}\to \mathcal N^{\otimes n}\]
is surjective for $n\gg 0$.
\end{lem}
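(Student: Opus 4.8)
The plan is to prove the equivalent statement that $\mathcal N^{\otimes n}$ is $\pi$-globally generated for $n\gg0$. Recall from the degree computation preceding the statement that on every geometric fibre of $\pi$ the line bundle $\mathcal N=\omega_\pi(\pazocal E)\otimes\OO_{\cC}(2\pazocal D)$ has non-negative degree on every component: it is trivial (degree $0$) exactly on the weight-$0$ elliptic tails, because there $\OO_{\cC}(\pazocal E)$ has degree $-1$, cancelling the degree $1$ of $\omega_\pi$, and it has strictly positive degree on all other components by the weighted stability condition. In particular, over the open substack $S\setminus\mathfrak D^1$ of $S:=\MM_1^{\rm{div}}$ the bundle $\mathcal N$ is fibrewise ample, hence $\pi$-ample, so for $n\gg0$ the sheaf $\mathcal N^{\otimes n}$ is $\pi$-very ample there and $\pi_*\mathcal N^{\otimes n}$ commutes with base change. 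Thus the cokernel $\mathcal Q_n$ of $\pi^*\pi_*\mathcal N^{\otimes n}\to\mathcal N^{\otimes n}$ is a coherent sheaf on $\cC$ supported over $\mathfrak D^1$, and it suffices to show it vanishes at every point $x$ of $\cC$ lying over $\mathfrak D^1$.

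The next step reduces this to a discrete valuation ring, which is exactly the purpose of Lemma~\ref{DVR}. Given such an $x$, pick a regular curve germ $\dvr\hookrightarrow S$ through $\pi(x)$ meeting $\mathfrak D^1$ transversally at its closed point; then $x$ lies on the induced family $\pi_\dvr\colon\cC_\dvr\to\dvr$. Since $\R^1\pi_*\mathcal N^{\otimes n}$ has fibrewise rank $1$ along $\mathfrak D^1$ and $0$ elsewhere for $n\gg0$ (same argument as for $n=1$), it is a line bundle supported on the Cartier divisor $\mathfrak D^1$, so Lemma~\ref{DVR} applies and identifies the restriction to $\cC_\dvr$ of $\pi^*\pi_*\mathcal N^{\otimes n}\to\mathcal N^{\otimes n}$ with the analogous map $\pi_\dvr^*\pi_{\dvr,*}(\mathcal N^{\otimes n}|_{\cC_\dvr})\to\mathcal N^{\otimes n}|_{\cC_\dvr}$ for the DVR family. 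Hence we are reduced to proving the lemma when $S=\dvr$, with generic point mapping outside $\mathfrak D^1$ and closed point into it.

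In this situation the central fibre is $C_0=Z\cup_q C'$, where $Z$ is the (Gorenstein, arithmetic genus $1$, possibly singular) contracted elliptic tail with $\mathcal N|_Z\cong\OO_Z$, and $C'$ has arithmetic genus $0$ with $\mathcal N$ of positive degree on each of its components. For $n\gg0$ I would argue: (a) $H^1\bigl(\cC_\dvr,\mathcal N^{\otimes n}\otimes\OO_{\cC_\dvr}(-Z)\bigr)=0$, checked fibrewise — on the generic fibre it reduces to the vanishing of $H^1$ of a positive power, and on $C_0$ one uses that $\OO_{\cC_\dvr}(-Z)|_Z\cong N^\vee_{Z/\cC_\dvr}$ has degree $-Z^2=1>0$ (adjunction), so $H^1$ still vanishes — whence the restriction $H^0(\cC_\dvr,\mathcal N^{\otimes n})\to H^0(Z,\mathcal N^{\otimes n}|_Z)=\k$ is onto, producing a global section of $\mathcal N^{\otimes n}$ nowhere vanishing on $Z$; (b) the image $W$ of $H^0(\cC_\dvr,\mathcal N^{\otimes n})$ in $H^0(C_0,\mathcal N^{\otimes n}|_{C_0})$ has codimension $1$ — the jump of $h^1$ across $\mathfrak D^1$, read off from Riemann--Roch — and by (a) the composite $W\hookrightarrow H^0(C_0,\mathcal N^{\otimes n}|_{C_0})\to H^0(Z,\OO_Z)$ is surjective; (c) for $n\gg0$ any such codimension-$1$ subspace still globally generates $\mathcal N^{\otimes n}|_{C_0}$, by a dimension count using that $\mathcal N^{\otimes n}|_{C'}$ and its twist $\mathcal N^{\otimes n}|_{C'}(-q)$ are very positive. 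Combining this with $\pi$-very ampleness over the generic fibre yields $\pi_\dvr$-global generation of $\mathcal N^{\otimes n}$ on all of $\cC_\dvr$, hence $\mathcal Q_n=0$ at $x$, which finishes the proof.

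The crux, and the only step that is not formal, is the interplay of (a) and (b): the dimension of global sections of $\mathcal N^{\otimes n}$ on a fibre jumps up by one across $\mathfrak D^1$, so the sections coming from the whole family span only a codimension-$1$ subspace on $C_0$, and one must rule out that this is the "bad" subspace of sections vanishing along the contracted elliptic tail $Z$. It is precisely the triviality of $\mathcal N$ on $\pazocal E$, entering through the vanishing in (a), that excludes this; everything else is bookkeeping with Riemann--Roch and cohomology and base change.
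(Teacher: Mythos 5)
Your reduction — first observing that $\mathcal N$ is $\pi$-ample away from $\mathfrak D^1$, then invoking Lemma~\ref{DVR} to reduce to a DVR family $\pi_\dvr\colon\cC_\dvr\to\dvr$ — is exactly what the paper does. The paper then closes the DVR case in one line by citing Smyth's Contraction Lemma \cite[Lemma~2.12]{SMY1}, whereas you attempt to prove it by hand via (a)--(c). Steps (a) and (b) are fine (granting that your transversal choice of $\dvr$ makes $\cC_\dvr$ regular at the node $q$, so that $Z$ is Cartier and $-Z^2=1$), but step (c) has a genuine gap.

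The assertion that \emph{any} codimension-$1$ subspace $W\subseteq H^0(C_0,\mathcal N^{\otimes n}|_{C_0})$ surjecting onto $H^0(Z,\OO_Z)$ globally generates $\mathcal N^{\otimes n}|_{C_0}$ ``by a dimension count'' is false. For any $x\in C'\setminus\{q\}$, the hyperplane $V_x=\ker\bigl(\operatorname{ev}_x\colon H^0(C_0,\mathcal N^{\otimes n}|_{C_0})\to \k\bigr)$ is a codimension-$1$ subspace which \emph{does} surject onto $H^0(Z,\OO_Z)$ (for $n\gg0$ take a section $s$ with $s|_Z=1$; if $s(x)\ne 0$, subtract a suitable multiple of a section vanishing on $Z$ but not at $x$, which exists since $\mathcal N^{\otimes n}(-q)|_{C'}$ is positive) and which by construction fails to generate at $x$. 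So dimension and surjectivity onto $H^0(Z,\OO_Z)$ alone cannot rule out $W=V_x$. To exclude this one must actually use that $W$ is the fibre of $\pi_{\dvr,*}\mathcal N^{\otimes n}$, i.e.\ consists of sections that extend over $\dvr$: for instance, choose a horizontal Cartier divisor $D\subseteq\cC_\dvr$ through $x$ transverse to $C_0$ and show by Serre duality on $C_0$ (using $\omega_{C_0}\otimes\mathcal N^{-n}|_Z\cong\OO_Z(q)$ and the very negativity on $C'$) that $\R^1\pi_{\dvr,*}\bigl(\mathcal N^{\otimes n}(-D)\bigr)\to\R^1\pi_{\dvr,*}\mathcal N^{\otimes n}$ is an isomorphism between rank-$1$ skyscrapers, whence $H^0(\cC_\dvr,\mathcal N^{\otimes n})\to H^0(D,\mathcal N^{\otimes n}|_D)$ is onto and evaluating at the closed point gives the missing generator at $x$. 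This extra argument (or simply the citation of Smyth, as the paper does) is the content that the phrase ``dimension count'' silently omits.
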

\begin{proof}
Outside the locus of elliptic tails $\mathcal N$ is $\pi$-ample. We are left to check on points belonging to an elliptic tail; thanks to the above Lemma we can reduce to the case that $C$ is the central fiber of a one-parameter smoothing of the elliptic tail. This has been dealt with by Smyth~\cite[Lemma~2.12]{SMY1}.
\end{proof}

\begin{lem}
$\pi_*\mathcal N$ is locally free on $\MM_1^{\rm{div}}.$
\end{lem}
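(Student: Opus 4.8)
The plan is a local homological-algebra argument. Since $\MM_1^{\rm{div}}$ is smooth we may work on a smooth atlas; write $S$ for a smooth affine chart, so that all its local rings are regular, and recall what we already know about the other direct image: $\R^1\pi_*\mathcal N$ is a line bundle supported on the Cartier divisor $\mathfrak D^1$ (the same input that was used to invoke Lemma~\ref{DVR}).

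The first step is to reduce to a two-term complex. As $\pi$ is proper, flat, of relative dimension $\le 1$ and $\mathcal N$ is $S$-flat, the complex $R\pi_*\mathcal N$ is perfect and, locally on $S$, represented by a two-term complex $[E^0\xrightarrow{\phi}E^1]$ of vector bundles placed in degrees $0$ and $1$, so that $\pi_*\mathcal N=\ker\phi$ and $\R^1\pi_*\mathcal N=\operatorname{coker}\phi$. Over $S\setminus\mathfrak D^1$ one has $\R^1\pi_*\mathcal N=0$ --- base change in top degree together with $h^1(\mathcal N_s)=0$, which holds because $\mathcal N$ restricts to an ample line bundle on such a fibre --- so $\phi$ is surjective and $\pi_*\mathcal N$, being the kernel of a surjection of bundles, is locally free there.

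The content is at a point $p\in\mathfrak D^1$. Put $A=\OO_{S,p}$, a regular local ring. Because $\R^1\pi_*\mathcal N$ is a line bundle on the Cartier divisor $\mathfrak D^1=V(t)$, its stalk is $A/(t)$ with $t$ a nonzerodivisor, hence has projective dimension exactly $1$. Substituting this into $0\to\operatorname{im}\phi_p\to E^1_p\to(\R^1\pi_*\mathcal N)_p\to 0$ forces $\operatorname{im}\phi_p$ to be free; then $0\to\ker\phi_p\to E^0_p\to\operatorname{im}\phi_p\to 0$ splits, exhibiting $(\pi_*\mathcal N)_p=\ker\phi_p$ as a direct summand of a free module, hence free. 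With the previous step, $\pi_*\mathcal N$ is locally free on all of $\MM_1^{\rm{div}}$.

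I do not expect a genuine obstacle, but two points are worth stressing. One cannot argue via constancy of fibre dimension: $h^0(\mathcal N_s)$ genuinely jumps by $1$ along $\mathfrak D^1$ (matching the jump of $h^1$), so $\pi_*\mathcal N\otimes k(s)\to H^0(\mathcal N_s)$ fails to be surjective there, and local freeness is saved only by the homological structure of $\R^1\pi_*\mathcal N$. Correspondingly, the one fact that really must be in hand is that $\R^1\pi_*\mathcal N$ is a \emph{line bundle} on $\mathfrak D^1$, not merely a rank-one coherent sheaf supported on it (a cyclic sheaf supported on a Cartier divisor may well have projective dimension $2$); once that is granted, the rest is formal.
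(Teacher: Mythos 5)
Your proof is correct, but it takes a genuinely different route from the paper's. Both arguments hinge on the same geometric input, namely that $\R^1\pi_*\mathcal N$ is (scheme-theoretically) a line bundle supported on the Cartier divisor $\mathfrak D^1$, but the paper does not pass through a two-term complex at all: it reduces to constancy of the fibre rank of $\pi_*\mathcal N$ and invokes Lemma~\ref{DVR} a second time. Concretely, given $x\in\mathfrak D^1$, the paper chooses a DVR scheme $\dvr$ with closed point over $x$ and generic point off $\mathfrak D^1$; Lemma~\ref{DVR} then yields $f^*\pi_*\mathcal N\cong\pi_{\dvr,*}f_{\cC}^*\mathcal N$, which is torsion-free over the DVR and hence free, so the fibre dimension of $\pi_*\mathcal N$ at $x$ matches the generic one, and constant fibre rank over a smooth (hence reduced) base gives local freeness. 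Your argument replaces this with a purely local homological computation: from $0\to\operatorname{im}\phi\to E^1\to A/(t)\to 0$ and $\operatorname{pd}_A A/(t)=1$ you deduce that $\operatorname{im}\phi$ is free, and then that $\ker\phi$ is a direct summand of $E^0$. The trade-off is that you avoid the DVR-pullback machinery entirely, at the cost of a Schanuel-type bookkeeping step; the paper's route has the economy of reusing Lemma~\ref{DVR}, which was in any case needed for the semi-ampleness lemma. Your closing caveat is well placed: both proofs require $\R^1\pi_*\mathcal N$ to be an honest line bundle on $\mathfrak D^1$, not merely a rank-one sheaf supported there (your $A/(t^2)$-type worry is exactly the failure mode); the paper takes this as a standing hypothesis of Lemma~\ref{DVR} without separate verification, so you are no less rigorous than the source on this point.
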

\begin{proof}\cite[Proposition~3.7.2.1]{RSPW}
We have to check that $\pi_*\mathcal N$ has constant rank.
On $\MM_1^{\rm{div}}\setminus \mathfrak{D}^1$ we see that $\R^1\pi_*\mathcal N=0$, so $\pi_*\mathcal N$ satisfies Cohomology and Base Change and its rank is determined by Riemann-Roch.
Given a point $x$ on the boundary $\mathfrak{D}^1$, we can pick a DVR scheme $\dvr$ whose closed point maps to $x$ and whose generic point maps to $\MM_1^{\rm{div}}\setminus \mathfrak{D}^1.$ Then we are in the hypotheses of Lemma~\ref{DVR} and we can check the rank at $x$ by looking at $\pi_*f^*\mathcal N$ over $\dvr.$ Now $f^*\mathcal N$ is flat over $\dvr$, so $\pi_*f^*\mathcal N$ is as well, which implies torsion-free and thus constant rank.
\end{proof}

\begin{proof}\ref{1-stabilization-div}
Let $S\to \MM_1^{\rm{div}}$ be a smooth atlas, then we have:
 \bcd
 (\cC_S,\pazocal D_S)\ar[rr,"\phi_S"]\ar[dr,"\pi_S"] & & (\hC_S,\phi(\pazocal D_S))\ar[ld,"\hat\pi_S" above left] \\
 & S &
 \ecd
where $\hC_S=\underline{\operatorname{Proj}}_{S}(\bigoplus_{n\geq 0}\pi_{S,*}\mathcal N^{\otimes n})$, $\phi_S$ is a proper and birational morphism since $\mathcal N$ is $\pi_S$-semi-ample and $\hat{\pi}_S$ is flat since $\pi_{S,*}\mathcal N$ is locally free. 
To verify that this defines a morphism $S\to \MM_1^{\rm{div}}(1)$ we have to argue that $\hC_S$ has reduced fibers and only nodes and cusps as singularities.

Since these properties only concern the fibers of $\hat{\pi}$ we can verify them after base change to a DVR scheme $\dvr$ chosen as in Lemma~\ref{DVR}, so that the construction commutes with base-change. Furthermore we can pick $f\colon \dvr\to S$ such that the total space $\cC_\dvr$ is regular, so we may just apply Smyth's Contraction Lemma~\cite[Lemma~2.13]{SMY1}.

Finally to conclude that this defines a morphism $\MM_1^{\rm{div}}\to \MM_1^{\rm{div}}(1)$ it is enough to verify that there is an isomorphism
 $\rm{pr}_1^*\hC_S\cong\rm{pr}_2^*\hC_S$ satisfying the cocycle condition, where $\rm{pr}_i\colon S'= S\times_{\MM_1^{\rm{div}}} S\rightrightarrows S$.

 This follows from the fact that $\pr_i^*\hC_S$ are obtained from applying the Proj construction to 
 $\rm{pr}_i^*\pi_{S,*}\mathcal{N}\cong\pi_ {S',*} \rm{pr}_i^*\mathcal N$, by flatness of $S'\to S$. Thus it is enough to show that $\rm{pr}_1^*\mathcal N\cong \rm{pr}_2^*\mathcal N$.
  But $\mathcal N$ is the pullback of a line bundle on $\MM_1^{\rm{div}}$, thus the desired isomorphism follows from the commutativity of the diagram
 
 \bcd
S\times_{\MM_1^{\rm{div}}} S\ar[r,"\rm{pr_1}"]\ar[d, "\rm{pr_2}"] & S\ar[d] \\
S\ar[r] & \MM_1^{\rm{div}}
\ecd
 
The cocycle condition is derived similarly. 
\end{proof}

\begin{prop}
The $1$-stabilisation for curves with a divisor induces an analogue morphism on weighted curves:
\bcd
\MM_1^{\rm{div}}\ar[r]\ar[d]\ar[dr,phantom,"\circlearrowright"] & \MM_1^{\rm{div}}(1)\ar[d] \\
\MM_1^{\rm{wt,st}}\ar[r,"\exists"] & \MM_1^{\rm{wt,st}}(1)
\ecd
\end{prop}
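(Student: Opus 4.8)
The plan is to obtain the bottom arrow by \emph{fppf descent} along the left-hand vertical map. Write $p\colon\MM_1^{\rm{div}}\to\MM_1^{\rm{wt,st}}$ and $q\colon\MM_1^{\rm{div}}(1)\to\MM_1^{\rm{wt,st}}(1)$ for the forgetful morphisms that retain the underlying curve and replace the relative Cartier divisor by its multidegree, read as a weight function on the dual graph; the stability hypothesis that $\omega_\pi\otimes\OO_{\cC}(2\pazocal D)$ be $\pi$-ample translates into the weighted-stability inequalities, so these morphisms are well defined. First I would check that $p$ is a smooth surjection, hence an fppf cover: its fibre over a fixed weighted curve is an open subscheme of a product of symmetric powers of the components, which is smooth over $\MM_1$, and a divisor of the prescribed multidegree supported in the smooth locus always exists fppf-locally. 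Since morphisms into a stack satisfy fppf descent, it then suffices to endow the composite $h:=q\circ\Psi\colon\MM_1^{\rm{div}}\to\MM_1^{\rm{wt,st}}(1)$ (where $\Psi$ is the $1$-stabilisation of Proposition~\ref{1-stabilization-div}) with descent data relative to $p$, that is, with a $2$-isomorphism $\theta\colon\pr_1^*h\Rightarrow\pr_2^*h$ over $R:=\MM_1^{\rm{div}}\times_{\MM_1^{\rm{wt,st}}}\MM_1^{\rm{div}}$ satisfying the cocycle condition on the triple fibre product; the descended morphism $g\colon\MM_1^{\rm{wt,st}}\to\MM_1^{\rm{wt,st}}(1)$ then automatically fits into the square.

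The content of the descent datum is that the $1$-stabilisation of Proposition~\ref{1-stabilization-div} does not depend on the chosen divisor. A $T$-point of $R$ is a family $\pi_T\colon\cC_T\to T$ of weighted-stable curves equipped with two relative Cartier divisors $\pazocal D_1,\pazocal D_2$ of the same multidegree; setting $\mathcal N_i:=\omega_{\pi_T}(\pazocal E_T)\otimes\OO_{\cC_T}(2\pazocal D_i)$ as in \eqref{eq:linebundlecontraction}, both $\mathcal N_1$ and $\mathcal N_2$ are trivial on the locus $\pazocal E_T$ of weight-$0$ elliptic tails (which is disjoint from the $\pazocal D_i$) and $\pi_T$-ample away from it, so by the argument of Lemma~\ref{lemma:semiample} both are $\pi_T$-semi-ample and the induced contractions $\phi_i\colon\cC_T\to\hC_T^{(i)}$ collapse exactly $\pazocal E_T$ to cusps and are isomorphisms elsewhere. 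I would then invoke uniqueness of the contraction of a fixed family along a prescribed subcurve inside the world of at worst cuspidal curves: since $\phi_1$ and $\phi_2$ agree on the dense open $\cC_T\setminus\pazocal E_T$, the identity there extends uniquely to an isomorphism $\hC_T^{(1)}\xrightarrow{\sim}\hC_T^{(2)}$ of weighted cuspidal curves --- fibrewise this is the statement that a proper birational morphism between cuspidal curves with the same normalisation and $\delta$-invariant is an isomorphism, already exploited in the tacnode discussion of Remark~\ref{remark:sprouting}. This isomorphism depends only on $(\cC_T,\pazocal E_T)$ and therefore commutes with arbitrary base change, exactly as the formation of $\hC_T$ does via the rank count and Lemma~\ref{DVR} used in the proof of Proposition~\ref{1-stabilization-div}; it thus assembles into the required $\theta$. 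Because each component of $\theta$ is pinned down as the unique extension of $\id_{\cC_T\setminus\pazocal E_T}$, the cocycle condition on the triple fibre product is immediate: all three comparison isomorphisms restrict to the identity on the common dense open, hence so do their composites.

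I expect the main obstacle to be precisely this independence-of-divisor step --- making rigorous that ``contract the weight-$0$ elliptic tails'' produces a single, base-change-compatible cuspidal curve regardless of $\pazocal D$, rather than merely abstractly isomorphic ones, and that the comparison is functorial in $T$. All the needed tools are already in place (the degree computation before Lemma~\ref{lemma:semiample}, Smyth's contraction lemmas, and Lemma~\ref{DVR}), so the remainder is bookkeeping: confirming that $p$ is genuinely an fppf cover, that $h$ is constant along the fibres of $p$ on points, and that $\theta$ is natural. Granting these, fppf descent yields the unique $g$ with $g\circ p\cong h$, which is the asserted $1$-stabilisation morphism; commutativity of the square holds by the very definition $h=q\circ\Psi$.
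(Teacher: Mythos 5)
Your overall strategy agrees with the paper's: reduce to an atlas (the paper takes a smooth atlas $S\to\MM_1^{\rm{wt,st}}$ together with an \'etale-local lift to $\MM_1^{\rm{div}}$; you phrase this as fppf descent along $p$), and then verify descent data by comparing the contractions associated to two different choices of divisor on the same weighted curve. The bookkeeping steps you outline (that a compatible divisor exists locally, that the descent datum should be a canonical identification of $\hC^{(1)}_T$ with $\hC^{(2)}_T$, and that the cocycle condition is then automatic by uniqueness) are exactly right and mirror the paper.

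The gap is in the single step you yourself flag as the crux, and the heuristic you offer for it does not close it. You assert that because $\phi_1$ and $\phi_2$ agree on the open $\cC_T\setminus\pazocal E_T$, the identity "extends uniquely to an isomorphism $\hC_T^{(1)}\xrightarrow{\sim}\hC_T^{(2)}$," justifying this by the fibrewise observation that a proper birational morphism of cuspidal curves with the same normalisation and $\delta$-invariant is an isomorphism. But what you have a priori is only a birational map $\psi=\phi_2\circ\phi_1^{-1}$ between the two families over $T$, and being a fibrewise isomorphism (even if established) does not by itself show that $\psi$ is a regular morphism in the family, nor that its inverse is. The paper fills this by establishing that each total space $\hC_i$ is \emph{normal} (using smoothness of the base, codimension-$1$ regularity of the fibres, and Serre's $S_2$ via Matsumura), then shows $\phi_{i,*}\OO_{\cC_S}\cong\OO_{\hC_i}$ by Zariski connectedness, and finally applies the rigidity lemma \cite[Lemma~1.15]{debarre}: since $\phi_2$ contracts every fibre of $\phi_1$ (their exceptional loci coincide with $\pazocal E_T$), $\phi_2$ factors regularly through $\phi_1$, and symmetrically, so $\psi$ and $\psi^{-1}$ are both regular. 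Uniqueness of $\psi$ then falls out exactly as you say, as the unique extension of the identity over the dense open. Your proposal needs this normality-plus-rigidity argument (or an equivalent substitute) in place of the "fibrewise" appeal; without it the descent datum $\theta$ is not actually constructed.
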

\begin{proof}
{\'E}tale locally on $\MM_1^{\rm{wt,st}}$ we can choose smooth sections $s_i$ of the universal curve so that the Cartier divisor $\pazocal D=\sum s_i$ has degree compatible with the weight function, so in particular it makes $\mathcal N=\omega_{\pi}(\pazocal E)\otimes\mathcal O_{\cC}(2\pazocal D)$ trivial on the elliptic tails and $\pi$-ample elsewhere.
For a smooth atlas $S\to \MM_1^{\rm{wt,st}}$, this observation allows us to define a lifting $S\to \MM_1^{\rm{div}}$, and thus a morphism $\xi\colon S\to  \MM_1^{\rm{wt,st}}(1)$ through the construction of Proposition \ref{1-stabilization-div}.

In order to show that this descends to a morphism $\MM_1^{\rm{wt,st}}\rightarrow \MM_1^{\rm{wt,st}}(1)$ we need to verify that there exists $\rm{pr}_1^*(\xi)\cong\rm{pr}_2^*(\xi)$ satisfying the cocycle condition, where $\rm{pr}_i\colon S'= S\times_{\MM_1^{\rm{wt,st}}} S\rightrightarrows S$.

This boils down to checking that for two different choices of a lifting $\pazocal D_1,\pazocal D_2\colon S\to\MM_1^{\rm{div}}$ there exists a unique isomorphism 
\[\hC_1=\underline{\operatorname{Proj}}_S\left(\bigoplus_{n\geq 0}\pi_*(\mathcal N_1))^{\otimes n}\right)\cong \underline{\operatorname{Proj}}_S\left(\bigoplus_{n\geq 0}\pi_*(\mathcal N_2))^{\otimes n}\right)=\hC_2. \]
By construction there is a birational map $\psi$:
 \bcd
&\cC_S \ar[dl,"\phi_1" above left]\ar[dr,"\phi_2"]  \\
 \hC_1\ar[rr,dashed, "\psi"] & &\hC_2.
 \ecd

We want to show that $\psi$ extends to a regular morphism. 
Notice that $ \hC_i$ is normal, $i=1,2$. Indeed since $S$ is smooth and the singularities of the fibers are in codimension $1$, $ \hC_i$ is regular in codimension $1$. Moreover since both $S$ (smooth) and the fibers (Cohen-Macaulay) satisfy Serre's condition $S_2$, so does the total space of $\hC_i$ by \cite[Thorem~23.9]{MAT}. Then, since the geometric fibers of $\phi_i$ are either points or $p_a=1$ subcurves, and in either case connected, Zariski's connectedness theorem implies that
\[\phi_{i,*}\OO_{\cC_S}\cong \OO_{\hC_i}. \]
 Moreover by construction $\operatorname{Exc}(\phi_1)=\operatorname{Exc}(\phi_2)$ is the locus of elliptic tails of weight $0$, so in particular $\phi_2$ contracts all fibers of $\phi_1.$ Then \cite[Lemma 1.15]{debarre}
implies that $\phi_2$ factors through $\phi_1$, and viceversa. This proves the regularity of $\psi$ and its inverse. Notice that $\psi$ is unique as it is the only extension of $\phi_2\circ\phi_1^{-1}$.
\end{proof} 
\begin{remark}
The $m$-stabilisation for $m\ge 2$ does not extend to a regular morphism; the following is an example with stable curves:

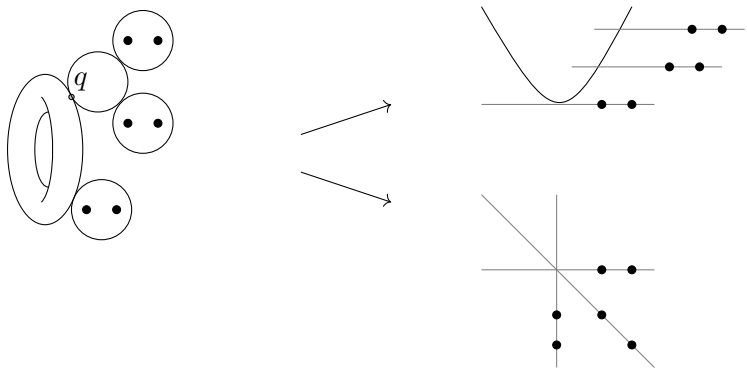
\begin{figure}[h]\label{fig:3-stab}
\centering
\begin{tikzpicture}
\draw (-.1,0) circle (.4cm);
\draw (.5,.55) circle (.4cm);
\node[left] at (-.1,0){$q$};
\draw[fill=gray!50] (-.45,-.2) circle[radius=1pt];
\draw[fill=black] (.3,.55) circle[radius=1.5pt];
\draw[fill=black] (.7,.55) circle[radius=1.5pt];
\draw (.5,-.55) circle (.4cm);
\draw[fill=black] (.3,-.55) circle[radius=1.5pt];
\draw[fill=black] (.7,-.55) circle[radius=1.5pt];
\draw (-.8,-.9) ellipse (.5cm and 1cm);

\draw (-.85,-.2) .. controls (-.65,-.4) and (-.65,-1.4) .. (-.85,-1.6);
\draw (-.75,-.4) .. controls (-1,-.4) and (-1,-1.4) .. (-.75,-1.4);
\draw (-.05,-1.7) circle (.4cm);

\draw[fill=black] (.15,-1.7) circle[radius=1.5pt];
\draw[fill=black] (-.25,-1.7) circle[radius=1.5pt];

\draw[->] (2.6,-.7) -- (3.8,-.3);
\draw[->] (2.6,-1.2) -- (3.8,-1.6);

\draw[gray] (5,-.3)-- (7.3,-.3);
\draw[fill=black] (7,-.3) circle[radius=1.5pt];
\draw[fill=black] (6.6,-.3) circle[radius=1.5pt];
\draw (5,1) .. controls (6,-.7) and (6.1,-.7) .. (7,1);
\draw[gray] (6.5,.7)-- (8.5,.7);

\draw[fill=black] (7.8,.7) circle[radius=1.5pt];
\draw[fill=black] (8.2,.7) circle[radius=1.5pt];
\draw[gray] (6.2,.2)-- (8.2,.2);
\draw[fill=black] (7.5,.2) circle[radius=1.5pt];
\draw[fill=black] (7.9,.2) circle[radius=1.5pt];

\draw[gray] (5,-2.5)-- (7.3,-2.5);
\draw[fill=black] (7,-2.5) circle[radius=1.5pt];
\draw[fill=black] (6.6,-2.5) circle[radius=1.5pt];
\draw[gray] (6,-1.5)-- (6,-3.8);
\draw[fill=black] (6,-3.5) circle[radius=1.5pt];
\draw[fill=black] (6,-3.1) circle[radius=1.5pt];
\draw[gray] (5,-1.5)-- (7.3,-3.8);
\draw[fill=black] (7,-3.5) circle[radius=1.5pt];
\draw[fill=black] (6.6,-3.1) circle[radius=1.5pt];
\end{tikzpicture}
\caption{An example of different plausible $3$-stabilisations.}
\end{figure}

Both curves on the right are $3$-stable, so it is unclear how to define the $3$-stabilisation already on points.

 Here is some heuristics about why the natural choices in families do not work. Indeed suppose we try contracting the minimal genus $1$ unmarked subcurve. Consider a $1$-parameter smoothing of the node $q$: the generic fiber is a smooth elliptic curve with three marked $\PP^1$ attached. If a contraction of the core existed, we would get a family with a Smyth's $3$-fold elliptic point degenerating to the tacnode, but such a family cannot exist; the miniversal deformation of the tacnode contains only cusps and nodes as singularities.

On the other hand, try to define the $3$-stabilisation by contracting the maximal unmarked genus $1$ subcurve. 
Choose a generic smoothing of our stable curve in such a way that the total space $\cC_{\dvr}$ is smooth. Notice that the maximal unmarked subcurve of the central fiber is not balanced~\cite[Definition 2.11]{SMY1}, so contracting it we would get a non-Goreinstein singularity.

The indeterminacy has been resolved by \cite{RSPW}.
\end{remark}
\subsection{Auxiliary spaces and induced obstruction theories}
In this last part of the section we exploit the $1$-stabilisation morphism to introduce some auxiliary spaces with a virtual class that are going to be useful in comparing cuspidal with reduced invariants of the quintic threefold. Let $\Z$ be defined by the pullback diagram:
\bcd
\Z \ar[r]\ar[d]\ar[rd,phantom,"\Box"] & \overline{\mathcal M}_1^{(1)}(\PP^4,d) \ar[d] \\
\MM_1^{\rm{wt,st}}\ar[r] & \MM_1^{\rm{wt,st}}(1)
\ecd

Objects of $\Z$ over a scheme $S$ consist of diagrams:
\bcd
C \ar[rr,"\phi"]\ar[dr,"\pi" below left] & & \widehat C\ar[rr,"f"]\ar[dl,"\hat\pi"] & & \PP^4 \\
& S & & &
\ecd
where $f$ is a $1$-stable map and $\phi$ is the weighted $1$-stabilisation; arrows over $\id_S$ are commutative diagrams:
\bcd
C\ar[r,"\phi"]\ar[d,"\psi"] &  \widehat C\ar[r,"f"]\ar[d,"\hat\psi"] & \PP^4 \ar[d,"\id_\PP"]\\
C'\ar[r,"\phi'"] &  \widehat C'\ar[r,"f'"] & \PP^4

\ecd
where $\psi$ and $\hat\psi$ are isomorphisms.  Recall that $\hat \psi$ is determined by $\psi$. 

Forgetting $\widehat C$ and keeping $f\circ\phi\colon C\to\PP^4$, we obtain a morphism \[i\colon\Z\to\oM_1(\PP^4,d).\]

\begin{lem}\label{lem:inclusion}
The morphism $i\colon\Z\hookrightarrow\oM_1(\PP^4,d)$ is a closed immersion. In particular $\Z$ is a proper DM stack.
\end{lem}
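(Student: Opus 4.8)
\emph{Proof plan.} The plan is to show that $i$ is a \emph{proper monomorphism} of Deligne--Mumford stacks. Granting this, $i$ is a closed immersion, since a monomorphism of algebraic stacks is representable and a proper monomorphism of algebraic spaces is a closed immersion \cite{stacks-project}; and then $\Z$, being a closed substack of the proper DM stack $\oM_1(\PP^4,d)$, is itself a proper DM stack, which settles the "in particular".

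\emph{Step 1: $i$ is a monomorphism.} First I would check that for every scheme $S$ the functor $i(S)$ on $S$-points is fully faithful, i.e.\ that an object $\bigl(C\xrightarrow{\phi}\widehat C\xrightarrow{f}\PP^4\bigr)$ of $\Z(S)$ is recovered, together with its automorphisms, from the stable map $f\circ\phi\colon C\to\PP^4$. The pair $(\widehat C,\phi)$ is the image of the family $C$ under the weighted $1$-stabilisation morphism $\MM_1^{\rm{wt,st}}\to\MM_1^{\rm{wt,st}}(1)$ constructed above, so it depends on $C$ alone; and $f$ is the unique factorisation of $f\circ\phi$ through $\phi$, which exists because $f\circ\phi$ has weight $0$ on the connected fibres of $\phi$ and $\phi_*\OO_{\cC}=\OO_{\widehat\cC}$ in characteristic $0$, exactly as in the descent argument for the $1$-stabilisation in \S\ref{sec:comparison}. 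Since moreover $\widehat\psi$ is determined by $\psi$ in any automorphism of a $\Z$-object, $i(S)$ is faithful, and by functoriality of $1$-stabilisation any isomorphism of the associated stable maps lifts, so $i(S)$ is full. Hence $i$ is a monomorphism; in particular $\Z$ is separated, being a monomorphism onto the separated stack $\oM_1(\PP^4,d)$.

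\emph{Step 2: $i$ is proper.} The morphism is of finite type, so by Step~1 it is enough to verify universal closedness, i.e.\ the existence part of the valuative criterion. Given a DVR $\dvr$ with closed point $0$ and generic point $\eta$, an $\dvr$-point of $\oM_1(\PP^4,d)$ --- a stable map $g_\dvr\colon\cC_\dvr\to\PP^4$ --- and a lift of its restriction to $\eta$ to $\Z$, I would apply the weighted $1$-stabilisation morphism to the family $\cC_\dvr$ to obtain $\phi_\dvr\colon\cC_\dvr\to\widehat\cC_\dvr$ with $\widehat\cC_\dvr$ a family of at worst cuspidal curves; since $g_\dvr$ has weight $0$ on the connected fibres of $\phi_\dvr$, it factors uniquely as $g_\dvr=f_\dvr\circ\phi_\dvr$. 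One then checks that $f_\dvr$ is a $1$-stable map: the source is at worst cuspidal by construction; a genus-$1$ subcurve of $\widehat\cC_\dvr$ contracted by $f_\dvr$ and meeting the rest in a single point would pull back to a weight-$0$ elliptic tail of $\cC_\dvr$ that $\phi_\dvr$ has already contracted, so condition~(2) of $1$-stability holds; and finiteness of automorphisms is inherited from $g_\dvr$. This produces the desired $\dvr$-point of $\Z$, which agrees with the given $\eta$-point because the $1$-stabilisation of $\cC_\eta$ is unique (being the value of a morphism of stacks) and $\Mone{1}{\PP^4}{d}$ is separated; uniqueness of the extension is automatic from the separatedness of $\Z$ established in Step~1. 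Hence $i$ is universally closed, therefore proper, therefore a closed immersion.

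\emph{Main obstacle.} The hard part will be Step~2: one must be certain that applying the $1$-stabilisation to an \emph{arbitrary} family $\cC_\dvr\to\PP^4$ --- not only to one already lying in the locus where $\phi$ is the identity --- yields a genuine family of $1$-stable maps, i.e.\ that $\widehat\cC_\dvr$ has reduced, at worst cuspidal fibres, that no inadmissible contracted subcurve is created, and that the construction commutes with the base change $\dvr\to\MM_1^{\rm{wt,st}}$ so that it really extends the prescribed $\eta$-point. All of these are precisely the phenomena analysed in building the $1$-stabilisation morphism: reducedness and cuspidality are built into the target $\MM_1^{\rm{wt,st}}(1)$, compatibility with DVR base change is Lemma~\ref{DVR}, and the behaviour of the central fibre is Smyth's contraction lemma \cite[Lemma~2.13]{SMY1}, so this step, though the heart of the argument, should require no essentially new input.
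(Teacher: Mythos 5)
Your proposal is correct and follows essentially the same route as the paper: establish that $i$ is fully faithful (a monomorphism) from the description of $\Z$-objects and their arrows, then verify the valuative criterion for properness by applying the weighted $1$-stabilisation to the family $\cC_\dvr$, with the factorisation of the stable map through $\hC_\dvr$ secured by $\phi_{\dvr,*}\OO_{\cC_\dvr}\cong\OO_{\hC_\dvr}$ (for which the paper supplies the Euler-characteristic argument you defer to the descent step) together with \cite[Lemma~1.15]{debarre}.
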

\begin{proof}
From the above description of arrows in $\Z$, $i$ is  representable (i.e. faithful)  and a monomorphism (i.e. full).

We can check properness using the valuative criterion. We argue as in \cite[Theorem 4.3]{RSPW}. Let $\dvr$ be a DVR scheme with generic point $\eta;$ consider a diagram:
\bcd
\cC_{\eta}\ar[r]\ar[d,"\phi_{\eta}" left]  & \cC_\dvr\ar[d,"\phi_\dvr" left]\ar[r,"f"] & \PP^4 \\
\hC_{\eta}\ar[r,"j" below]\ar[urr] & \hC_\dvr\ar[ur,dashed ,"g" below right] &
 \ecd 
Notice that there is an open dense substack of $\Z$ where $\phi$ is an isomorphism. Indeed the generic point of either the main component or any boundary component is already $1$-stable. Thus we can assume that $\phi_{\eta}$ in the above diagram is an isomorphism.

Observe that $f$ is constant on the fibers of $\phi_\dvr$, so it factors topologically through $\hC_{\dvr}.$
We can conclude as in \cite{RSPW} or appeal to \cite[Lemma~1.15]{debarre} using $\phi_*\OO_{\cC_\dvr}\cong\OO_{\hC_\dvr}$. To see this consider the exact sequence:
\[0\to \OO_{\hC_\dvr}\to\phi_*\OO_{\cC_\dvr}\to \phi_*\OO_{\cC_\dvr}/\OO_{\hC_\dvr}\to 0\]
Since $\phi$ is an isomorphism away from the cuspidal point, the cokernel is supported in dimension $0$. However $\chi (\OO_{\hC_\eta})=\chi(\phi_*\OO_{\cC_\eta})$ implies the same equality holds on the whole of $\dvr$, since the Euler characteristic is constant in flat families. So $\chi(\phi_*\OO_{\cC_\dvr}/\OO_{\hC_\dvr})=\operatorname{length}(\phi_*\OO_{\cC_\dvr}/\OO_{\hC_\dvr})=0.$
\end{proof}

So we may add the commutative diagram to the left:
\bcd
\oM_1(\PP^4,d)\ar[d] & \pazocal Z \ar[l,hook,swap,"i"]\ar[r]\ar[d]\ar[rd,phantom,"\Box"] & \overline{\mathcal M}_1^{(1)}(\PP^4,d) \ar[d] \\
\MM^{\rm{wt,st}}_1 & \X\ar[l,swap,"\sim"]\ar[r] & \MM_1^{\rm{wt,st}}(1)
\ecd

 A description of the irreducible components of $\Z$ can be easily obtained from the inclusion $i$: there is a main component $\Z^\text{main}$ which is the closure of the locus of maps from a smooth elliptic curve, and for every $k\geq 2$ a boundary component $D^k\Z$, whose general point represents a contracted elliptic curve with $k$ many rational tails of positive degree. 
\begin{remark}
Each component of $\Z$ is isomorphic to the corresponding one in $\oM_1(\PP^4,d)$. 
Indeed given any stable map there is at most one factorisation through the weighted $1$-stabilisation of the curve and more in details: objects of $\oM_1(\PP^4,d)\setminus D^1$ are $1$-stable already; objects of $D^1\cap \oM_1(\PP^4,d)^\text{main}$ factor through the cusp thanks to the smoothability criterion, which implies the node is a ramification point of the map; and objects of $D^1\cap D^k$ ($k\geq 2$) factor through a map which is constant on the cusp.

\begin{figure}
\centering
\begin{tikzpicture}
\draw (0,0) circle (.5cm);
\draw[fill=gray!50] (.7,.7) circle (.5cm);
\draw[fill=gray!50] (.7,-.7) circle (.5cm);
\draw (-1,0) ellipse (.5cm and 1cm);
\draw (-1.05,.7) .. controls (-.85,.5) and (-.85,-.5) .. (-1.05,-.7);
\draw (-.95,.5) .. controls (-1.2,.5) and (-1.2,-.5) .. (-.95,-.5);

\draw[->] (1.5,0) -- (2.5,0);
\draw (2.9,0) .. controls (3.2,0) and (3.1,.5) .. (3.5,.5);
\draw (3.5,0) ++(90:.5cm) arc (90:-90:.5cm);
\draw (2.9,0) .. controls (3.2,0) and (3.1,-.5) .. (3.5,-.5);
\draw[fill=gray!50] (4.2,.7) circle (.5cm);
\draw[fill=gray!50] (4.2,-.7) circle (.5cm);
\end{tikzpicture}
\caption{A typical element of $D^1\cap D^2$.}
\end{figure}
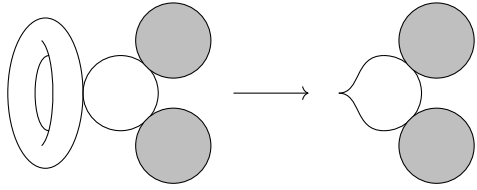

On the other hand, objects of $D^{1,\circ}=D^1\setminus(\oM_1(\PP^4,d)^\text{main}\cup\bigcup_{k\geq 2}D^k)$ do not admit any factorisation, so $\Z$ has no corresponding component.
\end{remark}
We introduce two more spaces: let $\XP$ and $\Z^p$ be the algebraic stacks defined by the following cartesian diagram:
\bcd
\Z^p \ar[r,"\alpha"]\ar[d,"\gamma"]\ar[rd,phantom,"\Box"] & \overline{\mathcal M}_1^{(1)}(\PP^4,d)^p \ar[d] \\
\XP \ar[r]\ar[d]\ar[rd,phantom,"\Box"] & \mathfrak{Pic}_1{(1)} \ar[d] \\
\X\ar[r] & \MM_1^{\rm{wt,st}}(1)
\ecd

\begin{remark}
The stack $\XP$ parametrises
\bcd
\cC_S \ar[rr,"\phi_S"]\ar[dr] & & \hC_S\ar[ld] \\
& S &
\ecd
with a line bundle $\hL_S$ on $\hC_S$. Notice that by pulling back $\hL_S$ via $\phi_S$ we obtain a line bundle on $\cC_S$, hence a morphism $\XP\to \mathfrak{Pic}_1$. This is generically an isomorphism, but has $1$-dimensional fibers over the locus of elliptic tails, due to the fact that $\Pic(\widehat C)\to\Pic(C)$ has kernel $\mathbb G_{\rm a}$ when $\widehat C$ has a cusp.
\end{remark}

\begin{remark}
Similarly the stack $\Z^p$ parametrises
\bcd
\cC_S\ar[rr,"\phi_S"]\ar[dr,"\pi_S" below left] & & \hC_S\ar[ld,"\hat\pi_S"] \ar[r,"f_S"] & \PP^4_S\\
& S & &
\ecd
with a $p$-field $\psi\in H^0(\hC_S,f_S^*\OO_{\PP^4}(-5)\otimes\omega_{\hat\pi_S})$.

We were not able to compare $\Z^p$ with $\oM_1(\PP^4,d)^p$ directly, since denoting by $\hL=f^*\OO_{\PP^4}(1)$ and by $\mathcal L=\phi^*\hL$ we only have a map $\R^1\hat\pi_*\hL\to\R^1\pi_*\mathcal L$ on $\Zp$ which is not an isomorphism (dually, since $\phi^*\omega_{\hat\pi}=\omega_{\pi}(\pazocal E)$, the $p$-fields on $\cC$ that we get by pulling back from $\hC$ vanish on elliptic tails).
\end{remark}

\subsection{A cosection-localised class on $\Zp$}\label{sec:cosecvirpull}

The obstruction theory $\R^\bullet\hat{\pi}_*(\hL^{\oplus 5}\oplus\widehat{\mathcal P})$ for the morphism $\overline{\mathcal M}_1^{(1)}(\PP^4,d)^p\to\mathfrak{Pic}_1{(1)}$ together with its cosection induce a localised virtual class on $\Z^p$ by localised virtual pullback, see \S~\ref{sec:cosection}:
\[[\Z^p]^{\rm{vir}}_{\alpha^*\sigma,\rm{loc}}:=\gamma^!_{\mathbb E_{\Z^p/\XP},\alpha^*\sigma}[\XP].\]

In order to show that this gives the same invariants as the space of $1$-stable maps with $p$-fields, we need to prove that localised virtual pullback commutes with proper pushforward.

\begin{lem}\label{lem:comutativity}
With the same setup and notation as in \S~\ref{sec:cosection}, suppose furthermore that $\varphi$ is proper and $\varphi'(Y')\cap U\neq\emptyset$. For every cycle $[B']\in A_*(S')$,
\[\varphi'_*\circ(\rho')^!_{\mathbb E_{Y'/S'}}[B']=(\rho)^!_{\mathbb E_{Y/S}}\circ\varphi_*[B]\in A_*(Y(\sigma)).\]

\end{lem}
\begin{proof}
Consider the following diagram:
\bcd
A_*(S')\ar[r]\ar[d] & A_*(\mathfrak C_{Y'/S'}) \ar[r]\ar[d] & A_*(h^1/h^0(\mathbb E_{Y'/S'})(\sigma'))\ar[r,"s_{\sigma',\rm{loc}}^!"]\ar[d] & A_*(Y'(\sigma'))\ar[d] \\
A_*(S)\ar[r] & A_*(\mathfrak C_{Y/S}) \ar[r] & A_*(h^1/h^0(\mathbb E_{Y/S})(\sigma))\ar[r,"s_{\sigma,\rm{loc}}^!"] & A_*(Y(\sigma))
\ecd
Only the right-most square is not classical. Hence we are left with showing that for every cycle $[B']\in A_*(h^1/h^0(\mathbb{E}_{Y'/S'})(\sigma'))$ we have:
\begin{equation}\label{eq:commcos}\varphi'_*( s^!_{\sigma',\rm{loc}}[B'])=s^!_{\sigma,\rm{loc}}\left(\varphi^{\mathbb{E}}_*[B']\right)\,\in\, A_*(Y(\sigma)),
\end{equation}
where $\varphi^{\mathbb{E}}\colon h^1/h^0(\mathbb{E}_{Y'/S'})(\sigma')\to h^1/h^0(\mathbb{E}_{Y/S})(\sigma)$ is induced by the morphism of vector bundle stacks.

If $B'\subset h^1/h^0(\mathbb{E}_{Y'/S'})\rvert_{Y'(\sigma')}$ then $\varphi^{\mathbb E}(B')\subset h^1/h^0(\mathbb{E}_{Y/S})\rvert_{Y(\sigma)}$,
thus the localised Gysin morphisms coincide with $s^!_{h^1/h^0(\mathbb{E}_{Y'/S'})\rvert_{Y'(\sigma')}}$ and $s^!_{h^1/h^0(\mathbb{E}_{Y/S})\rvert_{Y(\sigma)}}$, and their commutativity with proper pushforward is well-known, see \cite{Kresch} (or \cite[Chapter~3]{FUL} if we assume the existence of global resolutions).

Assume then that $B'$ is a prime cycle in $h^1/h^0(\mathbb{E}_{Y'/S'})(\sigma')$ not contained in $h^1/h^0(\mathbb{E}_{Y'/S'})\rvert_{Y'(\sigma')}$. Then recall that: 
 \[s^!_{\sigma',\rm{loc}}[B']:=\frac{1}{k} \nu'(\sigma')_*\left([D]\cdot s^!_{\widetilde{G}}[\widetilde{B}]\right)\,\in\,A_*(Y'(\sigma')),\]
 where $\nu'\colon \widetilde{Y}\to Y'$, $(\nu')^*(\sigma')$ and $\widetilde B$ satisfy the conditions of a regularising morphism, see \S~\ref{cond:deflocal}. Notice that if we consider the composition
 $\nu\colon \widetilde{Y}\xrightarrow{\nu'} Y'\xrightarrow{\varphi'} Y$ and the pull-back $\nu^*(\sigma)$, the first two of those conditions are always satisfied. How about
 $\widetilde{B}\to B'\to \varphi^{\mathbb{E}}(B')=:B?$
 Suppose first that $B'\to B$ is generically finite of degree $h$; then $\widetilde{B}\to B$ is  generically finite of degree $hk$, so:
 \begin{align*}
 \varphi'_*( s^!_{\sigma',\rm{loc}}[B'])&= \frac{1}{k}\varphi'_*\circ \nu'(\sigma')_*\left([D]\cdot s^!_{\widetilde{G}}[\widetilde{B}]\right)\\
 &=\frac{1}{k} \nu(\sigma)_*\left([D]\cdot s^!_{\widetilde{G}}[\widetilde{B}]\right)\\
 &=h\, s_{\sigma,\rm{loc}}^![B]\\
 &=s_{\sigma,\rm{loc}}^!(\varphi^{\mathbb{E}}_*[B']).
 \end{align*}
If instead $B'\to B$ is not generically finite, then the right-hand side in formula \eqref{eq:commcos} is zero. In order to show that the same is true for the left-hand side it is enough to appeal to \cite[Lemma~2.7]{KLcosection}, since $\nu_*^{\mathbb{E}}[\widetilde B]=(\nu')_*^{\mathbb E}\circ \phi^{\mathbb E}_*[\widetilde B]=0.$ 
\end{proof}

\begin{cor}\label{cor:Zp-vs-pfields}
$\deg[\Z^p]^{\rm{vir}}_{\rm{loc}}=\deg[\Mone{1}{\PP^4}{d}^p]^{\rm{vir}}_{\rm{loc}}$
\end{cor}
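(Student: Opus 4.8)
The plan is to deduce the identity from Lemma~\ref{lem:comutativity}, applied to the lower cartesian square in the diagram defining $\Zp$. Concretely I would take $Y=\widehat{\pazocal P}=\oM^{(1)}_1(\PP^4,d)^p$, $S=\hP=\mathfrak{Pic}_1(1)$, and $\rho\colon\widehat{\pazocal P}\to\hP$ the structure morphism with its dual obstruction theory $\R^\bullet\hat\pi_*(\hL^{\oplus5}\oplus\widehat{\mathcal P})$ and cosection $\sigma=\sigma_1$ of \eqref{eqn:cosection}; then $Y'=\Zp$, $S'=\XP$, $\varphi\colon\XP\to\hP$ the left vertical arrow and $\varphi'=\alpha\colon\Zp\to\widehat{\pazocal P}$. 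Since the square is cartesian, the relative obstruction theory and cosection on $\Zp/\XP$ are the pullbacks $\alpha^*\mathbb E_{\widehat{\pazocal P}/\hP}$ and $\alpha^*\sigma$, so $\hP$ being smooth the setup of \S~\ref{sec:cosection} applies and, by the very definition of the cosection-localised virtual pullback recalled there, $[\Zp]^{\rm{vir}}_{\rm{loc}}=\gamma^!_{\mathbb E_{\Zp/\XP},\alpha^*\sigma}[\XP]$ is exactly $(\rho')^!_{\mathbb E_{Y'/S'}}[\XP]$ in the notation of the lemma. Lemma~\ref{lem:comutativity} applied with $[B']=[\XP]$ would then give
\[\alpha_*[\Zp]^{\rm{vir}}_{\rm{loc}}=\rho^!_{\mathbb E_{\widehat{\pazocal P}/\hP},\sigma}\bigl(\varphi_*[\XP]\bigr)\in A_*\bigl(\Mone{1}{X}{d}\bigr).\]

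Next I would check the two extra hypotheses of that lemma. That $\alpha(\Zp)$ meets the cosection-surjective open $U=\widehat{\pazocal P}\setminus\Mone{1}{X}{d}$ is immediate: $\Zp$ contains $1$-stable maps from a smooth elliptic curve to $\PP^4$ equipped with a $p$-field, and $\alpha$ is an isomorphism on this locus, whose image in $\widehat{\pazocal P}$ generically does not factor through the quintic. That $\varphi$ is proper I would reduce, via the smooth (hence fppf) base change $\hP\to\hM$, to the properness of the $1$-stabilisation morphism $\X\simeq\MM_1^{\rm{wt,st}}\to\MM_1^{\rm{wt,st}}(1)=\hM$ constructed in \S~\ref{sec:comparison}; this is a proper birational contraction, being an isomorphism over the dense open locus of smooth curves (cf.\ Lemma~\ref{lamma:projection_iso}). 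Granting this, $\varphi$ is proper and birational and $\XP$, $\hP$ are integral of the same dimension, whence $\varphi_*[\XP]=[\hP]$.

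Assembling the pieces, $\alpha_*[\Zp]^{\rm{vir}}_{\rm{loc}}=\rho^!_{\mathbb E_{\widehat{\pazocal P}/\hP},\sigma}[\hP]=[\widehat{\pazocal P}]^{\rm{vir}}_{\rm{loc}}=[\Mone{1}{\PP^4}{d}^p]^{\rm{vir}}_{\rm{loc}}$, the middle equality being the definition of the cosection-localised class in \S~\ref{section:p-fields}. Finally I would observe that $\alpha$ restricts to the proper morphism $\Zp(\alpha^*\sigma)=\X\times_{\hM}\Mone{1}{X}{d}\to\Mone{1}{X}{d}$ between Deligne--Mumford stacks proper over $\k$ (both are, since $\Mone{1}{X}{d}$ is proper and $\varphi$ is), so it preserves degrees of $0$-cycles and
\[\deg[\Zp]^{\rm{vir}}_{\rm{loc}}=\deg\alpha_*[\Zp]^{\rm{vir}}_{\rm{loc}}=\deg[\Mone{1}{\PP^4}{d}^p]^{\rm{vir}}_{\rm{loc}}.\]
I expect the only delicate point to be the verification of the hypotheses of Lemma~\ref{lem:comutativity}, namely the properness of the $1$-stabilisation morphism together with the identity $\varphi_*[\XP]=[\hP]$; both hinge on $\MM_1^{\rm{wt,st}}$ being irreducible and on the $1$-stabilisation being a proper birational contraction, while the rest of the argument is a formal application of the machinery assembled in \S~\ref{sec:cosection}.
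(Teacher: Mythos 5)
Your proposal is correct and matches the paper's argument: the paper deduces the corollary from Lemma~\ref{lem:comutativity} together with the properness of the $1$-stabilisation morphism (proved in the lemma immediately following) and the birationality of $\XP$ with $\mathfrak{Pic}_1(1)$, which is exactly what you use. You spell out the verification of the hypotheses (properness of $\varphi$, $\varphi_*[\XP]=[\hP]$, nonemptiness of $\alpha(\Zp)\cap U$, degree preservation under $\alpha$ restricted to the cosection degeneracy loci) more explicitly than the paper's terse one-line proof, but the route is the same.
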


This descends from the following lemma and the fact that $\XP$ and $\mathfrak{Pic}_1(1)$ are birational.

\begin{lem}
 The $1$-stabilisation $\mathfrak{M}_1^{\rm{wt,st}}\to \mathfrak{M}_1^{\rm{wt,st}}(1)$ is a proper morphism.
\end{lem}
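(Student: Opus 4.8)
The plan is to check the valuative criterion of properness for the $1$-stabilisation $\MM\to\hM$, where $\MM=\MM_1^{\rm wt,st}$ and $\hM=\MM_1^{\rm wt,st}(1)$. It is of finite type: the total-weight-$d$ condition together with stability bounds the weighted dual graphs that occur, so $\MM$ and $\hM$ are both of finite type over $\k$. By Lemma~\ref{lamma:projection_iso} the morphism factors as $\MM\xrightarrow{\sim}\X\xrightarrow{\pr_2}\hM$; thus an object of $\MM$ over $S$ is the same as a pair of families $\cC_S\xrightarrow{\phi_S}\hC_S$ with $\phi_S$ a weighted $1$-stabilisation, a description I will use throughout. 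As usual one may restrict to DVR schemes $\dvr$ with algebraically closed residue field, and is allowed a finite extension of $\dvr$.

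\emph{Existence.} Given $\hC_\dvr\in\hM(\dvr)$ and, over $\eta$, a weighted-stable nodal curve $\cC_\eta$ with a $1$-stabilisation $\phi_\eta\colon\cC_\eta\to\hC_\eta$, I must extend $(\cC_\eta,\phi_\eta)$ to $\dvr$ after a finite base change. If $\hC_0$ is nodal then $\hC_\dvr$ is a family of nodal curves (being nodal is open and $\dvr$ is local); such a curve is automatically weighted-stable in the sense of $\MM$, since $\hM$-stability is the more restrictive condition on nodal curves, and it has no weight-$0$ elliptic tails. Hence $\cC_\dvr:=\hC_\dvr$ with $\phi_\dvr:=\id$ works, and it restricts to $(\cC_\eta,\phi_\eta)$ because, by the same reasoning, $\cC_\eta=\hC_\eta$. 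If instead $\hC_0$ carries a cusp, then after a finite base change the cuspidal locus of $\hC_\dvr$ is a section $\hat q$, and I would reconstruct $\cC_\dvr$ by ``un-contracting'' it to a weight-$0$ elliptic tail. The modulus of the tail is pinned down as follows: the genus-$1$ part of $\hC_\dvr$ around $\hat q$, together with the node-to-be, defines --- after a further finite base change --- a map $\dvr\to\oM_{1,1}$ whose restriction to $\eta$ is the class of the elliptic tail $E_\eta\subset\cC_\eta$ (when $\hC_\eta$ also has a cusp there) or of the smooth genus-$1$ component of $\hC_\eta$ specialising onto $\hat q_0$ (when $\hC_\eta$ is nodal). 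Properness of $\oM_{1,1}$ produces a stable limit $(E_\dvr,q_\dvr)$, and replacing the cuspidal section of $\hC_\dvr$ by $E_\dvr$ and running the two-step contraction of Lemma~\ref{lem:sing_smoothability} (equivalently Proposition~\ref{1-stabilization-div}) in reverse, Smyth's contraction lemma~\cite[Lemma~2.13]{SMY1} yields a flat family of nodal curves $\cC_\dvr\to\dvr$ with a contraction $\phi_\dvr\colon\cC_\dvr\to\hC_\dvr$ of the elliptic tail onto $\hat q$; weighted stability of $\cC_0$ is inherited from that of $\hC_0$, and over $\eta$ one recovers $(\cC_\eta,\phi_\eta)$ up to unique isomorphism because the $1$-stabilisation of $\cC_\eta$ is $\hC_\eta$.

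\emph{Uniqueness.} If $\cC_\dvr\xrightarrow{\phi_\dvr}\hC_\dvr$ and $\cC'_\dvr\xrightarrow{\phi'_\dvr}\hC_\dvr$ are two objects of $\X(\dvr)$ with the same image in $\hM(\dvr)$ which become isomorphic over $\eta$ compatibly with $\hC_\eta$, then away from the elliptic tails both are isomorphisms onto $\hC_\dvr$ minus its cuspidal section, so the isomorphism over $\eta$ is forced there, while on the elliptic tails it is an isomorphism of families of pointed genus-$1$ curves and extends uniquely over $\dvr$ by separatedness of $\oM_{1,1}$; in particular (separatedness of $\oM_{1,1}$ again) $\cC_0$ and $\cC'_0$ carry the same elliptic tail, and the two extensions glue to a unique isomorphism $\cC_\dvr\cong\cC'_\dvr$ over $\hC_\dvr$.

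Being separated, of finite type and universally closed, the $1$-stabilisation is proper. I expect the main obstacle to be the reverse two-step contraction in the cuspidal case --- producing out of $\hC_\dvr$ and the extended elliptic tail a genuinely flat family of nodal weighted-stable curves, and checking that contracting it returns precisely $\hC_\dvr$ (and over $\eta$ the given $\cC_\eta$). This is the mirror of the valuative-criterion analysis already carried out for $\pr_1|_\X$ in Lemma~\ref{lamma:projection_iso} and of the contraction constructions in Lemma~\ref{lem:sing_smoothability} and Proposition~\ref{1-stabilization-div}.
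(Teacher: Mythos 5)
Your plan — the valuative criterion with a case split on whether $\hC_0$ is nodal or cuspidal — is a reasonable starting point, but the execution of the cuspidal case is where the argument breaks. The paper's own proof sidesteps the difficulty: it first invokes density of the locus where $\phi$ is an isomorphism to reduce to $\phi_\eta=\id$, and then simply says ``take the weighted stable model of $\hC$,'' i.e.\ applies weighted (semi)stable reduction to the family $\hC_\dvr$, which after a finite base change produces the required nodal family $\cC_\dvr$ together with its contraction back to $\hC_\dvr$. The content is exactly in the statement that a (semi)stable reduction of a cuspidal family exists and is compatible with the $1$-stabilisation. Your proof tries to reconstruct the elliptic tail by hand, and that is where it goes wrong.

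The specific gap is in the sentence pinning down the modulus of the tail. You propose a map $\dvr\to\oM_{1,1}$ whose generic point is either (a) $[E_\eta]$ when $\hC_\eta$ is cuspidal, or (b) ``the smooth genus-$1$ component of $\hC_\eta$ specialising onto $\hat q_0$'' when $\hC_\eta$ is nodal. Case (b) fails in general: take $\hC_\dvr=\{y^2z=x^2(x+tz)\}\subset\PP^2_\dvr$ (weight $d$ on the unique component). Here $\hC_\eta$ is an irreducible nodal cubic — it has no smooth genus-$1$ component at all — while $\hC_0$ is a cuspidal cubic. The correct lift $\cC_\dvr$ exists, but the central fiber is $\PP^1\cup N$ with $N$ a weight-$0$ \emph{nodal rational} tail (the boundary point of $\oM_{1,1}$), and this tail is produced by resolving the non-normal total space $\hC_\dvr$ along its cuspidal section after a base change; it is encoded in the family, not in either fiber. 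Your recipe for the $\eta$-point of $\oM_{1,1}$ has nothing to evaluate in this situation, so the ``stable limit $(E_\dvr,q_\dvr)$'' is never actually constructed. Relatedly, ``replacing the cuspidal section of $\hC_\dvr$ by $E_\dvr$'' and ``running the two-step contraction in reverse'' are not defined operations: Smyth's contraction lemma goes from the nodal model to the cuspidal one, not the other way, and the inverse is precisely the (semi)stable reduction you are trying to avoid. Finally, once one invokes the paper's density reduction to $\phi_\eta=\id$, your subcase (a) never occurs, so the only case you actually need to treat is exactly the one where your argument has the gap. Correcting the proof amounts to replacing the $\oM_{1,1}$ device by stable reduction of $\hC_\dvr$ — which is what the paper does in one line.
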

\begin{proof}
Use the valuative criterion: let $\dvr$ be a DVR scheme with generic point $\eta$; we have to fill in the upper right part of the following diagram:
\bcd
\mathcal C_{\eta}\ar[d,"\phi_{\eta}"] \ar[r,dashed, hookrightarrow] &\mathcal C\ar[d, dashed, "\phi"]\\
\hC_{\eta}\ar[r, hookrightarrow] &\hC
\ecd
We may in fact assume that $\phi_{\eta}$ is an isomorphism, and then it is enough to take the weighted stable model of $\hC.$
\end{proof}

\section{Local equations and desingularisation}\label{sec:equations}
\subsection{Equations for $\Z^p$ relative to $\XP$}
We are going to need a description of the normal cone $\mathfrak C_{\Z^p/\XP}$ in order to perform a splitting.
Since $\Z^p$ is a line bundle over the boundary of $\Z$, we find instead equations for the latter.

Recall that $\Z$ can be embedded as an open inside $C(\hat{\pi}_*\hL^{\oplus 5})$ over $\XP$. We are going to find an embedding of $C(\hat{\pi}_*\hL^{\oplus 5})$ in a smooth ambient space over $\XP$, that will be a vector bundle obtained by suitably twisting $\hL$.

Following \cite{HL}, we work locally on $\Z$: start with a point $\xi=[(C\xrightarrow{\phi}\widehat{C}\xrightarrow{f}\PP^4]\in\Z$ and choose coordinates on $\PP^4$ such that $f^{-1}\{x_0=0\}$ is a simple smooth divisor $D=\sum_{i=1}^d\delta_i$ on $\widehat{C}$. This continues to be true on a neighbourhood $\pazocal U$ of $\xi$.

 Locally the morphism $\Z\to\XP$ can be written as $\xi\mapsto[C\to\widehat{C},\OO_{\widehat{C}}(D)]$, which admits a local lifting $\pazocal U\to\X^{\rm{div}}:=\MM_1^{\rm{div}}(1)\times_{\hM}\X$, and in fact hits the smooth locus of the latter.
 
 \begin{remark}
  The projection $\X^{\rm{div}}\to \XP$ 
  is \emph{not} smooth. In fact, when the line bundle is trivial on the minimal elliptic subcurve $E$, it may be deformed to a degree $0$, non-effective line bundle on such a subcurve, so that sections of $\OO_{\widehat{C}}(D)$ which are constant and non-zero on $E$ are obstructed.

There is a way around this: in a neighbourhood $\pazocal V\subseteq \XP$ of $[C\to\widehat{C},\OO_{\widehat{C}}(D)]$ we can write the universal line bundle $\hL_{\pazocal V}$ as $\OO_{\hC_{\pazocal V}}(\pazocal D + p-p_0)$. Indeed we can pick a local section $p_0$ through the core, so that $\hL_{\pazocal V}(p_0)$ becomes effective. We should think of $p$ as a local coordinate on $\XP$ relative to $\X$.
\end{remark}

Locally on $\pazocal V$ we can pick another smooth section $\A$ of the core not intersecting $p_0$, neither the support of $\pazocal D+p$.

\begin{lem}
$C(\hat\pi_*\hL_{\pazocal V})$ is the kernel of the vector bundle map:
\[ \hat\pi_*\OO_{\hC}(\A+\pazocal D+p-p_0)\xrightarrow{\varphi}\hat\pi_*\OO_{\A}(\A)\]
Up to shrinking $\pazocal V$ we may write:
\[\hat\pi_*\OO_{\hC}(\A+\pazocal D+p-p_0)\cong\bigoplus_{i=1}^d\hat\pi_*\OO_{\hC}(\A+\pazocal \delta_i-p_0)\oplus\hat\pi_*\OO_{\hC}(\A+p-p_0)\]
\end{lem}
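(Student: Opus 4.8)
The plan is to argue exactly as in Proposition~\ref{prop:smoothability}, adapting Hu--Li's construction from $\oM_1(\PP^r,d)$ over $\MM_1^{\rm{div}}$ to the cuspidal situation over $\pazocal V\subseteq\XP$, using the description $\hL_{\pazocal V}\cong\OO_{\hC_{\pazocal V}}(\pazocal D+p-p_0)$ from the preceding remark and the auxiliary section $\A$ through the core. First I would recall that $C(\hat\pi_*\hL_{\pazocal V})$ is, by definition, $\Spec_{\XP}\operatorname{Sym}^\bullet(\R^1\hat\pi_*\hL_{\pazocal V}^\vee\otimes\omega_{\hat\pi})$, so its fibre over a point represents $H^0$ of the corresponding line bundle by Serre duality; I then want to realise it as the kernel of a map between honest vector bundles. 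Since $\A$ is a smooth point lying on the core, the divisor $\A+\pazocal D+p-p_0$ has positive degree on every irreducible component of every fibre of $\hat\pi$ (the core carries $\A$, every rational tail carries some $\delta_i$ or $p$), so $\R^1\hat\pi_*\OO_{\hC}(\A+\pazocal D+p-p_0)=0$ and $\hat\pi_*\OO_{\hC}(\A+\pazocal D+p-p_0)$ is locally free of the rank predicted by Riemann--Roch, and likewise cohomology and base change hold. The short exact sequence
\[
0\to\hL_{\pazocal V}\to\OO_{\hC}(\A+\pazocal D+p-p_0)\to\OO_{\A}(\A)\to 0
\]
(restriction to the reduced point $\A$, using that $\A$ is disjoint from the support of $\pazocal D+p$ and from $p_0$) then pushes forward: $\hat\pi_*\OO_{\A}(\A)$ is a line bundle, $\R^1\hat\pi_*\OO_{\hC}(\A+\pazocal D+p-p_0)=0$, and the connecting map gives the identification $C(\hat\pi_*\hL_{\pazocal V})=\ker(\varphi)$ fibrewise; because everything commutes with base change this holds as stacks over $\pazocal V$.

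For the splitting I would run the same Čech/partial-fractions argument as in Proposition~\ref{prop:smoothability}.(2): since $\A$, the $\delta_i$ and $p$ are pairwise disjoint smooth sections, on the (affine, after shrinking) base $\pazocal V$ one can decompose a rational section of $\OO_{\hC}(\A+\pazocal D+p-p_0)$ into its ``principal parts'' at the points $\delta_1,\dots,\delta_d,p$, which yields the direct-sum decomposition
\[
\hat\pi_*\OO_{\hC}(\A+\pazocal D+p-p_0)\cong\bigoplus_{i=1}^d\hat\pi_*\OO_{\hC}(\A+\delta_i-p_0)\oplus\hat\pi_*\OO_{\hC}(\A+p-p_0).
\]
Concretely this is the statement that the restriction maps to the infinitesimal neighbourhoods of the $\delta_i$ and of $p$ split the sheaf, which works verbatim as in \cite[Proposition~4.13]{HL}; the point $p_0$ on the core behaves like $\A$ and contributes no summand since sections are allowed a pole only at $\A$. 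One has to check that each summand is a line bundle: $\hat\pi_*\OO_{\hC}(\A+\delta_i-p_0)$ has fibre of dimension $1$ by Riemann--Roch (degree-$1$ bundle on a genus-$1$ fibre, with $\R^1$ vanishing since $\A$ and $\delta_i$ lie on distinct components unless $\delta_i$ is on the core, in which case one checks directly), and similarly for the last summand; this is where shrinking $\pazocal V$ is used, to trivialise.

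The main obstacle I anticipate is precisely the non-smoothness of $\X^{\rm{div}}\to\XP$ flagged in the remark: when $\hL$ restricts trivially to the core $E$, the section $p_0$ through the core is exactly what makes $\hL_{\pazocal V}(p_0)$ effective and the pushforwards well-behaved, but one must be careful that the vanishing $\R^1\hat\pi_*\OO_{\hC}(\A+\delta_i-p_0)=0$ and the base-change statements still hold on fibres where $E$ carries a non-effective degree-$0$ line bundle. The resolution is that after twisting by $+\A$ (resp. $+p$) every line bundle in sight has \emph{positive} degree on the core and nonnegative degree on every tail with at least one marked point among $\{\A,\delta_i,p,p_0\}$, so the relevant $\R^1$'s vanish by the normalisation sequence exactly as in the nodal case — the cuspidal singularity, being Gorenstein and l.c.i., causes no extra trouble for these cohomology computations. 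Once that is in place the argument is a routine transcription of \cite[Lemma~4.10, Proposition~4.13]{HL}.
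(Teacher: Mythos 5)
Your argument takes the route the paper intends: the text gives no proof of its own and simply cites \cite[Lemma~4.10]{HL}, and what you have written is the natural cuspidal transcription of Hu--Li via the direct-image-cone description, the twisting short exact sequence by the section $\A$, and the decomposition of the pushforward according to the effective points $\delta_i$ and $p$. The one inaccurate step is in your first paragraph, where you assert that $\A+\pazocal D+p-p_0$ has \emph{positive} degree on every component because ``the core carries $\A$'': the core also carries $-p_0$, so the restriction of a summand $\OO_{\hC}(\A+\delta_i-p_0)$ to the core $E$ has degree $0$ whenever $\delta_i$ sits on a tail. The $\R^1$-vanishing nevertheless holds there because $\OO_E(\A-p_0)$ is a \emph{nontrivial} degree-$0$ line bundle on a genus-$1$ Gorenstein core (the sections $\A$ and $p_0$ were chosen disjoint, and the Abel--Jacobi map on the smooth locus of such a core is injective), hence is acyclic; you say essentially this in your final paragraph, so the proof is sound, but the first-paragraph justification should be replaced by the more careful statement so as not to leave a gap when applied to the summands.
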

Compare with \cite[Lemma 4.10]{HL}. Denote by \[\varphi_i\colon\hat\pi_*\OO_{\hC}(\A+\pazocal \delta_i-p_0)\to\hat\pi_*\OO_{\A}(\A)\] (and similarly $\varphi_p$) the composite of the inclusion with $\varphi$.

Let us introduce some more notation: around a point $[\widehat{C}]\in\MM_1^{\rm{wt,st}}(1)$, for every node $q$ of $\widehat{C}$ there is a coordinate $\zeta_q$ whose vanishing locus is the divisor where such a node is \emph{not} smoothed. These functions can be pulled back to $\pazocal V$. Denote by \[\zeta_{[\delta_i,\pazocal A]}=\prod\zeta_q\]
where the product runs over all the nodes separating $\delta_i$ from the core.
 
\begin{lem}
There are trivialisations for the line bundles $\pi_*\OO_{\hC}(\A+\pazocal \delta_i-p_0),$ $\hat\pi_*\OO_{\hC}(\A+p-p_0),\, \pi_*\OO_{\A}(\A) $ such that locally we have the following explicit expression for $\varphi_i$ and $\varphi_p$:
\[\varphi_i=\zeta_{[\delta_i,\pazocal A]}, \quad \varphi_p=(p-p_0)\]
\end{lem}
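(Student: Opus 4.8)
The plan is to transcribe Hu--Li's computation \cite[Lemma~4.10]{HL} --- which is the content of Proposition~\ref{prop:smoothability}.(3) in the nodal case --- to our at worst cuspidal curves, and to treat the extra section $p$ by a direct residue computation. Note first that $\hat\pi_*\OO_{\hC}(\A+\delta_i-p_0)$, $\hat\pi_*\OO_{\hC}(\A+p-p_0)$ and $\hat\pi_*\OO_{\A}(\A)$ are indeed line bundles on $\pazocal V$: the last one equals $N_{\A/\hC}$ since $\A$ is a section, and for the first two one checks fibrewise that $h^0=1$, $h^1=0$ and applies cohomology and base change. Hence, after choosing a trivialisation of each of these line bundles, $\varphi_i$ and $\varphi_p$ become regular functions on $\pazocal V$, and it is these functions that we must identify. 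As in \cite{HL} all the sheaves involved commute with base change along maps $\pazocal V'\to\pazocal V$, so it is enough to run the computation on a versal deformation of a fixed geometric point $[C\xrightarrow{\phi}\widehat C\xrightarrow{f}\PP^4]$ of $\Z$, where we may moreover assume the weighted dual graph of $\widehat C$ is terminally weighted.

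Next I would deal with $\varphi_i$, which is verbatim Hu--Li. On the central fibre the line bundle $\OO_{\hC}(\A+\delta_i-p_0)$ has its unique section supported on the chain of components joining $\delta_i$ to the core, and over the versal base the distinguished generator $s_i$ of $\hat\pi_*\OO_{\hC}(\A+\delta_i-p_0)$ is obtained from the section normalised to have leading coefficient $1$ along $\A$ on the generic fibre by rescaling it by $\prod_{q\in[\delta_i,\A]}\zeta_q$, this rescaling being exactly what is needed to extend it across the smoothing of each separating node $q$. Restricting $s_i$ to $\A$ then gives $\varphi_i=\prod_{q\in[\delta_i,\A]}\zeta_q=\zeta_{[\delta_i,\A]}$ in the trivialisation determined by $s_i$ and by any chosen generator of the target. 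The one thing to observe is that cusps do not interfere: being unibranch they never occur on the separating chain $[\A,\delta_i]$, and being Gorenstein l.c.i.\ they leave Riemann--Roch and the base-change statements untouched.

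For $\varphi_p$ the key observation is that the divisor $\{p=p_0\}\subseteq\pazocal V$ is exactly the locus over which $\hL_{\pazocal V}$ restricts to the trivial bundle on the (minimal genus $1$) core, so the generating section becomes constant there and its leading term along $\A$ vanishes to first order. Concretely, on the core $s_p$ restricts to the unique section of $\OO(\A+p-p_0)$; normalising $s_p$ to have unit leading coefficient at the pole $p$, at $p=p_0$ it degenerates to the constant section $1$ of $\OO(\A)$, whose leading term along $\A$ is $0$. A residue computation on the core shows that the derivative in $p$ of the leading term of $s_p$ along $\A$, evaluated at $p=p_0$, is a unit in $\OO_{\pazocal V}$; absorbing this unit into the trivialisation of $\hat\pi_*\OO_{\hC}(\A+p-p_0)$ yields $\varphi_p=(p-p_0)$. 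Finally, the trivialisations just chosen are compatible with the direct sum decomposition of the preceding lemma, since that decomposition is dictated by pole order and the generators $s_i$, $s_p$ have the prescribed pole structure.

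The step I expect to be the main obstacle is the $\varphi_p$ computation: one has to handle the normalisation of $s_p$ with some care in the regime $p\to p_0$, where the pole at $p$ collides with the zero at $p_0$, and to verify that the derivative of the leading term along $\A$ is genuinely a unit --- this amounts to controlling how $\OO(\A+p-p_0)$ and its unique section vary as $p\to p_0$ on the core, which is where the $\mathbb G_{\rm a}$-worth of line bundles degenerating on a cuspidal core makes its appearance. Everything else is a faithful transcription of \cite{HL}.
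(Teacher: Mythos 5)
The paper does not actually prove this lemma: it is stated with the remark ``Compare with [HL, Proposition~4.13]'' and the verification is left to the reader, so there is no paper proof to compare against line by line. Judged on its own, your sketch goes in the right direction but stops short of a proof exactly at the point where the lemma departs from Hu--Li.

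The $\varphi_i$ part is indeed a faithful transcription of Hu--Li (with $p_0$ playing the role of $\pazocal B$), and your two observations --- that cusps are always on the core and hence never appear on a separating chain $[\A,\delta_i]$, and that cusps being Gorenstein l.c.i.\ leave Riemann--Roch and cohomology-and-base-change untouched --- are exactly what justifies the transcription. Your fibrewise $h^0=1$, $h^1=0$ check is also the right way to see the summands are line bundles.

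The genuinely new content is $\varphi_p$, and here your argument is only an outline. Two things need tightening. First, your framing ``$\{p=p_0\}$ is exactly the locus over which $\hL$ restricts to trivial on the core'' is literally true only on the boundary (over the interior, where the map is non-constant on the core, the correct statement is simply that $\hL=\OO(\pazocal D)$ there); the paper's own remark is phrased as a statement \emph{on the boundary}, which is the relevant regime for the vanishing of $\varphi_p$. Second, and more substantively, the entire point of the claim $\varphi_p=(p-p_0)$ is that the vanishing along $\{p=p_0\}$ is of order exactly one, and the residue/derivative computation that establishes this is precisely what you defer. This is not cosmetic: you must check that $\partial_p\bigl(s_p|_{\A}\bigr)$ at $p=p_0$ is a unit on all of $\{p=p_0\}$, i.e.\ also over points where the core is cuspidal, where the group law of $\Pic^0$ is $\mathbb G_{\rm a}$ rather than that of an elliptic curve. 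The mechanism you allude to (the Abel--Jacobi type statement that the zero $q(p)$ of $s_p$ on the core satisfies $q(p)=\A+(p-p_0)$ and hence moves to first order in $p$) is the right one and does hold on a cuspidal core as well, but it needs to be written out, since a naive reader cannot rule out higher-order vanishing from the assertions as given.
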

Compare with \cite[Proposition 4.13]{HL}.
\begin{remark}
The vanishing locus of $(p-p_0)$ on the boundary means that the line bundle restricts to the trivial one on the core.
\end{remark}

\begin{lem}\label{lem:equations}
A local chart $\pazocal U$ for $\Z$ can be embedded as an open inside:
\[ (F_0=\ldots=F_4=0)\subseteq \rm{Vb}(\hat{\pi}_*\hL_{\pazocal V}(\A)^{\oplus 5}) \]
where
\[ F_j=\sum_{i=1}^d \zeta_{[\delta_i,\pazocal A]}w_i^j+(p-p_0)w_{d+1}^j \]
and $w_i^j$ are coordinates on the fiber of the $j$-th copy of $\rm{Vb}(\hat{\pi}_*\hL_{\pazocal V}(\A))$ over $\pazocal V$.
\end{lem}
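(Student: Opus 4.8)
The plan is to follow Hu--Li's argument \cite[Lemma~4.10, Proposition~4.13]{HL} almost verbatim, now over $\XP$ in place of $\MM_1^{\rm{div}}$, by assembling the two preceding lemmas and choosing fibre coordinates adapted to the splitting. Since a $1$-stable map to $\PP^4$ amounts to a line bundle together with $5$ sections generating it against the distinguished section, $\Z$ is an open substack of $C(\hat\pi_*\hL^{\oplus 5})$, which is the $5$-fold fibre power over $\XP$ of $C(\hat\pi_*\hL)$; so it is enough to embed $C(\hat\pi_*\hL_{\pazocal V})$ into the vector bundle $\rm{Vb}(\hat\pi_*\hL_{\pazocal V}(\A))$ over $\pazocal V$, read off its defining equation, and take the $5$-fold direct sum. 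By the first lemma above, $C(\hat\pi_*\hL_{\pazocal V})$ is the kernel of $\varphi\colon\hat\pi_*\OO_{\hC}(\A+\pazocal D+p-p_0)\to\hat\pi_*\OO_\A(\A)$ and, after shrinking $\pazocal V$, the source splits as $\bigoplus_{i=1}^{d}\hat\pi_*\OO_{\hC}(\A+\delta_i-p_0)\oplus\hat\pi_*\OO_{\hC}(\A+p-p_0)$. Here I would spell out that $\hat\pi_*\hL_{\pazocal V}(\A)$ and $\hat\pi_*\OO_\A(\A)$ are locally free and commute with base change, because $\A$ meets the core so $\hL_{\pazocal V}(\A)$ has vanishing $H^1$ on the geometric fibres and $\A$ lies in the relative smooth locus; consequently the identification of $C(\hat\pi_*\hL_{\pazocal V})$ with $\ker\varphi$ is an equality of functors, both representing the functor of sections of $\hL_{\pazocal V}$ (a section of $\hL_{\pazocal V}$ being precisely a section of $\hL_{\pazocal V}(\A)$ that vanishes on $\A$).

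With the trivialisations of the second lemma, under which $\varphi_i$ is multiplication by $\zeta_{[\delta_i,\A]}$ and $\varphi_p$ is multiplication by $p-p_0$, I would introduce fibre coordinates $(w_i^j)_{i=1,\dots,d+1}^{j=0,\dots,4}$ on $\rm{Vb}(\hat\pi_*\hL_{\pazocal V}(\A)^{\oplus 5})$ compatible with the above decomposition, with $i\le d$ indexing the $\delta_i$-summands and $i=d+1$ the $p$-summand. The restriction-to-$\A$ map on the $j$-th copy then reads $(w_i^j)_i\mapsto\sum_{i=1}^{d}\zeta_{[\delta_i,\A]}w_i^j+(p-p_0)w_{d+1}^j=F_j$, so $\ker\varphi^{\oplus 5}$ is cut out by $F_0=\dots=F_4=0$; intersecting with the open locus carved out by the stability condition defining $\Z$ inside the cone of sections exhibits the chart $\pazocal U$ as an open subset of $(F_0=\dots=F_4=0)$. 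The ambient $\rm{Vb}(\hat\pi_*\hL_{\pazocal V}(\A)^{\oplus 5})$ is smooth over $\pazocal V$, and $\pazocal V\subseteq\XP$ is itself smooth since $\X\cong\MM$ is smooth, $\mathfrak{Pic}_1(1)\to\MM_1^{\rm{wt,st}}(1)$ is smooth, and $\MM_1^{\rm{wt,st}}(1)$ is a smooth Artin stack, nodes and cusps being l.c.i.

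The genuine obstacle is not this formal assembly but justifying the appearance of $p-p_0$ in $F_j$ in place of a literal node-smoothing parameter: this reflects, as the remarks above explain, that $\X^{\rm{div}}\to\XP$ fails to be smooth over the locus where $\hL$ restricts trivially to the core, so Hu--Li's equations cannot be carried over verbatim and the ``missing'' smoothing coordinate must be replaced by the deformation parameter $p$ of the line bundle along the core. I expect the one computation needing genuine care to be checking that, in the trivialisations of the second lemma, $\varphi_p$ is indeed multiplication by $p-p_0$ and that this is compatible with the kernel description, i.e. that a section constant and nonzero on the core deforms to a neighbouring fibre precisely when $p=p_0$.
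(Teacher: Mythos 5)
Your proposal is correct and follows exactly the route the paper intends: the paper's ``proof'' is simply a pointer to \cite[Theorems~2.17--19]{HL}, and the content of the lemma is indeed the mechanical assembly of the two preceding lemmas (the kernel description of $C(\hat\pi_*\hL_{\pazocal V})$ and the trivialisations identifying $\varphi_i$ with $\zeta_{[\delta_i,\A]}$ and $\varphi_p$ with $p-p_0$), five copies at a time, followed by intersecting with the stability-open. You have also correctly isolated the only genuinely new point compared with Hu--Li, namely that the lost node-smoothing coordinate is replaced by the Picard-direction coordinate $p-p_0$ because $\X^{\rm{div}}\to\XP$ fails to be smooth where $\hL$ is trivial on the core.
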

Compare with \cite[Theorems 2.17-19]{HL}.

\subsection{Hu-Li blow-up and desingularisation}
We perform a modular blow-up of $\MM_1^{\rm{wt,st}}(1)$: we successively blow up $\widehat{\Theta}_k, \;k\geq 2,$  defined as the closure of the loci where the minimal elliptic subcurve $E$ has weight $0$ and $\left\lvert\overline{C\setminus E}\cap E\right\rvert=k.$

Notice that after the $k$-th blow-up, the strict transform of $\widehat{\Theta}_{k+1}$ is smooth, so the final result  $\widetilde{\MM}_1^{\rm{wt,st}}(1)$ is smooth as well. 

\begin{remark}
The fiber product 
\[\widetilde{\MM}_1^{\rm{wt,st}}(1)\times_{\MM_1^{\rm{wt,st}}(1)}\MM_1^{\rm{wt,st}}\]
recovers the Hu-Li blow-up $\widetilde{\MM}_1^{\rm{wt,st}}.$ The key observation is that $\Theta_1$ is already a Cartier divisor and the inverse image of $\widehat{\Theta}_k$ is precisely $\Theta_k$; 
using the universal property of the blow-up, it can be shown that there are maps in both directions, and they are inverse to one another. 
\end{remark}

\begin{remark}\label{rmk:eqn}
After blowing up, the equations in \ref{lem:equations} are simplified and assume the following form:
\[\tilde\zeta\tilde w+(p-p_0)w_{d+1}=0\]
where $\tilde\zeta$ is one of the newly created boundary divisors $\widetilde{\Theta}_k$ from $\widetilde{\MM}_1^{\rm{wt,st}}$ (i.e. one of the exceptional divisors produced by the blow-up process), and $\tilde w$ is a suitably defined coordinate on the fiber of $\rm{Vb}(\hat{\pi}_*\hL(\A))\times_{\XP}\widetilde{\XP}$.
\end{remark}

Summing up, we get:
\bcd
   &  \widetilde{\Z}^p \ar[r]\ar[d]\ar[rd,phantom,"\Box"] & \tM_1^{(1)}(\PP^4,d)^p \ar[d] \\
\tM_1(\PP^4,d)\ar[d] & \widetilde{\Z} \ar[l,hook,swap,"i"]\ar[r]\ar[d]\ar[rd,phantom,"\Box"] & \tM_1^{(1)}(\PP^4,d) \ar[d] \\
\widetilde{\mathfrak{Pic}}_1\ar[d] &\widetilde{\XP}\ar[l]\ar[r]\ar[d]\ar[rd,phantom,"\Box"] & \widetilde{\mathfrak{Pic}}_1 (1)\ar[d]\\
\widetilde{\MM}^{\rm{wt,st}}_1 & \widetilde{\X}\ar[l,swap,"\sim"]\ar[r] & \widetilde{\MM}_1^{\rm{wt,st}}(1)
\ecd

Notice that the components of $\tZp$ are in bijection with those of $\Zp$, however all the boundary ones have the same dimension $5d+4$, and their intersection with main is a divisor in the latter.

We conclude this brief section remarking that the blow-up procedure does not affect the invariants:
\begin{lem}\label{lem:tilding}
We have the identity:
\[\deg\virloc{\tZp}=\deg\virloc{\Zp}.\]
\end{lem}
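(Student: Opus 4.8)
The plan is to deduce the equality from the compatibility of cosection-localised virtual pullback with proper pushforward, Lemma~\ref{lem:comutativity}, applied to the base change of $\gamma\colon\Zp\to\XP$ along the iterated modular blow-up $b\colon\widetilde{\MM}_1^{\rm{wt,st}}(1)\to\MM_1^{\rm{wt,st}}(1)$ of \S\ref{sec:equations}. Since every square of the last diagram of that section is cartesian, $b$ induces a proper morphism $\beta\colon\tZp\to\Zp$ sitting in a cartesian square over the blow-down $\widetilde{\XP}\to\XP$, and the relative perfect obstruction theory of $\tZp\to\widetilde{\XP}$ together with its cosection $\tilde\sigma$ are the pullbacks of those of $\Zp\to\XP$. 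In particular the degeneracy locus $\tZp(\tilde\sigma)$ equals $\beta^{-1}(\Zp(\sigma))$, and $\virloc{\tZp}=(\gamma')^{!}_{\mathbb E_{\tZp/\widetilde{\XP}},\,\tilde\sigma}[\widetilde{\XP}]$ with $\gamma'\colon\tZp\to\widetilde{\XP}$ the projection.

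Before invoking Lemma~\ref{lem:comutativity} I would verify its hypotheses. First, $\XP\cong\MM_1^{\rm{wt,st}}\times_{\MM_1^{\rm{wt,st}}(1)}\mathfrak{Pic}_1(1)$ is a smooth Artin stack, being a $\mathfrak{Pic}$-bundle over $\X\cong\MM_1^{\rm{wt,st}}$ (nodal curves are l.c.i.); and $\widetilde{\XP}$ is smooth too, since it is obtained from $\XP$ by the same sequence of blow-ups along smooth strict transforms that produces $\widetilde{\MM}_1^{\rm{wt,st}}(1)$ from $\MM_1^{\rm{wt,st}}(1)$. Hence the framework of \S\ref{sec:cosection} applies to both $\Zp\to\XP$ and $\tZp\to\widetilde{\XP}$, and in both cases the cosection lifts to the absolute obstruction theory as in \S\ref{sec:cosecvirpull}; all of this is inherited by pullback from $\Mone{1}{\PP^4}{d}^p$ over $\mathfrak{Pic}_1(1)$. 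Second, $b$ is proper and an isomorphism over a dense open — its centres $\widehat{\Theta}_k$ lie in the nowhere-dense locus of curves with a contracted elliptic subcurve — so $b_*[\widetilde{\XP}]=[\XP]$. Third, $\beta$ is surjective and the non-degeneracy open $U\subseteq\Zp$ of the cosection is non-empty (its main component is not contained in $\Mone{1}{X}{d}$), so $\beta(\tZp)\cap U\neq\emptyset$.

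Granting all this, Lemma~\ref{lem:comutativity} yields
\[
\begin{aligned}
\beta_*\virloc{\tZp}
&=\beta_*\bigl((\gamma')^{!}_{\mathbb E_{\tZp/\widetilde{\XP}},\,\tilde\sigma}[\widetilde{\XP}]\bigr)\\
&=\gamma^{!}_{\mathbb E_{\Zp/\XP},\,\alpha^*\sigma}\bigl(b_*[\widetilde{\XP}]\bigr)
=\gamma^{!}_{\mathbb E_{\Zp/\XP},\,\alpha^*\sigma}[\XP]=\virloc{\Zp},
\end{aligned}
\]
an identity of $0$-cycles on $\Zp(\sigma)$. Finally I would note that both classes are supported on proper Deligne--Mumford substacks: $\Zp(\sigma)\cong\Z\times_{\oM_1(\PP^4,d)}\Mone{1}{X}{d}$ is closed in $\Z$, which is proper by Lemma~\ref{lem:inclusion}, and $\tZp(\tilde\sigma)=\beta^{-1}(\Zp(\sigma))$ is proper as well. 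Since $\beta$ restricts to a proper morphism between these supports, and proper pushforward preserves the degree of $0$-cycles, we conclude $\deg\virloc{\tZp}=\deg\beta_*\virloc{\tZp}=\deg\virloc{\Zp}$.

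The main obstacle I anticipate is justifying that $b$, and hence $\beta$, is proper and birational so that $b_*[\widetilde{\XP}]=[\XP]$: this rests on the modular analysis of the blow-up centres $\widehat{\Theta}_k$ in \S\ref{sec:equations} — that each is a nowhere-dense closed substack and that the construction of $\widetilde{\XP}$ is compatible with the flat pullback $\XP\to\MM_1^{\rm{wt,st}}(1)$. A secondary point needing care is confirming that the compatible triple of obstruction theories and the absolute lift of the cosection on the tilded spaces are genuine pullbacks from the $p$-field story, so that Lemma~\ref{lem:comutativity} applies verbatim.
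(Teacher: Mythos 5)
Your proposal is correct and takes essentially the same route as the paper, which simply cites the analogous blow-up invariance statement (Proposition~2.5 of Chang--Li \cite{CL}); the mechanism in both cases is that the cosection-localised virtual class pushes forward along the proper birational blow-down $\beta\colon\tZp\to\Zp$, which you realise via the paper's own Lemma~\ref{lem:comutativity} applied to the cartesian square over $\widetilde{\XP}\to\XP$ together with $b_*[\widetilde{\XP}]=[\XP]$. Your verification of the hypotheses (smoothness of $\widetilde{\XP}$, birationality of $b$, non-emptiness of the non-degeneracy locus) is sound; the smoothness of $\widetilde{\XP}$ is perhaps most cleanly seen as a smooth $\mathfrak{Pic}$-bundle over $\widetilde{\X}\cong\widetilde{\MM}_1^{\rm{wt,st}}$, the latter being smooth by Hu--Li.
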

Compare with \cite[Proposition~2.5]{CL}.
\section{Splitting the cone and proof of the Main Theorem}\label{section:main_proof}
We are finally able to study the cone $\mathfrak{C}_{\tZp/\tXP}.$ This is going to be a word-by-word repetition of the arguments in \cite{CLpfields}.
\begin{lem}\label{lem:open_cones}
The map $\tilde{\rho}\colon \tZp\to\tXP$ has a relative perfect obstruction theory $\mathbb {E}_{\tZp/\tXP}=\R^{\bullet}\hat\pi_*(\hL^{\oplus 5}\oplus\widehat{\mathcal P})$. The intrinsic normal cone $\mathfrak{C}_{\tZp/\tXP}$ has the following properties:
\begin{enumerate}
\item Its restriction to the open $\tZ^{p,\circ}= \tZ^{p,\rm{main}}\setminus \bigcup_{k\geq 2}D^k \tZp$ can be described as the zero section of $h^1/h^0(\mathbb {E}_{\tZp/\tXP})\rvert_{\tZ^{p,\circ}}$
\item Its restriction to the open $\tZ^{p,\rm{gst},\circ}=\tZp - \tZ^{p,\rm{main}}$  is a rank $2$ subbundle stack of $h^1/h^0(\mathbb {E}_{\tZp/\tXP})\rvert_{\tZ^{p,\rm{gst},\circ}}$
\end{enumerate}
\end{lem}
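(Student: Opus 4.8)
The dual perfect obstruction theory is automatic from the cone–of–sections description of \S\ref{section:p-fields}: locally over $\tXP$ the stack $\tZp$ is an open inside the cone of sections of $\rm{Vb}(\hL^{\oplus 5}\oplus\widehat{\mathcal P})$ over the universal curve, so the construction recalled in \S\ref{section:p-fields} (compare \cite[Prop.~2.5]{CLpfields}) produces $\mathbb E_{\tZp/\tXP}=\R^\bullet\hat\pi_*(\hL^{\oplus 5}\oplus\widehat{\mathcal P})$, and this is manifestly compatible with the blow-up and with base change. Since $\hat\pi_*\hL(\A)$, $\hat\pi_*\OO_{\A}(\A)$ and their $p$-field analogues give a global two-term resolution $[\mathcal K^0\to\mathcal K^1]$ of $\mathbb E_{\tZp/\tXP}$, the object $h^1/h^0(\mathbb E_{\tZp/\tXP})=[\rm{Vb}(\mathcal K^1)/\rm{Vb}(\mathcal K^0)]$ is a genuine vector bundle stack. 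To compute $\mathfrak C_{\tZp/\tXP}=[C_{\tZp/M}/T_{M/\tXP}|_{\tZp}]$ I will use the local model of Lemma~\ref{lem:equations} and Remark~\ref{rmk:eqn}: over an \'etale chart of $\tXP$, $\tZp$ sits inside a smooth ambient $M$ cut out by the five equations $\tilde F_j=\tilde\zeta\,\tilde w^j+(p-p_0)\,w^j_{d+1}$, $j=0,\dots,4$, together with the $p$-field equations, which near a generic boundary point have the shape $\tilde\zeta\,\psi=0$.

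\emph{Part (1).} The plan is to show that $\tilde\rho$ is smooth over $\tZ^{p,\circ}$. The point is that $\tZ^{p,\circ}$ is exactly the locus where $\tilde\zeta$ and $(p-p_0)$ do not vanish simultaneously, equivalently where $\widehat C$ carries no contracted genus-$1$ subcurve and $\hL=\hat f^*\OO_{\PP^4}(1)$ is positive on the core. Indeed, on the main component and away from $\bigcup_{k\ge 2}D^k\tZp$ one of three things happens: the source is a smooth elliptic curve; or $f$ is non-constant on a component of the core, which by Proposition~\ref{prop:components}(2) is the remaining part of the boundary of the main component; or we lie over the old $D^1$-locus, where the smoothability criterion forces the attaching node to be a ramification point, so $\phi$ contracts the elliptic tail to a cusp and $\widehat C$ is irreducible with $\hL$ of degree $d>0$. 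In every case $(p-p_0)$ (the coordinate measuring the failure of $\hL$ to be trivial on the core) is a unit, so each $\tilde F_j=0$ solves for one fibre coordinate and the five equations form a regular sequence; moreover $\widehat{\mathcal P}=\hL^{\otimes -5}\otimes\omega_{\hat\pi}$ then has negative degree on every component carrying positive $\hL$-degree and is not effective on any subcurve meeting such a component, so $\hat\pi_*\widehat{\mathcal P}=0$ and the $p$-field contributes neither a further equation nor a free coordinate. Thus $\tZp=\tZ$ is smooth over $\tXP$ there, $\mathfrak C_{\tZp/\tXP}|_{\tZ^{p,\circ}}=B\,T_{\tZp/\tXP}$, which is the zero section of $h^1/h^0(\mathbb E_{\tZp/\tXP})$. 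One also checks $\R^1\hat\pi_*\hL=0$ on $\tZ^{p,\circ}$ (nefness of $\hL$ together with the absence of a contracted genus-$1$ subcurve), so the obstruction sheaf degenerates consistently.

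\emph{Part (2).} Over $\tZ^{p,\rm{gst},\circ}=\tZp\setminus\tZ^{p,\rm{main}}$ the source is $\widehat C=E\cup R_1\cup\dots\cup R_k$ with $E$ the contracted genus-$1$ core ($k\ge 2$) and the $R_i$ of positive degree, and here both $\tilde\zeta$ and $(p-p_0)$ vanish. I will argue as in \cite[\S 6]{CLpfields} using Hu--Li's equations \cite{HL}: generically along $D^k$ the $5\times 2$ matrix with columns $(\tilde w^j)_j$ and $(w^j_{d+1})_j$ has rank $2$, so in a neighbourhood of a generic point of $D^k$ the ideal of $\tZp$ is $(\tilde\zeta,\,p-p_0)$ (the remaining $\tilde F_j$ being consequences of these two, and $\tilde\zeta\psi=0$ being automatic once $\tilde\zeta=0$). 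Hence $\tZp$ is smooth of codimension $2$ in $M$ there, $C_{\tZp/M}=N_{\tZp/M}$ is a rank-$2$ vector bundle, and through $e_j\mapsto \tilde w^j\,d\tilde\zeta+w^j_{d+1}\,d(p-p_0)$ it embeds into $\rm{Vb}(\mathcal K^1)|_{\tZp}$ as the image of the two obstructed base directions — smoothing $E$ and deforming $\hL|_E$ off $\OO_E$, i.e. the $\R^1\hat\pi_*\hL$-summand supported on $E$ together with the $\mathfrak{Pic}^0(E)$-direction of the $p$-field. Since every entry of the differential $\mathcal K^0\to\mathcal K^1$ of the resolution of $\mathbb E_{\tZp/\tXP}$ (the Jacobian entries $\tilde\zeta$, $(p-p_0)$, $\tilde\zeta c_a$) vanishes along $D^k$, the quotient by $T_{M/\tXP}$ acts trivially on these cone directions, so $\mathfrak C_{\tZp/\tXP}|_{\tZ^{p,\rm{gst},\circ}}$ is precisely this rank-$2$ bundle sitting inside $h^1/h^0(\mathbb E_{\tZp/\tXP})$ as a rank-$2$ subbundle stack.

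The delicate bookkeeping points, which I expect to be the main obstacle, are: pinning down that $\tZ^{p,\circ}$ is the complement of the deep boundary \emph{including} the surviving piece of the old $D^1$, so that $\hL$ is non-special and the $\tilde F_j$ are transverse exactly there; and matching the rank-$2$ normal bundle at a generic boundary point with the two extra obstruction directions inside $h^1/h^0(\mathbb E_{\tZp/\tXP})$. Once the local form of Remark~\ref{rmk:eqn} is in hand the remaining computation is a verbatim transcription of the cone analysis of \cite{CLpfields}.
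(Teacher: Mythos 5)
Your argument is correct and follows essentially the same route as the paper, which itself just adapts \cite[Lemma~4.3]{CLpfields}: in Part~(1) both proofs reduce to the vanishing of $\hat\pi_*\widehat{\mathcal P}$ and $\R^1\hat\pi_*\hL$ on $\tZ^{p,\circ}$, and in Part~(2) both read off from Remark~\ref{rmk:eqn} that $\tZ^{p,\rm{gst},\circ}$ is smooth over the codimension-$2$ locus $\pazocal W\subset\tXP$ where $\tilde\zeta=p-p_0=0$, then present $\mathfrak C_{\tZp/\tXP}$ as a rank-$2$ bundle modulo a trivial group action. The only cosmetic difference is that the paper factors $\tZ^{p,\rm{gst},\circ}\xrightarrow{\acute et}\pazocal W\times\Aaff^{5d+6}\hookrightarrow\tXP\times\Aaff^{5d+6}$ to pull back $C_{\pazocal W/\tXP}$, whereas you compute $C_{\tZp/M}$ in the Hu--Li ambient $M$ and observe that $T_{M/\tXP}$ acts trivially because $\tilde\zeta$ and $p-p_0$ are base coordinates; these are the same computation, and your closing identification of the two cone directions with specific $h^1$-summands is slightly muddled but not load-bearing.
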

\begin{proof} Compare with \cite[Lemma 4.3]{CLpfields}
\begin{enumerate}
\item Observe that $\tZ^{p,\circ}\cong \tZ^{\circ}$ because here $H^0(\widehat{C}, \widehat{L}^{\otimes -5}\otimes \omega_{\widehat{C}})=0$. Moreover $\tZ^{\circ}$ is unobstructed on its image, which is an open of $\tXP$, because $\R^1\hat{\pi}_*\hL=0$.
So the normal cone is $[\tZ^{p,\circ}/\hat{\pi}_*\hL^{\oplus 5}]$, which is the zero section of $h^1/h^0(\mathbb {E}_{\tZp/\tXP})\rvert_{\tZ^{p,\circ}}=[0\oplus \R^1\hat{\pi}_*\widehat{\mathcal P}/\hat{\pi}_*\hL^{\oplus 5}\oplus 0]$.

\item We know that $\tZ^{p,\rm{gst},\circ}$ is a line bundle over $\tZ^{\rm{gst},\circ}$. From the equations \ref{rmk:eqn} we see that the latter is smooth over its image $\pazocal W$ in $\tXP$, which is the codimension $2$ locus where the core has weight $0$ and the line bundle is trivial on it. Recall that every smooth morphism $A\to B$ of relative dimension $n$ factors as $A\xrightarrow{\acute{e}t} B\times \Aaff^n\xrightarrow{\pr_1} B$. So we have
\bcd
\tZ^{p,\rm{gst},\circ} \ar[r,"\acute{e}t"] & \pazocal W\times\Aaff^{5d+6}\ar[r,hook]\ar[d,"q"] & \tXP\times \Aaff^{5d+6}\ar[d] \\
& \pazocal W \ar[r,hook] & \tXP
\ecd
where the bottom horizontal arrow is a codimension $2$ regular embedding. Thus 
\[\mathfrak{C}_{\tZp/\tXP}\rvert_{\tZ^{p,\rm{gst},\circ}}\cong \left[ q^* C_{\pazocal W/\tXP}/ \hat{\pi}_*\hL^{\oplus 5}\oplus\hat{\pi}_*\widehat{\mathcal P}\right]\]
is a rank 2 subbundle stack of  $h^1/h^0(\mathbb {E}_{\tZp/\tXP})\rvert_{\tZ^{p,\rm{gst},\circ}}.$
\end{enumerate}
\end{proof}
Notice that the image of $\tZ^{\circ}$ in $\tM_1(\PP^4)$ contains $\tM_1(\PP^4)^{\rm{main}}\cap \widetilde{D}^1.$

\subsection{Contribution of the main component}
Recall the definition of the \emph{closure of the zero section of a vector bundle stack}: let $B$ be an integral algebraic stack and let $\mathbb F=[F_0\xrightarrow{d} F_1]$ be a complex of locally free sheaves on $B$. The zero section is $0_{\mathbb F}\colon [F_0/F_0]\to h^1/h^0(\mathbb F)=[F_1/F_0]$, which is in general not a closed embedding; its closure is defined as:
\[ \overline{0}_{\mathbb F}=[\operatorname{cl}(dF_0)/F_0] \]
$\overline{0}_{\mathbb F}$ is an integral stack.

\begin{ex}
When $h^0(\mathbb F)=0$, the closure of the zero section looks like $B$ with some further stacky structure on the vanishing locus of $d$. Consider for example $B=\PP^1$ and $\mathbb F=[\OO_{\PP^1}\xrightarrow{x}\OO_{\PP^1}(1)]$. Then the action of $e\in F_0$ on $F_1$ is given by $f\mapsto f+xe$. Clearly $\operatorname{cl}(dF_0)$ is the whole line bundle $F_1$; the $F_0$-action is transitive on the fibers over $\{x\neq 0\}$ and trivial on the $\{x=0\}$-fiber. Hence $\overline{0}_{\mathbb F}$ is isomorphic to $\PP^1\setminus\{x=0\}$ with a gerbe $[\Aaff^1/\mathbb G_{\rm a}]$ replacing the point $\{x=0\}$.
\end{ex}

We may now split the cone $\mathfrak{C}_{\tZp/\tXP}$ in the following manner: we denote by $\mathfrak C^{\rm{main}}$ the closure of the zero section of $h^1/h^0(\mathbb {E}_{\tZp/\tXP})\rvert_{\tZ^{p,\rm{main}}}$, which is an irreducible cone supported on the main component. All the rest is supported on the boundary components, possibly on their intersection with the main one, and we are going to pack all the components supported on $D^k\tZp$ together and label them $\mathfrak C^k$ accordingly, so in the end we obtain a splitting:
\[
 \mathfrak{C}_{\tZp/\tXP}=\mathfrak C^{\rm{main}}+\sum_{k\geq 2} \mathfrak C^k
\]
We are going to show that:
\begin{enumerate}
 \item the contribution of $\mathfrak C^{\rm{main}}$ is exactly the reduced invariants of $X$;
 \item the other cones $\mathfrak C^k,\ k\geq 2,$ are enumeratively meaningless.
\end{enumerate}

In order to prove the first claim we proceed as in \cite[\S5]{CLpfields}; let us start by noticing that the obstruction theory $\mathbb E:=\mathbb E_{\tZp/\tXP}$ splits as $\mathbb E_1\oplus\mathbb E_2$ where $\mathbb E_1=\R^{\bullet}\hat\pi_*(\hL^{\oplus 5})$ and $\mathbb E_2=\R^{\bullet}\hat\pi_*(\widehat{\mathcal P})$. When we restrict to $\tZ^{p,\rm{main}}$ we see that $h^1/h^0(\mathbb E_1)$ is the closure of its own zero section; it follows that:
\[
 \mathfrak C^{\rm{main}}=\overline{0}_{\mathbb E\rvert_{\tZ^{p,\rm{main}}}}=h^1/h^0(\mathbb E_1)\rvert_{\tZ^{p,\rm{main}}}\oplus \overline{0}_{\mathbb E_2\rvert_{\tZ^{p,\rm{main}}}}
\]
Then by standard intersection theory (pullback the right-hand side by $\hat\pi_{\mathbb E}^*=\hat\pi_{\mathbb E_1}^*\circ\hat\pi_{\mathbb E_2}^*$):
\[
 0^!_{\mathbb E}[\mathfrak C^{\rm{main}}]=0_{\mathbb E_2}^![\overline{0}_{\mathbb E_2\rvert_{\tZ^{p,\rm{main}}}}]
\]

At this point we recall the following \cite[Lemma 5.3]{CLpfields}:
\begin{lem}
Let $\mathbb E=[E_0\to E_1]$ be a complex of locally free sheaves on an integral Deligne-Mumford stack $B$ such that
$h^1(\mathbb E)$ is a torsion sheaf on $B$ and the image sheaf of $E_0\to E_1$ is locally free.
Let $U\subseteq B$ be the complement of the support of $h^1(\mathbb E)$, and let $\mathbf{B}\subseteq h^1/h^0(\mathbb E^\vee[-1])$
be the closure of the zero section  of the vector bundle $h^1/h^0(\mathbb E^\vee[-1]|_U)= h^0(\mathbb E|_U)^\vee$. Then
$$0^![\mathbf{B}]=e(h^0(\mathbb E)^\vee)\in A_*(B).
$$
\end{lem}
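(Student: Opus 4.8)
The plan is to make both $\mathbf B$ and the Gysin map along the zero section completely explicit in terms of the vector bundles $E_0$ and $E_1$, and then to reduce the statement to the self-intersection formula for a sub-bundle. First I would unwind the hypotheses. Writing $d\colon E_0\to E_1$ for the differential and $I:=\operatorname{im}(d)$, the assumption that $I$ is locally free makes the sequence $0\to h^0(\mathbb E)\to E_0\to I\to 0$ locally split, so $h^0(\mathbb E)$ is locally free and, dualising, we obtain a short exact sequence of vector bundles
\[
0\longrightarrow I^\vee\longrightarrow E_0^\vee\longrightarrow h^0(\mathbb E)^\vee\longrightarrow 0 ;
\]
in particular $E_0^\vee/I^\vee\cong h^0(\mathbb E)^\vee$. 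Since $h^1(\mathbb E)$ is torsion and $B$ is integral, $U:=B\setminus\operatorname{supp}h^1(\mathbb E)$ is dense, $\operatorname{rk}I=\operatorname{rk}E_1$ and $\operatorname{rk}h^0(\mathbb E)=\operatorname{rk}E_0-\operatorname{rk}E_1$. I would also observe that $\mathbb E^\vee[-1]=[E_1^\vee\xrightarrow{d^\vee}E_0^\vee]$ has $\ker(d^\vee)=\mathcal{H}om(h^1(\mathbb E),\mathcal O_B)=0$, so that the vector bundle stack
\[
\mathfrak E:=h^1/h^0\bigl(\mathbb E^\vee[-1]\bigr)=\bigl[\,{\rm Vb}(E_0^\vee)\big/{\rm Vb}(E_1^\vee)\,\bigr]
\]
has rank $\operatorname{rk}h^0(\mathbb E)$ and its zero section $B\hookrightarrow\mathfrak E$ is a closed immersion; over $U$ it restricts to the bundle $h^0(\mathbb E)^\vee$ because $\mathbb E|_U$ is quasi-isomorphic to $h^0(\mathbb E)|_U$ placed in degree $0$, which already gives the first assertion.

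Next I would identify $\mathbf B$ as an honest sub-bundle stack. By the definition of the closure of a zero section recalled above, $\mathbf B=\bigl[\operatorname{cl}\!\bigl(d^\vee\,{\rm Vb}(E_1^\vee)\bigr)\big/{\rm Vb}(E_1^\vee)\bigr]$ inside $\mathfrak E$. The image of the induced map of total spaces ${\rm Vb}(E_1^\vee)\to{\rm Vb}(E_0^\vee)$ is irreducible, is contained in the closed sub-bundle ${\rm Vb}(I^\vee)$, and coincides with ${\rm Vb}(I^\vee)$ over the dense open $U$ (there $E_1^\vee\xrightarrow{\sim}I^\vee$); since $B$ is integral this forces $\operatorname{cl}(d^\vee\,{\rm Vb}(E_1^\vee))={\rm Vb}(I^\vee)$, i.e. the scheme-theoretic image of that map is ${\rm Vb}(I^\vee)$. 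As ${\rm Vb}(E_1^\vee)$ acts on ${\rm Vb}(E_0^\vee)$ by translation along $\operatorname{im}(d^\vee)\subseteq I^\vee$, it preserves ${\rm Vb}(I^\vee)$, and I conclude $\mathbf B=[{\rm Vb}(I^\vee)/{\rm Vb}(E_1^\vee)]$, with $\mathbf B\hookrightarrow\mathfrak E$ induced by the sub-bundle inclusion ${\rm Vb}(I^\vee)\subseteq{\rm Vb}(E_0^\vee)$.

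Finally I would compute the Gysin pullback. Using the global presentation of $\mathfrak E$, the class $0^![\mathbf B]$ is computed (following Behrend--Fantechi) by pulling $\mathbf B$ back along the smooth surjection ${\rm Vb}(E_0^\vee)\to\mathfrak E$ (this produces the cycle $[{\rm Vb}(I^\vee)]$) and then applying the ordinary vector-bundle Gysin map along the zero section, so that $0^![\mathbf B]=0^!_{{\rm Vb}(E_0^\vee)}[{\rm Vb}(I^\vee)]$. Now the zero section $B\hookrightarrow{\rm Vb}(E_0^\vee)$ factors through regular embeddings as $B\hookrightarrow{\rm Vb}(I^\vee)\hookrightarrow{\rm Vb}(E_0^\vee)$, and the normal bundle of ${\rm Vb}(I^\vee)$ in ${\rm Vb}(E_0^\vee)$ is the pullback of $E_0^\vee/I^\vee$; hence by functoriality of refined Gysin maps and the self-intersection formula
\[
0^!_{{\rm Vb}(E_0^\vee)}[{\rm Vb}(I^\vee)]=e\bigl(E_0^\vee/I^\vee\bigr)\cap[B]=e\bigl(h^0(\mathbb E)^\vee\bigr)\cap[B],
\]
which is the desired identity.

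The step I expect to be the main obstacle is the identification carried out in the middle, together with the bookkeeping opening the last paragraph: one must pin down exactly which vector bundle stack $h^1/h^0(\mathbb E^\vee[-1])$ is (the grading and shift conventions, and which of ${\rm Vb}(E_0^\vee)$, ${\rm Vb}(E_1^\vee)$ acts, in which direction), match the abstract closure of the zero section with the concrete sub-bundle ${\rm Vb}(I^\vee)$, and know that the Behrend--Fantechi Gysin map for a vector bundle stack with a global resolution carries no spurious multiplicity when one passes to the smooth cover. All of this is done carefully in \cite[Lemma 5.3]{CLpfields}, which one may also simply cite.
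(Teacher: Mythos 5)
Your proof is correct. Note, however, that the paper itself does not prove this lemma; it merely quotes it from \cite[Lemma 5.3]{CLpfields} (``we recall the following''), so there is no in-paper argument to compare against. Your reasoning does fill the gap honestly: the reduction to the short exact sequence $0\to I^\vee\to E_0^\vee\to h^0(\mathbb E)^\vee\to 0$ is justified because local freeness of $I$ splits $0\to h^0(\mathbb E)\to E_0\to I\to 0$ locally; the identification $\operatorname{cl}(d^\vee\,\mathrm{Vb}(E_1^\vee))=\mathrm{Vb}(I^\vee)$ uses exactly the right ingredients (irreducibility of $\mathrm{Vb}(I^\vee)$ over the integral base $B$, density of $U$ where $E_1^\vee\xrightarrow{\sim}I^\vee$, and closedness of the subbundle $\mathrm{Vb}(I^\vee)\subseteq\mathrm{Vb}(E_0^\vee)$); and the final excess-intersection computation of $0^!_{\mathrm{Vb}(E_0^\vee)}[\mathrm{Vb}(I^\vee)]=e(E_0^\vee/I^\vee)\cap[B]$ is the standard self-intersection of a subbundle. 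The dimension bookkeeping also checks out, since $h^1(\mathbb E)$ torsion forces $\operatorname{rk}I=\operatorname{rk}E_1$, so both sides live in degree $\dim B-\operatorname{rk}h^0(\mathbb E)$. One cosmetic remark: the aside that the zero section of $\mathfrak E$ is a closed immersion is harmless but not needed for the argument, and the claim that $\ker(d^\vee)=\mathcal{H}om(h^1(\mathbb E),\OO_B)=0$ does deserve the one-line reason you supply (torsion-freeness of $\OO_B$ on an integral stack). As you say at the end, for the purposes of this paper one may simply cite \cite[Lemma 5.3]{CLpfields}, which is what the authors do.
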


We apply this lemma to $\R^{\bullet}\hat{\pi}_*\hL^{\otimes 5}=\mathbb E_2^\vee$ on $\tZ^{p,\rm{main}}$. Notice that it satisfies the hypotheses by virtue of the equations in Remark \ref{rmk:eqn}: indeed the question may be addressed locally; looking at the resolution of $\mathbb E_2^\vee$:
\[
 \hat\pi_*\hL^{\otimes 5}(\A)\to \hat\pi_*\hL^{\otimes 5}(\A)\rvert_{\A}
\]
we deduce from the equation that the image of this map is $\pi_*\hL^{\otimes 5}(\A)\rvert_{\A}\otimes\OO_{\tZ^{p,\rm{main}}}(-\Xi)$, where $\Xi$ denotes the Cartier divisor $\Xi=\tZ^{p,\rm{main}}\cap\left(\bigcup_{k\geq 2}D^k\tZp\right)$. Then $\hat\pi_*\hL^{\otimes 5}$ is a vector bundle, being the kernel of a vector bundle map.

\begin{lem}\label{lem:main-compo}
 If we let $i$ be the inclusion of $\tZ$ in $\tM_1(\PP^4,d)$, then:
 \[
  i_*(c_{\rm{top}}(\hat{\pi}_*\hL^{\otimes 5})\cap[\tZ^{p,\rm{main}}])=c_{\rm{top}}({\pi}_*\mathcal L^{\otimes 5})\cap[\tM_1(\PP^4,d)^{p,\rm{main}}]
 \]
\end{lem}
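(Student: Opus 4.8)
The plan is to compare the two vector bundles $\hat\pi_*\hL^{\otimes 5}$ on $\tZ$ and $\pi_*\mathcal L^{\otimes 5}$ on $\tM_1(\PP^4,d)$ along the closed immersion $i\colon\tZ\hookrightarrow\tM_1(\PP^4,d)$, restricted to the main components. First I would recall that $\phi^*\hL\cong\mathcal L$ on $\cC$, so that the natural map $\phi^*(\hL^{\otimes 5})\to\mathcal L^{\otimes 5}$ is an isomorphism; applying $\hat\pi_*$ and using $\pi=\hat\pi\circ\phi$ together with $\phi_*\OO_{\cC}\cong\OO_{\hC}$ (proved in the course of Lemma~\ref{lem:inclusion}) and the projection formula, we get $\hat\pi_*\hL^{\otimes 5}\cong\pi_*\mathcal L^{\otimes 5}$ as sheaves on $\tZ$. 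On the main component $H^1$ vanishes on a dense open, and by the equations of Remark~\ref{rmk:eqn} (as used in the paragraph preceding the statement) the pushforward is actually a vector bundle, so this is an isomorphism of vector bundles over $\tZ^{p,\rm{main}}$.

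The second ingredient is to identify $i_*[\tZ^{p,\rm{main}}]$ with $[\tM_1(\PP^4,d)^{p,\rm{main}}]$, or rather with the fundamental class of the main component of $\tM_1(\PP^4,d)$. By the remark following Lemma~\ref{lem:inclusion}, the main component of $\Z$ is isomorphic to the main component of $\oM_1(\PP^4,d)$ via $i$ (both are closures of the locus of maps from a smooth elliptic curve, on which $\phi$ is an isomorphism), and this persists after the Hu--Li blow-up since the blow-up centres lie in the boundary; hence $i$ restricts to an isomorphism $\tZ^{\rm{main}}\xrightarrow{\sim}\tM_1(\PP^4,d)^{\rm{main}}$, and likewise on the total spaces of the $p$-field line bundles. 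In particular $i_*[\tZ^{p,\rm{main}}]=[\tM_1(\PP^4,d)^{p,\rm{main}}]$.

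Putting these together, I would invoke the projection formula for the proper (indeed closed) immersion $i$: since $\hat\pi_*\hL^{\otimes 5}\cong i^*\big(\pi_*\mathcal L^{\otimes 5}\big)$ as bundles on $\tZ^{p,\rm{main}}$ — this is exactly the isomorphism above, because the formation of $\pi_*\mathcal L^{\otimes 5}$ commutes with the relevant base change on the main component — we have $i_*\big(c_{\rm{top}}(\hat\pi_*\hL^{\otimes 5})\cap[\tZ^{p,\rm{main}}]\big)=i_*\big(i^*c_{\rm{top}}(\pi_*\mathcal L^{\otimes 5})\cap[\tZ^{p,\rm{main}}]\big)=c_{\rm{top}}(\pi_*\mathcal L^{\otimes 5})\cap i_*[\tZ^{p,\rm{main}}]=c_{\rm{top}}(\pi_*\mathcal L^{\otimes 5})\cap[\tM_1(\PP^4,d)^{p,\rm{main}}]$, which is the claim.

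The main obstacle I anticipate is the careful bookkeeping around base change and local freeness: one must check that $\pi_*\mathcal L^{\otimes 5}$ restricted to $\tM_1(\PP^4,d)^{\rm{main}}$ really is a vector bundle whose formation commutes with the isomorphism $i|_{\rm{main}}$, and that the ranks match — this is where the equations in Remark~\ref{rmk:eqn} and the divisor $\Xi=\tZ^{p,\rm{main}}\cap\big(\bigcup_{k\ge 2}D^k\tZp\big)$ enter, exactly as in the paragraph before the statement. Once it is known that both sheaves are locally free and that $i$ identifies the main components scheme-theoretically, the identity of Chern classes is formal. I would also double-check that the compatibility $\phi^*\hL\cong\mathcal L$ is the one built into the definition of $\Z$ and $\Zp$ via the fibre-product diagrams, so that no extra twist is hiding; compare \cite[Lemma~5.5]{CLpfields} where the analogous statement is proved for nodal curves, the present argument being the word-by-word adaptation to at worst cuspidal curves.
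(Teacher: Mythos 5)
Your proof is correct and follows the paper's argument essentially step for step: identify $\hat\pi_*\hL^{\otimes 5}\cong\pi_*\mathcal L^{\otimes 5}$ on $\tZ^{p,\rm{main}}$ via $\phi^*\hL\cong\mathcal L$, the projection formula, and $\phi_*\OO_{\cC}\cong\OO_{\hC}$ (Zariski connectedness); identify $i_*[\tZ^{p,\rm{main}}]=[\tM_1(\PP^4,d)^{p,\rm{main}}]$ using that $i$ and the $p$-field projection are isomorphisms on the main component; then finish with the projection formula for Chern classes. Your extra care about local freeness of the pushforwards (via Remark~\ref{rmk:eqn}) is a useful check that the paper leaves implicit, but the route is the same.
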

\begin{proof}
 Recall that the projection $(-)^p\to (-)$ is an isomorphism on main, so it makes sense to write $i_*[\tZ^{p,\rm{main}}]=[\tM_1(\PP^4,d)^{p,\rm{main}}]$, which follows from Lemma~\ref{lem:inclusion}. On the other hand notice that on $\tZ^{p,\rm{main}}$ we have:
 \[
  \pi_*\mathcal L=\hat{\pi}_*\phi_*\phi^*\hL=\hat{\pi}_*\hL
 \]
by projection formula and since $\phi_*\OO_{\cC_{\tZ^{p,\rm{main}}}}=\OO_{\hC_{\tZ^{p,\rm{main}}}}$ by Zariski connectedness theorem. The result follows from the projection formula for Chern classes.
\end{proof}

We are left with showing that the rest of the $\mathfrak C^k$ do not contribute to the invariants. This is essentially a dimensional computation. The arguments of \cite[\S\S6-8]{CLpfields} may be adapted; we shall outline them for the benefit of the reader. We introduce the notation $\tZ^{p,\rm{gst}}:=\bigcup_{k\geq 2} D^k\tZp$ for the union of the boundary components, and $\mathfrak C^{\rm{gst}}=\bigcup_{k\geq 2}\mathfrak C^k$.

\subsection*{Step I: reduction to the case of a cone inside a vector bundle}

This section deals with removing the technicalities of working with a cone stack inside a vector bundle stack.

The key point is that $\mathbb E:=\mathbb E\rvert_{\tZ^{p,\rm{gst}}}$ has locally free $h^0$ \emph{and} $h^1$: the equations in Remark \ref{rmk:eqn} are automatically satisfied on the boundary, without imposing any condition on the fiber coordinates.

When we fix a resolution by locally free sheaves $\mathbb E=[F_0\xrightarrow{d} F_1]$, the image sheaf $d(F_0)$, which is the kernel of $F_1\to h^1(\mathbb E)$, is a subbundle of $F_1$. Consider the projections:
\[
 \beta\colon F_1\to h^1/h^0(\mathbb E) \quad \text{and} \quad \beta'\colon F_1\to \tilde{V}:=\R^1\hat{\pi}_*(\hL^{\oplus 5}\oplus\widehat{\mathcal P});
\]
the second is also flat since $d(F_0)$ is a vector bundle. The cone stack $\mathfrak C^{\rm{gst}}$ can be descended to a cone $C^{\rm{gst}}\subseteq \tilde{V}$ by taking the quotient of $\beta^{-1}\mathfrak C^{\rm{gst}}$ by the free action of $d(F_0)$; $C^{\rm{gst}}$ should then be thought of as the coarse moduli of $\mathfrak C^{\rm{gst}}$. Recall that we started with a cosection $\sigma_1$ of $V$ (see Eq. \eqref{eqn:cosection}) and pulled it back to all relevant spaces. 
It follows from the commutativity of localised Gysin pullback with flat pullback that:
\[
 s^!_{h^1/h^0(\mathbb E),\tilde\sigma_1}[\mathfrak C^{\rm{gst}}]= s^!_{\tilde V,\tilde\sigma_1}[C^{\rm{gst}}]
\]
See \cite[Proposition 6.3]{CLpfields}.

\subsection*{Step II: from cosection localised to standard Gysin map}
Recall that the construction of a localised virtual class refines the standard one, namely:
\[\iota_*\virloc{Y}=\vir{Y},\]
where $\iota\colon Y(\sigma)\hookrightarrow Y$ is the degeneray locus of the cosection. In particular, when $Y$ itself is a proper Deligne-Mumford stack, the degree of the localised virtual class can be computed after pushing it forward to $Y$.

Then to compute $s^!_{\tilde V,\tilde\sigma_1}[C^{\rm{gst}}]$ we are going to compactify $\tZ^{p,\rm{gst}},$ extend the cone, the vector bundle and the cosection to it, and then make use of the fact we just recalled.

Since $\tZ^{p,\rm{gst}}\cong\operatorname{Vb}_{\tZ^{\rm{gst}}}\left(\hat{\pi}_*\widehat{\mathcal P}\right)$, we take its standard copactification:
\[\bar{\gamma}\colon \overline{\Z}^{p,\rm{gst}}:=\PP\left(\hat{\pi}_*\widehat{\mathcal P}\oplus \OO_{\tZ^{\rm{gst}}}\right)\to \tZ^{\rm{gst}}.\]
We want to pullback the cosection:
\[\sigma_{1|(u,\psi)}(\mathring{x},\mathring{p})=\mathring{p}\w(u)+\psi\sum_{i=0}^4\partial_i\w(u)\mathring{x}_i;\]
noticing that in the compactification the $p$-field $\psi$ can go to infinity, we define the vector bundles on $\oZp$:
\[\tilde{V}^{\rm{cpt}}_1=\bar{\gamma}^*V_1(-D_{\infty}),\qquad \tilde{V}^{\rm{cpt}}_2=\bar{\gamma}^*V_2.\]
We are now able to extend the cosection and get:
\[\bar{\sigma}\colon \tilde{V}^{\rm{cpt}}=\tilde{V}^{\rm{cpt}}_1\oplus \tilde{V}^{\rm{cpt}}_2\to\OO_{\oZp}. \]
 
 The compatibility of $\tilde\sigma$ with $\bar\sigma$, and the fact that $\oZp$ is proper, together with the observation at the beginning of this section, explain the following:
 \begin{prop}
 Let $\iota_{!}\colon Z_*(\tilde{V}(\widetilde{\sigma}))\to Z_*(\tilde{V}^{\rm{cpt}})$ be defined by 
 $\iota_{!}[C]=[\overline{C}].$ And $i\colon D(\widetilde{\sigma})\to \tZ^{\rm{gst}}$ the inclusion. Then
 \[\bar{\gamma}_*\circ s^{!}_{\tilde{V}^{\rm{cpt}}}\circ\iota_{!}=i_*\circ s^{!}_{\tilde\sigma,\rm{loc}}\colon  Z_*(\tilde{V}(\widetilde{\sigma}))\to A_*(\tilde{Z}^{\rm{gst}}).\]
 \end{prop}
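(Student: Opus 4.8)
The plan is to reinterpret $\bar\sigma$ as an honest cosection of the vector bundle $\tilde V^{\rm{cpt}}$ over the projective-bundle compactification $\oZp$, with the \emph{same} degeneracy locus $D(\widetilde{\sigma})$, and then to read off the identity from two standard facts about cosection localisation: that the localised Gysin map refines the ordinary one, and that it depends only on an \'etale neighbourhood of the degeneracy locus. By linearity I would immediately reduce to the case of a prime cycle $[C]$.

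First I would check that $D(\bar\sigma)=D(\widetilde{\sigma})$. Over the open part $\tZ^{p,\rm{gst}}\subseteq\oZp$ nothing changes, so the only issue is surjectivity of $\bar\sigma$ along $D_\infty$. This is exactly what the twist $\tilde V_1^{\rm{cpt}}=\bar\gamma^*V_1(-D_\infty)$ is designed for: it cancels the pole of $\psi$, and on $D_\infty$ the restriction of $\bar\sigma$ to the $\tilde V_1^{\rm{cpt}}$-summand becomes, in Serre-dual form, the map $\phi\mapsto(\partial_i\w(u)\phi)_{i=0}^{4}$; since $V(\partial_0\w,\dots,\partial_4\w)=\emptyset$ in $\PP^4$ — a consequence of Euler's identity together with the smoothness of $X$ — the sections $\partial_i\w(u)$ generate $\hL^{\otimes 5}$ along every such $u$, hence this map is injective and $\bar\sigma$ is surjective on all of $D_\infty$. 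Therefore $D(\bar\sigma)$ is the closed substack $D(\widetilde{\sigma})$ of the zero section $\tZ^{\rm{gst}}\subseteq\oZp$, and $\bar\gamma\circ j=i$ where $j\colon D(\bar\sigma)\hookrightarrow\oZp$.

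Next I would check that $\iota_![C]=[\overline C]$ genuinely lies in $\tilde V^{\rm{cpt}}(\bar\sigma)$: $\ker\bar\sigma$ is a closed subscheme of $\tilde V^{\rm{cpt}}$ restricting to $\ker\widetilde{\sigma}$ over $\tZ^{p,\rm{gst}}$, so the closure of a cycle supported in $\ker\widetilde{\sigma}$ over the open part lands in $\ker\bar\sigma$, while the closure of a cycle supported in $\tilde V|_{D(\widetilde{\sigma})}$ stays in $\tilde V^{\rm{cpt}}|_{D(\bar\sigma)}$ because $D(\widetilde{\sigma})$ sits in the zero section, which is closed in $\oZp$ and disjoint from $D_\infty$. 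With this in place $s^!_{\bar\sigma,\rm{loc}}[\overline C]\in A_*(D(\bar\sigma))$ is defined, and since the cosection-localised Gysin map refines the ordinary one \cite[\S~2]{KLcosection} we get $s^!_{\tilde V^{\rm{cpt}}}[\overline C]=j_*s^!_{\bar\sigma,\rm{loc}}[\overline C]$; pushing forward along $\bar\gamma$ (proper, as $\oZp$ is a projective bundle over $\tZ^{\rm{gst}}$) and using $\bar\gamma\circ j=i$ turns the left-hand side of the Proposition into $i_*s^!_{\bar\sigma,\rm{loc}}[\overline C]$. I would then finish by invoking that the localised Gysin map is unchanged under restriction to an open neighbourhood of the degeneracy locus (independence of the regularising morphism, \cite[\S~2]{KLcosection}): $\tZ^{p,\rm{gst}}\subseteq\oZp$ is such a neighbourhood on which $\bar\sigma$ restricts to $\widetilde{\sigma}$ and $\overline C$ to $C$, whence $s^!_{\bar\sigma,\rm{loc}}[\overline C]=s^!_{\widetilde{\sigma},\rm{loc}}[C]$, and combining the two displays gives the claim.

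The step I expect to be most delicate is this last locality/independence move: one must make sure that Kiem--Li's localised construction really is insensitive to what happens far from $D(\widetilde{\sigma})$ — concretely, that a regularising morphism for $(\tilde V^{\rm{cpt}},\bar\sigma)$ can be chosen supported over a neighbourhood of $D(\widetilde{\sigma})$ inside $\tZ^{p,\rm{gst}}$, where it is simultaneously a regularising morphism for $(\tilde V,\widetilde{\sigma})$, so that the two cycles are computed by literally the same formula $\tfrac1k\nu(\sigma)_*([D]\cdot s^!_{\widetilde G}[\widetilde B])$. A close second is the verification that $\overline C\subseteq\ker\bar\sigma$ and the surjectivity computation along $D_\infty$, which is where the specific geometry of the twisting divisor $D_\infty$ and of the gradient ideal of $\w$ enters; everything else is formal manipulation of Gysin maps and proper pushforward.
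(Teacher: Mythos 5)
Your proposal is correct and takes essentially the same route the paper sketches: the paper's stated proof is exactly the combination of (i) the refinement property $\iota_*[Y]^{\rm{vir}}_{\rm{loc}}=[Y]^{\rm{vir}}$ applied to the cycle $[\overline C]$, (ii) the compatibility of $\bar\sigma$ with $\widetilde\sigma$ over the open part $\tZ^{p,\rm{gst}}$, and (iii) properness of $\oZp$, deferring details to \cite[Proposition~6.4]{CL}. Your verification that $D(\bar\sigma)=D(\widetilde\sigma)$ — via nonvanishing of $(\bar\psi\,\partial_i\w(u))_i$ along $D_\infty$, using that $\bar\psi$ is nonzero on the contracted core where the $\partial_i\w(u)$ are nonzero constants (Euler plus smoothness of $X$) — and the observation that the Kiem--Li formula $\tfrac1k\nu(\sigma)_*([D]\cdot s^!_{\widetilde G}[\widetilde B])$ only sees a neighbourhood of $D(\widetilde\sigma)$ (disjoint from $D_\infty$), are precisely the points one must spell out; you have correctly identified the locality step as the delicate one.
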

 See \cite[Proposition~6.4]{CL} for full details.
 
 Furthermore, from functoriality of Gysin pullbacks and the deformation to the normal cone it follows that:
 \[s^!_{\tilde V^{\rm{cpt}}}[\overline C]=s^!_{\tilde V_2^{\rm{cpt}}}\circ s^!_{\tilde V_1^{\rm{cpt}}}[N_{\overline C\cap 0\oplus\tilde V_2^{\rm{cpt}}}\overline C].\]
 
 \subsection*{Step III: homogeneous cones}
Chang an Li introduce the notion of \emph{homogeneity} for substacks of $\tilde V$ on $\tZ^{p,\rm{gst}}$: write 
\[\tilde V=\tilde V_1\oplus\tilde V_2\;\;\text{ with} \;\;\tilde V_1=\R^1\hat{\pi}_*(\hL^{\oplus 5})\;\; \text{and}\;\; \tilde V_2=\R^1\hat{\pi}_*(\widehat{\mathcal P}),\]
 and $\gamma\colon \tZ^{p,\rm{gst}}\to\tZ^{\rm{gst}}$ the projection. Since $\tZ^{p,\rm{gst}}$ is the total space of a line bundle over $\tZ^{\rm{gst}}$, it comes with a natural $\Gm$-action on the fibers of $\gamma$. Moreover, since the highest pushforwards satisfy cohomology and base-change:
 \[\tilde V_i=\gamma^*V_i\]
 for the corresponding vector bundles $V_i$ on $\tZ^{\rm{gst}}$, so the total space of $\tilde V_i$ can be endowed with a $\Gm$-action that makes the projection to $\tZ^{p,\rm{gst}}$ equivariant. We say that a closed substack of $\tilde V$ is $0$-homogeneous if it is the pullback of a closed substack of $V$ along $\gamma$. In fact there are different $\Gm$-actions on $\tilde V$ that make the projection to $\tZ^{p,\rm{gst}}$ equivariant: namely, we can twist the trivial action on the fibers by two characters of $\Gm$, one for each $\tilde V_i$. Then we say that a substack of $\tilde V$ is $(l_1,l_2)$-homogeneous if it is invariant with respect to such an action. Here is how we are going to use the homogeneity:

\begin{lem}
 Let $C\subseteq \tilde V$ be an $(l_1,l_2)$-homogeneous sub\emph{cone} of $\tilde V$; then the cone $\overline{C}\cap(0\oplus\tilde V_2^{\rm{cpt}})$ is pulled back from a cone in $V_2$ on $\tZ^{\rm{gst}}$.
\end{lem}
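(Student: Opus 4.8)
The plan is to run everything through the $\Gm$-action that $\tZ^{p,\rm{gst}}$ carries as the total space of the line bundle $\hat\pi_*\widehat{\mathcal P}$ over $\tZ^{\rm{gst}}$, extended to the $\PP^1$-bundle $\bar\gamma\colon\oZp=\PP(\hat\pi_*\widehat{\mathcal P}\oplus\OO_{\tZ^{\rm{gst}}})\to\tZ^{\rm{gst}}$ and to $\tilde V^{\rm{cpt}}=\tilde V_1^{\rm{cpt}}\oplus\tilde V_2^{\rm{cpt}}$. First I would record that scaling the $\hat\pi_*\widehat{\mathcal P}$-summand makes $\oZp$ a $\Gm$-variety whose fixed locus is exactly the zero section $\tZ^{\rm{gst}}$ together with the divisor at infinity $D_\infty$, and that the weight $(l_1,l_2)$-action on $\tilde V|_{\tZ^{p,\rm{gst}}}$ extends to $\tilde V^{\rm{cpt}}$: on $\tilde V_2^{\rm{cpt}}=\bar\gamma^*V_2$ one just prolongs the fibrewise weight-$l_2$ action, and on $\tilde V_1^{\rm{cpt}}=\bar\gamma^*V_1(-D_\infty)$ the twist by $\OO(-D_\infty)$ is precisely what allows the extension, shifting only the weight on the fibre over $D_\infty$. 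With this, both summands and the projection to $\oZp$ are $\Gm$-equivariant, and $0\oplus\tilde V_2^{\rm{cpt}}$ is a $\Gm$-invariant subbundle.

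Next, since $C$ is a $\Gm$-invariant cone in $\tilde V|_{\tZ^{p,\rm{gst}}}$, its closure $\overline C$ is a $\Gm$-invariant cone in $\tilde V^{\rm{cpt}}$, so $W:=\overline C\cap(0\oplus\tilde V_2^{\rm{cpt}})$ is a $\Gm$-invariant cone inside $\tilde V_2^{\rm{cpt}}=\bar\gamma^*V_2$. I would then restrict to a fibre $\PP^1_z$ of $\bar\gamma$: the $\Gm$-action has the single free orbit $\PP^1_z\setminus\{0,\infty\}$, and over a point $x$ of this orbit the fibre $W_x\subseteq(V_2)_z$ is a cone, hence automatically $\Gm$-stable; therefore $\Gm$-invariance of $W$ forces $W_x$ to be constant along the orbit, equal to a single cone $W_z\subseteq(V_2)_z$. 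Since $W|_U$ is $\Gm$-invariant and $U:=\oZp\setminus(\tZ^{\rm{gst}}\cup D_\infty)$ is a $\Gm$-quotient of $\tZ^{\rm{gst}}$, the $W_z$'s assemble to a cone $C_2\subseteq V_2$ on $\tZ^{\rm{gst}}$ with $W|_U=\bar\gamma|_U^*C_2$; taking closures gives $\bar\gamma^*C_2=\overline{W|_U}\subseteq W$.

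The remaining point — and the main obstacle — is the reverse inclusion $W=\bar\gamma^*C_2$, i.e. that $W$ carries no component supported over the fixed loci $\tZ^{\rm{gst}}$ or $D_\infty$. Over the open chart one has $\overline C|_{\tZ^{p,\rm{gst}}}=C$, so $W|_{\tZ^{p,\rm{gst}}}=C\cap(0\oplus\tilde V_2)$, and by homogeneity this cone, restricted to a $\gamma$-fibre $\Aaff^1_z$, is the union of the $\Gm$-orbits of one slice, with the point over the zero section being the $t\to0$ limit of those orbits; this limit lies in $(C_2)_z=W_z$ as long as the weight $l_2$ is non-negative, which holds for the cones produced by Lemma~\ref{lem:open_cones} since over the good open $\tZ^{p,\rm{gst},\circ}$ they are subbundle stacks pulled back along $q$ (hence of weight $0$). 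Symmetrically, at $D_\infty$ the $\OO(-D_\infty)$-twist on $\tilde V_1^{\rm{cpt}}$ is exactly what makes the corresponding limit of the $\tilde V_1$-components land inside $0\oplus\tilde V_2^{\rm{cpt}}$ with no extra fibre. Alternatively — and perhaps more robustly — one can feed in that by Lemma~\ref{lem:open_cones} the cone $\overline C$ is equidimensional over $\oZp$ (it is a vector subbundle stack over the dense good open), so that $W$ is the closure of $W|_U$ and hence equals $\bar\gamma^*C_2$. Either route delivers the statement.
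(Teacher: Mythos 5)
Your proposal is a sensible geometric reformulation of the paper's argument, and you have correctly located the crux: ruling out components of $W:=\overline C\cap(0\oplus\tilde V_2^{\rm{cpt}})$ supported over the $\Gm$-fixed loci $\{t=0\}$ and $D_\infty$. But neither of your two resolutions of that step is convincing. The orbit-limit argument ($l_2\geq 0$ forces the $t\to 0$ limit of an orbit into the cone) is a \emph{pointwise} statement: it says where a single orbit closure lands, but it does not preclude an entire irreducible component of $W$ sitting over $\{t=0\}$, since such a component is not the closure of orbits from the dense open at all. (Note also that a component over $\{t=0\}$ need not come from $D_\infty$: for instance $C=V(t\cdot y)$ is a closed $(l_1,l_2)$-homogeneous subcone of $\tilde V$ for any $(l_1,l_2)$, and its intersection with $\{x=0\}$ contains a whole fibre of $\tilde V_2$ over the zero section; the lemma therefore implicitly assumes $\overline C$ has no component over the fixed loci, which is automatic for the cone $\overline{C^{\rm{gst}}}$ it is applied to.) Your equidimensionality fallback has the same problem and a further one: it specializes to the case $C=\overline{C^{\rm{gst}}}$ rather than proving the general statement, and even granting that $\overline C$ is the closure of a bundle over a dense open, the intersection with the closed subbundle $0\oplus\tilde V_2^{\rm{cpt}}$ need not remain equidimensional, and equidimensionality alone does not give that every component of $W$ meets the open $U$.

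The paper's proof works at the level of the ideal rather than the underlying set, which is where the two approaches genuinely part ways. One chooses local coordinates $t,x_i,y_i$ so that the ideal of $\overline C$ is generated by polynomials $p_j$ in the rescaled fibre variables $t^{-l_1}x$ and $t^{-l_2}y$ with $t$-free coefficients (this is exactly the concrete content of $(l_1,l_2)$-homogeneity once one removes the extraneous components discussed above, since over $\{t\neq 0\}$ the $\Gm$-equivariant trivialization $\xi=t^{-l_1}x$, $\eta=t^{-l_2}y$ makes the ideal $t$-independent, and the generators are taken from there). Setting $x=0$ gives generators $p_j(0,t^{-l_2}y)$, and the cone condition says each $p_j(0,-)$ is homogeneous of some degree $d_j$, so $p_j(0,t^{-l_2}y)=t^{-l_2 d_j}p_j(0,y)$; up to the unit $t^{-l_2 d_j}$ the generators are the $t$-independent $p_j(0,y)$, and the pullback conclusion is immediate on ideals, not just on sets. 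This is strictly more information than $\Gm$-invariance of the underlying set (which is all your orbit argument uses), and it is precisely the extra information that makes the ``no extra components'' point go through. If you want to keep your geometric framing, the fix is to upgrade from ``$W$ is $\Gm$-invariant'' to ``$W$ has a $\Gm$-equivariant set of generators that are pulled back from $V_2$,'' which is what the paper's computation delivers.
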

\begin{proof}
 Locally we may pick coordinates $t$ on the fibers of $\gamma$, $x_1,\ldots,x_5$ on the fibers of $\tilde V_1^{\rm{cpt}}$, and $y_1,\ldots,y_{5d+5}$ on the fibers of $\tilde V_2^{\rm{cpt}}$, such that the ideal of $\overline{C}$ is generated by separately homogeneous polynomials $p_j$ in $t^{-l_1}x_i$ and $t^{-l_2}y_i$. The ideal of $\overline{C}\cap(0\oplus\tilde V_2^{\rm{cpt}})$ is then given by $\langle x_1,\ldots, x_5,p_j(0,t^{-l_2}y)\rangle_j$, where $p_j(0,t^{-l_2}y)$ results from setting $x_i=0$ in $p_j$. Notice now that $\overline{C}$ being a cone, it is invariant by scalar multiplication on the fibers of $\tilde V^{\rm{cpt}}$, so we may as well say that $\overline{C}\cap(0\oplus\tilde V_2^{\rm{cpt}})$ is cut by the ideal $\langle x_1,\ldots, x_5,p_j(0,y)\rangle_j$. This makes it clear that $\overline{C}\cap(0\oplus\tilde V_2^{\rm{cpt}})$ is pulled back from $(0\oplus V_2)$ on $\tZ^{\rm{gst}}$.
\end{proof}

Finally Chang and Li point out that the coarse moduli cone $C^{\rm{gst}}$ is $(0,1)$-homogeneous \cite[Proposition 6.7]{CLpfields}.

\subsection*{Step IV: reduction of the support of the cone}
We now explain a key technical lemma which will enable us to show that $C^{\rm{gst}}$ pushes forward to zero under a suitably defined morphism. It is basically reducing the support of $\overline{C^{\rm{gst}}}\cap 0\oplus\tilde{V}_2^{\rm{cpt}}$ to a manageable substack of $\tilde{V}_2^{\rm{cpt}}$, that is the union of the zero-section (i.e. $\oZp$) and a line subbundle of $\tilde{V}_2^{\rm{cpt}}$ supported on $\widetilde{\Xi}=\tZ^{p,\rm{main}}\cap\tZ^{p,\rm{gst}}$. Even better, using the homogeneity we can show that such a line bundle comes from $\Xi=\tZ^{\rm{main}}\cap\tZ^{\rm{gst}}$. We give a sketch of the proof, see \cite[Proposition 7.1]{CLpfields} for more details.
\begin{lem}
 There is a line subbundle $F$ of $V_2\rvert_\Xi$ such that:
 \[
  \overline{C^{\rm{gst}}}\cap 0\oplus\tilde{V}_2^{\rm{cpt}}\subseteq 0_{\tilde{V}_2^{\rm{cpt}}}\cup \tilde F:= \oZp\cup \bar\gamma^* F
 \]
\end{lem}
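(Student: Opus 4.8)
The strategy is to imitate \cite[Proposition~7.1]{CLpfields}, the only genuinely new ingredient being the bookkeeping about the cuspidal curve and its dualising sheaf, which enters through $\widehat{\mathcal P}=\hL^{\otimes -5}\otimes\omega_{\hat\pi}$. By Step~III it suffices to analyse the cone $\overline{C^{\rm{gst}}}$ inside $\tilde V^{\rm{cpt}}=\tilde V_1^{\rm{cpt}}\oplus\tilde V_2^{\rm{cpt}}$ and, using its $(0,1)$-homogeneity, to push the whole question down to $\tZ^{\rm{gst}}$: the claim is local on $\tZ^{\rm{gst}}$, and $\overline{C^{\rm{gst}}}\cap(0\oplus\tilde V_2^{\rm{cpt}})$ is pulled back from a cone in $V_2^{\rm{cpt}}$ over $\tZ^{\rm{gst}}$. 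I would therefore work with the presentation of Remark~\ref{rmk:eqn}: on a chart meeting the boundary, $\tZp$ is cut out of a vector bundle over $\tXP$ by $F_j=\tilde\zeta\,\tilde w^j+(p-p_0)\,w_{d+1}^j=0$ for $j=0,\dots,4$, where $\tilde\zeta$ is a boundary divisor of $\tXP$, $(p-p_0)$ is a local equation for the sublocus of $\tXP$ on which the universal line bundle is trivial on the core, and $\tZ^{\rm{gst}}$ maps onto the codimension-two locus $\pazocal W=\{\tilde\zeta=p-p_0=0\}$, inside which $\Xi=\tZ^{\rm{main}}\cap\tZ^{\rm{gst}}$ sits.

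The first step is to read off from these equations the limiting ideal of $\mathfrak C_{\tZp/\tXP}$ along $\tZ^{\rm{gst}}$, and then that of $\overline{C^{\rm{gst}}}\cap(0\oplus\tilde V_2^{\rm{cpt}})$: imposing the vanishing of the $\tilde V_1^{\rm{cpt}}$-coordinates should force everything into the zero section over the generic point of $\tZ^{\rm{gst}}$, leaving exactly one extra direction alive over $\Xi$. The second step is to identify that direction canonically; I expect it to be a distinguished line $F\subseteq V_2|_\Xi$ in $\R^1\hat\pi_*\widehat{\mathcal P}$ singled out by the contracted genus-one subcurve — concretely the one produced by deforming $\hL$ away from being trivial on that subcurve, which via $\phi^*\omega_{\hat\pi}=\omega_\pi(\pazocal E)$ and relative Serre duality corresponds to a one-dimensional piece of $\hat\pi_*\hL^{\otimes 5}$. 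This defines $F$ and gives the asserted containment $\overline{C^{\rm{gst}}}\cap(0\oplus\tilde V_2^{\rm{cpt}})\subseteq 0_{\tilde V_2^{\rm{cpt}}}\cup\bar\gamma^*F$. The last step is to check the statement along the divisor at infinity $D_\infty$ of $\oZp=\PP(\hat\pi_*\widehat{\mathcal P}\oplus\OO_{\tZ^{\rm{gst}}})$: the twist $\tilde V_1^{\rm{cpt}}=\bar\gamma^*V_1(-D_\infty)$ is engineered precisely so that sending the $p$-field to infinity produces no new component of $\overline{C^{\rm{gst}}}\cap(0\oplus\tilde V_2^{\rm{cpt}})$ over $D_\infty$, which one verifies in the chart at infinity.

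The hard part is the degeneration analysis along $\Xi$: one must keep track of the Hu--Li blow-up coordinate $\tilde\zeta$ carefully enough to be sure the limiting ideal is as tame as claimed, i.e. that a \emph{rank-one} piece of $\tilde V_2^{\rm{cpt}}$ survives over $\Xi$ and nothing survives over $\tZ^{\rm{gst}}\setminus\Xi$, and then to argue that the resulting line bundle is intrinsic rather than an artefact of the chart. The cuspidal geometry modifies only $\omega_{\hat\pi}$ and hence $\widehat{\mathcal P}$; one should confirm that the presence of the cusp does not change the count of surviving directions, which it does not since $\phi$ contracts exactly the weight-zero genus-one subcurves and $\phi_*\OO_{\cC}=\OO_{\hC}$. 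The compactification step, though largely formal, is the other place where a wrong twist would eventually corrupt the degree computation.
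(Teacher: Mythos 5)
Your proposal has the right scaffolding — the local equations $F_j=\tilde\zeta\,\tilde w^j+(p-p_0)\,w_{d+1}^j$, the use of homogeneity to descend from $\widetilde\Xi$ to $\Xi$, and the care about the $(-D_\infty)$-twist on $\tilde V_1^{\rm{cpt}}$ — but it leaves as a black box exactly the step the paper supplies, and that step is the content of the lemma. You propose to ``read off from these equations the limiting ideal of $\mathfrak C_{\tZp/\tXP}$'' and then intersect with $0\oplus\tilde V_2^{\rm{cpt}}$, and you correctly flag that this degeneration analysis along $\Xi$ is the hard part; but you don't say how to do it. The paper avoids this direct computation entirely by factorising $\tZp\to\tZ\to\widetilde{\XP}$ and observing that, after taking $h^1$ of the compatible dual obstruction theories, the inclusion $0\oplus\tilde V_2\hookrightarrow\tilde V_1\oplus\tilde V_2$ is precisely the kernel of the projection induced by $\tZ\to\widetilde{\XP}$. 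Hence the support of $C^{\rm{gst}}\cap(0\oplus\tilde V_2)$ is controlled by the coarse moduli cone of the \emph{single} morphism $\tZp\to\tZ$, which locally is a principal divisor $\{\tilde\zeta t=0\}$ in a line bundle chart $S\times\Aaff^1_t$ over $\tZ$. From $\tau^{\ge -1}\mathbb L_{\tZp/\tZ}|_T=[I/I^2\xrightarrow{\tilde\zeta\,\mathrm dt}\Omega_{\Aaff^1_S/S}]$ the differential dies on $\{\tilde\zeta=0\}$, so the coarse moduli cone is literally $\Spec\operatorname{Sym}^\bullet I/I^2$, a line bundle supported on $\widetilde\Xi$ and trivial off it — exactly the asserted $\tilde F$, no delicate degeneration analysis required.

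Two further points. First, your proposed identification of $F$ ``via $\phi^*\omega_{\hat\pi}=\omega_\pi(\pazocal E)$ and relative Serre duality as a piece of $\hat\pi_*\hL^{\otimes 5}$'' is a guess that does not obviously match what the argument produces: $F$ arises canonically as (the descent to $\Xi$ of) the line bundle $I/I^2|_{\widetilde\Xi}$ coming from the normal cone of $\tZp$ in $\tZ$, sitting inside $\tilde V_2=\R^1\hat\pi_*\widehat{\mathcal P}$ via the obstruction-theory embedding — you would have to justify that these two descriptions agree, and the lemma doesn't need that identification anyway. Second, your invocation of $(0,1)$-homogeneity at the start to ``push the whole question down to $\tZ^{\rm{gst}}$'' conflates the order of operations: homogeneity is needed only at the very end, to pass from the line bundle $\tilde F$ on $\widetilde\Xi$ (inside $\tZp$) to a line bundle $F$ on $\Xi$ (inside $\tZ$), not to reduce the intersection computation itself. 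As written your proposal would require you to verify by hand, from the five equations $F_j$, that exactly one fibre direction of $\tilde V_2$ survives over $\Xi$ and none survives elsewhere on $\tZ^{\rm{gst}}$; without the factorisation through $\tZ$ this is substantially more work than the paper's proof and is the gap you would need to fill.
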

It is enough to show this before taking the closure. First they use the fact that there is a triple of compatible obstruction theories for the triangle:
\bcd
\tZp \ar[rr,"\gamma"]\ar[dr] & & \tZ \ar[dl] \\
& \widetilde{\XP} &
\ecd
such that their restrictions to $\tZ^{p,\rm{gst}}$ have locally free $h^0$ and $h^1$. By taking $h^1$ of the dual obstruction theories we obtain a commutative diagram:
\bcd
h^1(\mathbb L_{\tZp/\tZ}^\vee\rvert_{\tZ^{p,\rm{gst}}}) \ar[r]\ar[d] & h^1(\mathbb L_{\tZp/\XP}^\vee\rvert_{\tZ^{p,\rm{gst}}})\ar[r]\ar[d] &h^1(\gamma^*\mathbb L_{\tZ/\XP}^\vee\rvert_{\tZ^{p,\rm{gst}}})\ar[d] \\
\tilde{V}_2\ar[r,"i_2"] & \tilde{V}_1\oplus\tilde{V}_2 \ar[r,"\pr_1"] & \tilde{V}_1
\ecd
The vertical arrows are injective by the definition of an obstruction theory, and the bottom triangle is exact. Notice that $0\oplus\tilde{V}_2$ is precisely the kernel of $\pr_1$. It follows that, in order to understand the support of $C^{\rm{gst}}\cap 0\oplus\tilde{V}_2$, it is enough to study that of $N$, where $N$ is the coarse moduli cone of $\mathfrak C_{\tZp/\tZ}$, living in the upper left corner of the above diagram.

This is an easier task, since we know that $\tZp/\tZ$ is a line bundle on $\tZ^{\rm{gst}}$ and an isomorphism on $\tZ^{\rm{main},\circ}$. Hence we can always find a local chart $S\to \tZ$ and a diagram as follows:
\bcd
\tZp\ar[d] & T\ar[l]\ar[ld,phantom,"\Box"]\ar[d]\ar[r,hook,"V(\tilde\zeta t)"] & S\times\Aaff^1_t\ar[ld] \\
\tZ & S\ar[l,"\acute{e}t"] &
\ecd
where $\tilde\zeta$ is a local equation for the boundary. Then $\tau^{\geq-1}\mathbb L_{\tZp/\tZ}\rvert_T=[I/I^2\xrightarrow{\delta}\Omega_{\Aaff^1_S/S}]$; $I$ is generated by $\tilde\zeta t$, whose image under $\delta$ is $\tilde\zeta \rm{d}t$, which restricts to $0$ on $\tZ^{p,\rm{gst}}\times_{\tZp}T=\{\tilde\zeta=0\}$. So the action is trivial, and the coarse moduli cone is precisely $\Spec_{T^{\rm{gst}}}\operatorname{Sym}^{\bullet} I/I^2$, which is a line bundle supported on $\widetilde{\Xi}\times_{\tZp}T$ and trivial otherwise. By gluing different charts we get the line bundle $\tilde F$ on $\widetilde{\Xi}$.

The last part of the statement, namely that $\tilde F$ descends to a line bundle $F$ on $\Xi$ is proved by homogeneity: the normal cone of $\tZp/\tZ$ is homogeneous with respect to the $\Gm$-action with character $1$ on the fibers of $\tilde{V}_2\to \tZ^{p,\rm{gst}}$, but being a cone it is $0$-homogeneous as well (see the above discussion of homogeneity), so it is $\bar\gamma^*F$ for some line bundle $F$ on $\Xi\subseteq \tZ$.

\subsection*{Step V: the boundary pushes forward to zero} Recall that we need to show that the degree of the following class is $0$:
\[
 s_{\tilde{V}^{\rm{cpt}}}^![\overline {C^{\rm{gst}}}]=s_{\tilde{V}_2^{\rm{cpt}}}^!\circ s_{\tilde{V}_1^{\rm{cpt}}}^![N_{\overline {C^{\rm{gst}}}\cap (0\oplus {\tilde{V}_2^{\rm{cpt}}})}\overline {C^{\rm{gst}}}]
\]

It follows from the previous section that $s_{\tilde{V}_1^{\rm{cpt}}}^![N_{\overline {C^{\rm{gst}}}\cap (0\oplus {\tilde{V}_2^{\rm{cpt}}})}\overline {C^{\rm{gst}}}]$ can be represented by the sum of two cycles, one (call it $N_1$) supported on a line subbundle of $\tilde{V}_2^{\rm{cpt}}$ on $\widetilde\Xi$, the other one (call it $N_2$) supported on the zero section of $\tilde{V}_2^{\rm{cpt}}$.

\begin{lem}\label{lem:boundary}
 Both $N_1$ and $N_2$ are $5d+1$-dimensional cycles, and for $i=1,2$: \[\deg(s_{\tilde{V}_2^{\rm{cpt}}}^![N_i])=0.\]
\end{lem}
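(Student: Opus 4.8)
The strategy is to compute the two Gysin pushforwards by dimension-counting, showing that in each case the cycle $s^!_{\tilde V^{\rm{cpt}}_2}[N_i]$ lives in $A_*$ of a proper stack of strictly negative dimension, hence vanishes. First I would establish the dimension count for $N_1$ and $N_2$. Recall that $\tZ^{p,\rm{gst}}$ is a line bundle over $\tZ^{\rm{gst}}=\bigcup_{k\ge 2}D^k\tZ$, and by the discussion after the blow-up (see the remark following Lemma~\ref{lem:tilding}) every boundary component $D^k\tZ\cong D^k\subseteq\tM_1(\PP^4,d)$ has dimension $5d+4$; hence $\tZ^{\rm{gst}}$ has dimension $5d+4$ and $\tZ^{p,\rm{gst}}$ has dimension $5d+5$. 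The cone $C^{\rm{gst}}$ sits inside $\tilde V=\tilde V_1\oplus\tilde V_2$, but after Step~IV we have reduced to the situation where $\overline{C^{\rm{gst}}}\cap(0\oplus\tilde V^{\rm{cpt}}_2)$ is supported on $\oZp\cup\tilde F$ with $\tilde F=\bar\gamma^*F$ a line subbundle over $\widetilde\Xi$. The key numerical input is that $\widetilde\Xi=\tZ^{p,\rm{main}}\cap\tZ^{p,\rm{gst}}$ is a divisor in $\tZ^{p,\rm{gst}}$ (stated in the remark after the big diagram in \S\ref{sec:equations}), so $\dim\widetilde\Xi=5d+4$ and the line bundle $\tilde F$ over it has dimension $5d+5$; likewise $\Xi\subseteq\tZ^{\rm{gst}}$ has dimension $5d+3$. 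Meanwhile the normal cone $N_{\overline{C^{\rm{gst}}}\cap(0\oplus\tilde V^{\rm{cpt}}_2)}\overline{C^{\rm{gst}}}$ has dimension equal to $\dim\overline{C^{\rm{gst}}}=\dim\tZ^{p,\rm{gst}}+\operatorname{rk}\tilde V_1$ minus nothing, and after applying $s^!_{\tilde V_1^{\rm{cpt}}}$ (which lowers dimension by $\operatorname{rk}\tilde V_1=5d+5$, using \eqref{eq:reltangentcone} to read off the rank) one lands in dimension $5d+1$ inside $\tilde V^{\rm{cpt}}_2$. This is the assertion $\dim N_i=5d+1$.

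Next I would carry out the two pushforward computations separately. For $N_2$, which is supported on the zero section $\oZp$ of $\tilde V^{\rm{cpt}}_2$, the class $s^!_{\tilde V^{\rm{cpt}}_2}[N_2]$ is a $5d+1$-dimensional class pushed into $A_*(\tZ^{\rm{gst}})$; but once we compose with $\bar\gamma_*$ and push to $\tZ^{\rm{gst}}$ we may use the projection formula together with the fact that $\tZ^{p,\rm{gst}}\to\tZ^{\rm{gst}}$ has one-dimensional fibers: a class supported on the zero section of $\tilde V^{\rm{cpt}}_2=\bar\gamma^*V_2$ that is moreover $(0,1)$-homogeneous (Step~III) descends, as in the homogeneous-cone lemma, to a cone in $V_2$ on $\tZ^{\rm{gst}}$. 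Then $s^!_{V_2}$ of a cone of dimension $5d$ (the descent lowers dimension by $1$) on the $(5d+4)$-dimensional $\tZ^{\rm{gst}}$ against a bundle $V_2$ of rank $5d+5$ gives a class of dimension $5d-(5d+5)<0$, which vanishes because $\tZ^{\rm{gst}}$ is proper. For $N_1$, supported on $\tilde F=\bar\gamma^*F$ over $\widetilde\Xi$, the same homogeneity descent realizes $s^!_{\tilde V^{\rm{cpt}}_2}[N_1]$ as pulled back from a $5d$-dimensional class on $\Xi\subseteq\tZ$; intersecting against $\tilde V^{\rm{cpt}}_2$ (rank $5d+5$) now uses that $\dim\Xi=5d+3$, so we land in dimension $\le 5d+3-(5d+5)+1<0$ after accounting for the one-dimensional fiber of $\tilde F$, and again properness of $\Xi$ forces the degree to be zero. (In both cases I would phrase the vanishing as: the pushforward of an $r$-cycle into $A_r$ of a proper stack of dimension $<r$ is $0$.)

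\textbf{Main obstacle.} The delicate point is bookkeeping the dimensions through the two-step deformation-to-the-normal-cone decomposition $s^!_{\tilde V^{\rm{cpt}}}=s^!_{\tilde V^{\rm{cpt}}_2}\circ s^!_{\tilde V^{\rm{cpt}}_1}$ and making sure the homogeneity from Step~III is genuinely inherited by the intermediate cones $N_1,N_2$ so that the descent along $\bar\gamma$ is legitimate — this is where one must invoke that $(0,1)$-homogeneity is preserved by taking normal cones and by the linear operations $\cap(0\oplus\tilde V_2^{\rm{cpt}})$, exactly as in \cite[Propositions 7.1, 8.1]{CLpfields}. A secondary subtlety is correctly identifying the virtual rank $\operatorname{rk}\tilde V_1=\operatorname{rk}\R^1\hat\pi_*(\hL^{\oplus 5})$ on the boundary, where by Remark~\ref{rmk:eqn} the obstruction bundle genuinely has locally free $h^1$ of the expected rank $5d+5$ coming from \eqref{eq:reltangentcone}; once these ranks and the dimensions $\dim\tZ^{\rm{gst}}=5d+4$, $\dim\Xi=5d+3$ are pinned down, the vanishing is a formal consequence of properness. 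I expect the whole argument to be essentially a transcription of \cite[\S\S6--8]{CLpfields}, the only new verification being that cuspidal curves do not alter any of these dimension counts, which follows because the $1$-stabilisation $\tZ\to\tM_1(\PP^4,d)$ identifies corresponding components (Lemma~\ref{lem:inclusion} and the remark after it).
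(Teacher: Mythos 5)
Your dimension bookkeeping is systematically off, and for $N_2$ you are missing the key new idea of the paper's argument.

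First, the numerical data. The remark after Lemma~\ref{lem:tilding} says that the boundary components of $\tZp$ (not of $\tZ$) have dimension $5d+4$. The paper's proof then starts from $\dim\tZ^{\rm{gst}}=5d+3$ (locally a $5(d+1)$--bundle over a stack of dimension $-2$), so $\dim\tZ^{p,\rm{gst}}=5d+4$, not $5d+5$. Your rank of $\tilde V_1=\R^1\hat\pi_*(\hL^{\oplus 5})$ is also wrong: on the boundary the line bundle is trivial on the core, so $h^1(\hL)=1$ and $\operatorname{rk}\tilde V_1=5$, not $5d+5$ (you have confused $\R^1$ with $\R^0$, whose rank is indeed $5(d+1)=5d+5$). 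Similarly $\operatorname{rk}\tilde V_2=h^0(\hL^{\otimes 5})=5d+1$, not $5d+5$. And $\Xi=\tZ^{\rm{main}}\cap\tZ^{\rm{gst}}$ is a divisor in $\tZ^{\rm{main}}$, which has dimension $5d$, so $\dim\Xi=5d-1$, not $5d+3$. Finally, your formula $\dim\overline{C^{\rm{gst}}}=\dim\tZ^{p,\rm{gst}}+\operatorname{rk}\tilde V_1$ is not right; the correct input is Lemma~\ref{lem:open_cones}~(2), which says the cone is a rank $2$ subbundle stack on the open part, giving $\dim\overline{C^{\rm{gst}}}=(5d+4)+2=5d+6$; subtracting $\operatorname{rk}\tilde V_1^{\rm cpt}=5$ then gives the dimension $5d+1$ of the $N_i$.

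The more serious gap is in the argument for $N_2$. You push forward along $\bar\gamma$ to $\tZ^{\rm{gst}}$ and try to conclude by a dimension count there. But $\dim\tZ^{\rm{gst}}=5d+3>5d+1$, so a $(5d+1)$--cycle pushed into $\tZ^{\rm{gst}}$ need not vanish; your dimension $5d-(5d+5)<0$ is only negative because of the two erroneous inputs ($\dim\tZ^{\rm{gst}}$ and $\operatorname{rk}\tilde V_2$), and with the correct numbers this calculation does not close. Moreover your descent "lowers dimension by $1$" presupposes that $N_2$ is a pullback along $\bar\gamma$, which the homogeneity statements of Steps~III--IV establish for $\overline{C^{\rm{gst}}}\cap(0\oplus\tilde V_2^{\rm{cpt}})$ and for the line bundle $\tilde F$, but not for the normal cone cycle $N_2$ itself. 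The paper instead splits $N_2$ by partitions $\mu\vdash d$ with $k\geq 2$ parts and pushes forward along the comparison map $\beta_\mu\colon D^\mu\oZp\to\pazocal W_\mu=\prod_i\M{0}{1}{\PP^4}{d_i}\times_{(\PP^r)^k}\PP^r$, which forgets the $p$-field, the blow-up, and the contracted pointed elliptic curve. The two decisive facts, both absent from your proposal, are: (i) $\tilde V_{2,\mu}^{\rm{cpt}}$ is pulled back along $\beta_\mu$ (so commutativity of Gysin with proper pushforward applies), and (ii) $\dim\pazocal W_\mu=5d+4-2k<5d+1$ for $k\geq 2$, whence $\beta_{\mu,*}[N_{2,\mu}]\in A_{5d+1}(\pazocal W_\mu)=0$. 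For $N_1$ the paper's argument is also simpler than yours: after $\deg(s^!_{\tilde V_2^{\rm{cpt}}}[N_1])=\deg(s^!_{V_2}\bar\gamma_*[N_1])$ one only needs $\bar\gamma_*[N_1]\in A_{5d+1}(F)=0$ because $\dim F=5d$, a line bundle over the $(5d-1)$--dimensional $\Xi$; there is no need to apply $s^!_{V_2}$ at all.
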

\begin{proof}
 Compare with \cite[Lemma 8.1]{CLpfields}. The dimension of $\tZ^{\rm{gst}}$ is $5d+3$, being locally a $5(d+1)$ vector bundle over a dimension $-2$ stack; so $\tZ^{p,\rm{gst}}$, which is a line bundle on the former, has dimension $5d+4$. The coarse moduli cone has then dimension $5d+6$, as can be argued from Lemma \ref{lem:open_cones}. $\tilde{V}_1^{\rm{cpt}}\rvert_{\oZp}$ has rank $5$, so $s_{\tilde{V}_1^{\rm{cpt}}}^![\overline {C^{\rm{gst}}}]$ is represented by a cycle of dimension $5d+1$. We shall exploit the commutativity of Gysin pullback with proper pushforward.

For $N_1$ we conclude from the above facts, since $\deg(s_{\tilde{V}_2^{\rm{cpt}}}^![N_1])=\deg(s_{V_2}^!\bar{\gamma}_*[N_1])$, but $\bar{\gamma}_*[N_1]\in A_{5d+1}(F)$ must be trivial, since $F$ has dimension $5d$, being a line bundle on $\Xi$ which is a divisor in $\Z^{\rm{main}}$.

On the other hand $N_2\subseteq \tilde{V}_2^{\rm{cpt}}$ admits a further splitting into $N_{2,\mu}\subseteq \tilde{V}_{2,\mu}^{\rm{cpt}}$ according to the component $D^{\mu}\oZp$ on which they are supported, with $\mu\vdash d$ in $k$ parts. There exists a comparison morphism:
\[\beta_{\mu}\colon D^\mu\oZp\to D^\mu\Z=D^\mu\oM_1(\PP^4,d)\to \pazocal W_{\mu}\]
where $\pazocal W_{\mu}:=\prod_{i=1}^k \M{0}{1}{\PP^4}{d_i}\times_{(\PP^r)^k}\PP^r$. The map $\beta_{\mu}$ is given by forgetting the $p$-field,  the Vakil-Zinger blow-up and the $k$-pointed elliptic curve contracted by the map to $\PP^4$. This has the nice property that $\tilde{V}_2^{\rm{cpt}}$ is the pullback along $\beta_{\mu}$ of a vector bundle on $\pazocal W_{\mu}$. First construct a connected curve $\overline \cC_\mu$ by gluing the universal curve over each factor along the given sections, producing a genus $0$ non-Gorenstein (unless $k=2$) singularity to which the universal maps to $\PP^4$ descend by the property of pushouts:
\bcd
\overline \cC_\mu \ar[d,"\bar\pi"]\ar[r,"\bar f"] & \PP^4 \\
\pazocal W_\mu &
\ecd
Notice now that the sheaf $V_\mu:=\bar\pi_*\bar f^*\OO_{\PP^4}(5)$ is a vector bundle of rank $5d+1$ on $\pazocal W_\mu$, as can be checked by Riemann-Roch and the normalisation sequence, and $\tilde{V}_{2,\mu}^{\rm{cpt}}=\beta_{\mu}^*(V_\mu^\vee)$. Finally the actual dimension of $\pazocal W_\mu$ is $5d+4-2k <5d+1$ for $k\geq 2$ by Kleiman-Bertini theorem, so $\beta_{\mu,*}[N_{2,\mu}]=0$.

\end{proof}
\begin{proof}(\emph{Main Theorem})
For the benefit of the reader we summarise here the key results which allow us to conclude:
\begin{itemize}
\item $p$-fields give the same invariants as the quintic up to a sign, see Theorem \ref{thm:p-fields-quintic}: \[\deg[\Mone{1}{\PP^4}{d}^p]^{\rm{vir}}_{\rm{loc}}= (-1)^{5d}\deg[\Mone{1}{X}{d}]^{\rm{vir}}.\]
\item $\Zp$ and $\Mone{1}{\PP^4}{d}^p$ are virtually birational, see Corollary \ref{cor:Zp-vs-pfields}:
\[\deg\virloc{\Zp}=\deg\virloc{\Mone{1}{\PP^4}{d}^p}.\]
\item The desingularisation $\tZp\to \Zp$ does not alter the invariants, see Lemma \ref{lem:tilding} \[\deg\virloc{\tZp}=\deg\virloc{\Zp}.\]
\item The main component of $\tZp$ contributes with the reduced invariants up to a sign, while the boundary is numerically irrelevant, see Lemmas \ref{lem:main-compo} and \ref{lem:boundary}:
\[\deg\virloc{\tZp}=(-1)^{5d}\deg\left(c_{\rm{top}}(\tilde\pi_*\tilde f^*\OO_{\PP^4}(5)\cap[\widetilde{\pazocal M}_1(\PP^4,d)^{\rm{main}}]\right).\]
\end{itemize}

\end{proof}

\newpage

\noindent Luca Battistella\\
Department of Mathematics, Imperial College London \\
\texttt{l.battistella14@imperial.ac.uk}\\

\noindent Francesca Carocci \\
Department of Mathematics, Imperial College London \\
\texttt{f.carocci14@imperial.ac.uk}\\

\noindent Cristina Manolache \\
Department of Mathematics, Imperial College London \\
\texttt{c.manolache@imperial.ac.uk}

\end{document}